\documentclass{article}
\usepackage{graphicx} 
\usepackage[a4paper, margin=1in]{geometry} 
\RequirePackage{etex}
\usepackage[english]{babel}
\usepackage{subcaption}
\usepackage{mathrsfs}
\usepackage[all]{xy}
\usepackage{color}
\usepackage{url}
\usepackage{indent first}
\usepackage[labelfont=bf,labelsep=period,justification=raggedright]{caption}
\usepackage{quantikz}
\usepackage{amsthm}
\usepackage{amsmath}
\usepackage{amsfonts}
\usepackage{amssymb}
\usepackage{tikz}
\usetikzlibrary{arrows.meta, positioning, bending}
\usepackage{float}
\usepackage{xcolor}
\usepackage{subcaption}
\theoremstyle{plain}
\newtheorem{Theorem}{Theorem}[section]
\newtheorem{cor}[Theorem]{Corollary}

\newtheorem{prop}[Theorem]{Proposition}
\newtheorem{lemma}[Theorem]{Lemma}

\newtheorem{defn}[Theorem]{Definition}

\newtheorem{remark}[Theorem]{Remark}
\newtheorem{ex}[Theorem]{Example}

\makeatletter
\renewcommand{\theTheorem}{%
  \thesection
  \ifnum\value{subsection}>0
    .\arabic{subsection}%
  \fi
  .\arabic{Theorem}}
\makeatother
\usepackage{graphicx} 
\usepackage[utf8]{inputenc}
\usepackage[ backend=biber]{biblatex}
\usepackage{abstract} 
\usepackage[colorlinks=true, linkcolor=blue, citecolor=blue, urlcolor=blue]{hyperref}
\usepackage{authblk} 
\renewbibmacro{in:}{}
\title{Perfect State Transfer on Quotient Graphs 
in Shunt Decomposition-Based Quantum Walks}
                  
\author[1]{Banita Katuwal\thanks{Corresponding author. Email: banitakatuwal@sssihl.edu.in}}
\author[1]{Srinath M~S\thanks{Email: srinathms@sssihl.edu.in}}
\author[1]{Y~Lakshmi Naidu\thanks{Email: ylakshminaidu@sssihl.edu.in}}
\author[2]{Supriyo Dutta\thanks{Email:dosupriyo@gmail.com}}

\affil[1]{Department of Mathematics and Computer Science\\
Sri Sathya Sai Institute of Higher Learning\\
Andhra Pradesh, India -- 515001}

\affil[2]{Department of Mathematics\\
National Institute of Technology Agartala\\
Jirania, West Tripura, India -- 799046}
\date{}
\begin{document}

\maketitle
\begin{abstract}
This paper investigates perfect state transfer (PST) in 
discrete-time quantum walks constructed via the shunt 
decomposition method. The walks are defined on a graph $G$ 
and its associated quotient graph $G/\pi$, induced by an 
equitable partition $\pi$. 
Through the shunt decomposition of $G$, we derive an explicit 
relation between the shift operator of the parent graph $G$ 
and that of its quotient graph $G/\pi$. We construct a reflection operator based on the characteristic matrix, 
which establishes a connection between the transition operator of the 
parent graph and that of its lower-dimensional quotient graph. 
We then prove that PST occurs on $G$ if and only if it occurs on 
$G/\pi$. Furthermore, we express the unitary evolution 
operator of the quotient graph in terms of Chebyshev polynomials of 
the first kind, from which we derive explicit criteria for PST. As an 
application, we establish PST on the cycle graph $C_{n}$ at time 
$k = n/2$, and lift the result to the parent graph $C_{2n}$ via the 
equitable partition \(\pi\). We further show that if an equitable partition 
$\pi$ of $G$ induces a quotient isomorphic to $K_n^{\circlearrowleft}$, 
the complete digraph on $n$ vertices with a loop at every vertex, then 
PST occurs at step $k = n$, and the walk is periodic at $k = 2n$. This framework is applied to two families 
of graphs, which are the complete bipartite digraph $K_{n,n}^{\rightleftharpoons}$ 
and the circulant graph $\operatorname{Circ}(2n, S)$, where $S$ 
consists of all odd residues modulo $2n$ and $n = 2^s$ for some 
$s \geq 1$, establishing PST in their respective line digraphs. Collectively, these results also answer the  question posed by 
Godsil and Zhan concerning which shunt decompositions or 
embeddings of a graph admit PST. \\~\\

\noindent\textit{Keywords:} quantum walks, perfect state transfer, quotient graph, shunt decomposition walks, Chebyshev polynomial\\

\noindent\textit{MSC 2020 subject classifications:} 05C50; 81Q99
\end{abstract}

\section{Introduction}
Quantum walks are the quantum analogue of classical random walks~\cite{portugal2013quantum} and serve as a 
primary framework for designing quantum algorithms~\cite{ambainis2003quantum}. 
Beyond their algorithmic utility, they have been proven universal for quantum 
computation, implying that any quantum circuit can be encoded into a quantum walk 
evolution~\cite{childs2009universal}. Notably, quantum walk-based search algorithms 
provide a quadratic speedup over classical search, analogous to the speedup achieved 
by Grover's algorithm~\cite{grover1996fast}. In fact, both the continuous- and discrete-time quantum 
walk models can be used to derive and justify Grover's search algorithm as a 
special case of quantum walk dynamics on graphs~\cite{childs2009universal, godsil2023discrete}. Quantum walks are broadly categorized 
into two primary frameworks: {continuous-time quantum walks (CTQW)}~\cite{farhi1998quantum, childs2009universal,coutinho2016graph} and {discrete-time 
quantum walks (DTQW)~\cite{Aharonov2001QuantumWO,ambainis2003quantum}. In a CTQW, the evolution is driven by a Hamiltonian typically 
derived from the adjacency or Laplacian matrix of the underlying graph. In contrast, 
a DTQW evolves through the repeated application of a unitary coin operator followed 
by a shift operator.

Within the discrete-time setting, several models have been developed to describe the walker's dynamics, including the arc-reversal, two-reflection, sedentary, and vertex-face models~\cite{godsil2019discrete, zhan2021quantum}. Among these, the shunt-decomposition model stands out as one of the most computationally efficient~\cite{wing2023circuit,wing2025circuit} frameworks in which the evolution alternates between a coin unitary and a shift
that moves the walker along an outgoing arc inside a given shunt or arc-class~\cite{godsil2019discrete, godsil2023discrete}. Originally introduced by Aharonov \emph{et al.}~\cite{Aharonov2001QuantumWO}, this model was later reformulated in a combinatorial context~\cite{godsil2019discrete,godsil2023discrete}. They explored its spectral properties and established conditions for uniform average mixing using the Grover coin. Due to its structural simplicity, the shunt-decomposition model is particularly well-suited for quantum circuit implementations~\cite{wing2023circuit,11497753} and has been applied to diverse problems, such as the development of quantum search complement algorithm~\cite{wing2025circuit}, quantum channel~\cite{11497753}.

A central phenomenon in quantum-walk-based protocols is PST~\cite{godsil2012state}. PST occurs when a quantum state localized at a vertex $a$ is transferred to a vertex $b$ with unit fidelity at a specific time $t\in \mathbb{R}$. In CTQW, this is expressed as:
\[ | \langle e_b \mid e^{-iAt} \mid e_a \rangle | = 1, \]
while in DTQW, it requires:
\[ | \langle e_b \mid U^k \mid e_a \rangle | = 1, \]
for some step $k \in \mathbb{Z}$. A substantial body of work has identified families of graphs that exhibit PST in both continuous and discrete settings~\cite{zhan2019infinite, kubota2022perfect, ge2011perfect, chan2023pretty, guo2024perfect,coutinho2015perfect, dutta2026perfect}. One of the method for analyzing PST is the theory of {quotient graphs}~\cite{godsil2001algebraic, godsil1993algebraic}, where a large, highly symmetric graph can be reduced (collapsed) into a smaller graph while preserving quantum walk behavior a technique known as {path collapsing}. 

This argument was used by Christandl \emph{et al.}~\cite{christandl2004perfect} to show that weighted paths exhibit PST, derived from the fact that the unweighted $n$-dimensional hypercube $Q_n$ has PST and can be collapsed into a weighted path. This follows earlier work by Childs \emph{et al.}~\cite{childs2003exponential} in the context of exponential algorithmic speedup for graph search problems, observing that CTQW on unweighted layered graphs has polynomial hitting times due to their behavior on corresponding weighted paths.
In the continuous-time framework, Bachman \emph{et al.}~\cite{bachman2011perfect} established a fundamental equivalence: a graph $G$ admits perfect state transfer (PST) if and only if its quotient graph $G/\pi$, modulo an equitable partition $\pi$, also admits PST. This theorem provided a systematic methodology for constructing graphs where PST occurs between vertices that lack a ``swapping'' automorphism. This was instrumental in addressing a significant open question posed by {Godsil}~\cite{godsil2012state}, who had demonstrated that while PST implies the equality of stabilizer subgroups ($\text{Aut}(X)_u = \text{Aut}(X)_v$) and the identity of distance partitions ($\Delta_u = \Delta_v$), it remained unclear whether a global swapping automorphism was strictly necessary for the phenomenon to occur. 

{Bachman} \emph{et al.} provided the definitive counter-example to this conjecture by constructing a graph using two non-isomorphic regular graphs of the same valency and size. By joining these graphs such that the distance partition of a vertex $a$ remained equitable, they proved that PST can occur between vertices $a$ and $b$ even when no automorphism maps one to the other. Their work further showed that Feder’s graphs are quotients of a $k$-fold Cartesian product of PST graphs and provided an extensive treatment of graphs whose quotients are weighted $P_4$ paths. This confirmed that state transfer is governed by spectral properties and equitable structures rather than global symmetries. Subsequently, Coutinho and Godsil~\cite{coutinho2016graph} reinforced these findings by identifying PST timings for specific quotient structures, such as the $d$-cube at $t = \pi/2$ and the complete bipartite graph $K_{2,n}$ at $t = \pi/\sqrt{2n}$. Similarly, Yang \emph{et al.}~\cite{ge2011perfect} used generalized path-collapsing to revisit graphs of diameter three and compare them to weighted paths $P_4$, providing PST conditions for paths $P_4(\gamma, \kappa)$ with middle edge weight $\gamma$ and internal self-loops with weight $\kappa$. Furthermore, Kim \emph{et al.}~\cite{kim2024generalization} introduced the notion of $s$-pair state transfer, providing a systematic method for identifying PST by analyzing lower-dimensional quotient matrices.

While the relationship between quotient graphs and PST is 
well-established in the continuous-time framework, the discrete-time 
counterpart remains a significant research gap. This motivates our 
central research question \textit{``Is it possible to define a 
discrete-time quantum walk on a graph $G$ such that PST occurs if 
and only if it occurs on the quotient graph $G/\pi$?''} This approach 
offers two key advantages. First, it allows PST to be studied on 
smaller quotient graphs, which in turn facilitates the identification 
of PST in larger graphs; that is, establishing PST on a quotient graph 
$G/\pi$ is equivalent to demonstrating PST on the corresponding larger 
graph $G$. Second, it addresses the question raised by Godsil and Zhan 
in~\cite[\textit{Sec.}~12]{godsil2019discrete} \textit{``Which 
shunt-decompositions or embeddings give perfect state transfer?''} To this end, we employ the shift operator arising from the shunt decomposition of a graph and define a reflection operator for both $G$ and 
$G/\pi$.

Moreover, Zhan~\cite{Zhan2025uniform} previously 
considered $Q^*UQ$ as the transition matrix relative to $\pi$ and 
provided conditions for pretty good state transfer in this context. 
Crucially, they showed that if $\pi$ is an equitable partition of 
$G$, then $\pi'$ is an induced equitable partition of the line digraph 
$LD(G)$. In the discrete framework, Krovi and Brun~\cite{krovi2007quantum}, studied quantum walks on quotient graphs using graph automorphisms. They demonstrated that symmetries of a subgroup $H$ induce invariant subspaces, and restricting the walk to such subspaces yields an equivalent walk on a quotient graph obtained by identifying vertex and edge orbits. They observed that quotient graphs with significantly fewer vertices, such as those reducing to paths in hypercubes and glued trees, can lead to exponentially faster hitting times. This highlights the fundamental role of symmetry and quotient graph structure in optimizing quantum walk dynamics.

The remainder of this paper is organized as follows. 
Section~\ref{section 2} presents the preliminaries required for the 
subsequent developments. Section~\ref{section 3} establishes the 
relationship between the shift matrix of a graph $G$ and its quotient 
graph $G/\pi$ via the shunt-decomposition model. 
Subsection~\ref{subsection 3.1} facilitates the transition to arc 
partitions by demonstrating that $G$ is isomorphic to the arc 
partition of its line digraph $LD(G)$, establishing that
\[
LD(G)/\tau \cong G, \qquad LD(G/\pi)/\sigma \cong G/\pi,
\]
where $\tau$ is the full arc equitable partition of $LD(G)$, $\pi$ is the 
vertex equitable partition of $G$, and $\sigma$ is the arc equitable partition of 
$LD(G/\pi)$. Section~\ref{section 4} defines the transition matrices 
of $G$ and its quotient $G/\pi$. Subsection~\ref{subsection 4.1} 
derives the relationship between these transition matrices and 
establishes the condition under which PST occurs in $G$ if and only 
if it occurs in $G/\pi$. Section~\ref{section 5} defines the 
transition matrix on the line digraph $LD(G)$ and on $LD(G/\pi)$, 
and provides the condition under which $LD(G/\pi)/\sigma$ exhibits PST if 
and only if $LD(G)$ exhibits PST. Section~\ref{section 6} expresses 
the unitary evolution operator in terms of Chebyshev polynomials of 
the first kind, yielding a representation analogous to the Grover 
walk and providing criteria for PST in quotient graphs. 
Section~\ref{section 7} establishes PST on the quotient of the cycle 
graph $C_{2n}$ with even $n$ at time $k = n/2$, and uses the results 
of Subsection~\ref{subsection 4.1} to derive the PST condition on the 
parent graph $C_{2n}$ at $k = n/2$. Section~\ref{section 8} shows 
that if a graph reduces to the quotient graph $K_n^{\circlearrowleft}$, 
that is, the complete digraph with loops at every vertex, then under a 
specific characteristic matrix $\widetilde{Q}$, PST occurs at step 
$k = n$ and the walk is periodic at $k = 2n$. 
Sections~\ref{section 9} and~\ref{section 10} reduce the complete 
bipartite digraph $K_{n,n}^{\rightleftharpoons}$ and the circulant 
graph $\operatorname{Circ}(2n, S)$ to quotient graphs isomorphic to 
$K_n^{\circlearrowleft}$, and establish that the line digraphs 
$LD(K_{n,n}^{\rightleftharpoons})$ and $LD(\operatorname{Circ}(2n,S))$ 
exhibit PST at $k = n$ and are periodic at $k = 2n$. Finally, 
Section~\ref{section 11} summarizes the conclusions and outlines 
directions for future research. T
\section{Preliminaries} \label{section 2}
\subsection{Equitable Partitions}
The concept of equitable partitions in algebraic graph theory has been 
studied for a long time; foundational results on this topic can be found 
in~\cite{godsil2001algebraic, godsil1993algebraic}. More precisely, let $G = (V, E)$ be 
a connected graph, let $\pi = \{C_1, C_2, \dots, C_r\}$ be a partition 
of the vertex set $V(G)$ into $r$ cells, and let $Q \in \{0,1\}^{|V| 
\times r}$ be the \emph{characteristic matrix} associated with $\pi$, 
which is defined as
\begin{equation}
    Q_{uj} =
    \begin{cases}
        1, & \text{if } u \in C_j, \\
        0, & \text{otherwise.}
    \end{cases}
\end{equation}
A partition $\pi$ is called \emph{equitable} if, for all 
$i, j \in \{1, \dots, r\}$, every vertex $u \in C_i$ has the same 
number of neighbours in $C_j$, denoted by $b_{ij}$, independent of 
the choice of $u$. Equivalently, if $\pi$ is equitable, then
\begin{equation}\label{eq:equitable}
    AQ = Q\widetilde{A},
\end{equation}
where $A$ is the adjacency matrix of $G$ and 
$\widetilde{A} \in \mathbb{R}^{r \times r}$ is the \emph{quotient matrix} 
of $G$ with respect to $\pi$~\cite[Chapter~5, Lemma~2.2]{godsil1993algebraic}. The matrix 
$\widetilde{A} = (b_{ij})$ records the number of edges from each 
vertex in cell $C_i$ to cell $C_j$. In particular, the 
diagonal entry $b_{ii}$ counts the number of neighbours of any 
vertex $u \in C_i$ that lie within the same cell $C_i$. Note that 
when $i = j$, each edge inside $C_i$ connects two vertices of the 
same cell and contributes exactly $1$ to the neighbour count of 
each of its endpoints; if $b_{ii} = 0$, there are no edges within 
$C_i$, and the cells are said to be \emph{independent sets}. The 
graph $\widetilde{G}$ with adjacency matrix $\widetilde{A}$ is 
called the \emph{quotient graph} of $G$ with respect to $\pi$, 
denoted $G/\pi$.

The following lemma gives three equivalent characterisations of 
equitable partitions that will be used throughout.

\begin{lemma}[Lemma 3.1.1 {~\cite{coutinho2016graph}}]
\label{lem:equitable_equiv}
Let $\pi$ be a partition of $V(G)$ with normalized characteristic matrix $Q$. 
Then the following statements are equivalent:
\begin{enumerate}
    \item The partition $\pi$ is equitable.
    \item The column space of $Q$, denoted $\operatorname{col}(Q)$, 
          is invariant under $A$.
    \item The matrices $A$ and ${Q}{Q}^\top$ commute.
\end{enumerate}
\end{lemma}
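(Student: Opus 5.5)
We prove the cycle of implications $(1)\Rightarrow(2)\Rightarrow(3)\Rightarrow(1)$. The tools are the identity $AQ = Q\widetilde{A}$ from~\eqref{eq:equitable} and the fact that the \emph{normalized} characteristic matrix has orthonormal columns. Concretely, column $j$ of $Q$ is $|C_j|^{-1/2}\mathbf{1}_{C_j}$, so $Q^\top Q = I_r$. Hence $P := QQ^\top$ is the orthogonal projection onto $\operatorname{col}(Q)$, which is the space of vectors constant on each cell of $\pi$. Normalization only rescales columns, so it changes neither $\operatorname{col}(Q)$ nor the structure of $AQ = QB$, where $B = D^{1/2}\widetilde{A}D^{-1/2}$ and $D = \operatorname{diag}(|C_j|)$.

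\textbf{$(1)\Rightarrow(2)$.} Suppose $\pi$ is equitable. Then for $u \in C_i$,
\[
(A\mathbf{1}_{C_j})_u = |N(u)\cap C_j| = b_{ij}.
\]
So $A\mathbf{1}_{C_j} = \sum_i b_{ij}\mathbf{1}_{C_i}$, which is the unnormalized form of $AQ = Q\widetilde{A}$. Hence $A$ maps each spanning vector of $\operatorname{col}(Q)$ back into $\operatorname{col}(Q)$.

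\textbf{$(2)\Rightarrow(3)$.} Suppose $\operatorname{col}(Q)$ is $A$-invariant. Since $A$ is symmetric, the orthogonal complement $\operatorname{col}(Q)^\perp$ is also $A$-invariant: if $y\perp \operatorname{col}(Q)$ and $x\in\operatorname{col}(Q)$, then $\langle Ay,x\rangle = \langle y,Ax\rangle = 0$. Now write $A$ in block form with respect to $\mathbb{R}^V = \operatorname{col}(Q)\oplus\operatorname{col}(Q)^\perp$. The matrix $A$ is block diagonal, and $P$ is $\operatorname{diag}(I,0)$ in the same decomposition, so $AP = PA$.

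Alternatively, one can argue algebraically. Invariance gives $AQ = QB$ for some $B$. Then $B = Q^\top A Q$ is symmetric, and transposing gives $Q^\top A = BQ^\top$. Therefore
\[
AQQ^\top = QBQ^\top = QQ^\top A.
\]

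\textbf{$(3)\Rightarrow(1)$.} Suppose $AP = PA$. Then for each $x \in \operatorname{col}(Q)$ we have $Ax = APx = PAx \in \operatorname{col}(Q)$. Apply this to $x = \mathbf{1}_{C_j}$. The vector $A\mathbf{1}_{C_j}$ must be constant on each cell $C_i$. Its value at $u\in C_i$ is $|N(u)\cap C_j|$, so this number depends only on $i$ and $j$. That is exactly the definition of an equitable partition.

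\textbf{Main obstacle.} The only delicate point is the step $(2)\Rightarrow(3)$, where symmetry of $A$ is essential. For a non-symmetric matrix, such as the digraph adjacency matrices used later in the paper, invariance of $\operatorname{col}(Q)$ does not imply commutation with $QQ^\top$. So the proof must state explicitly that $G$ is an undirected graph, so that $A = A^\top$. The lemma should not be applied verbatim in the directed setting.
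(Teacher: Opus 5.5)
Your proof is correct and is essentially the standard argument behind the cited result; the paper gives no proof of its own, only the reference to Coutinho--Godsil. In that argument $(1)\Leftrightarrow(2)$ follows from $A\mathbf{1}_{C_j}=\sum_i b_{ij}\mathbf{1}_{C_i}$, and $(2)\Leftrightarrow(3)$ follows from $QQ^\top$ being the orthogonal projection onto $\operatorname{col}(Q)$ together with $A=A^\top$, as in your algebraic variant $AQQ^\top=QBQ^\top=QQ^\top A$.

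Your caveat about symmetry is well taken, with one refinement. The implication $(2)\Rightarrow(3)$ still holds for any normal matrix, since every invariant subspace of a normal matrix is reducing. This covers the unitary operators $P_j$ and $U$ to which the paper later applies the lemma, but the implication does fail for general digraph adjacency matrices.
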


\subsection{Shunt Decomposition Walks and Perfect State Transfer (PST)}
The shunt decomposition provides a systematic method for partitioning the arc set of a \(d\)-regular
directed graph (where each vertex has $d$ in-neighbors and $d$ out-neighbors) into disjoint permutations of the vertex set. This decomposition plays a fundamental role
in the construction of discrete-time quantum walks, as it allows the adjacency matrix to be expressed as
a sum of permutation matrices. These matrices directly form the basis of the shift operator \(S\) and the
transition matrix \(U\).

A \emph{shunt} on a directed graph $G$ is a permutation of $V(G)$ where each vertex maps to one of its out-neighbors. For a $d$-regular directed graph, it comprises exactly $d$ shunts. A \emph{shunt decomposition} of $G$ partitions its arcs into such shunts. This yields permutation matrices $P_1, \dots, P_d \in \mathbb{C}^{n \times n}$ such that
\begin{equation}
A = \sum_{j=1}^{d} P_j,
\end{equation}
 where $A$ is the adjacency matrix of $G$.  The existence of such a decomposition is guaranteed by Lemma~7.1.1 in~\cite{godsil2023discrete}.

In the discrete-time quantum walk, the state space is the Hilbert space
\(
\mathcal{H} = \mathbb{C}^n \otimes \mathbb{C}^d,
\)
where $n = |V(G)|$ and $d$ is the degree of the graph $G$ (or equivalently, the coin dimension). The unitary evolution operator is given by
\[
U = SC,
\]
where $S$ is the shift operator and $C$ is the coin operator. Using the natural isomorphism between $\mathbb{C}^d\otimes \mathbb{C}^n$ and $\mathbb{C}^n\otimes \mathbb{C}^d$, the operator $U$ can be expressed as
\begin{equation}
U = \left( \sum_{j=1}^{d}  E_{jj} \otimes  P_j \right)
    \left( \sum_{v \in V(G)}  C_v \otimes E_{vv} \right),
\end{equation}
where $P_j$ are permutation matrices acting on $\mathbb{C}^n$, $E_{jj}$ and $E_{vv}$ are matrix units, and each $C_v$ is a coin operator acting on $\mathbb{C}^d$. The shift operator $S$ is defined by
\begin{equation}\label{shift_matrix_}
S = \sum_{j=1}^{d}E_{jj}\otimes  P_j ,
\end{equation}
which can be written in block diagonal form as
\begin{equation}
S =
\begin{pmatrix}
P_{1} & 0      & \cdots & 0 \\
0      & P_{2} & \cdots & 0 \\
\vdots & \vdots & \ddots & \vdots \\
0      & 0      & \cdots & P_{d}
\end{pmatrix},
\end{equation}
where $E_{jj}$ is the $d \times d$ matrix with a $1$ in the $(j,j)$-entry and zeros elsewhere. The operator $S$ shifts the amplitude associated with label $j$ according to the permutation $P_j$, as described in~\cite[\textit{ Sec.} 2.2 ]{godsil2019discrete} and  ~\cite[\textit{Sec.} 7.1]{godsil2023discrete}.
This framework is particularly useful for studying equitable partitions: if the permutation matrices $P_j$ respect a partition $\pi$, then the operator $U$ reduces to a smaller operator $\widetilde{U}$ acting on the quotient graph, thereby confining the quantum dynamics to a lower-dimensional invariant subspace.
The state evolves in time according to
\[
\psi_k = U^k x.
\]
Following~\cite{chan2023pretty}, we say that there is PSTfrom a unit state $x \in \mathcal{H}$ to $y \in \mathcal{H}$ if there exists an integer $k$ such that
\begin{equation}
U^k x = \gamma y,
\end{equation}
for some $\gamma \in \mathbb{C}$ with $|\gamma| = 1$. Equivalently,
\begin{equation}
|\langle U^k x, y \rangle| = 1.
\end{equation}

\subsection{Chebyshev Polynomials of the First Kind}\label{Chebyshev Polynomials}

The Chebyshev polynomials of the first kind, denoted by $T_n(p)$, form a sequence of orthogonal polynomials that play a significant role in the spectral analysis of quantum walks. They are defined recursively by
\[
T_0(p) = 1, \quad T_1(p) = p,
\]
and for $n \geq 2$,
\begin{equation}
T_n(p) = 2pT_{n-1}(p) - T_{n-2}(p).
\end{equation}
A fundamental property of these polynomials is their trigonometric representation. For $p \in [-1,1]$, let $p = \cos \theta$. Then,
\begin{equation}
T_n(\cos \theta) = \cos(n\theta).
\end{equation}
This identity immediately implies that
\begin{equation}
|T_n(p)| \leq 1 \quad \text{for all } p \in [-1,1],
\end{equation}
a property that ensures the stability of amplitudes in quantum walk dynamics.

In the context of state transfer, Kubota and Segawa~\cite{kubota2022perfect} employed Chebyshev polynomials of the first kind to analyze perfect state transfer (PST) between vertex-type states, that is, states associated with the vertices of a graph. By expressing powers of the transition operator in terms of these polynomials, they derived necessary conditions on the eigenvalues of the underlying graph. Their results show that PST between vertex-type states can occur only if the spectral structure of the graph satisfies certain algebraic constraints determined by the roots and values of the Chebyshev polynomials $T_n$.

\section{ Quantum Walk on Quotient Graphs via Shunt Decomposition}\label{section 3}
In this section, we develop a framework for the symmetrized quotient graph, denoted $G/\pi$, using a vertex equitable partition. This approach allows us to relate the shift matrix of the parent graph and its quotient graph within the context of the shunt decomposition model.

Let $G=(V,E)$ be a finite $d$-regular directed graph with adjacency matrix $A$. Let $\pi$ be an equitable partition of $V$ into $|\pi|$ cells, and let $Q \in \mathbb{C}^{|V| \times |\pi|}$ be its normalized characteristic matrix satisfying $Q^\top Q = I_{|\pi|}$. By definition of an equitable partition, we have the intertwining relation $AQ = Q\widetilde{A}$, where $\widetilde{A}$ is the adjacency matrix of the quotient graph $G/\pi$.
\begin{cor}\label{cor:3.1}
Let $G$ be a $d$-regular directed graph with equitable partition
$\pi = \{C_1, \ldots, C_r\}$ and quotient matrix $\widetilde{A}$.
If $|C_1| = |C_2| = \cdots = |C_r| $,
then $G/\pi$ is $d$-regular.
\end{cor}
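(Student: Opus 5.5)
We need to show that every row sum and every column sum of $\widetilde{A}=(b_{ij})$ equals $d$. Throughout we use the unnormalized quotient matrix $\widetilde{A}=(b_{ij})$, where $b_{ij}$ is the number of out-neighbours in $C_j$ of any vertex in $C_i$. For a directed graph, equitability is read as a condition on out-neighbours, which is exactly what $AQ=Q\widetilde{A}$ encodes.

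\emph{Out-degrees.} Fix a cell $C_i$ and a vertex $u\in C_i$. The out-neighbours of $u$ are partitioned by the cells, so $\sum_j b_{ij}$ equals the out-degree of $u$, which is $d$. Equivalently, apply $AQ=Q\widetilde{A}$ to the all-ones vector $\mathbf{1}_r$. Since $Q\mathbf{1}_r=\mathbf{1}_{|V|}$ and $A\mathbf{1}=d\mathbf{1}$, we get $Q(\widetilde{A}\mathbf{1}_r)=d\,Q\mathbf{1}_r$. Because $Q$ has full column rank, this gives $\widetilde{A}\mathbf{1}_r=d\mathbf{1}_r$. This part does not need equal cell sizes.

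\emph{In-degrees.} This is the substantive step, and it is where $|C_1|=\cdots=|C_r|=m$ is used. Count the arcs from $C_i$ to $C_j$ in two ways. Counting from the tails gives $|C_i|\,b_{ij}$. Counting from the heads, with $c_{ji}(w)$ the number of in-neighbours in $C_i$ of $w\in C_j$, gives $\sum_{w\in C_j}c_{ji}(w)$. Summing over $i$ for fixed $j$:
\[
\sum_i |C_i|\,b_{ij}=\sum_{w\in C_j}\sum_i c_{ji}(w)=\sum_{w\in C_j}\operatorname{indeg}(w)=d\,|C_j|.
\]
With all $|C_i|=m$, this becomes $m\sum_i b_{ij}=dm$, so every column sum of $\widetilde{A}$ is $d$. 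In matrix form this is the identity $\mathbf{1}^\top A Q=d\,\mathbf{1}^\top Q$, that is, $\mathbf{1}_r^\top D\widetilde{A}=d\,\mathbf{1}_r^\top D$ with $D=\operatorname{diag}(|C_i|)$, which reduces to $\mathbf{1}_r^\top\widetilde{A}=d\,\mathbf{1}_r^\top$ when $D=mI$.

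So $\widetilde{A}$ is a nonnegative integer matrix with all row and column sums equal to $d$, and hence $G/\pi$, read as a multidigraph possibly with loops, is $d$-regular. The only subtlety is bookkeeping: entries $b_{ij}>1$ count as parallel arcs, and diagonal entries $b_{ii}$ count as loops, each contributing once to both in-degree and out-degree.
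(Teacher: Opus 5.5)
Your proposal is correct and follows essentially the same route as the paper: row sums of $\widetilde{A}$ equal $d$ by distributing each vertex's $d$ out-neighbours over the cells, and column sums equal $d$ by double-counting the arcs entering $C_j$, where equal cell sizes cancel the common factor. Your matrix identity $\mathbf{1}^\top A Q = d\,\mathbf{1}^\top Q$ is just a compact restatement of that same count, and it correctly uses the unnormalized $Q$.
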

\begin{proof}
Let $u \in C_i$. Since $G$ is $d$-regular, $u$ has exactly $d$
out-neighbours distributed across the cells, with $b_{ij}$ of them
falling in $C_j$, so $\sum_{j=1}^{r} b_{ij} = d$ for all $i$. Now
fix $C_j$. Let $|C_1| = |C_2| = \cdots = |C_r|=\alpha$, each of the $\alpha$ vertices in $C_j$ has in-degree $d$,
so the total number of directed edges entering $C_j$ is $\alpha d$.
Each cell $C_i$ contributes exactly $\alpha\,b_{ij}$ directed edges
into $C_j$, so
\(
\sum_{i=1}^{r} \alpha\,b_{ij} = \alpha d,
\)
we get $\sum_{i=1}^{r} b_{ij} = d$ for all $j$.
Hence both row and column sums equal $d$, and $G/\pi$ is $d$-regular.
\end{proof}

Assume that $G$ and $G/\pi$ are both $d$-regular directed graphs, 
so that both $A$ and $\widetilde{A}$ admit a shunt decomposition 
of the form
\begin{equation}
A = \sum_{j=1}^{d} P_j, \qquad \widetilde{A} = \sum_{j=1}^{d} 
\widetilde{P}_j,
\end{equation}
where $P_j \in \{0,1\}^{|V|\times|V|}$ are permutation matrices on $V$, and $\widetilde{P}_j \in \mathbb{C}^{|\pi|\times|\pi|}$ are matrices acting on the quotient cells. Suppose these shunts are consistent with the partition such that for each $j \in \{1, \dots, d\}$, the following holds:
\begin{equation}
    P_j Q = Q \widetilde{P}_j.
    \label{eq:intertwine}
\end{equation}
Using the definition of the shift operator~\eqref{shift_matrix_}, together with \eqref{eq:intertwine}, we obtain a corresponding reduction of this operator to the quotient space.

\begin{lemma}\label{lemma 3.1}
Let $G$ be a $d$-regular directed graph with equitable partition
$\pi = \{C_1, \ldots, C_r\}$ of equal cell size,
and normalized characteristic matrix $Q$. Then
\[
(I_d \otimes Q^\top)\, S\, (I_d \otimes Q) = \widetilde{S}
\]
if and only if $P_j Q = Q\widetilde{P}_j$ for each $j = 1, \ldots, d$, 
where
\(
S = \sum_{j=1}^d E_{jj} \otimes P_j
\)
is the shift matrix of $G$ and
\(
\widetilde{S} = \sum_{j=1}^d E_{jj} \otimes \widetilde{P}_j
\)
is the quotient shift matrix of \(G/\pi\).
\end{lemma}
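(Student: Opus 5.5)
The plan is to reduce the operator identity to a blockwise statement and then treat the two implications separately. By the mixed-product property of the Kronecker product,
\[
(I_d \otimes Q^\top)\Big(\sum_{j=1}^d E_{jj}\otimes P_j\Big)(I_d\otimes Q)=\sum_{j=1}^d E_{jj}\otimes Q^\top P_j Q .
\]
Both this matrix and $\widetilde{S}=\sum_j E_{jj}\otimes \widetilde P_j$ are block diagonal with respect to the same decomposition. So the identity $(I_d\otimes Q^\top)S(I_d\otimes Q)=\widetilde S$ holds if and only if
\[
Q^\top P_j Q=\widetilde P_j \qquad \text{for every } j.
\]
The lemma is therefore equivalent to showing, for each fixed $j$, that $Q^\top P_jQ=\widetilde P_j$ if and only if $P_jQ=Q\widetilde P_j$.

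\emph{($\Leftarrow$).} If $P_jQ=Q\widetilde P_j$, multiply on the left by $Q^\top$ and use $Q^\top Q=I_{|\pi|}$. This gives $Q^\top P_jQ=\widetilde P_j$ immediately.

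\emph{($\Rightarrow$).} This is the main obstacle, since $Q^\top$ is not injective and $Q^\top X=Q^\top Y$ does not force $X=Y$ in general. I will use the standing assumptions:
\begin{itemize}
\item the cells have equal size $\alpha$, so $Q=\alpha^{-1/2}Q_0$ with $Q_0$ the $0/1$ characteristic matrix;
\item by Corollary~\ref{cor:3.1}, $G/\pi$ is $d$-regular, and $\widetilde A=\sum_j\widetilde P_j$ is a shunt decomposition, so each $\widetilde P_j$ is a permutation matrix.
\end{itemize}
Compute the entries of $Q^\top P_jQ$. The $(a,b)$ entry equals $\alpha^{-1}$ times the number of arcs of the shunt $P_j$ running from $C_a$ to $C_b$. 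Because $P_j$ is a permutation, exactly $\alpha$ such arcs leave $C_a$ in total, so each row of $Q^\top P_jQ$ sums to $1$.

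If this matrix equals the permutation matrix $\widetilde P_j$, then row $a$ has a single entry $1$, say in column $\sigma_j(a)$. That means all $\alpha$ vertices of $C_a$ are sent by $P_j$ into the single cell $C_{\sigma_j(a)}$. Consequently $P_j$ maps the indicator vector of $C_a$ to the indicator vector of $C_{\sigma_j(a)}$, with the orientation dictated by the row/column convention for $P_j$. This is exactly the columnwise statement $P_jQ=Q\widetilde P_j$, after checking that the index convention matches how $\widetilde P_j$ was read off.

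The only delicate bookkeeping is keeping the convention for $P_j$ consistent (whether a row encodes out-neighbours or a column does) so that $Q^\top P_jQ$ matches $\widetilde P_j$ rather than its transpose. I will fix this via $AQ=Q\widetilde A$ and verify it in the counting step.

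If $\widetilde P_j$ were allowed to be an arbitrary nonnegative matrix rather than a permutation, the converse could fail. So I will state explicitly that the quotient shunts are permutation matrices, as in a genuine shunt decomposition of the $d$-regular quotient.
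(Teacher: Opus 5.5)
Your proof is correct, and for the nontrivial implication it takes a genuinely different route from the paper. The easy direction (intertwining $\Rightarrow$ block identity, by left-multiplying with $Q^\top$) is the same in both.

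For the converse, the paper also reduces to $Q^\top P_jQ=\widetilde P_j$ and multiplies by $Q$. It then asserts $QQ^\top P_j=P_jQQ^\top$ by Lemma~\ref{lem:equitable_equiv}. But that lemma only gives commutation of $QQ^\top$ with $A=\sum_j P_j$, and commutation with the sum does not pass to the individual shunts. For example, take the $2$-regular digraph $K_{2,2}$ with sides $\{1,2\}$, $\{3,4\}$ and the equitable partition $\{\{1,3\},\{2,4\}\}$. It has the shunt $1\to3\to2\to4\to1$, which splits the cell $\{1,3\}$.

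You instead get the cell-respecting property of $P_j$ from the hypothesis itself. With equal cell sizes, the $(a,b)$ entry of $Q^\top P_jQ$ is the fraction of $C_a$ sent into $C_b$. Equality with a permutation matrix therefore forces every cell to be mapped wholesale onto one cell, which is exactly $P_jQ=Q\widetilde P_j$. So your route supplies the justification that the paper's step lacks. Your closing remark also correctly names the hypothesis the paper leaves implicit: $\widetilde P_j$ must be a genuine $0/1$ shunt of $G/\pi$, not merely the compression $Q^\top P_jQ$.

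If you want a convention-free version that does not even need equal cell sizes, use a norm argument. Since $\|Q^\top P_jQe_b\|=\|\widetilde P_je_b\|=1=\|P_jQe_b\|$, the vector $P_jQe_b$ already lies in $\operatorname{col}(Q)$. Hence $P_jQ=QQ^\top P_jQ=Q\widetilde P_j$.
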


\begin{proof}
$(\Rightarrow)$ By the equitability of $\pi$, we have \(AQ=Q\tilde A\). Since by Corollary~\ref{cor:3.1}, the quotient graph $G/\pi$ is
$d$-regular. We have 
$\left(\sum_{j=1}^d P_j\right) Q = Q \left(\sum_{j=1}^d \widetilde{P}_j\right)$. 
Assume $P_j Q = Q\widetilde{P}_j$ for each $j = 1,\ldots,d$. 
Then
\begin{equation}\label{permuatation}
\sum_{j=1}^{d} E_{jj} \otimes P_j Q 
= \sum_{j=1}^{d} E_{jj} \otimes Q\widetilde{P}_j.
\end{equation}
Applying $(I_d \otimes Q^\top)$ on the left of ~\eqref{permuatation} and using $Q^\top Q = I_{|\pi|}$, 
we obtain
\begin{equation}\label{quotient_permutation}
\sum_{j=1}^{d} E_{jj} \otimes Q^\top P_j Q 
= \sum_{j=1}^{d} E_{jj} \otimes \widetilde{P}_j.
\end{equation}
Hence by ~\eqref{quotient_permutation} the quotient shift matrix satisfies
\begin{equation}
\widetilde{S} 
= \sum_{j=1}^d E_{jj} \otimes (Q^\top P_j Q) 
= \sum_{j=1}^d (E_{jj} \otimes Q^\top)\, S\, (E_{jj} \otimes Q) 
= (I_d \otimes Q^\top)\, S\, (I_d \otimes Q).
\end{equation}

$(\Leftarrow)$ Assume $(I_d \otimes Q^\top)\,S\,(I_d \otimes Q) = \widetilde{S}$. 
Expanding via the mixed-product property and using the linear independence 
of $\{E_{jj}\}$, we obtain
\[
Q^\top P_j Q = \widetilde{P}_j \qquad \text{for each } j.
\]
Left-multiplying by $Q$ gives $QQ^\top P_j Q = Q\widetilde{P}_j$. Since 
$\pi$ is equitable, by Lemma~\ref{lem:equitable_equiv} the projector 
$QQ^\top$ commutes with each $P_j$, that is, $QQ^\top P_j = P_j QQ^\top$. 
Therefore
\[
P_j Q = Q\widetilde{P}_j, \qquad j = 1,\ldots,d. \qquad 
\]
\end{proof}

\begin{cor}\label{involutary}
Under the assumptions of Lemma~\ref{lemma 3.1}, the quotient shift
operator satisfies $\widetilde{S}^2 = I$ if and only if
$\widetilde{P}_j^2 = I$ for all $j = 1, \ldots, d$.
\end{cor}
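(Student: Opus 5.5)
The plan is to compute $\widetilde{S}^2$ directly from its block-diagonal form and then compare blocks. By definition, $\widetilde{S} = \sum_{j=1}^d E_{jj}\otimes \widetilde{P}_j$. The matrix units satisfy $E_{jj}E_{ll} = \delta_{jl}E_{jj}$, so the mixed-product property of the Kronecker product gives
\[
\widetilde{S}^2 = \sum_{j,l=1}^d (E_{jj}E_{ll})\otimes(\widetilde{P}_j\widetilde{P}_l) = \sum_{j=1}^d E_{jj}\otimes \widetilde{P}_j^2 .
\]
In block form this is $\operatorname{diag}(\widetilde{P}_1^2,\ldots,\widetilde{P}_d^2)$. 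The identity on $\mathbb{C}^d\otimes\mathbb{C}^{|\pi|}$ is $I_d\otimes I_{|\pi|} = \sum_{j=1}^d E_{jj}\otimes I_{|\pi|}$.

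For the direction ($\Leftarrow$), assume $\widetilde{P}_j^2 = I$ for every $j$. Substituting into the displayed formula gives $\widetilde{S}^2 = \sum_j E_{jj}\otimes I = I$.

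For the direction ($\Rightarrow$), assume $\widetilde{S}^2 = I$. Then
\[
\sum_j E_{jj}\otimes(\widetilde{P}_j^2 - I) = 0 .
\]
To extract the individual blocks, multiply on the left by $E_{ll}\otimes I_{|\pi|}$. This kills every term with $j\neq l$ and leaves $E_{ll}\otimes(\widetilde{P}_l^2-I)=0$. Since $E_{ll}\neq 0$, a Kronecker product $E_{ll}\otimes M$ vanishes only if $M=0$. Hence $\widetilde{P}_l^2 = I$ for each $l$. Equivalently, one can simply read off the $(l,l)$ diagonal block of the block-diagonal matrix.

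The only point needing care is the role of the hypotheses of Lemma~\ref{lemma 3.1}. They guarantee that $\widetilde{S}$ really has the form $\sum_j E_{jj}\otimes \widetilde{P}_j$ with $\widetilde{P}_j = Q^\top P_j Q$. They also guarantee that $\widetilde{S}$ acts on $\mathbb{C}^d\otimes\mathbb{C}^{|\pi|}$, so that $I$ means $I_d\otimes I_{|\pi|}$. No equitability or permutation structure is needed beyond this block-diagonal form, so I do not expect a real obstacle. It is also worth remarking that $\widetilde{P}_j^2 = Q^\top P_j^2 Q$, using $P_jQ=Q\widetilde{P}_j$ twice and $Q^\top Q = I$. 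Consequently, involutive shunts $P_j^2=I$ on $G$ yield $\widetilde{P}_j^2=I$, which connects the corollary to the parent graph.
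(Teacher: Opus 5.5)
Your proposal is correct and follows the paper's proof essentially step for step: you expand $\widetilde{S}^2=\sum_j E_{jj}\otimes\widetilde{P}_j^2$ using $E_{jj}E_{ll}=\delta_{jl}E_{jj}$, substitute $\widetilde{P}_j^2=I$ for ($\Leftarrow$), and compare diagonal blocks for ($\Rightarrow$). Isolating each block by left-multiplying with $E_{ll}\otimes I_{|\pi|}$ is just a more explicit version of the paper's appeal to the linear independence of the $E_{jj}$.
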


\begin{proof}
Recall that $\widetilde{S} = \sum_{j=1}^d E_{jj} \otimes \widetilde{P}_j$.
Using the orthogonality of the coin-basis projectors,
$E_{jj}E_{kk} = \delta_{jk}E_{jj}$, we compute
\begin{equation}
\widetilde {S}^2
= \sum_{j=1}^d \sum_{k=1}^d (E_{jj}E_{kk}) \otimes
(\widetilde {P}_j\widetilde {P}_k)=  \sum_{j=1}^d E_{jj} \otimes \widetilde{P}_j^2.
\end{equation}
$(\Rightarrow)$ Suppose $\widetilde{S}^2 = I_{d|\pi|}$, that is,
\[
\sum_{j=1}^d E_{jj} \otimes \widetilde{P}_j^2
= \sum_{j=1}^d E_{jj} \otimes I_{|\pi|}.
\]
Since the matrices $\{E_{jj}\}_{j=1}^d$ are linearly independent,
we may equate the $j$-th block on each side to conclude
$\widetilde{P}_j^2 = I_{|\pi|}$ for all $j = 1, \ldots, d$.

$(\Leftarrow)$ Suppose $\widetilde{P}_j^2 = I_{|\pi|}$ for all
$j = 1, \ldots, d$. Then
\begin{equation}
\widetilde{S}^2
= \sum_{j=1}^d E_{jj} \otimes \widetilde{P}_j^2
= \sum_{j=1}^d E_{jj} \otimes I_{|\pi|}
= I_d \otimes I_{|\pi|}
= I_{d|\pi|}.
\end{equation}
This completes the proof. 
\end{proof}
\subsection{Transition to  Arc Partitions}\label{subsection 3.1}
\begin{defn}
Let $G = (V,E)$ be a directed graph, where each element of $E$ is an ordered pair $(u,v)$, called an \emph{arc} (that is, a directed edge from $u$ to $v$). The \emph{line digraph} of $G$, denoted by $LD(G)$, is the directed graph whose vertex set is $E(G)$, so that each vertex corresponds to an arc $(u,v)$ of $G$. Two vertices $(u,v)$ and $(v,w)$ in $LD(G)$ are adjacent if and only if $(u,v), (v,w) \in E(G)$.
\end{defn}
Thus, the direction in $LD(G)$ encodes the natural progression of a walk along consecutive arcs of $G$, where the terminal vertex of one arc coincides with the initial vertex of the next. To redefine the quantum walk model on arc partition, we utilize the structural relationship between a $d$-regular directed graph $G$ and its line digraph $LD(G)$. In this framework, the state space is
\[
\mathcal{H} = \mathbb{C}^{|E(G)|},
\]
where each basis state corresponds to an arc of $G$. The evolution of the quantum walk is governed by the adjacency matrix of the line digraph, denoted by $A_{LD(G)}$, which encodes transitions between arcs. Specifically, the walker evolves from an arc $(u,v)$ to a succeeding arc $(v,w)$, reflecting the adjacency condition in $LD(G)$.

Let
\[
\pi=\{C_0,C_1,\dots,C_r\}
\]
be an equitable partition of \(V(G)\), and let \(Q\) be its normalized characteristic matrix of \(G\). For each pair \(0\le i,j\le r\), define
\[
C'_{ij}=\{(u,v)\in E(G):u\in C_i,\ v\in C_j\}.
\]
The nonempty sets \(C'_{ij}\) form a partition of \(E(G)\). Let \(Q'\) be the normalized characteristic matrix of this induced arc partition, defined by
\[
Q'_{(u,v),(i,j)}=
\begin{cases}
\dfrac{1}{\sqrt{|C'_{ij}|}}, & \text{if }(u,v)\in C'_{ij},\\[1ex]
0, & \text{otherwise.}
\end{cases}
\]
Since the nonempty arc-cells are disjoint, the columns of \(Q'\) are orthonormal, and hence
\[
{Q'}^\top Q'=I.
\]

We recall the following lemma, which holds for any graph $G$.
\begin{lemma}[{~\cite[Lemma 7.1]{Zhan2025uniform}}]\label{lemma 7.1}
Let \(\pi = \{C_0, C_1, \ldots, C_r\}\) be an equitable partition of \(G\). Define arc-cell
\[
C'_{ij} \;=\; \{(v_i, v_j)\in E(G) : v_i\in C_i,\ v_j\in C_j\}.
\]
Then \(\pi'=\{C'_{ij}\}_{0\le i,j\le r}\) is an equitable partition of \(LD(G)\).
\end{lemma}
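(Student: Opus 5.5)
The plan is to verify equitability directly from the combinatorial definition. We take $LD(G)$ as the digraph on arcs with $(u,v)\to(v,w)$, which in general permits $w=u$. We must show that for any two nonempty arc-cells $C'_{ij}$ and $C'_{kl}$, every arc $(u,v)\in C'_{ij}$ has the same number of out-neighbours in $C'_{kl}$, and that this number depends only on $(i,j)$ and $(k,l)$. Equivalently, we could aim for the matrix identity $A_{LD(G)}Q' = Q'\widetilde{A'}$. Since $Q'$ is just a rescaling of the unnormalized characteristic matrix by an invertible diagonal matrix, it suffices to prove the combinatorial statement; the normalized form follows by conjugating the quotient matrix by the diagonal matrix of $\sqrt{|C'_{ij}|}$, in the same way as Lemma~\ref{lem:equitable_equiv} is used.

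\textbf{Key step.} Fix $(u,v)\in C'_{ij}$, so $u\in C_i$ and $v\in C_j$. The out-neighbours of $(u,v)$ in $LD(G)$ are exactly the arcs $(v,w)\in E(G)$. Such an arc lies in $C'_{kl}$ if and only if $v\in C_k$ and $w\in C_l$. There are two cases.
\begin{enumerate}
\item If $k\neq j$, the count is $0$ for every $(u,v)\in C'_{ij}$.
\item If $k=j$, the count is the number of out-neighbours $w$ of $v$ lying in $C_l$. Because $\pi$ is equitable and $v\in C_j$, this number is $b_{jl}$, independent of the choice of $v\in C_j$, and in particular independent of $u$.
\end{enumerate}
Hence the count equals $\delta_{jk}\,b_{jl}$, which depends only on the cell indices. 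This proves that $\pi'$ is equitable, with quotient entries $\widetilde{A'}_{(i,j),(k,l)}=\delta_{jk}b_{jl}$ in the unnormalized form.

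\textbf{Main obstacle.} The step needing care is the convention for directed graphs. For a digraph, "neighbours" in the definition of an equitable partition must mean out-neighbours (the row convention in $AQ=Q\widetilde{A}$), so $b_{jl}$ should be read as an out-neighbour count. For an undirected $G$, each edge is viewed as two arcs and everything goes through unchanged. Two further points need checking:
\begin{itemize}
\item Backtracking arcs $(v,u)$ are genuinely included among the out-neighbours in $LD(G)$ as defined, so no correction term such as $-\delta$ for $w=u$ arises. If one instead used the non-backtracking convention, the count would depend on whether $u\in C_l$, and equitability would still hold since $u\in C_i$ is fixed within the cell, giving $\delta_{jk}(b_{jl}-\delta_{il})$.
\item Empty cells $C'_{ij}$ are discarded, so the normalization in $Q'$ is well defined.
\end{itemize}
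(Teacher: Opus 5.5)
The paper gives no proof of its own here (the lemma is simply quoted from~\cite{Zhan2025uniform}), but your direct count is correct: it is the same head-equals-tail argument the paper uses in Lemma~\ref{line digraph 1} and Lemma~\ref{Bipartite Graph}, and it gives quotient entries $\delta_{jk}b_{jl}$. Your side remark on the non-backtracking convention holds only for undirected $G$, since for a general digraph the correction would also depend on whether $(v,u)$ is an arc; this does not affect your proof, because the paper's $LD(G)$ allows backtracking.
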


\begin{cor}\label{cor:shunt-intertwine}
Let $G$ be a $d$-regular digraph  with 
equitable partition $\pi = \{C_1, \ldots, C_r\}$ of equal 
cell size, and let $Q$ be the normalized characteristic 
matrix of $\pi$ . For each 
$\ell = 1, \ldots, d$, let $G_\ell = (V, E_\ell)$ be the 
directed graph induced by the $\ell$-th arc class. Define 
arc cells
\[
C'_{ij} = \{(v_i, v_j) \in E(G_\ell) : v_i \in C_i,\ 
v_j \in C_j\}.
\]
Then $\pi'_\ell = \{C'_{ij}\}$ is an equitable arc partition of 
$\mathrm{LD}(G_\ell)$. Moreover, for each $\ell = 1, \ldots, d$,
\(
P_\ell\, Q = Q\, \widetilde{P}_\ell,
\)
where $P_\ell$ is the shunt of $G$ associated to the 
$\ell$-th arc class, and $\widetilde{P}_\ell$ is the 
corresponding shunt of the quotient graph $G/\pi$.
\end{cor}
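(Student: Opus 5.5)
The proof has two parts: equitability of the arc partition in each line digraph, and the intertwining relation $P_\ell Q = Q\widetilde{P}_\ell$.

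For the first part, I would apply Lemma~\ref{lemma 7.1} to the digraph $G_\ell = (V,E_\ell)$ in place of $G$. That lemma needs $\pi$ to be an equitable partition of $G_\ell$, and this is not automatic from equitability in $G$. So the first real step is to fix what ``shunt decomposition consistent with $\pi$'' means. The natural choice is to assume that each $P_\ell$ respects $\pi$ in the sense of the paper's standing hypothesis: every vertex $u \in C_i$ sends its $\ell$-th out-arc into a cell $C_{\sigma_\ell(i)}$ that depends only on $i$. Since $G_\ell$ is $1$-regular (its adjacency matrix is the permutation $P_\ell$), the cell counts are then $b^{(\ell)}_{ij} = \delta_{j,\sigma_\ell(i)}$, which is independent of $u$. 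Hence $\pi$ is equitable for $G_\ell$, and Lemma~\ref{lemma 7.1} gives that $\pi'_\ell$ is equitable for $\mathrm{LD}(G_\ell)$.

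For the intertwining relation, I would compute $(P_\ell Q)_{u,j}$ directly. It equals $|C_j|^{-1/2}$ when the $\ell$-th out-neighbour of $u$ lies in $C_j$, and $0$ otherwise. Write $\widetilde{P}_\ell$ for the $r\times r$ matrix with entries $(\widetilde{P}_\ell)_{ij} = b^{(\ell)}_{ij}$. Then $(Q\widetilde{P}_\ell)_{u,j} = |C_i|^{-1/2}\, b^{(\ell)}_{ij}$ for $u \in C_i$.

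Equal cell size makes the normalizations $|C_i|^{-1/2}$ and $|C_j|^{-1/2}$ coincide, so the two matrices agree entrywise. This is exactly the equitability identity $A_{G_\ell}Q = Q\widetilde{A}_{G_\ell}$, valid for a normalized $Q$ once cells have equal size.

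It remains to check that $\widetilde{P}_\ell$ is a genuine shunt of $G/\pi$, i.e.\ that $\sigma_\ell$ is a permutation of the cells. Because $P_\ell$ is a bijection and all cells have the same size $\alpha$, the $\alpha$ vertices of $C_i$ map injectively into $C_{\sigma_\ell(i)}$, which also has $\alpha$ vertices, so they fill it. Two distinct cells therefore cannot map to the same target cell, and $\sigma_\ell$ is a bijection.

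Summing over $\ell$ gives $\sum_\ell \widetilde{P}_\ell = \widetilde{A}$, because $\sum_\ell b^{(\ell)}_{ij} = b_{ij}$. So the $\widetilde{P}_\ell$ form a shunt decomposition of $G/\pi$, consistent with Corollary~\ref{cor:3.1}.

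The main obstacle is the first step: equitability of $\pi$ for a single arc class $G_\ell$ does not follow from equitability for $G$. An arbitrary shunt decomposition can send different vertices of the same cell to different cells. I would therefore state the compatibility of the arc classes with $\pi$ explicitly as a hypothesis, or read it into ``the $\ell$-th arc class'' as the paper's setup around \eqref{eq:intertwine} does. With that hypothesis in place, everything else is a direct verification.
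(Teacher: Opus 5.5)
This is correct and is essentially the paper's argument. The paper likewise invokes Lemma~\ref{lemma 7.1} for $G_\ell$. It then uses that $P_\ell$ is ``consistent with the arc structure'', and hence maps each $C_k$ bijectively onto one cell $C_{\sigma_\ell(k)}$ of the same size, to check $P_\ell Q_k = Q_{\sigma_\ell(k)} = Q\widetilde{P}_\ell e_k$ column by column; that is your entrywise computation.

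You are right that this compatibility is a genuine extra hypothesis, which the paper uses without stating it. In $K_6^{\circlearrowleft}$ with cells $\{1,2\},\{3,4\},\{5,6\}$, every permutation is a shunt extending to a shunt decomposition, and $1\mapsto 3,\ 2\mapsto 4,\ 3\mapsto 1,\ 4\mapsto 5,\ 5\mapsto 2,\ 6\mapsto 6$ breaks both conclusions. So the claim in Remark~\ref{rem:shunt-intertwine-general} that equitability forces compatibility is false, and your checks that $\pi$ is equitable for $G_\ell$ and that $\sigma_\ell$ is a bijection fill in steps the paper merely asserts.
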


\begin{proof}
Fix $\ell \in \{1, \ldots, d\}$. Since $P_\ell$ is a 
permutation matrix, $G_\ell$ is $1$-regular, each vertex 
$v \in V$ has exactly one outgoing arc $(v, P_\ell(v))$, 
so $A(\mathrm{LD}(G_\ell)) = P_\ell$. By 
Lemma~\ref{lemma 7.1}, the arc cells $C'_{ij}$ form an 
equitable partition of $\mathrm{LD}(G_\ell)$. Any arc 
$(v_i, P_\ell(v_i)) \in C'_{ik}$ has its unique 
out-neighbor $(P_\ell(v_i), P_\ell^2(v_i))$ in 
$C'_{ks}$, so the out-neighbor count into $C'_{\ell s}$ 
equals $1$ if $\ell = k$ and $0$ otherwise, independent 
of the arc chosen. Since $\pi$ is equitable and $P_\ell$ is consistent with 
the arc structure, $P_\ell$ maps each cell $C_k$ 
$(k \in \{1,\ldots,r\})$ entirely into a single cell 
$C_{\sigma_\ell(k)}$, defining a well-defined permutation 
$\sigma_\ell$ on $\{1, \ldots, r\}$, and $\widetilde{P}_\ell$ 
is the corresponding $r \times r$ permutation matrix.
For the $k$-th column $Q_k$ of $Q$, since $P_\ell$ maps 
$C_k$ bijectively onto $C_{\sigma_\ell(k)}$ with 
$|C_k| = |C_{\sigma_\ell(k)}| = m$, we have
\(
P_\ell\, Q_k = Q_{\sigma_\ell(k)} = 
Q\,\widetilde{P}_\ell\, e_k.
\)
Since this holds for every $k \in \{1, \ldots, r\}$, 
we conclude
\begin{equation}\label{eq:shunt-intertwine}
P_\ell Q = Q\widetilde{P}_\ell.
\end{equation}
\end{proof}
\begin{remark}\label{rem:shunt-intertwine-general}
The intertwining relation~\eqref{eq:shunt-intertwine} holds for
any $d$-regular digraph admitting a shunt decomposition and an
equitable partition. Equitability forces each shunt to map every
cell entirely into a single cell, so
Corollary~\ref{cor:shunt-intertwine} applies to each shunt
individually. Summing over all $d$ shunts then recovers the
full quotient intertwining.
\end{remark}

We now record a particularly useful special case, in which the arc partition 
of $LD(G)$ is indexed in a structured way by the vertex partition of $G$.

Let $G=(V,E)$ be a finite $d$-regular directed graph with $2n$ vertices and 
let $\pi=\{C_1,\dots,C_n\}$ be an equitable partition of $V$ with normalized 
characteristic matrix $Q\in\mathbb{R}^{2n\times n}$. Suppose the arc set 
$E$ is written as a disjoint union of $r$ arc-classes
\[
E \;=\; \mathcal{A}^{(1)} \;\dot{\cup}\; \mathcal{A}^{(2)} \;\dot{\cup}\; 
\cdots \;\dot{\cup}\; \mathcal{A}^{(r)}.
\]
For each $s=1,\dots,r$, define the induced arc-partition of $\mathcal{A}^{(s)}$
coming from $\pi$ by
\[
\pi'^{(s)} \;=\; \bigl\{\,C_{ij}^{\prime\,(s)} : 1\le i,j\le n\,\bigr\}, 
\qquad
C_{ij}^{\prime\,(s)} \;=\; 
\{(u,v)\in\mathcal{A}^{(s)} : u\in C_i,\ v\in C_j\}.
\]
The nonempty sets $C_{ij}^{\prime\,(s)}$ form a partition of 
$\mathcal{A}^{(s)}$. Let $m_s = |\mathcal{A}^{(s)}|$. The normalized 
characteristic matrix $Q'^{(s)} \in \mathbb{R}^{m_s \times k_s}$ (where 
$k_s$ is the number of nonempty cells in $\pi'^{(s)}$) is defined by
\[
Q'^{(s)}_{(u,v),(i,j)} =
\begin{cases}
\dfrac{1}{\sqrt{|C_{ij}^{\prime\,(s)}|}}, 
& \text{if } (u,v)\in C_{ij}^{\prime\,(s)},\\[1ex]
0, & \text{otherwise.}
\end{cases}
\]
Thus each column is the normalized indicator vector of a cell, and the 
columns of $Q'^{(s)}$ are orthonormal.

The combined arc partition of $E$ into atomic cells is
\[
\tau_{\mathrm{atom}} \;=\; \bigcup_{s=1}^{r} \pi'^{(s)},
\]
that is, $\tau_{\mathrm{atom}}$ consists of all nonempty cells 
$C_{ij}^{\prime\,(s)}$ for $1\le i,j\le n$ and $1\le s\le r$.

A \emph{coarsening} of $\tau_{\mathrm{atom}}$ is any partition obtained by 
grouping together some of these atomic cells. In particular, for each vertex 
$v_p \in V(G)$ with $1\le p\le 2n$, we define
\[
T_p \;=\; \bigcup_{s=1}^{r} C_{i_s j_s}^{\prime\,(s)},
\qquad
\text{where each } C_{i_s j_s}^{\prime\,(s)}\in \pi'^{(s)} 
\text{ is chosen so that all arcs in } T_p 
\text{ have terminal vertex } v_p.
\]
The resulting coarsening
\[
\tau \;=\; \{T_1,\, T_2,\, \dots,\, T_{2n}\}
\]
has exactly $t = 2n = |V(G)|$ cells, one for each vertex of $G$.

\begin{figure}[H]
\centering
\begin{tikzpicture}[>=stealth,
    every node/.style={circle, draw, minimum size=0.85cm, font=\small},
    t1/.style={->, thick, green!60!black},
    t2/.style={->, thick, orange!90!black},
    t3/.style={->, thick, blue!75!black},
    t4/.style={->, thick, violet!80!black}]
\node (1) at (-2,  1) {$1$};
\node (2) at (-2, -1) {$2$};
\node (3) at ( 2,  1) {$3$};
\node (4) at ( 2, -1) {$4$};
\draw[t1] (3) to[bend right=20] (1);
\draw[t1] (4) to[bend left=20]  (1);
\draw[t2] (4) to[bend right=20] (2);
\draw[t2] (3) to[bend left=20]  (2);
\draw[t3] (2) to[bend left=20]  (3);
\draw[t3] (1) to[bend right=20] (3);
\draw[t4] (1) to[bend left=20]  (4);
\draw[t4] (2) to[bend right=20] (4);
\end{tikzpicture}
\qquad
\begin{tikzpicture}[>=stealth,
    every node/.style={circle, draw, minimum size=1.0cm, font=\small},
    qarr/.style={->, thick, gray!55!black}]
\node[fill=green!20]  (T1) at (-2,  1) {$T_1$};
\node[fill=orange!25] (T2) at (-2, -1) {$T_2$};
\node[fill=blue!15]   (T3) at ( 2,  1) {$T_3$};
\node[fill=violet!20] (T4) at ( 2, -1) {$T_4$};
\draw[qarr] (T1) -- (T3);
\draw[qarr] (T1) -- (T4);
\draw[qarr] (T2) -- (T3);
\draw[qarr] (T2) -- (T4);
\draw[qarr] (T3) -- (T1);
\draw[qarr] (T3) -- (T2);
\draw[qarr] (T4) -- (T1);
\draw[qarr] (T4) -- (T2);
\end{tikzpicture}

\begin{tabular}{ll}
\textcolor{green!60!black}{$\bullet$}  $T_1 = \{(3,1),(4,1)\}$ &
\textcolor{orange!90!black}{$\bullet$} $T_2 = \{(4,2),(3,2)\}$ \\[2pt]
\textcolor{blue!75!black}{$\bullet$}   $T_3 = \{(2,3),(1,3)\}$ &
\textcolor{violet!80!black}{$\bullet$} $T_4 = \{(1,4),(2,4)\}$
\end{tabular}
\caption{Left: $K_{2,2}$ with arcs coloured by cell $T_p$ (terminal vertex 
$p$). Right: quotient graph $LD(G)/\tau$; arrow $T_p\!\to\!T_q$ exists when 
the head-vertex of arcs in $T_p$ has an out-arc belonging to $T_q$.}
\label{fig:K22-partition}
\end{figure}

\begin{ex}\label{ex 1}
Let $G = K_{2,2}$ with vertex set $\{1,2,3,4\}$, bipartition 
$C_1=\{1,2\}$ and $C_2=\{3,4\}$, and arc classes consisting of all 
directed arcs between the two parts:
\[
\mathcal{A}^{(1)}=\{(3,1),(4,2),(2,3),(1,4)\},\qquad
\mathcal{A}^{(2)}=\{(1,3),(2,4),(4,1),(3,2)\}.
\]
The atomic cells of $\pi'^{(1)}$ and $\pi'^{(2)}$ are:
\[
\begin{array}{ll}
C_{21}^{\prime(1)} = \{(3,1),(4,2)\}, & 
C_{21}^{\prime(2)} = \{(4,1),(3,2)\}, \\[4pt]
C_{12}^{\prime(1)} = \{(2,3),(1,4)\}, & 
C_{12}^{\prime(2)} = \{(1,3),(2,4)\},
\end{array}
\]
where the index $ij$ records tail-cell $C_i$ and head-cell $C_j$ with 
$i,j\in\{1,2\}$. The coarsening by terminal vertex gives 
$\tau=\{T_1,T_2,T_3,T_4\}$ with $t = 2n = 4$:
\[
T_1=\{(3,1),(4,1)\},\quad T_2=\{(4,2),(3,2)\},\quad 
T_3=\{(2,3),(1,3)\},\quad T_4=\{(1,4),(2,4)\}.
\]
Explicitly, each $T_p$ is formed by taking from each atomic cell the 
unique arc whose head is vertex $p$:
\[
T_1 = \{(3,1)\}\cup\{(4,1)\}, \quad
T_2 = \{(4,2)\}\cup\{(3,2)\}, \quad
T_3 = \{(2,3)\}\cup\{(1,3)\}, \quad
T_4 = \{(1,4)\}\cup\{(2,4)\}.
\]
The arcs within each class are ordered so that the $k$-th arc of 
$\mathcal{A}^{(1)}$ and the $k$-th arc of $\mathcal{A}^{(2)}$ share 
the same tail vertex for every $k$:
\[
\begin{array}{c|cc}
k & \mathcal{A}^{(1)} & \mathcal{A}^{(2)} \\
\hline
1 & (3,1) & (4,1) \\
2 & (4,2) & (3,2) \\
3 & (2,3) & (1,3) \\
4 & (1,4) & (2,4)
\end{array}
\]
This alignment ensures the Kronecker factorization $Q_\tau = I_2\otimes  Q$.
The adjacency matrix of $G$ is
\[
A = P_{\mathcal{A}^{(1)}} + P_{\mathcal{A}^{(2)}} =
\begin{pmatrix}0&0&1&1\\0&0&1&1\\1&1&0&0\\1&1&0&0\end{pmatrix}.
\]
Each cell $T_p$ has size $2$, so the normalized characteristic matrix, 
with rows ordered as $\mathcal{A}^{(1)}$ then $\mathcal{A}^{(2)}$, that is
$(3,1),(4,2),(2,3),(1,4),(4,1),(3,2),(1,3),(2,4)$, is
\[
Q_\tau = \frac{1}{\sqrt{2}}\begin{pmatrix}
1&0&0&0\\
0&1&0&0\\
0&0&1&0\\
0&0&0&1\\
1&0&0&0\\
0&1&0&0\\
0&0&1&0\\
0&0&0&1\
\end{pmatrix} = I_2\otimes Q,
\]
where columns correspond to $T_1, T_2, T_3, T_4$ respectively.
The adjacency matrix of $LD(G)$, with arcs ordered as
$(3,1),(4,2),(2,3),(1,4),(4,1),(3,2),(1,3),(2,4)$, is
\[
A_{LD}=\begin{pmatrix}
0&0&0&1&0&0&1&0\\
0&0&1&0&0&0&0&1\\
1&0&0&0&0&1&0&0\\
0&1&0&0&1&0&0&0\\
0&0&0&1&0&0&1&0\\
0&0&1&0&0&0&0&1\\
1&0&0&0&0&1&0&0\\
0&1&0&0&1&0&0&0
\end{pmatrix}.
\]
Computing $Q_\tau^\top A_{LD(G)}\,Q_\tau$ gives
\[
B = Q_\tau^\top A_{LD(G)}\,Q_\tau
=\begin{pmatrix}0&0&1&1\\0&0&1&1\\1&1&0&0\\1&1&0&0\end{pmatrix} = A (\text{see Figure}~\ref{fig:K22-partition}),
\]
confirming
\[
Q_\tau^\top A_{LD(G)}\,Q_\tau = A \qquad \text{and hence} \qquad
A_{LD(G)}\,Q_\tau = Q_\tau\,A.
\] 
\end{ex}

\begin{lemma}\label{line digraph 1}
Let $\pi = \{C_1, \dots, C_n\}$ be an equitable partition of $V(G)$, 
where $|V(G)| = 2n$ and $G$ is a $d$-regular directed graph. 
For each vertex $v_p \in V(G)$ with $1 \le p \le 2n$, define
\[
T_p \;=\; \bigcup_{s=1}^{r} C_{i_s j_s}^{\prime\,(s)},
\qquad 1 \le i_s,\, j_s \le n,
\]
where for each $s$, the cell $C_{i_s j_s}^{\prime\,(s)} \in \pi'^{(s)}$ 
is chosen so that every arc in $T_p$ has terminal vertex $v_p$. 
Then the collection
\[
\tau = \{T_1,\, T_2,\, \dots,\, T_{2n}\}
\]
is an equitable arc partition of the line digraph $LD(G)$. Moreover, if 
$Q_\tau$ denotes the normalized characteristic matrix of $\tau$, then
\[
A_{LD(G)}\, Q_\tau = Q_\tau\, A,
\]
where $A_{LD(G)}$ is the adjacency matrix of $LD(G)$ and $A$ is the 
adjacency matrix of $G$.
\end{lemma}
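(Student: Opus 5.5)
The plan is to recognise that, regardless of how the atomic cells $C_{i_sj_s}^{\prime(s)}$ are assembled, the cell $T_p$ is exactly the set of all arcs of $G$ with head $v_p$. This is the in-arc set of $v_p$, and it has size $d$ because $G$ is $d$-regular. I would establish this first. Every arc of $E$ lies in exactly one class $\mathcal{A}^{(s)}$ and in exactly one atomic cell there. The choice rule in the statement puts into $T_p$ precisely those atomic pieces whose arcs terminate at $v_p$. So the $T_p$ are pairwise disjoint, their union is $E$, and each is nonempty, which makes $\tau$ a partition of $V(LD(G))$ into $2n$ cells. With this identification, the normalized characteristic matrix satisfies $(Q_\tau)_{(u,v),p}=d^{-1/2}\,[v=v_p]$, and hence $Q_\tau^\top Q_\tau=I_{2n}$.

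Next I would verify equitability directly from the definition of $LD(G)$. Fix an arc $(u,v_p)\in T_p$. Its out-neighbours in $LD(G)$ are the arcs $(v_p,w)$ with $w$ an out-neighbour of $v_p$ in $G$. Among these, the ones lying in $T q$ are those with $w=v_q$, so their number is $A_{pq}$, the number of arcs $v_p\to v_q$. This count does not depend on the choice of $u$, so $\tau$ is equitable and the quotient matrix is $A$ itself. This matches Example~\ref{ex 1}.

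For the intertwining identity I would compute entrywise. The entry $(A_{LD(G)}Q_\tau)_{(u,v),q}$ equals $\sum_{(v,w)} d^{-1/2}[w=v_q]=d^{-1/2}A_{v,v_q}$. On the other side, $(Q_\tau A)_{(u,v),q}=\sum_p d^{-1/2}[v=v_p]A_{pq}=d^{-1/2}A_{v,v_q}$. The two sides agree, so $A_{LD(G)}Q_\tau=Q_\tau A$. Alternatively, this follows from the equitability step together with the general relation \eqref{eq:equitable}, once the normalization is accounted for. Because all cells have the same size $d$, the normalized quotient $Q_\tau^\top A_{LD(G)}Q_\tau$ coincides with the unnormalized count matrix $A$, and Lemma~\ref{lem:equitable_equiv} then gives the identity.

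I expect the main obstacle to be the first step: justifying rigorously that the somewhat loosely phrased union over atomic cells really equals the full in-arc set of $v_p$. An atomic cell $C_{ij}^{\prime(s)}$ generally contains arcs with different heads, as $C_{21}^{\prime(1)}=\{(3,1),(4,2)\}$ shows. So I would interpret the construction as taking the intersection of each atomic cell with the in-arcs of $v_p$, as Example~\ref{ex 1} does explicitly. I would then state that the only property of $\tau$ used afterwards is $T_p=\{(u,v_p)\in E\}$. The equal cell size $d$ is what makes the normalized and unnormalized quotients coincide, which is why the identity holds with $A$ and not with a diagonally rescaled version of $A$.
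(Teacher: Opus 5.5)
Your proposal is correct and follows the same route as the paper. Both arguments rest on the observation that the out-neighbours of an arc $(u,v_p)$ that lie in $T_q$ are exactly the arcs $v_p\to v_q$; the count is therefore $A_{pq}$ regardless of $u$, which gives equitability with quotient matrix $A$. You also make precise two points the paper's proof uses only implicitly: that $T_p$ is the full in-arc set of $v_p$ (needed both for $\tau$ to be a partition and for the count to equal $A_{pq}$), and that the common cell size $d$ is what lets the \emph{normalized} $Q_\tau$ intertwine with $A$ itself.
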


\begin{proof}
Let $(v_i, v_p)$ be any arc of $G$ lying in cell $T_p$, and let $T_q$ 
be any cell of $\tau$, where $1\le p,q\le 2n$. We count the neighbors 
of $(v_i, v_p)$ in $T_q$ inside $LD(G)$. By definition of the line 
digraph, an arc $(v_\ell, v_m)$ is adjacent to $(v_i, v_p)$ in $LD(G)$ 
if and only if $v_\ell = v_p$. Therefore the neighbors of $(v_i,v_p)$ 
lying in $T_q$ are exactly
\(
S \;=\; \{(v_\ell, v_m) \in T_q : v_\ell = v_p\}.
\)
By construction, every arc in $T_q$ has terminal vertex $v_q$, so every 
arc in $T_q$ has the form $(v_\ell, v_q)$ for some $v_\ell$. Imposing 
$v_\ell = v_p$ reduces $S$ to
\(
S \;=\; \{(v_p, v_q) \in T_q\} \;=\; \{(v_p, v_q) \in E(G)\}.
\)
Hence
\begin{equation}\label{eq_20}
|S| \;=\; A_{pq},
\end{equation}
where $A_{pq}$ is the $(p,q)$-entry of the adjacency matrix $A$ of $G$. 
Since $|S| = A_{pq}$ depends only on $p$ and $q$, and not on the 
particular arc $(v_i, v_p)$ chosen within $T_p$, every arc in $T_p$ 
has exactly $A_{pq}$ neighbors in $T_q$. This holds for all 
$1\le p,q\le 2n$, so $\tau$ is an equitable partition of $LD(G)$.

Now let $Q_\tau$ be the normalized characteristic matrix of $\tau$. 
For any equitable partition the standard identity gives
\begin{equation}\label{eq_21}
A_{LD(G)}\,Q_\tau \;=\; Q_\tau\,B,
\end{equation}
where $B$ is the quotient matrix with $(p,q)$-entry equal to the number 
of neighbors in $T_q$ of any arc in $T_p$. Thus from~\eqref{eq_20}, we have
\[
B_{pq} \;=\; |S| \;=\; A_{pq} \qquad \text{for all } 1\le p,q\le 2n,
\]
so $B = A$. Substituting in~\eqref{eq_21} gives
\begin{equation}
A_{LD(G)}\,Q_\tau \;=\; Q_\tau\,A.
\end{equation}
\end{proof}

Now we extend our analysis by performing an arc partition on the quotient graph $G/\pi$. This allows us to further reduce the dimensionality of the quantum walk by exploiting symmetries remaining in the quotient structure.

\begin{lemma}\label{line digraph 2}
Let $\pi = \{C_1, \dots, C_n\}$  be an equitable partition of $V(G)$, where 
$|V(G)| = 2n$ and $G$ is $d$-regular directed graph,
and let $ G/\pi$ be the quotient digraph with adjacency
matrix $\widetilde{A}$. Define
\[
\sigma_j \;=\;
\{\,(C_i, C_j)\in E({G/\pi}) : 1 \le i \le n\,\},
\qquad j = 1,\dots,n,
\]
that is, $\sigma_j$ consists of all arcs of ${G/\pi}$ with terminal
vertex $C_j$. Then
\[
\sigma = \{\sigma_1,\dots,\sigma_n\}
\]
is an equitable arc partition of the line digraph
$LD({G/\pi})$. Moreover, if $\widetilde{Q}$ is the
normalized characteristic matrix of $\sigma$, then
\[
A_{LD(G/\pi)}\,\widetilde{Q} \;=\; \widetilde{Q}\,\widetilde{A}.
\]
\end{lemma}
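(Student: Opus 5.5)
The plan is to repeat the proof of Lemma~\ref{line digraph 1}, with $G/\pi$ and $\widetilde{A}$ in place of $G$ and $A$. The cell $\sigma_j$ plays the role of $T_p$: it collects all arcs of $G/\pi$ whose head is $C_j$. First I note that $\sigma$ really is a partition of the arc set of $LD(G/\pi)$, since every arc $(C_i,C_j)$ has a unique terminal vertex $C_j$. By Corollary~\ref{cor:3.1}, $G/\pi$ is $d$-regular (the cells $C_1,\dots,C_n$ are assumed of equal size, as in the preceding setup), so every $\sigma_j$ is nonempty. This means $\widetilde{Q}$ has $n$ orthonormal columns and $\widetilde{Q}^\top\widetilde{Q}=I_n$.

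Next, to show $\sigma$ is equitable, fix an arc $(C_i,C_p)\in\sigma_p$ and a cell $\sigma_q$, and count its out-neighbours in $LD(G/\pi)$ lying in $\sigma_q$.
\begin{itemize}
\item An arc $(C_\ell,C_m)$ is an out-neighbour exactly when $C_\ell=C_p$.
\item It lies in $\sigma_q$ exactly when $C_m=C_q$.
\end{itemize}
So the neighbours in $\sigma_q$ are exactly the arcs from $C_p$ to $C_q$ in $G/\pi$. Their number is $\widetilde{A}_{pq}=b_{pq}$ when we count arcs with multiplicity, viewing $G/\pi$ as a multidigraph with $b_{pq}$ parallel arcs, as the shunt decomposition $\widetilde{A}=\sum_j\widetilde{P}_j$ suggests. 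This count depends only on $p$ and $q$, not on the tail $C_i$, so $\sigma$ is equitable with quotient matrix $B_{pq}=\widetilde{A}_{pq}$.

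Finally, I invoke the standard identity for equitable partitions, as in~\eqref{eq:equitable} and Lemma~\ref{lem:equitable_equiv}, applied to $LD(G/\pi)$ with the partition $\sigma$. This gives $A_{LD(G/\pi)}\widetilde{Q}=\widetilde{Q}B$, and since $B=\widetilde{A}$ the identity follows. One point needs care with the normalization. In the unnormalized form the identity reads $A_{LD}Q_0=Q_0B$, where $Q_0$ is the 0/1 characteristic matrix and $B$ is the row-count quotient. With the normalized $\widetilde{Q}=Q_0D^{-1/2}$, where $D=\operatorname{diag}(|\sigma_j|)$, this becomes $A_{LD}\widetilde{Q}=\widetilde{Q}D^{1/2}BD^{-1/2}$. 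Now $|\sigma_j|$ is the in-degree of $C_j$, which equals $d$ for every $j$ by Corollary~\ref{cor:3.1}. So $D=dI$, the conjugation is trivial, and we get exactly $\widetilde{A}$.

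I expect the main obstacle to be the interpretation of the arcs of $G/\pi$. If loops or entries $b_{pq}>1$ are collapsed to single arcs, the neighbour count becomes $\min(1,b_{pq})$ rather than $\widetilde{A}_{pq}$. The identity then fails unless $\widetilde{A}$ is $0/1$. I would therefore state explicitly that $E(G/\pi)$ is the arc multiset determined by $\widetilde{A}$, equivalently the union of the shunts $\widetilde{P}_1,\dots,\widetilde{P}_d$. Loops $(C_j,C_j)$ are included, and each counts once in $\sigma_j$ and once as an out-neighbour of an arc ending at $C_j$. With this convention the counting argument and the equal cell sizes $|\sigma_j|=d$ go through.
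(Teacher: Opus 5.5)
Your proposal is correct and follows the paper's proof. The paper only says that the argument of Lemma~\ref{line digraph 1} carries over, and you carry it out explicitly: the out-neighbours of $(C_i,C_p)$ lying in $\sigma_q$ are exactly the $\widetilde{A}_{pq}$ arcs from $C_p$ to $C_q$, and you add care the paper leaves implicit. You treat $E(G/\pi)$ as the arc multiset of $\widetilde{A}$, and you use $|\sigma_j|=d$ from Corollary~\ref{cor:3.1} so that the normalized $\widetilde{Q}$ yields exactly $\widetilde{Q}\widetilde{A}$ rather than a diagonally conjugated quotient matrix.
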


\begin{proof}

The proof follows the same argument as the proof of Lemma~\ref{line digraph 1}.

\end{proof}

\begin{ex}\label{C_4 grah and its quotient}
Let $G = C_4$ and consider the vertex partition
\(
\pi = \{\{1,3\},\{2,4\}\} = \{C_1, C_2\}, \quad d = 2.
\)
The normalized characteristic matrix of this partition is
\[
Q = \frac{1}{\sqrt{2}}
\begin{pmatrix}
1 & 0\\ 0 & 1\\ 1 & 0\\ 0 & 1
\end{pmatrix},
\]
so the quotient adjacency matrix is
\[
A(G/\pi) = Q^\top A\, Q =
\begin{pmatrix}0 & 2\\ 2 & 0\end{pmatrix}.
\]
The quotient graph $G/\pi$ has two vertices $C_1, C_2$ with two arcs
in each direction. Label the four arcs sequentially as
\[
b_1 = (C_1, C_2)^{(1)}, \quad
b_2 = (C_2, C_1)^{(1)}, \quad
b_3 = (C_1, C_2)^{(2)}, \quad
b_4 = (C_2, C_1)^{(2)},
\]
ordered as $b_1, b_2, b_3, b_4$ ( see Figure~\ref{fig:quotient-arc-partition}). The arcs alternate terminal vertex
in this ordering:
\[
\begin{array}{c|cc}
\text{arc} & \text{direction} & \text{terminal vertex}\\
\hline
b_1 & C_1 \to C_2 & C_2\\
b_2 & C_2 \to C_1 & C_1\\
b_3 & C_1 \to C_2 & C_2\\
b_4 & C_2 \to C_1 & C_1
\end{array}
\]
Coarsen these four arcs into two cells by terminal vertex:
\[
\sigma_1 = \{b_1, b_3\}
\quad\text{(terminal vertex }C_2\text{)},\qquad
\sigma_2 = \{b_2, b_4\}
\quad\text{(terminal vertex }C_1\text{)}.
\]
Each cell has size $2$, so the normalized characteristic matrix of
$\sigma = \{\sigma_1, \sigma_2\}$, with rows ordered as
$b_1, b_2, b_3, b_4$ and columns as $\sigma_1, \sigma_2$, is
\[
\widetilde{Q} = \frac{1}{\sqrt{2}}
\begin{pmatrix}
1 & 0\\
0 & 1\\
1 & 0\\
0 & 1
\end{pmatrix}.
\]
In $LD(G/\pi)$, an arc with head $C_j$ points to all arcs with tail
$C_j$, so every vertex has out-degree $2$. Under the ordering $b_1, b_2, b_3, b_4$, the adjacency matrix of
$LD(G/\pi)$ is therefore
\[
A_{LD(G/\pi)} =
\begin{pmatrix}
0 & 1 & 0 & 1\\
1 & 0 & 1 & 0\\
0 & 1 & 0 & 1\\
1 & 0 & 1 & 0
\end{pmatrix}.
\]
We verify the intertwining relation directly:
\[
A_{LD(G/\pi)}\,\widetilde{Q}
=
\frac{1}{\sqrt{2}}
\begin{pmatrix}
0&1&0&1\\ 1&0&1&0\\ 0&1&0&1\\ 1&0&1&0
\end{pmatrix}
\begin{pmatrix}
1&0\\ 0&1\\ 1&0\\ 0&1
\end{pmatrix}
=
\frac{1}{\sqrt{2}}
\begin{pmatrix}
0&2\\ 2&0\\ 0&2\\ 2&0
\end{pmatrix}.
\]
On the other hand,
\[
\widetilde{Q}\,A(G/\pi)
=
\frac{1}{\sqrt{2}}
\begin{pmatrix}
1&0\\ 0&1\\ 1&0\\ 0&1
\end{pmatrix}
\begin{pmatrix}0&2\\ 2&0\end{pmatrix}
=
\frac{1}{\sqrt{2}}
\begin{pmatrix}
0&2\\ 2&0\\ 0&2\\ 2&0
\end{pmatrix}.
\]
Since both sides agree, we confirm
\[
A_{LD(G/\pi)}\,\widetilde{Q} = \widetilde{Q}\,A(G/\pi).
\]
We also recover the quotient adjacency matrix:
\[
\widetilde{Q}^\top A_{LD(G/\pi)}\,\widetilde{Q}
=
\frac{1}{2}
\begin{pmatrix}1&0&1&0\\ 0&1&0&1\end{pmatrix}
\begin{pmatrix}
0&2\\ 2&0\\ 0&2\\ 2&0
\end{pmatrix}
=
\begin{pmatrix}0&2\\ 2&0\end{pmatrix}
= A(G/\pi),
\]
confirming that the nested quotient recovers the two-vertex multigraph.
\begin{figure}[ht]
\centering
\begin{tikzpicture}[>=stealth,
    every node/.style={circle, draw, minimum size=0.9cm, font=\small},
    s1/.style={->, thick, violet!80!black},
    s2/.style={->, thick, teal!80!black}]
  \node (C1) at (-2, 0) {$C_1$};
  \node (C2) at ( 2, 0) {$C_2$};
  \draw[s1] (C1) to[bend left=40]
      node[draw=none, above, font=\tiny, violet!80!black]{} (C2);
  \draw[s1] (C1) to[bend left=22]
      node[draw=none, above, font=\tiny, violet!80!black]{} (C2);
  \draw[s2] (C2) to[bend left=40]
      node[draw=none, below, font=\tiny, teal!80!black]{} (C1);
  \draw[s2] (C2) to[bend left=22]
      node[draw=none, below, font=\tiny, teal!80!black]{} (C1);
\end{tikzpicture}
\qquad
\begin{tikzpicture}[>=stealth,
    every node/.style={circle, draw, minimum size=0.9cm, font=\small},
    s1arr/.style={->, thick, violet!80!black},
    s2arr/.style={->, thick, teal!80!black}]
  \node[fill=violet!15] (b1) at (-2,  1) {$b_1$};
  \node[fill=teal!15]   (b2) at ( 2,  1) {$b_2$};
  \node[fill=violet!15] (b3) at (-2, -1) {$b_3$};
  \node[fill=teal!15]   (b4) at ( 2, -1) {$b_4$};
  \draw[s1arr] (b1) -- (b2);
  \draw[s1arr] (b1) to[bend left=12]  (b4);
  \draw[s1arr] (b3) to[bend right=12] (b2);
  \draw[s1arr] (b3) -- (b4);
  \draw[s2arr] (b2) -- (b1);
  \draw[s2arr] (b2) to[bend right=12] (b3);
  \draw[s2arr] (b4) to[bend left=12]  (b1);
  \draw[s2arr] (b4) -- (b3);
\end{tikzpicture}

\begin{tabular}{ll}
  \textcolor{violet!80!black}{$\bullet$}\ $\sigma_1 = \{b_1, b_3\}$
  \quad terminal vertex $C_2$ &\qquad
  \textcolor{teal!80!black}{$\bullet$}\ $\sigma_2 = \{b_2, b_4\}$
  \quad terminal vertex $C_1$
\end{tabular}

\caption{Left: quotient graph $G/\pi$ with arcs coloured by cell;
$\sigma_1$ (violet) groups arcs $b_1, b_3$ with terminal vertex
$C_2$, and $\sigma_2$ (teal) groups arcs $b_2, b_4$ with terminal
vertex $C_1$.
Right: line digraph $LD(G/\pi)$; every arc in $\sigma_j$ has the
same out-neighbours.}
\label{fig:quotient-arc-partition}
\end{figure}
\end{ex}
\begin{Theorem}\label{Isomorphism}
Let $G = (V, E)$ be a $d$-regular directed graph with $|V(G)| = 2n$, and let
\[
\pi = \{C_1, \dots, C_n\}
\] 
be an equitable partition of $V(G)$ with quotient digraph $G/\pi$. Then there 
exist equitable arc partitions $\tau$ of $LD(G)$ and $\sigma$ of $LD(G/\pi)$, 
obtained by grouping arcs according to their terminal vertices, such that
\[
LD(G)/\tau \;\cong\; G
\qquad \text{and} \qquad
LD(G/\pi)/\sigma \;\cong\; G/\pi.
\]
\end{Theorem}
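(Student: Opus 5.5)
The plan is to assemble Theorem~\ref{Isomorphism} from Lemma~\ref{line digraph 1} and Lemma~\ref{line digraph 2}. The key step is showing that an equitable partition whose quotient matrix equals the adjacency matrix of a given digraph produces a quotient digraph isomorphic to it. We treat the two claims in parallel.

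\emph{The parent graph.} Take $\tau=\{T_1,\dots,T_{2n}\}$, where $T_p$ is the set of all arcs of $G$ with terminal vertex $v_p$. In the notation of Lemma~\ref{line digraph 1}, $T_p$ is the union over $s$ of the atomic cells $C'^{(s)}_{i_s j_s}$ restricted to arcs with head $v_p$. Since $G$ is $d$-regular, each $T_p$ is nonempty with $|T_p|=d$. The sets $T_p$ are pairwise disjoint and cover $E(G)$, because every arc has exactly one head. So $\tau$ is a genuine partition of $V(LD(G))$ into $2n$ cells of equal size. Lemma~\ref{line digraph 1} then gives that $\tau$ is equitable and that $A_{LD(G)}Q_\tau=Q_\tau A$. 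Multiplying on the left by $Q_\tau^\top$ and using $Q_\tau^\top Q_\tau=I_{2n}$, the quotient matrix $Q_\tau^\top A_{LD(G)}Q_\tau$ equals $A$. With equal cell sizes this normalized quotient coincides with the count matrix $B=(b_{pq})$. The map $T_p\mapsto v_p$ is a bijection from the cells of $\tau$ to $V(G)$. Under it the number of arcs $T_p\to T_q$ in $LD(G)/\tau$, namely $b_{pq}=A_{pq}$, equals the number of arcs $v_p\to v_q$ in $G$. This bijection is therefore a digraph isomorphism $LD(G)/\tau\cong G$.

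\emph{The quotient graph.} By Corollary~\ref{cor:3.1}, assuming cells of equal size, $G/\pi$ is $d$-regular as a multidigraph, so each $\sigma_j$ (arcs with head $C_j$) has exactly $d$ elements. The cells $\sigma_1,\dots,\sigma_n$ again partition $E(G/\pi)$ by head. Lemma~\ref{line digraph 2} gives that $\sigma$ is equitable and $A_{LD(G/\pi)}\widetilde Q=\widetilde Q\widetilde A$. Hence $\widetilde Q^\top A_{LD(G/\pi)}\widetilde Q=\widetilde A$, and $\sigma_j\mapsto C_j$ is an isomorphism $LD(G/\pi)/\sigma\cong G/\pi$ by the same argument as above. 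Example~\ref{C_4 grah and its quotient} illustrates this.

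\emph{Main obstacle.} The delicate point is justifying that "quotient matrix equals adjacency matrix" means "the quotient digraph is isomorphic". Two things must be checked.

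First, the normalized quotient $Q^\top A Q$ has entries $b_{pq}\sqrt{|T_q|/|T_p|}$ rather than the raw counts $b_{pq}$. So I must verify that all cells have equal size $d$, which follows from in-regularity in both cases. Then the two notions agree.

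Second, $G/\pi$ must be regarded as a multidigraph, possibly with loops, where the entry $\widetilde A_{ij}$ counts parallel arcs. With that convention, adjacency in $LD(G/\pi)$ is well defined on arcs, and the counting argument of Lemma~\ref{line digraph 1} transfers verbatim. I will state this convention explicitly. Otherwise the claim fails when $\widetilde A$ has entries greater than $1$, as in the $C_4$ example.
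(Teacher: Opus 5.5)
Your proposal is correct and follows the same route as the paper, which derives the theorem directly from Lemma~\ref{line digraph 1} and Lemma~\ref{line digraph 2}. Your extra checks (the head-classes partition the arcs and have equal size, so the normalized quotient matches the count matrix under the multidigraph convention) make explicit what the paper leaves implicit.

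One quibble: the equal-cell-size assumption you import for $\sigma$ is not a hypothesis of the theorem, and the isomorphism does not need it. The paper defines $G/\pi$ by the count matrix $(b_{ij})$, and your head-counting argument gives $b_{jk}=\widetilde A_{jk}$ for $\sigma$ directly. Equal sizes matter only for the normalized identity, whose $(p,q)$ entry is $b_{pq}\sqrt{|\sigma_p|/|\sigma_q|}$; that is the reciprocal of the ratio you wrote.
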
               
\begin{proof}

The result follows directly from the preceding Lemma~\ref{line digraph 1} and Lemma~\ref{line digraph 2}. The first isomorphism \begin{equation}LD(G)/\tau \cong G\end{equation} is established by the construction of the arc partition \(\tau\), where arcs are grouped according to their terminal vertices. Similarly, the second isomorphism \begin{equation}LD(G/\pi)/\sigma \cong G/\pi\end{equation} follows from the corresponding induced arc partition \(\sigma\) on the quotient graph.
This construction shows that forming the line digraph and then taking the corresponding arc partition yields a graph isomorphic to the original graph (or its quotient), completing the proof.
\end{proof}
\section{Case~I: Unitary Evolution on \texorpdfstring{$G$}{G} and 
\texorpdfstring{$G/\pi$}{G/pi} under the Assumption 
\texorpdfstring{$Q = \widetilde{Q}$}{Q = Q-tilde} and 
\texorpdfstring{$Q_\tau = I_d \otimes \widetilde{Q}$}{Q-tau = Id otimes Q-tilde}}
\label{section 4}
\label{sec:unitary-evolution-case1}
\begin{figure}[h]
\centering
\begin{tikzpicture}[scale=0.8]
\node (G)     at (0,4)  {$G$};
\node (Gpi)   at (3.5,4)  {$G/\pi$};
\node (LDG)   at (0,2)  {$LD(G)$};
\node (LDGpi) at (3.5,2) {$LD(G/\pi)$};
\node (LDGt)  at (0,0)  {$LD(G)/\tau$};
\node (LDGps) at (3.5,0) {$LD(G/\pi)/\sigma$};

\draw[->] (G)     -- node[above]{\small$Q$}             (Gpi);
\draw[->] (G)     -- node[left] {\small$L$}              (LDG);
\draw[->] (Gpi)   -- node[right]{\small$L$}              (LDGpi);
\draw[->] (LDG)   -- node[left] {\small$Q_\tau$}         (LDGt);
\draw[->] (LDGpi) -- node[right]{\small$\widetilde{Q}$}  (LDGps);
\draw[->] (LDGt)  -- node[right]{\small$Q$}              (Gpi);
\draw[<->] (Gpi) to[bend left=50]  node[right]{\small$\cong$} (LDGps);
\draw[<->] (G)   to[bend right=50] node[left] {\small$\cong$} (LDGt);
\end{tikzpicture}
\caption{Commutative diagram illustrating the structural relationships between 
$G$, $LD(G)$, and their equitable quotients, as established in 
Theorem~\ref{Isomorphism}.}
\label{fig:commutative-diagram}
\end{figure}
 In this section, we formalize the transition operator arising from the 
arc-partitioned structure induced by a shunt decomposition. Using the 
partitions introduced above, we describe the unitary evolution on the 
line digraph $LD(G)$ and its reduced form on the quotient space. 
By Theorem~\ref{Isomorphism}, the arc partitions $\tau$ and $\sigma$ 
yield the identifications
\[
LD(G)/\tau \;\cong\; G,
\qquad
LD(G/\pi)/\sigma \;\cong\; G/\pi,
\]
showing that the arc-based dynamics on $LD(G)$ reduce naturally to 
vertex dynamics on $G$, and similarly for the quotient graph $G/\pi$. 
The interplay between the line digraph construction, the equitable 
partitions, and the resulting isomorphisms is captured in the commutative 
diagram of Figure~\ref{fig:commutative-diagram}.
Thus, the arc-based dynamics on $LD(G)$ reduce to vertex dynamics on
$G$, and similarly for the quotient graph.

Let $\widetilde{A} \in \mathbb{C}^{n \times n}$, where $n = |\pi|$ 
denotes the number of cells in the partition $\pi = \{C_1, \ldots, C_r\}$, 
each cell of equal size. Its entries are given by
\begin{equation}
\widetilde{A}_{ij} =
\begin{cases}
1, & \text{if there exists an arc from } C_i \text{ to } C_j \text{ in } G/\pi,\\
0, & \text{otherwise}
\end{cases}
\end{equation}
This reduced matrix governs the effective evolution on the quotient
space and provides a lower-dimensional representation of the dynamics.. Since $G$ is $d$-regular directed graph with $|\pi|$ cells of equal 
size, each vertex $C_i$ of $G/\pi$ has out-degree $d$, and hence
\begin{equation}
|\mathcal{A}(G/\pi)| \;=\; d \cdot |\pi|,
\end{equation}
where $\mathcal{A}(G/\pi)$ denotes the arc set of $G/\pi$. This 
reflects Corollary~\ref{cor:3.1} that the quotient graph inherits the 
$d$-regularity of $G$, with each cell $C_i$ sending exactly $d$ arcs 
to other cells, one per shunt class. 

The \emph{unitary evolution operator or the transition operator} for the quotient graph $G/\pi$
is defined as
\begin{equation}
\widetilde{U} \;=\; \mathcal{U}(G/\pi)
\;=\; \widetilde{S}\,(2\widetilde{Q}\widetilde{Q}^\top - I)
\;\in\; \mathbb{C}^{|\mathcal{A}(G/\pi)| \times |\mathcal{A}(G/\pi)|},
\end{equation}
where $\widetilde{S}$ is the shift operator on the arcs of $G/\pi$
and $\widetilde{Q}$ is the normalized characteristic matrix of the arc
partition $\sigma$ of $LD(G/\pi)$. Since
$|\mathcal{A}(G/\pi)| = d\,|\pi|$, this is a square matrix of size
$d|\pi| \times d|\pi|$ acting on the quotient arc space
$\mathbb{C}^{\mathcal{A}(G/\pi)}$, with one coordinate per arc of
$G/\pi$. We refer to the discrete quantum walks governed by
$\widetilde{U}$ as \emph{shunt decomposition walks}.
To verify the unitarity of $\widetilde {U}$, we examine its components. Since each shunt $P_j$ is a permutation matrix, it is unitary and satisfies the intertwining relation $P_j Q = Q \widetilde{P}_j$, where $Q$ is the normalized vertex characteristic matrix. Given ${Q}^\top {Q} = I_n$, it follows that for every $j$:
\begin{equation}
(Q^\top P_j Q)(Q^\top P_j Q)^\top = (Q^\top Q \widetilde{P}_j)(Q^\top Q \widetilde{P}_j)^\top = \widetilde{P}_j \widetilde{P}_j^\top = I_n.
\end{equation}
Thus, each block $Q^\top P_j Q$ is unitary. Using the properties of the Kronecker product and the identity $E_{jj}E_{ii} = \delta_{ji}E_{jj}$, the reduced shift operator $\widetilde {S}$ satisfies:
\begin{equation}\label{shift matrix unitary opertaor}
\widetilde {S}\widetilde {S}^\top = \sum_{j,i}E_{jj}E_{ii} \otimes (Q^\top P_j Q)(Q^\top P_i Q)^\top  = \sum_{j}E_{jj} \otimes I_n = I_{|\pi|} \otimes I_n .
\end{equation}
Hence, $\widetilde {S}$ is unitary. Similarly, the reduced reflection operator \begin{equation}\label{quotient reflection}
\widetilde {R} = 2\widetilde{Q}\widetilde{Q}^\top - I\end{equation} is Hermitian. Since $\widetilde{Q}\widetilde{Q}^\top$ is an orthogonal projector, we have:
\begin{equation}
\widetilde {R}^2 = (2\widetilde{Q}\widetilde{Q}^\top - I)^2 = 4(\widetilde{Q}\widetilde{Q}^\top)^2 - 4\widetilde{Q}\widetilde{Q}^\top + I = I,
\end{equation}
implying that $\widetilde {R}$ is an involution and thus unitary. Consequently, $\widetilde{U} = \widetilde{S}\widetilde{R}$ is the 
product of two unitary matrices, and hence $\widetilde{U}$ is unitary.
The reflection operator on the full arc space $\mathcal{A}(G)$ is 
defined as
\begin{equation}\label{reflection operator 2}
R = 2\,Q_\tau(\widetilde{Q}\widetilde{Q}^\top)Q_\tau^\top - I,
\end{equation}
where $Q_\tau$ is the normalized characteristic matrix of the arc 
partition $\tau$ induced by the shunt decomposition, and 
$\widetilde{Q}$ is the normalized characteristic matrix of the arc 
partition $\sigma$ of $LD(G/\pi)$. The matrix 
$P = Q_\tau(\widetilde{Q}\widetilde{Q}^\top)Q_\tau^\top$ is an 
orthogonal projection onto the subspace spanned by the columns of 
$Q_\tau\widetilde{Q}$, satisfying $P^2 = P$ and $P^\top = P$. The 
operator $R = 2P - I$ is then the reflection through this projection 
subspace, which maps any vector $v$ to $2Pv - v$, reversing the 
component of $v$ orthogonal to the subspace while preserving the 
component within it. In particular, $R$ satisfies $R^2 = I$ and 
$R^\top = R$, confirming that $R$ is both unitary and self-adjoint. The operator $R$ acts as a 
reflection about the subspace spanned by the columns of $Q_\tau$, 
and is unitary by construction.

Using this, we define the transition matrix $U$ for the parent graph 
$G$ on the full arc space $\mathcal{A}(G)$ as
\begin{equation}
U = SR = S\bigl(2\,Q_\tau(\widetilde{Q}\widetilde{Q}^\top)Q_\tau^\top
- I\bigr)
\;\in\; \mathbb{C}^{|\mathcal{A}(G)|\times|\mathcal{A}(G)|},
\end{equation}
where $S = \displaystyle\sum_{j=1}^{d} P_j \otimes E_{jj}$ is the 
shift operator on $\mathcal{A}(G)$, with $P_j$ the permutation matrix 
of the $j$-th arc class and $E_{jj}$ the $j$-th standard basis matrix, 
and $|\mathcal{A}(G)|$ denotes the total number of arcs in $G$. Thus 
$U$ is a square unitary matrix acting on the arc space 
$\mathbb{C}^{|\mathcal{A}(G)|}$, with one coordinate per arc of $G$. 
This framework allows the  analysis of high-dimensional walks 
to be conducted efficiently through the reduced operator $\widetilde{U}$. 
In what follows, we illustrate this construction with examples and 
establish the precise relationship between $\widetilde{U}$ and $U$.

\subsection{Relation between \texorpdfstring{$U$}{U} and 
\texorpdfstring{$\widetilde{U}$}{\~U}}\label{subsection 4.1}
Ordering the arcs of $\mathcal{A}(G)$ such that all elements of the
arc-subsets $\mathcal{A}^{(1)}, \mathcal{A}^{(2)}, \dots,
\mathcal{A}^{(d)}$ are grouped sequentially, the normalized
characteristic matrix $Q_{\tau}$ of the induced arc-partition $\tau$
factorizes as:
\begin{equation}
Q_{\tau} = I_d \otimes \widetilde{Q} ,
\end{equation}
where $\widetilde{Q}$ is the normalized characteristic matrix of the
quotient arc-partition of $LD(G/\pi)$, and $I_d$ is the identity
matrix on the arc-class index. Similarly, by ordering the arcs of the nested quotient
$LD(G/\pi)/\sigma$, such that the normalized characteristic
matrix of the induced arc-partition $\sigma$ satisfies
$\widetilde{Q} = Q$, where $Q$ is the characteristic matrix of the
original vertex partition $\pi$. From Equation~\ref{eq:equitable} and Lemma~\ref{line digraph 2}, 
using $\widetilde{Q} = Q$, we have:
\begin{equation}\label{eq:spectral-equiv}
\widetilde{Q}^\top \!\left(A - A_{LD(G/\pi)}\right)\widetilde{Q} = O.
\end{equation}
where $O$ denotes the zero matrix. This identity states that the
matrix $M = A - A_{LD(G/\pi)}$ satisfies
$\widetilde{Q}^\top M\,\widetilde{Q} = O$, meaning that for every
pair of cells $C_i, C_j \in \pi$, the total number of arcs from
$C_i$ to $C_j$ counted in $A$ equals the total number counted in
$A_{LD(G/\pi)}$:
\[
\sum_{u \in C_i}\sum_{v \in C_j} A_{uv}
\;=\;
\sum_{u \in C_i}\sum_{v \in C_j} (A_{LD(G/\pi)})_{uv}.
\]
In other words, even if $A$ and $A_{LD(G/\pi)}$ differ entry by
entry, their cell-to-cell arc counts are identical under $\pi$.
From Equation~\eqref{eq:spectral-equiv} we identify two structural
cases:

\begin{enumerate}
\item  $A = A_{LD(G/\pi)}$, that is, the
adjacency matrix of the parent graph $G$ and the adjacency matrix of
the line digraph quotient $LD(G/\pi)$ coincide entry by entry. The
example of $C_4$ with partition $\pi = \{\{1,3\},\{2,4\}\}$
illustrates this case in Example~\ref{C_4 grah and its quotient}: we computed
\[
A_{LD(C_4/\pi)} =
\begin{pmatrix}0&1&0&1\\1&0&1&0\\0&1&0&1\\1&0&1&0\end{pmatrix}
= A(C_4),
\]
so $M = O$ identically, and Equation~\eqref{eq:spectral-equiv} holds
trivially.

\item 
$A \neq A_{LD(G/\pi)}$ but
$\mathrm{Range}(\widetilde{Q}) \subseteq \mathrm{Null}(M)$, where
$M = A - A_{LD(G/\pi)}$. In this case the two matrices differ in
individual entries, but for every pair of cells $C_i, C_j$ the number
of arcs from $C_i$ to $C_j$ is the same in both $A$ and
$A_{LD(G/\pi)}$. The difference $M$ annihilates every column of
$\widetilde{Q}$, so $M\widetilde{Q} = O$ and hence
$\widetilde{Q}^\top M \widetilde{Q} = O$.
\end{enumerate}
\begin{prop}\label{rem:cases}
Let $G$ be a $d$-regular digraph on $2n$ vertices. Let $\pi$ an equitable
partition of $V(G)$ into $n$ cells, and $\sigma$ an equitable partition
of the arcs of $LD(G/\pi)$ into $n$ cells. If $LD(G/\pi) \cong G$, then
$Q = \widetilde{Q}$; however, the converse does not hold in general.
\end{prop}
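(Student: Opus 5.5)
The argument has two parts: the forward implication, obtained by transporting the vertex partition along the isomorphism, and a counterexample for the converse.

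\textbf{Forward direction.} Assume $LD(G/\pi)\cong G$ and fix an isomorphism $\phi\colon V(G)\to V(LD(G/\pi))$. Choose the ordering of the arcs of $G/\pi$ to be the one induced by $\phi$, so that $A = A_{LD(G/\pi)}$ entry by entry. This puts us in case~1 of the discussion following Equation~\eqref{eq:spectral-equiv}.

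Next, compare the two partitions of the same index set. The cell $C_j\in\pi$ is mapped by $\phi$ to a set of arcs of $G/\pi$. The cell $\sigma_j$ of Lemma~\ref{line digraph 2} is the set of arcs of $G/\pi$ with terminal vertex $C_j$. I claim $\phi$ can be chosen so that $\phi(C_j)=\sigma_j$ for each $j$. The reason is that arcs of $LD(G/\pi)$ with a common head have identical out-neighbourhoods, since their out-neighbours are exactly the arcs with tail $C_j$. So $\sigma$ is precisely the partition of $LD(G/\pi)$ into classes of vertices with equal out-neighbourhood rows. Because $\phi$ preserves adjacency, it carries this row-equivalence partition of $LD(G/\pi)$ to the row-equivalence partition of $G$.

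This is where I expect the main obstacle. One must check that the row-equivalence partition of $G$ is $\pi$ itself, or else relabel cells so that it is. The tools are the identity $Q^\top A Q=\widetilde A$ together with $\widetilde{Q}^\top A_{LD(G/\pi)}\widetilde{Q}=\widetilde A$ from Lemma~\ref{line digraph 2}. Since $\widetilde A$ is the common quotient and the cell counts agree ($n$ cells, each of size $d|\pi|/n=2$, matching $|V(G)|=2n$), the two partitions have the same sizes and the same quotient matrix. If the identification of $\pi$ with the transported $\sigma$ fails, I would fall back on defining the arc ordering of $LD(G/\pi)/\sigma$ by pulling $\pi$ back through $\phi$, which is the freedom the text grants when it says ``by ordering the arcs''. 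Either way, both characteristic matrices are the $2n\times n$ matrices whose columns are $\tfrac{1}{\sqrt2}$ times the indicator vectors of corresponding cells under the identified index sets. Hence $Q=\widetilde Q$.

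\textbf{Converse.} Here I would exhibit a counterexample: $G$ and a partition $\pi$ for which the orderings can be chosen so that $Q=\widetilde Q$, yet $LD(G/\pi)\not\cong G$. The plan is to find $G$ with $n$ cells of size $2$ on $2n$ vertices, with the same cell-indicator pattern, but with $A$ not isomorphic to $A_{LD(G/\pi)}$. Case~2 of the discussion ($A\neq A_{LD(G/\pi)}$ but $M\widetilde Q=O$) is exactly this situation.

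\textbf{Candidate counterexample.} A concrete choice is $G=K_{2,2}$ as a digraph with $\pi=\{\{1,2\},\{3,4\}\}$. Then $G/\pi$ is the two-vertex multigraph with $\widetilde A=\begin{pmatrix}0&2\\2&0\end{pmatrix}$, and by Example~\ref{C_4 grah and its quotient} $LD(G/\pi)$ is $C_4$ with the alternating pattern. Here $K_{2,2}\cong C_4$, so this fails as a counterexample. I would instead pick a $2$-regular digraph whose quotient has loops, for example a directed $C_4$ with $\pi=\{\{1,3\},\{2,4\}\}$. Its $LD(G/\pi)$ differs in structure from $G$, for instance in the number of $2$-cycles, while $Q$ and $\widetilde Q$ are both $\tfrac1{\sqrt2}(I_2\otimes\ldots)$-type indicator matrices of the same shape and coincide under a suitable ordering. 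Verifying non-isomorphism by comparing a simple invariant such as $\operatorname{tr}A^2$ finishes the converse.
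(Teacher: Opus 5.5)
\textbf{Forward direction.} The step you flag as the main obstacle cannot be overcome, and your own observation shows why. You are right that $\sigma$ is exactly the partition of $LD(G/\pi)$ into classes with equal out-neighbourhood. So every isomorphism $\phi\colon G\to LD(G/\pi)$ maps the twin partition $\rho$ of $G$ onto $\sigma$. Hence $\phi(C_j)=\sigma_j$, up to relabelling of cells, is possible if and only if $\pi=\rho$, and this can fail.

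Here is a counterexample. Take $G=LD(K_2^{\circlearrowleft})$ with vertices $a=(1,1)$, $b=(1,2)$, $c=(2,2)$, $d=(2,1)$ and arcs $a\to a,b$; $b\to c,d$; $c\to c,d$; $d\to a,b$. The partition $\pi=\{\{a,c\},\{b,d\}\}$ is equitable with quotient matrix $J_2$. So $G/\pi\cong K_2^{\circlearrowleft}$ and $LD(G/\pi)\cong G$, yet $\rho=\{\{a,d\},\{b,c\}\}$. Order the rows $a,b,c,d$; this is the ordering that gives $A=A_{LD(G/\pi)}$ entrywise. Then $Q=\tfrac{1}{\sqrt2}\begin{pmatrix}1&0\\0&1\\1&0\\0&1\end{pmatrix}$ while $\widetilde Q=\tfrac{1}{\sqrt2}\begin{pmatrix}1&0\\0&1\\0&1\\1&0\end{pmatrix}$. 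No choice of $\phi$ and no relabelling of cells makes them equal. Loops are not the cause: in $LD(C_4)$ the pairing $\{(0,1),(2,3)\}$, $\{(1,0),(3,2)\}$, $\{(1,2),(3,0)\}$, $\{(2,1),(0,3)\}$ is equitable with quotient $C_4$ and differs from the head partition.

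Your fallback orders the arcs of $G/\pi$ by pulling back $\pi$ rather than by $\phi$. That discards the entrywise identification $A=A_{LD(G/\pi)}$ you fixed in your first step. Under that reading, $Q=\widetilde Q$ holds for any two partitions of $2n$-element sets into $n$ pairs. The isomorphism hypothesis is then used only to force $d=2$, via $|\mathcal A(G/\pi)|=nd=2n$, and the twin-class argument plays no role. If this ordering-dependent statement is what you mean, say so and prove it directly by that count.

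For comparison, the paper does not transport cells at all. It substitutes $A_{LD(G/\pi)}=A$ into $A_{LD(G/\pi)}\widetilde Q=\widetilde Q\widetilde A$, compares with $AQ=Q\widetilde A$, and appeals to uniqueness of a normalized characteristic matrix satisfying this relation. The example above shows that uniqueness is not available either, since both matrices satisfy $AX=XJ_2$. So the paper's route does not supply the missing step. In the reading where both matrices are indexed compatibly with $A=A_{LD(G/\pi)}$, the forward implication is false.

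\textbf{Converse.} Your candidate counterexample fails. The oriented $4$-cycle is $1$-regular, not $2$-regular. Its quotient by $\{\{1,3\},\{2,4\}\}$ is a single pair of opposite arcs between $C_1$ and $C_2$, with no loops. Then $LD(G/\pi)$ has only two vertices, so $\widetilde Q$ is $2\times 2$ and cannot equal the $4\times 2$ matrix $Q$. If you meant the symmetric $C_4$, the paper's $C_4$ example shows $LD(C_4/\pi)\cong C_4$, so nothing is refuted.

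The paper instead uses $C_6$ with antipodal cells. The quotient is $C_3$, and $LD(C_3)$ is not symmetric: $(1,2)\to(2,3)$ but not conversely, whereas $C_6$ is symmetric. Meanwhile both $Q$ and $\widetilde Q$ can be ordered as $\tfrac{1}{\sqrt2}\binom{I_3}{I_3}$. Your idea of a quotient with loops also works: take the symmetric $C_4$ with $\pi=\{\{1,2\},\{3,4\}\}$. The quotient is $K_2^{\circlearrowleft}$, whose line digraph has loops while $C_4$ has none.
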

\begin{proof}
Since $\pi$ is an equitable vertex partition of $G$, there exists a normalized
characteristic matrix $Q$ such that
\begin{equation}\label{eq:vertex}
AQ = Q\widetilde{A},
\end{equation}
where $A$ is the adjacency matrix of $G$ and $\widetilde{A}$ is the quotient
matrix of $G/\pi$. Since $\sigma$ is an equitable arc partition of $L(G/\pi)$, there exists a
normalized characteristic matrix $\widetilde{Q}$ such that
\begin{equation}\label{eq:arc}
A_{LD(G/\pi)}\,\widetilde{Q} = \widetilde{Q}\,\widetilde{A},
\end{equation}
where $\widetilde{A}$ is the quotient matrix of \(G/\pi\) by Theorem~\ref{Isomorphism}. Now suppose $LD(G/\pi)\cong G$, so that $A_{LD(G/\pi)}=A$. Substituting into
Equation~\eqref{eq:arc} gives
\(
A\widetilde{Q} = \widetilde{Q}\,\widetilde{A}.
\)
Comparing with Equation~\eqref{eq:vertex}, both $Q$ and $\widetilde{Q}$
satisfy the same Equation, and since the normalized
characteristic matrix of an equitable partition is unique, we conclude that
\[
Q = \widetilde{Q}.
\]
For the converse, $Q = \widetilde{Q}$ does not imply $LD(G/\pi) \cong G$ in general. As a counterexample, see Example~\ref{ex:C6_quotient}.
\end{proof}
\begin{prop}\label{prop}
Let $G$ be a $d$-regular directed graph on $2n$ vertices, and let $\pi$ be an
equitable partition of $V(G)$ into cells of equal size. Let $\tau$ and $\sigma$ be the arc
partitions of $LD(G)$ and $LD(G/\pi)$ into cells 
of equal size, respectively, with normalized
characteristic matrices $Q_\tau$ and $\widetilde{Q}$ satisfying
\[
Q_\tau=I_d\otimes \widetilde Q, \qquad \widetilde{Q}={Q} .
\]
Let $U = SR$ and $\widetilde{U} = \widetilde{S}\,\widetilde{R}$ be the
unitary evolution operators on $G$ and
$G/\pi$, respectively. Then the following hold:
\begin{enumerate}
    \item $R\, Q_\tau = Q_\tau\, \widetilde{R}$.

    \item For all integers $k \geq 0$,
    \(
        \widetilde{U}^k = Q_\tau^\top\, U^k\, Q_\tau.
    \)
\end{enumerate}
\end{prop}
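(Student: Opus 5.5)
The plan is to prove both parts by establishing intertwining relations of the form $X Q_\tau = Q_\tau \widetilde{X}$ for each factor of $U$, and then to compose them. Throughout I will use the isometry property ${Q_\tau}^\top Q_\tau = (I_d\otimes \widetilde{Q})^\top(I_d\otimes\widetilde{Q}) = I_d\otimes \widetilde{Q}^\top\widetilde{Q} = I$, which follows from the orthonormality of the columns of $\widetilde{Q}$. I will also fix a single arc ordering, namely arcs grouped by arc class $\mathcal{A}^{(1)},\dots,\mathcal{A}^{(d)}$. With this ordering the shift is written as $S=\sum_{j} E_{jj}\otimes P_j$, which matches the factorization $Q_\tau = I_d\otimes\widetilde{Q}$. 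The paper also writes $S$ with the factors in the other order, $\sum_j P_j\otimes E_{jj}$; that form differs from this one only by the canonical swap isomorphism, so I will simply work in the grouped ordering.

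For part 1, I would expand directly from the definition \eqref{reflection operator 2}:
\[
R\,Q_\tau = 2\,Q_\tau(\widetilde{Q}\widetilde{Q}^\top)Q_\tau^\top Q_\tau - Q_\tau = 2\,Q_\tau\widetilde{Q}\widetilde{Q}^\top - Q_\tau = Q_\tau\bigl(2\widetilde{Q}\widetilde{Q}^\top - I\bigr) = Q_\tau\widetilde{R}.
\]
The only subtlety is bookkeeping. In \eqref{reflection operator 2}, the projector $\widetilde{Q}\widetilde{Q}^\top$ is sandwiched by $Q_\tau$, so it must act on the column space of $Q_\tau$, that is, on the quotient arc space $\mathbb{C}^{\mathcal{A}(G/\pi)}$. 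The operator $\widetilde{R}$ of \eqref{quotient reflection} lives on the same space. I will state explicitly that both $\widetilde{Q}\widetilde{Q}^\top$ and $\widetilde{R}$ are read as operators on this space, and that this identification is made through the orderings fixed in Subsection~\ref{subsection 4.1}. The hypothesis $\widetilde{Q}=Q$ is what allows this identification.

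For part 2, I first need the shift intertwining $S\,Q_\tau = Q_\tau\,\widetilde{S}$. By Corollary~\ref{cor:shunt-intertwine} and Remark~\ref{rem:shunt-intertwine-general}, each shunt satisfies $P_jQ = Q\widetilde{P}_j$. Since $\widetilde{Q}=Q$, the mixed-product property gives
\[
S\,Q_\tau = \sum_{j=1}^d E_{jj}\otimes P_jQ = \sum_{j=1}^d E_{jj}\otimes Q\widetilde{P}_j = (I_d\otimes Q)\sum_{j=1}^d E_{jj}\otimes\widetilde{P}_j = Q_\tau\widetilde{S}.
\]
This is the same computation as in Lemma~\ref{lemma 3.1}, but left in intertwining form rather than compressed. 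Combining it with part 1 gives
\[
U Q_\tau = S R Q_\tau = S Q_\tau\widetilde{R} = Q_\tau\widetilde{S}\widetilde{R} = Q_\tau\widetilde{U}.
\]
A straightforward induction on $k$ then gives $U^kQ_\tau = Q_\tau\widetilde{U}^k$, with the base case $k=0$ trivial. Left-multiplying by $Q_\tau^\top$ and using $Q_\tau^\top Q_\tau=I$ yields $\widetilde{U}^k = Q_\tau^\top U^kQ_\tau$.

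I expect the main obstacle to be the dimension and ordering consistency rather than any algebra. Three things must be checked simultaneously: that $\widetilde{S}$ and $\widetilde{R}$ act on the same $d|\pi|$-dimensional space, that $Q_\tau$ maps this space isometrically into the arc space of $G$, and that the block structure of $S$ aligns with the Kronecker factor $I_d$. To handle this, I will first fix the grouped-by-class ordering, then verify that the $j$-th diagonal block of $Q_\tau$ is exactly the $Q$ for which $P_jQ=Q\widetilde{P}_j$ holds. Once that alignment is justified, every remaining step is a one-line identity.
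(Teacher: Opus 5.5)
Your proof is correct. Part 1 is exactly the paper's computation, but in part 2 you reach the key identity $UQ_\tau = Q_\tau\widetilde{U}$ by a different and tighter route.

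The paper first compresses. It uses Lemma~\ref{lemma 3.1} in the form $\widetilde{S} = Q_\tau^\top S Q_\tau$, together with part 1, to obtain $\widetilde{U} = Q_\tau^\top U Q_\tau$. It then upgrades this to the intertwining relation by asserting that $Q_\tau Q_\tau^\top$ commutes with $U$, citing Lemma~\ref{lem:equitable_equiv}. That lemma concerns an adjacency matrix and the projector of an equitable partition, not the walk operator $U = SR$. So the $U$-invariance of $\operatorname{Im}(Q_\tau)$ is asserted there rather than derived. The compressed identity on its own only covers $k=1$, since $Q_\tau^\top U^k Q_\tau = (Q_\tau^\top U Q_\tau)^k$ requires that invariance.

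You instead keep the computation behind Lemma~\ref{lemma 3.1} in uncompressed form, $SQ_\tau = Q_\tau\widetilde{S}$. You derive it from the same per-shunt relations $P_jQ = Q\widetilde{P}_j$ together with $\widetilde{Q} = Q$, and compose it with $RQ_\tau = Q_\tau\widetilde{R}$. This gives $UQ_\tau = Q_\tau\widetilde{U}$ directly, which is exactly the invariance the paper takes for granted. The compressed formula then follows by multiplying by $Q_\tau^\top$. Both arguments rest on the same hypothesis $P_jQ = Q\widetilde{P}_j$, but yours needs nothing beyond it.

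One small correction to your commentary. In part 1, the identification of the column space of $Q_\tau$ with $\mathbb{C}^{\mathcal{A}(G/\pi)}$ is automatic from the Kronecker form, since $I_d\otimes\widetilde{Q}$ has $d|\pi| = |\mathcal{A}(G/\pi)|$ columns. So $\widetilde{Q} = Q$ is not what makes that step work. That hypothesis is genuinely used only in part 2, to convert $P_jQ = Q\widetilde{P}_j$ into $P_j\widetilde{Q} = \widetilde{Q}\widetilde{P}_j$.
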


\begin{proof}

\textbf{(a)} Since \(Q_{\tau}^{\top}Q_{\tau} = I\), by Equations~\ref{rem:cases} and~\ref{quotient reflection}, we have
\begin{equation}
R Q_{\tau}
= (2Q_{\tau}(\widetilde  Q \widetilde  Q^\top)Q_{\tau}^{\top} - I)Q_{\tau}
= Q_{\tau}(2\widetilde Q \widetilde Q^\top - I)
= Q_{\tau}\widetilde{R}.
\end{equation}
\textbf{(b)} By Lemma~\ref{lemma 3.1} and Corollary~\ref{cor:shunt-intertwine}, and using \(Q_{\tau} = I_d \otimes \widetilde Q,\quad Q=\widetilde{Q}\), we have

\[
\widetilde U
= \widetilde S \widetilde R
= (I_d \otimes \widetilde Q^\top)\, S\, (I_d \otimes \widetilde Q)\, \widetilde R=Q_\tau SQ_\tau \widetilde  R.
\]
Using part~(a), we have
\begin{equation}
    \widetilde{U} = Q_\tau^{\top} U Q_\tau.
\end{equation}
By Lemma~\ref{lem:equitable_equiv}, $Q_\tau Q_\tau^{\top}$ commutes 
with $U$, and hence $\operatorname{Im}(Q_\tau)$ is invariant under $U$. 
This gives \(
    U Q_\tau = Q_\tau \widetilde{U}\).
Applying this relation inductively yields
\begin{equation}\label{eq:Uk-intertwine}
    U^k Q_\tau = Q_\tau \widetilde{U}^k \quad \text{for all } k \geq 0.
\end{equation}
Multiplying both sides of \eqref{eq:Uk-intertwine} on the left by 
$Q_\tau^{\top}$ and using the isometry property $Q_\tau^{\top}Q_\tau = I$, 
we obtain
\begin{equation}
    Q_\tau^{\top}\, U^k\, Q_\tau = \widetilde{U}^k 
    \quad \text{for all } k \geq 0.
\end{equation}

\end{proof}
The assumptions and propositions established above lead to the 
following equivalence between PST in the parent 
graph $G$ and its quotient graph $G/\pi$, as made precise in 
Theorem~\ref{PST theorem 1} below. 
\begin{Theorem}\label{PST theorem 1}
Let $G$ be a $d$-regular directed graph on $2n$ vertices, and let $\pi$ be an
equitable partition of $V(G)$ into cells of equal size. Let $\tau$ and $\sigma$ be the arc
partitions of $LD(G)$ and $LD(G/\pi)$ into cells 
of equal size, respectively, with normalized
characteristic matrices $Q_\tau$ and $\widetilde{Q}$ satisfying
\[
Q_\tau=I_d\otimes \widetilde Q, \qquad \widetilde{Q}={Q} .
\] Then $U$ exhibits PST from $Q_\tau x$ to $Q_\tau y$ at time $k$ if and only if $\widetilde U$ exhibits PST from $x$ to $y$ at time $k$, for all $x,y \in \operatorname{Im}(\widetilde Q)$.
\end{Theorem}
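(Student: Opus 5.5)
The plan is to reduce the theorem to the intertwining identity $U^k Q_\tau = Q_\tau \widetilde U^k$ from Proposition~\ref{prop}, together with the isometry $Q_\tau^\top Q_\tau = I$. PST is the statement $U^k (Q_\tau x) = \gamma\, Q_\tau y$ with $|\gamma|=1$, or equivalently $|\langle U^k Q_\tau x, Q_\tau y\rangle| = 1$ for unit vectors. Throughout I take $x,y$ to be unit vectors. Since $Q_\tau$ is an isometry, $Q_\tau x$ and $Q_\tau y$ are also unit vectors, so the two notions of PST are being compared on unit states in both spaces.

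\emph{Forward direction ($\Leftarrow$).} Suppose $\widetilde U^k x = \gamma y$. Apply $Q_\tau$ on the left and use \eqref{eq:Uk-intertwine} to get
\[
U^k Q_\tau x = Q_\tau \widetilde U^k x = \gamma\, Q_\tau y,
\]
which is exactly PST for $U$ from $Q_\tau x$ to $Q_\tau y$ at time $k$.

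\emph{Converse ($\Rightarrow$).} Suppose $U^k Q_\tau x = \gamma\, Q_\tau y$. Multiply on the left by $Q_\tau^\top$ and use part (b) of Proposition~\ref{prop}, namely $\widetilde U^k = Q_\tau^\top U^k Q_\tau$:
\[
\widetilde U^k x = Q_\tau^\top U^k Q_\tau x = \gamma\, Q_\tau^\top Q_\tau y = \gamma y .
\]
The same argument can be phrased through inner products, since
\[
\langle U^k Q_\tau x, Q_\tau y\rangle = \langle Q_\tau^\top U^k Q_\tau x, y\rangle = \langle \widetilde U^k x, y\rangle .
\]
So the two fidelities are literally equal, and PST on one side holds exactly when it holds on the other. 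Both directions take place at the same step $k$ with the same phase $\gamma$. The restriction $x,y\in\operatorname{Im}(\widetilde Q)$ is not used beyond ensuring that $x,y$ live in the quotient arc space where $\widetilde U$ acts.

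\emph{Main obstacle.} The only delicate point is justifying $U Q_\tau = Q_\tau \widetilde U$, i.e.\ that $\operatorname{Im}(Q_\tau)$ is $U$-invariant, and not merely the compressed identity $\widetilde U = Q_\tau^\top U Q_\tau$. I will take this from the proof of Proposition~\ref{prop}. It combines $S(I_d\otimes Q) = (I_d\otimes Q)\widetilde S$, which follows from Corollary~\ref{cor:shunt-intertwine} blockwise, with $R Q_\tau = Q_\tau \widetilde R$ from part (a). Using $Q_\tau = I_d\otimes\widetilde Q$ and $\widetilde Q = Q$, these give
\[
U Q_\tau = S R Q_\tau = S Q_\tau \widetilde R = Q_\tau \widetilde S \widetilde R = Q_\tau \widetilde U .
\]
If this direct intertwining is available, the converse direction needs only $Q_\tau^\top Q_\tau = I$, and no commutation argument is required.
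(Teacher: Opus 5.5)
Your proposal is correct and follows the same route as the paper: the intertwining $U^kQ_\tau = Q_\tau\widetilde U^k$ from Proposition~\ref{prop} lifts PST from $\widetilde U$ to $U$, and left-multiplying by $Q_\tau^\top$ together with $Q_\tau^\top Q_\tau = I$ gives the converse (your version also carries the phase $\gamma$ and shows the two fidelities are equal). You diverge only in justifying $UQ_\tau = Q_\tau\widetilde U$, which you obtain directly as $SRQ_\tau = SQ_\tau\widetilde R = Q_\tau\widetilde S\widetilde R$ from Corollary~\ref{cor:shunt-intertwine} and part (a); this is cleaner than the paper's appeal to Lemma~\ref{lem:equitable_equiv}, since that lemma concerns the adjacency matrix rather than $U$.
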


\begin{proof}
Since $\operatorname{Im}(Q_\tau)$ is $U$--invariant and $Q_\tau$ has orthonormal
columns, we have the basic relation from Proposition~\ref{prop} 
 for all $k\ge0$,
\begin{equation}\label{eq.4.20}
\widetilde U^k = Q_\tau^\top U^k Q_\tau,\qquad U^k Q_\tau = Q_\tau\widetilde U^k.
\end{equation} 
\noindent
($\Rightarrow$)
Assume that $\widetilde U$ exhibits PST at time $k$, that is,
\begin{equation}
\widetilde U^k x = y,
\quad \text{for some } x,y \in \operatorname{Im}(\widetilde Q).
\end{equation}
From Equation~\ref{eq.4.20}, we obtain
\begin{equation}
U^k Q_\tau x
= Q_\tau \widetilde U^k x
= Q_\tau y.
\end{equation}
Hence, $U$ exhibits PST from $Q_\tau x$ to $Q_\tau y$ at time $k$.

\noindent
($\Leftarrow$)
Conversely, suppose that $U$ exhibits PST at time $k$, that is,
\[
U^k Q_\tau x = Q_\tau y,
\quad \text{for some } x,y \in \operatorname{Im}(\widetilde Q).
\]
Multiplying on the left by $Q_\tau^{\top}$ and using $Q_\tau^{\top} Q_\tau = I$, together with~\eqref{eq.4.20}, we obtain
\[
\widetilde U^k x
= Q_\tau^{\top} U^k Q_\tau x
= Q_\tau^{\top} Q_\tau y
= y.
\]
Thus $\widetilde U$ exhibits PST from $x$ to $y$ at time $k$.
Therefore, PST in the quotient system is equivalent to PST in the original system restricted to the invariant subspace $\operatorname{Im}(Q_\tau)$.
\end{proof}
\begin{cor}
  Under the assumptions of Theorem~\ref{PST theorem 1}, the quotient graph
  $LD(G/\pi)/\sigma$ exhibits PST if and only if $LD(G)\tau$ exhibits PST.
    \end{cor}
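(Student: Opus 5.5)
The plan is to reduce the corollary to Theorem~\ref{PST theorem 1} by means of the identifications in Theorem~\ref{Isomorphism}. That theorem gives $LD(G)/\tau \cong G$ and $LD(G/\pi)/\sigma \cong G/\pi$. Here ``$LD(G)/\tau$ exhibits PST'' is read as PST for the walk $U = SR$, which lives on the arc space $\mathbb{C}^{\mathcal{A}(G)}$ and is organised by the arc partition $\tau$. Likewise, ``$LD(G/\pi)/\sigma$ exhibits PST'' is read as PST for $\widetilde{U}=\widetilde{S}\widetilde{R}$, which lives on $\mathbb{C}^{\mathcal{A}(G/\pi)}$ and uses the reflection $2\widetilde{Q}\widetilde{Q}^\top - I$ built from $\sigma$. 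The first step is to fix this reading explicitly, so that the corollary becomes a statement about $U$ and $\widetilde{U}$. The intended sense of PST is the one of Theorem~\ref{PST theorem 1}: transfer between states in the reflection-invariant subspaces $\operatorname{Im}(Q_\tau)$ and $\operatorname{Im}(\widetilde{Q})$.

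Next I verify the hypotheses. The assumptions of Theorem~\ref{PST theorem 1} are exactly the standing ones. The arc partitions $\tau$ and $\sigma$ are equitable by Lemmas~\ref{line digraph 1} and \ref{line digraph 2}, and their characteristic matrices satisfy $Q_\tau = I_d\otimes\widetilde{Q}$ and $\widetilde{Q}=Q$. Proposition~\ref{prop} therefore supplies $U^kQ_\tau = Q_\tau\widetilde{U}^k$ and $\widetilde{U}^k = Q_\tau^\top U^k Q_\tau$ for all $k\ge 0$.

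For the forward direction, suppose $\widetilde{U}^k x = \gamma y$ with $|\gamma|=1$ and $x,y\in\operatorname{Im}(\widetilde{Q})$ unit vectors. Applying $Q_\tau$ gives $U^k(Q_\tau x)=\gamma Q_\tau y$. Since $Q_\tau$ is an isometry, $Q_\tau x$ and $Q_\tau y$ are unit states, so $U$ has PST at the same time $k$. For the converse, suppose $U^k Q_\tau x = \gamma Q_\tau y$. Left-multiplying by $Q_\tau^\top$ and using the intertwining relation yields $\widetilde{U}^k x=\gamma y$. This is just Theorem~\ref{PST theorem 1}, with the phase factor $\gamma$ carried along; inserting $\gamma$ is harmless because every relation involved is linear.

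The main obstacle is the interpretive step rather than any computation. The corollary states PST for the quotient digraphs themselves, while Theorem~\ref{PST theorem 1} concerns the walks on arc spaces restricted to $\operatorname{Im}(Q_\tau)$. I would handle this by invoking the isomorphisms of Theorem~\ref{Isomorphism} and stating explicitly that the states considered are lifts $Q_\tau x$ of quotient states. I would then add a remark that PST from states outside $\operatorname{Im}(Q_\tau)$ is not covered by the equivalence.
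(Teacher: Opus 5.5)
Your proposal is correct and follows the paper's route. The paper's proof also just invokes the isomorphisms $LD(G)/\tau\cong G$ and $LD(G/\pi)/\sigma\cong G/\pi$ from Theorem~\ref{Isomorphism} to reduce the statement to Theorem~\ref{PST theorem 1}; your explicit re-derivation with the phase factor $\gamma$, and your caveat that the converse only covers PST between lifted states in $\operatorname{Im}(Q_\tau)$, spell out what the paper leaves implicit.
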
 
\begin{proof}
  By Theorem~\ref{Isomorphism}, we have the isomorphisms
  \(
    LD(G)/\tau \;\cong\; G
    \quad \text{and} \quad
    LD(G/\pi)/\sigma \;\cong\; G/\pi,
  \)
  from which the result follows.
\end{proof}
\begin{lemma}\label{lem:2regular}
Let $G$ be a $d$-regular directed graph on $2n$ vertices. Let
\[
\tau = \{T_1, \dots, T_{2n}\},
\qquad
\pi = \{C_1, \dots, C_{n}\},
\qquad
\sigma = \{\sigma_1, \dots, \sigma_{n}\}
\]
be, respectively, an equitable arc partition of $LD(G)$, an equitable
vertex partition of $G$, and an equitable arc partition of $LD(G/\pi)$, with
normalized characteristic matrices $Q_\tau$, $Q$, and $\widetilde{Q}$.
If $A(G) = A(LD(G/\pi))$ and
\(
Q_\tau = \widetilde{Q} \otimes I_d,
\)
then $G$ is $2$-regular. The converse does not hold in general.
    \end{lemma}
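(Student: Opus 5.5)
The natural route is a dimension count. I will compare the sizes of $A(G)$ and $A(LD(G/\pi))$, and then the sizes of the two sides of $Q_\tau=\widetilde Q\otimes I_d$, to force $d=2$.

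\textbf{Step 1 (sizes).} Since $G$ has $2n$ vertices, $A(G)$ is $2n\times 2n$. By Corollary~\ref{cor:3.1} the quotient $G/\pi$ is $d$-regular on $n$ cells, so it has $dn$ arcs. Hence $A(LD(G/\pi))$ is $dn\times dn$. The hypothesis $A(G)=A(LD(G/\pi))$ is an equality of matrices, so the orders must agree: $2n=dn$, which gives $d=2$.

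\textbf{Step 2 (consistency check with $Q_\tau$).} As an independent check, $Q_\tau$ has one row per arc of $G$, i.e.\ $2n\cdot d$ rows, and $2n$ columns, one per cell $T_p$. The matrix $\widetilde Q\otimes I_d$ has $dn\cdot d$ rows and $n\cdot d$ columns. Matching columns gives $2n=nd$, again forcing $d=2$, and the row counts then agree.

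I would present both counts. The Kronecker hypothesis alone already pins $d=2$, and the adjacency equality pins it independently. So $G$ is $2$-regular.

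\textbf{Step 3 (converse fails).} I expect this to be the main obstacle, since it needs an explicit example. I would take a $2$-regular digraph on $2n$ vertices where the adjacency equality fails, for instance $C_6$ with a partition into three cells of size two. The quotient is then a $3$-vertex $2$-regular (multi)digraph, and $LD(G/\pi)$ has $6$ vertices. I would show that $A(LD(G/\pi))\neq A(C_6)$ under every labelling, using a mismatch in some invariant. Candidates are the number of $2$-cycles or loops, or the multiplicity structure: $C_6$ is a simple cycle, whereas the line digraph of the quotient (for example with partition $\{\{1,4\},\{2,5\},\{3,6\}\}$, giving a directed $3$-cycle with doubled arcs) contains directed triangles. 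This ties in with the later Example~\ref{ex:C6_quotient} that the paper uses for the non-converse in Proposition~\ref{rem:cases}. The only care needed is choosing a partition of $C_6$ that is genuinely equitable with equal cells and checking the triangle count of $LD(G/\pi)$.
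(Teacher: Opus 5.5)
Your Steps 1--2 are correct and follow the paper's argument. The paper also uses the adjacency equality to identify $V(G)$ with $\mathcal{A}(G/\pi)$, and then matches the column counts $2n$ and $nd$ of the two sides of $Q_\tau=\widetilde Q\otimes I_d$. As you note, the column count alone forces $d=2$. Also, $|\mathcal{A}(G/\pi)|=nd$ only needs each row of the quotient matrix to sum to $d$. So you need not invoke Corollary~\ref{cor:3.1}, whose equal-cell-size hypothesis is not part of this lemma.

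For the converse you take a different route. The paper uses the digraph of Example~\ref{ex:counterexample} (arcs $v\to v+1$ and $v\to v+4$ modulo $6$) and shows $A(G)\neq A_{LD(G/\pi)}$ for one fixed ordering. You use the undirected $C_6$ and a labelling-independent invariant.

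There is one slip to correct. For $C_6$ with $\pi=\{\{1,4\},\{2,5\},\{3,6\}\}$, the quotient is $K_3$, with arcs in both directions between every pair of cells (Example~\ref{ex:C6_quotient}). It is not a directed triangle with doubled arcs; that is the quotient of the paper's digraph.

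Your argument survives the correction. $LD(K_3)$ contains the directed triangle $(1,2)\to(2,3)\to(3,1)\to(1,2)$ and has only three $2$-cycles. $C_6$ has no closed walks of odd length and has six $2$-cycles. So no labelling makes $A(C_6)=A(LD(C_6/\pi))$.

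This buys something real. The paper's digraph is in fact isomorphic to its $LD(G/\pi)$: list its vertices as $1,4,2,5,3,6$ and the two matrices coincide. Its inequality is therefore an artefact of the chosen ordering, whereas yours holds under every labelling.

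The paper reads the converse as: $2$-regularity together with the Kronecker factorization does not force the adjacency equality. Under that reading, add one remark. For $C_6$, every cell of $\tau$ and $\sigma$ has size $2$, so $Q_\tau=\widetilde Q\otimes I_2$ can be arranged simply by ordering the arcs and cells suitably.
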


\begin{proof}
Since $A(G) = A(LD(G/\pi))$, the graphs $G$ and $L(G/\pi)$ are identical, and
in particular have the same vertex set. Therefore
\[
|V(G)| = |V(L(G/\pi))| = |\mathcal{A}(G/\pi)|,
\]
which gives $|\mathcal{A}(G/\pi)| = nd$. Now consider the dimensions of each side of $Q_\tau = \widetilde{Q}\otimes I_d$.
Since $|\mathcal{A}(G)|=2nd$ and $\tau$ partitions $\mathcal{A}(G)$ into $n$
cells,
\(
Q_\tau \in \mathbb{R}^{2nd \times 2n}.
\)
Since $|\mathcal{A}(G/\pi)|=nd$ and $\sigma$ partitions $\mathcal{A}(G/\pi)$
into $n$ cells,
\(
\widetilde{Q} \in \mathbb{R}^{nd \times n},
\quad Q_\tau=
I_d\otimes \widetilde{Q}  \in \mathbb{R}^{nd^2 \times nd}.
\)
For the two sides to be equal, their column dimensions must agree, thus \(
2n = nd \implies d = 2.
\)
Therefore $G$ is $2$-regular. For the converse, $G$ being $2$-regular and satisfying
$Q_\tau =  I_2 \otimes \widetilde{Q}$ does not imply $A(G)=A(LD(G/\pi))$ in
general; a counterexample is provided in
Example~\ref{ex:counterexample}.
\end{proof}

\begin{ex}\label{ex:counterexample}
Let $G$ be a directed graph with vertex set
\(
V(G)=\{1,2,3,4,5,6\}
\)
and arc set
\(
\mathcal{A}(G)=\{1\to2,1\to5,\;2\to3,2\to6,\;3\to1,3\to4,\;4\to5,4\to2,\;5\to6,5\to3,\;6\to4,6\to1\}.
\)
Then $G$ is $2$-regular, since every vertex has in-degree and out-degree equal to $2$. The adjacency matrix of $G$ is
\[
A(G)=
\begin{pmatrix}
0&1&0&0&1&0\\
0&0&1&0&0&1\\
1&0&0&1&0&0\\
0&1&0&0&1&0\\
0&0&1&0&0&1\\
1&0&0&1&0&0
\end{pmatrix}.
\]
Consider the partition
\(
\pi=\{\{1,4\},\{2,5\},\{3,6\}\}.
\)
Each vertex in a cell has the same number of out-neighbors in every other cell. Hence $\pi$ is equitable. Its normalized characteristic matrix is \[
Q=
\frac{1}{\sqrt{2}}
\begin{pmatrix}
I_3\\
I_3
\end{pmatrix}.
\]
The quotient graph $G/\pi$ has three nodes $C_1,C_2,C_3$ with two parallel arcs between consecutive nodes:
\(
C_1 \to C_2,\quad C_2 \to C_3,\quad C_3 \to C_1.
\)
Label the arcs as
\(
b_1,b_2: C_1 \to C_2,\quad
b_3,b_4: C_2 \to C_3,\quad
b_5,b_6: C_3 \to C_1.
\)
The line digraph $LD(G/\pi)$ has adjacency matrix (in the order $b_1,\dots,b_6$)
\[
A_{LD(G/\pi)}=
\begin{pmatrix}
0&0&1&1&0&0\\
0&0&1&1&0&0\\
0&0&0&0&1&1\\
0&0&0&0&1&1\\
1&1&0&0&0&0\\
1&1&0&0&0&0
\end{pmatrix}.
\] Define the arc partition
\(
\sigma=\{\{b_1,b_2\},\{b_3,b_4\},\{b_5,b_6\}\}.
\)
Its normalized characteristic matrix is
\[
\widetilde{Q}=
\frac{1}{\sqrt{2}}
\begin{pmatrix}
1&0&0\\
1&0&0\\
0&1&0\\
0&1&0\\
0&0&1\\
0&0&1
\end{pmatrix}.
\]

Then
\[
\widetilde{Q}_\sigma^\top A_{LD(G/\pi)} \widetilde{Q}_\sigma
=
\begin{pmatrix}
0&2&0\\
0&0&2\\
2&0&0
\end{pmatrix}
= \widetilde{A}.
\]

Although $A(G)$ and $A_{LD(G/\pi)}$ are both $6\times6$, they are not equal. Let
\[
M = A(G) - A_{LD(G/\pi)} \neq 0.
\]
However,
\[
\widetilde{Q}^{\top}A(G)\widetilde{Q}
=
\widetilde{Q}^{\top}A_{LD(G/\pi)}\widetilde{Q}
=
\begin{pmatrix}0&2&0\\0&0&2\\2&0&0\end{pmatrix}.
\]
Hence,
\[
\widetilde{Q}^{\top} M \widetilde{Q} = 0 \quad \text{while} \quad M \neq 0,
\]
showing that the two matrices agree at the quotient level but differ entry-wise.
\end{ex}
\section{Case~II: Unitary Evolution on \texorpdfstring{$LD(G)$}{LD(G)} 
and \texorpdfstring{$LD(G/\pi)/\sigma$}{LD(G/pi)/sigma} under the 
Assumption \texorpdfstring{$Q \neq \widetilde{Q}$}{Q neq Q-tilde} and 
\texorpdfstring{$Q_\tau \neq I_d \otimes \widetilde{Q}$}{Q-tau neq Id 
otimes Q-tilde}}\label{section 5}

\begin{figure}[h]
  \centering
  \begin{tikzpicture}[scale=0.8, every node/.style={font=\small}]
    \node (G)      at (0,3.2)  {$G$};
    \node (Gpi)    at (3.2,3.2)  {$G/\pi$};
    \node (LDG)    at (0,1.6)  {$LD(G)$};
    \node (LDGpi)  at (3.2,1.6)  {$LD(G/\pi)$};
    \node (LDGt)   at (0,0)  {$LD(G)/\tau$};
    \node (LDGps)  at (3.2,0)  {$LD(G/\pi)/\sigma$};
    \draw[->]  (G)     -- node[above]{\scriptsize$Q$}            (Gpi);
    \draw[->]  (G)     -- node[left] {\scriptsize$L$}             (LDG);
    \draw[->]  (Gpi)   -- node[right]{\scriptsize$L$}             (LDGpi);
    \draw[->]  (LDG)   -- node[left] {\scriptsize$Q_\tau$}        (LDGt);
    \draw[->]  (LDGpi) -- node[right]{\scriptsize$\widetilde{Q}$} (LDGps);
    \draw[<->] (LDGt)  -- node[below]{\scriptsize$\cong$}         (LDGpi);
    \draw[<->] (Gpi) to[bend left=50] node[right]{\scriptsize$\cong$} (LDGps);
  \end{tikzpicture}
  \caption{Commutative diagram relating $G$, its quotient $G/\pi$, their
           associated $LD$-structures, and the induced quotients
           $LD(G)/\tau$ and $LD(G/\pi)$}
  \label{fig:LD-commutative-diagram}
\end{figure}
In this section, we establish PST equivalence between $LD(G)$ and 
$LD(G/\pi)/\sigma$ under the condition $LD(G)/\tau \cong LD(G/\pi)$, 
as shown in Figure~\ref{fig:LD-commutative-diagram}. In 
Sections~\ref{section 9} and~\ref{section 10}, we verify this 
isomorphism explicitly for specific graphs.
In the special case of Theorem~\ref{Isomorphism}, the arc partition
$\tau = \{T_1, \dots, T_{2n}\}$ is indexed by the arcs of $G$,
and the quotient matrix coincides with the adjacency matrix of $G$,
so $LD(G)/\tau \cong G$. 
We consider a $d$-regular
directed graph $G$ with $d \geq 3$ and $m = 2n$ vertices. In general, let
\[
  \tau = \{T_1, T_2, \dots, T_{2m}\}
\]
be an equitable arc partition of $LD(G)$ into $2m$ cells, where $2m$
need not equal $|V(G)|$. Since $|\mathcal{A}(G)| = 2nd$ and $LD(G)$
is $d$-regular (as $G$ is $d$-regular), we have
\[
  |\mathcal{A}(LD(G))| = 2nd \cdot d = 2nd^2.
\]
Assuming further that $LD(G)/\tau$ is $d$-regular, it follows that
\[
  |\mathcal{A}(LD(G)/\tau)| = 2md = 4nd.
\]
The equitable partition condition yields
\[
  A_{LD(G)}\, Q_\tau = Q_\tau\, A',
  \qquad
  A' = Q_\tau^\top A_{LD(G)}\, \quad Q_\tau \in \mathbb{R}^{2nd \times 4n},
\]
where $A'$ is the adjacency matrix of the quotient graph $LD(G)/\tau$.
In general, $A'$ does not coincide with $A_G$. Now let
\(
  \pi = \{C_1, C_2, \dots, C_n\}
\)
be an equitable vertex partition of $V(G)$ into $n$ cells, so that
the quotient graph $G/\pi$ satisfies
\[
  |\mathcal{A}(G/\pi)| = nd.
\]
Let $\sigma = \{\sigma_1, \dots, \sigma_n\}$ be an equitable arc
partition of $G/\pi$ into $n$ cells, with normalized
characteristic matrix $\widetilde{Q}$. Since $\sigma$ has $n$ parts
and $|\mathcal{A}(G/\pi)| = nd$, the matrix $\widetilde{Q}$ has
dimensions
\[
  \widetilde{Q} \in \mathbb{R}^{nd \times n}.
\]
By Lemma~\ref{line digraph 2},
\[
  A_{LD(G/\pi)}\, \widetilde{Q} = \widetilde{Q}\, \widetilde{A},
  \qquad
  \widetilde{A}
  = \widetilde{Q}^\top A_{LD(G/\pi)}\, \widetilde{Q}
  \in \mathbb{R}^{n \times n}.
\]
Since $Q_\tau \neq \widetilde{Q} \otimes I_d$ in general, the
hypotheses of Theorem~\ref{PST theorem 1} are not satisfied, and one cannot
directly conclude that PST occurs in $G$
if and only if it occurs in $G/\pi$. Instead, we relate PST between
$LD(G)$ and $LD(G/\pi)/\sigma$ by imposing additional structural
assumptions ( see Figure~\ref{fig:LD-commutative-diagram}) on the partition $\tau$ together with compatibility
conditions on the associated reflection operators.
\subsection{Relations Between the Transition Operators of 
\texorpdfstring{$LD(G)$}{LD(G)}, 
\texorpdfstring{$LD(G/\pi)$}{LD(G/pi)}, 
\texorpdfstring{$LD(G)/\tau$}{LD(G)/tau}, and 
\texorpdfstring{$LD(G/\pi)/\sigma$}{LD(G/pi)/sigma}}
Since $Q \neq \widetilde{Q}$ and $Q_\tau \neq I_d \otimes \widetilde{Q}$, 
we define reflection operators for the graphs $LD(G/\pi)$, $LD(G)$, and 
$LD(G)/\tau$ analogously to $R$ and $\widetilde{R}$ for $G$ and $G/\pi$, 
as given in Equations~\eqref{quotient reflection} and~\eqref{reflection operator 2}.
Each reflection is of the form $2PP^{\top} - I$ for an appropriate 
isometry $P$, hence unitary, Hermitian, and involutive (i.e.\ $R^2 = I$). Let the transition operators for the three coined quantum walks 
acting on $LD(G/\pi)$, $LD(G)$, and $LD(G)/\tau$ respectively, 
all of degree $d$, be defined as
\[
U_{\sigma} = S_{\sigma}R_{\sigma}, \qquad
U_{\tau} = S_{\tau}R_{\tau}, \qquad
\widetilde{U}_{\tau} = \widetilde{S}_{\tau}\widetilde{R}_{\tau}.
\]
In each case, the shift matrix 
is defined according to the shunt decomposition model as given in 
Equation~\eqref{shift_matrix_}, and the reflection operator $R = 2PP^{\top} - I$ 
serves as the coin operator for the respective graph.
The three reflection operators,
with explicit dimensions, are defined as follows.
\noindent\textit{Reflection operator on $LD(G/\pi)$}
(of size $|\mathcal{A}(G/\pi)|\cdot d \times |\mathcal{A}(G/\pi)|\cdot d
= nd^{2}\times nd^{2}$):
\begin{equation}
  R_{\sigma}
  \;=\;
  2\,
  \underbrace{
    (I_d \otimes \widetilde{Q})
  }_{\displaystyle nd^{2}\,\times\, nd}
  \;
  \underbrace{
    (\widetilde{Q}\,\widetilde{Q}^{\top})
  }_{\displaystyle nd\,\times\, nd}
  \;
  \underbrace{
    (I_d \otimes \widetilde{Q})^{\top}
  }_{\displaystyle nd\,\times\, nd^{2}}
  \;-\; I
  \;\in\;
  \mathbb{C}^{\,nd^{2}\times nd^{2}}.
\end{equation}
Here the ambient space decomposes as 
$\mathbb{C}^{nd^2} \cong \mathbb{C}^d \otimes \mathbb{C}^{nd}$, 
where $\mathbb{C}^d$ is the coin space (the $d$ outgoing arcs per vertex) 
and $\mathbb{C}^{nd}$ is the arc space of $G/\pi$ with 
$|A(G/\pi)| = nd$. The isometry 
$P_{\sigma} = I_d \otimes \widetilde{Q}$ 
(of size $nd^2 \times nd$) embeds the coin tensored with the arc-space 
projector $\widetilde{Q}\widetilde{Q}^{\top}$, so that
$R_{\sigma} = 2P_{\sigma}(\widetilde{Q}\widetilde{Q}^{\top})P_{\sigma}^{\top} - I$
is unitary and Hermitian.

\noindent\textit{Reflection operator on $LD(G)$}
(of size $|\mathcal{A}(G)|\cdot d \times |\mathcal{A}(G)|\cdot d
= 2nd^{2}\times 2nd^{2}$):
\begin{equation}
  R_{\tau}
  \;=\;
  2\,
  \underbrace{
    (I_d \otimes Q_{\tau})
  }_{\displaystyle 2nd^{2}\,\times\, 4nd}
  \;
  \underbrace{
    (I_2 \otimes Q_{\tau}Q_{\tau}^{\top})
  }_{\displaystyle 4nd\,\times\, 4nd}
  \;
  \underbrace{
    (I_d \otimes Q_{\tau})^{\top}
  }_{\displaystyle 4nd\,\times\, 2nd^{2}}
  \;-\; I
  \;\in\;
  \mathbb{C}^{2nd^{2}\times 2nd^{2}}.
\end{equation}
The ambient space decomposes as
$\mathbb{C}^{2nd^2} \cong \mathbb{C}^d \otimes \mathbb{C}^{2nd} 
\cong \mathbb{C}^d \otimes \mathbb{C}^2 \otimes \mathbb{C}^{nd}$,
where the $\mathbb{C}^2$ factor encodes the two-block structure induced 
by the equitable arc partition $\tau$. 
The isometry $P_{\tau} = I_d \otimes Q_{\tau}$ 
(of size $2nd^2 \times 4nd$) embeds into 
$\mathbb{C}^{4nd} \cong \mathbb{C}^2 \otimes \mathbb{C}^{2nd}$. 
The block projector $I_2 \otimes Q_{\tau}Q_{\tau}^{\top}$ acts 
identically on the two copies of $\mathbb{C}^{2nd}$, so that
$R_{\tau} = 2P_{\tau}(I_2 \otimes Q_{\tau}Q_{\tau}^{\top})P_{\tau}^{\top} - I$
is unitary and Hermitian.

\noindent\textit{Reflection operator on $LD(G)/\tau$}
(of size $|\mathcal{A}(LD(G)/\tau)| \times |\mathcal{A}(LD(G)/\tau)|
= 4nd\times 4nd$):
\begin{equation}
  \widetilde{R}_{\tau}
  \;=\;
  2\,(I_2 \otimes Q_{\tau}Q_{\tau}^{\top})
  \;-\; I
  \;\in\;
  \mathbb{C}^{4nd\times 4nd}.
\end{equation}
This operator acts directly on the reduced arc space
$\mathbb{C}^{4nd} \cong \mathbb{C}^2 \otimes \mathbb{C}^{2nd}$, 
where the $\mathbb{C}^2$ factor again denotes the $\tau$-block structure. 
Since $Q_{\tau}Q_{\tau}^{\top}$ is an orthogonal projector, 
$\widetilde{R}_{\tau}$ is unitary, Hermitian, and involutive.

Since each reflection operator is of the form $R = 2PP^{\top} - I$ 
for an appropriate isometry $P$, it satisfies $R^2 = I$ and 
$R = R^{\dagger}$. That is, each reflection is both involutive and 
Hermitian unitary. Furthermore, each is constructed from an isometric 
embedding and the appropriate orthogonal projector. Therefore, 
$R_{\sigma}$, $R_{\tau}$, and $\widetilde{R}_{\tau}$ are valid 
coined quantum-walk reflection operators on the arc spaces of 
$LD(G/\pi)$, $LD(G)$, and $LD(G)/\tau$, respectively.

Using the reflection operators defined above, we establish a relation 
between the transition operators of $LD(G)$, $LD(G/\pi)$, and 
$LD(G)/\tau$ in Proposition~\ref{relation_operators}, and subsequently 
show the equivalence of perfect state transfer in their respective 
line digraphs in Theorem~\ref{lemma_auxiliary_pst}.
\begin{prop}\label{relation_operators}
Let $G$ be a $d$-regular directed graph on $2n$ vertices. Let
$\tau$, $\sigma$, and $\pi$ be the equitable arc partitions of
$LD(G)$, $LD(G/\pi)$, and the equitable vertex partition of $G$ into cells of equal size,
respectively. Let $Q_{\tau}$ and $\widetilde{Q}$ be their normalized
characteristic matrices, satisfying
\[
  Q_{\tau}^{\top} Q_{\tau} = I,
  \qquad
  \widetilde{Q}^{\top}\widetilde{Q} = I.
\]
Let
\[
  U_{\tau} = S_{\tau} R_{\tau},
  \qquad
  U_{\sigma} = S_{\sigma} R_{\sigma},
  \qquad
  \widetilde{U}_{\tau} = \widetilde{S}_{\tau}\widetilde{R}_{\tau},
  \qquad
  \widetilde{U} = \widetilde{S}\,\widetilde{R}
\]
be the unitary evolution operators on $LD(G)$, $LD(G/\pi)$,
$LD(G)/\tau$, and $L(G/\pi)/\sigma,$ respectively, each
of degree $d$. Then the following hold.

\begin{enumerate}
  \item[\textup{(a)}]
    $R_{\tau}\,(I_d \otimes Q_{\tau})
    = (I_d \otimes Q_{\tau})\,\widetilde{R}_{\tau}$
    \quad and \quad
    $R_{\sigma}\,(I_d \otimes \widetilde{Q})
    = (I_d \otimes \widetilde{Q})\,\widetilde{R}.$
\item[\textup{(b)}] For all integers $k \geq 0$,
    \(
      \widetilde{U}_{\tau}^{\,k}
      = (I_d \otimes Q_{\tau})^{\top}\,U_{\tau}^{k}\,
        (I_d \otimes Q_{\tau}),
      \quad
      \widetilde{U}^{\,k}
      = (I_d \otimes \widetilde{Q})^{\top}\,U_{\sigma}^{k}\,
        (I_d \otimes \widetilde{Q}).
    \)
\end{enumerate}
\end{prop}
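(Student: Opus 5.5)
The plan mirrors the proof of Proposition~\ref{prop}, applied once to the pair $(U_\tau,\widetilde U_\tau)$ and once to the pair $(U_\sigma,\widetilde U)$. Set $P_\tau = I_d\otimes Q_\tau$ and $P_\sigma = I_d\otimes\widetilde Q$. By the mixed-product property, $P_\tau^\top P_\tau = I_d\otimes Q_\tau^\top Q_\tau = I$, and likewise $P_\sigma^\top P_\sigma = I$. So both are isometries, and everything reduces to the identity $(2PMP^\top - I)P = P(2M-I)$, valid for any isometry $P$.

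For part (a), substitute the definition of $R_\tau$:
\[
R_\tau P_\tau = 2P_\tau (I_2\otimes Q_\tau Q_\tau^\top) P_\tau^\top P_\tau - P_\tau = P_\tau\bigl(2(I_2\otimes Q_\tau Q_\tau^\top)-I\bigr) = P_\tau \widetilde R_\tau .
\]
The computation for $\sigma$ is identical and gives $R_\sigma P_\sigma = P_\sigma(2\widetilde Q\widetilde Q^\top - I) = P_\sigma\widetilde R$, by \eqref{quotient reflection}. The only care needed is dimensional bookkeeping: the middle factors must live on the column space of $P$. I take the dimensions as declared in the definitions preceding the proposition.

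For part (b), I first establish a shift intertwining $S_\tau P_\tau = P_\tau \widetilde S_\tau$ and $S_\sigma P_\sigma = P_\sigma\widetilde S$. I obtain this from Lemma~\ref{lemma 3.1}, together with Corollary~\ref{cor:shunt-intertwine} and Remark~\ref{rem:shunt-intertwine-general}, applied to the $d$-regular digraphs $LD(G)$ and $LD(G/\pi)$. Their shunts respect the equitable partitions $\tau$ and $\sigma$ (Lemmas~\ref{line digraph 1} and~\ref{line digraph 2}), so each shunt maps every cell onto a single cell. This gives $P_j Q_\tau = Q_\tau \widetilde P_j$ blockwise, and summing $E_{jj}\otimes(\cdot)$ over $j$ yields the intertwining. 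Combining with (a) gives
\[
U_\tau P_\tau = S_\tau R_\tau P_\tau = S_\tau P_\tau\widetilde R_\tau = P_\tau\widetilde S_\tau\widetilde R_\tau = P_\tau \widetilde U_\tau ,
\]
and similarly $U_\sigma P_\sigma = P_\sigma\widetilde U$. Induction on $k$ gives $U_\tau^k P_\tau = P_\tau\widetilde U_\tau^k$. Left-multiplying by $P_\tau^\top$ and using $P_\tau^\top P_\tau = I$ yields the first claimed identity, and the second follows the same way. The case $k=0$ is just $P^\top P = I$.

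The main obstacle is the shift intertwining. Lemma~\ref{lemma 3.1} is stated for vertex partitions of $G$, so it has to be transported to arc partitions of line digraphs. This requires that the coin-space tensor structure of $S_\tau$ matches the factor $I_d$ in $P_\tau$, meaning the arcs are ordered by arc class exactly as in Subsection~\ref{subsection 4.1}. I would first verify that ordering convention and the equal-cell-size hypothesis, and then invoke the lemma shunt by shunt. If the lemma cannot be applied directly, I would instead argue via Lemma~\ref{lem:equitable_equiv}: show that $P_\tau P_\tau^\top$ commutes with $S_\tau$, which gives invariance of $\operatorname{Im}(P_\tau)$, and then identify $P_\tau^\top S_\tau P_\tau$ with $\widetilde S_\tau$.
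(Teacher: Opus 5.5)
Your route is the paper's. Part (a) is the same isometry computation. Part (b) likewise gets the shift relation by transporting Lemma~\ref{lemma 3.1} and Remark~\ref{rem:shunt-intertwine-general} to the line digraphs, combines it with (a), and inducts. Obtaining $U_\tau P_\tau=P_\tau\widetilde U_\tau$ directly from the two intertwinings is tidier than the paper's step, which appeals to Lemma~\ref{lem:equitable_equiv} for a commutation with $U$ that the lemma does not give.

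\textbf{The gap.} The load-bearing step is ``the shunts respect $\tau$ and $\sigma$ by Lemmas~\ref{line digraph 1} and~\ref{line digraph 2}, so each shunt maps every cell onto a single cell''. Equitability constrains only $A_{LD}=\sum_j P_j$, not the individual shunts. For the terminal-vertex partition $\sigma$ of Lemma~\ref{line digraph 2} (the one behind $\widetilde Q$ in Sections~\ref{section 8}--\ref{section 10}), the step fails for every shunt decomposition:
\begin{enumerate}
\item A shunt of $LD(G/\pi)$ sends an arc with head $v$ to an arc with tail $v$.
\item By injectivity and counting, it therefore maps $\sigma_v$ bijectively onto the $d$ out-arcs of $v$.
\item Those out-arcs meet $\sigma_w$ in exactly $\widetilde A_{vw}$ arcs, so $\widetilde Q^\top P_j\widetilde Q=\tfrac1d\widetilde A$ for every shunt $P_j$.
\end{enumerate}
Together with (a) this gives
\[
(I_d\otimes\widetilde Q)^\top U_\sigma\,(I_d\otimes\widetilde Q)=\bigl(I_d\otimes \tfrac1d\widetilde A\bigr)\widetilde R ,
\]
which equals $\widetilde U=\widetilde S\widetilde R$ only if $\widetilde A$ is $d$ times a permutation matrix. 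For $G/\pi=K_n^{\circlearrowleft}$ the compression is $I_n\otimes\tfrac1n J_n$, which is singular, while $\widetilde U$ is unitary. So the second identity of (b) already fails at $k=1$; for $n=2$ you can check this by hand on the four arcs of $K_2^{\circlearrowleft}$.

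\textbf{What this means for the statement.} The paper's proof makes the same leap: Remark~\ref{rem:shunt-intertwine-general} asserts that equitability forces each shunt to preserve cells, which is false. So this is not an idea you missed; the proposition lacks a hypothesis. It needs shunt decompositions of $LD(G)$ and $LD(G/\pi)$ with
\[
P_jQ_\tau=Q_\tau\widetilde P_j,\qquad P_j\widetilde Q=\widetilde Q\widetilde P_j ,
\]
where the $\widetilde P_j$ are the shunts defining $\widetilde S_\tau$ and $\widetilde S$. Under that assumption your argument is complete.

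The assumption can hold for some partitions. For $\tau=\{C_{ij}\}$ on $LD(K_{n,n}^{\rightleftharpoons})$, the shunts $(a_i,b_j)\mapsto(b_j,a_{i+t})$, $(b_i,a_j)\mapsto(a_j,b_{i+t})$ (indices mod $n$) send $C_{ij}$ onto $C_{j,i+t}$.

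Your fallback does not avoid the issue. $P_\tau P_\tau^\top=I_d\otimes Q_\tau Q_\tau^\top$ commutes with $S_\tau=\sum_j E_{jj}\otimes P_j$ if and only if every $P_j$ commutes with $Q_\tau Q_\tau^\top$. That is the same cell-preservation condition, and Lemma~\ref{lem:equitable_equiv} only yields commutation with $\sum_j P_j$.
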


\begin{proof}
\textbf{(a).}
Set $P = I_d \otimes Q_{\tau}$. Since $P^{\top}P = I$, we compute
\begin{align*}
  R_{\tau}\,P
  &= \Bigl(2\,P\,(I_2 \otimes Q_{\tau}Q_{\tau}^{\top})\,P^{\top}
     - I\Bigr)P = P\,\Bigl(2\,(I_2 \otimes Q_{\tau}Q_{\tau}^{\top}) - I\Bigr) = P\,\widetilde{R}_{\tau}.
\end{align*}
Hence \begin{equation}
R_{\tau}\,(I_d \otimes Q_{\tau})
= (I_d \otimes Q_{\tau})\,\widetilde{R}_{\tau}.\end{equation}
The second identity follows by the identical argument with $Q_{\tau}$
replaced by $\widetilde{Q}$ and $R_{\tau}$ replaced by $R_{\sigma}$.

\textbf{(b).}
Since $\tau$ is an equitable arc partition of $LD(G)$, we have
\(
  A_{LD(G)/\tau} = Q_{\tau}^{\top}\,A_{LD(G)}\,Q_{\tau}.
\)
Since $LD(G)/\tau$ and $LD(G)$ are both $d$-regular, by same argument as 
Lemma~\ref{lemma 3.1} and Remark~\ref{rem:shunt-intertwine-general}, we have 
\(
  \widetilde{S}_{\tau}
  = (I_d \otimes Q_{\tau})^{\top}\,S_{\tau}\,(I_d \otimes Q_{\tau}).
\)
Therefore,
\begin{align*}
  \widetilde{U}_{\tau}
  = \widetilde{S}_{\tau}\,\widetilde{R}_{\tau}
  &= (I_d \otimes Q_{\tau})^{\top}\,S_{\tau}\,(I_d \otimes Q_{\tau})\,
     \widetilde{R}_{\tau} \\
  &= (I_d \otimes Q_{\tau})^{\top}\,S_{\tau}\,R_{\tau}\,
     (I_d \otimes Q_{\tau})
     \qquad\text{(by part~(a))} \\
  &= (I_d \otimes Q_{\tau})^{\top}\,U_{\tau}\,(I_d \otimes Q_{\tau}).
\end{align*}
For the second identity, since $\sigma$ is an equitable arc partition
of $LD(G/\pi)$ and $LD(G/\pi)/\sigma \cong G/\pi$ by
Theorem~\ref{Isomorphism}, both $LD(G/\pi)/\sigma$ and $LD(G/\pi)$ are
$d$-regular, by same argument as 
Lemma~\ref{lemma 3.1} and Remark~\ref{rem:shunt-intertwine-general}, we have 
\(
  \widetilde{S}
  = (I_d \otimes \widetilde{Q})^{\top}\,S_{\sigma}\,
    (I_d \otimes \widetilde{Q}).
\)
Therefore,
\begin{align*}
  \widetilde{U}
  = \widetilde{S}\,\widetilde{R}
  &= (I_d \otimes \widetilde{Q})^{\top}\,S_{\sigma}\,
     (I_d \otimes \widetilde{Q})\,\widetilde{R} \\
  &= (I_d \otimes \widetilde{Q})^{\top}\,S_{\sigma}\,R_{\sigma}\,
     (I_d \otimes \widetilde{Q})
     \qquad\text{(by part~(a))} \\
  &= (I_d \otimes \widetilde{Q})^{\top}\,U_{\sigma}\,
     (I_d \otimes \widetilde{Q}).
\end{align*}
By Lemma~\ref{lem:equitable_equiv}, $(I_d \otimes Q_{\tau})(I_d \otimes Q_{\tau})^\top$ commutes with $U$, and hence 
$\operatorname{Im}(I_d \otimes Q_{\tau})$ is invariant under
$U_{\tau}$.This gives $U_{\tau}\,(I_d \otimes Q_{\tau})
= (I_d \otimes Q_{\tau})\,\widetilde{U}_{\tau}$. Applying this relation inductively yields
\begin{equation}\label{induction}
  U_{\tau}^{k}(I_d \otimes Q_{\tau})
  = (I_d \otimes Q_{\tau})\,\widetilde{U}_{\tau}^{\,k}
\quad \forall k \geq 0\end{equation}
 
Multiplying both sides of (\ref{induction}) on the left by $(I_d \otimes Q_{\tau})^{\top}$ and using
$(I_d \otimes Q_{\tau})^{\top}(I_d \otimes Q_{\tau}) = I$, we obtain
\begin{equation}
  \widetilde{U}_{\tau}^{\,k}
  = (I_d \otimes Q_{\tau})^{\top}\,U_{\tau}^{k}\,
    (I_d \otimes Q_{\tau})
  \quad\text{for all } k \geq 0.
\end{equation}
The identity for $\widetilde{U}^{\,k}$ follows by the identical
argument with $Q_{\tau}$ replaced by $\widetilde{Q}$ and $U_{\tau}$
replaced by $U_{\sigma}$.
\end{proof}

\begin{Theorem}\label{lemma_auxiliary_pst}
Let $G$ be a $d$-regular directed graph on \(2n\) vertices.  Let $\tau = \{T_1, \ldots, T_{2m}, m=2n\}$, 
$\sigma = \{\sigma_1, \ldots, \sigma_{n}\}$, and $\pi = \{C_1, \ldots, C_{n}\}$ 
be equitable arc partitions of $LD(G)$, $LD(G/\pi)$, and vertex partition of $G$ into cells 
of equal size, with normalized characteristic matrices $Q_{\tau}$, 
$\widetilde{Q}$, and $Q$, respectively.let
$\tau = \{T_1, \ldots, T_{2m}, m=2n\}$ be an equitable arc partition of
$LD(G)$ with normalized characteristic matrix $Q_{\tau}$, and let
$\pi = (C_1, \ldots, C_{n})$ be an equitable vertex partition of
$G$.  Assume that
\[
  LD(G)/\tau \;\cong\; LD(G/\pi).
\]
Then the transition operator $U_{\tau}$ of $LD(G)$ exhibits PST from
\[
  (I_d \otimes Q_{\tau})(I_d \otimes \widetilde{Q})\,x
  \quad\text{to}\quad
  (I_d \otimes Q_{\tau})(I_d \otimes \widetilde{Q})\,y
\]
at time $k$ if and only if the transition operator $\widetilde{U}$ of
$LD(G/\pi)/\sigma$ exhibits PST from $x$ to $y$
at time $k$, for all
$x, y \in \operatorname{Im}(I_d \otimes \widetilde{Q})$.
\end{Theorem}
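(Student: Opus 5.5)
The plan is to mirror the proof of Theorem~\ref{PST theorem 1}, now chaining two intertwinings instead of one. Write $P_1 = I_d \otimes Q_\tau$ and $P_2 = I_d \otimes \widetilde{Q}$. By Proposition~\ref{relation_operators}(b) we already have
\[
U_\tau^k P_1 = P_1 \widetilde{U}_\tau^{\,k}, \qquad U_\sigma^k P_2 = P_2 \widetilde{U}^{\,k},
\]
for all $k \ge 0$. Here the second relation comes from the invariance of $\operatorname{Im}(P_2)$ under $U_\sigma$, exactly as in the proof of that proposition. Both $P_1$ and $P_2$ are isometries, so $P_1^\top P_1 = I$ and $P_2^\top P_2 = I$.

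The key step is to use the hypothesis $LD(G)/\tau \cong LD(G/\pi)$ to identify the walk $\widetilde{U}_\tau$ on $LD(G)/\tau$ with the walk $U_\sigma$ on $LD(G/\pi)$. Under a fixed arc ordering realizing the isomorphism, the two adjacency matrices coincide. Both graphs are $d$-regular, so their shunt decompositions give the same shift, $\widetilde{S}_\tau = S_\sigma$. For the reflections, one must check that the isomorphism carries $\widetilde{R}_\tau = 2(I_2\otimes Q_\tau Q_\tau^\top) - I$ to $R_\sigma = 2P_2(\widetilde{Q}\widetilde{Q}^\top)P_2^\top - I$. Equivalently, the two projectors must agree after permutation. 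I would establish this by taking the induced arc partition of $LD(G)/\tau$ to be the image of $\sigma$, which is legitimate because Lemma~\ref{line digraph 2} builds $\sigma$ purely from terminal vertices and this construction is preserved by isomorphism. This gives $\widetilde{U}_\tau = \Phi U_\sigma \Phi^\top$ for the permutation matrix $\Phi$ of the isomorphism, which I absorb into the ordering so that $\widetilde{U}_\tau = U_\sigma$.

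Composing the two intertwinings then gives
\[
U_\tau^k P_1 P_2 = P_1 U_\sigma^k P_2 = P_1 P_2 \widetilde{U}^{\,k},
\]
and $(P_1P_2)^\top (P_1P_2) = I$. The equivalence now follows as in Theorem~\ref{PST theorem 1}. For ($\Leftarrow$), if $\widetilde{U}^k x = \gamma y$, then $U_\tau^k P_1P_2 x = \gamma P_1P_2 y$. For ($\Rightarrow$), if $U_\tau^k P_1P_2x = \gamma P_1P_2 y$, left-multiply by $(P_1P_2)^\top$ and use the intertwining to get $\widetilde{U}^k x = \gamma y$. Unit norms are preserved because $P_1P_2$ is an isometry.

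The main obstacle is the identification step: showing that the isomorphism matches the reflection on $LD(G)/\tau$ with $R_\sigma$, and not only the shifts. This also requires matching dimensions, since $P_1$ has $4nd$ columns and $P_2$ has $nd^2$ rows, so the composition only makes sense once the isomorphism aligns these spaces. I would handle both issues by fixing orderings in which the arc-space identification $\mathbb{C}^{4nd}\cong \mathbb{C}^{nd^2}$ is induced by the graph isomorphism, and then verifying the projector equality cell by cell.
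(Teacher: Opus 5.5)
Your chain $U_\tau^kP_1P_2=P_1U_\sigma^kP_2=P_1P_2\widetilde U^{\,k}$ is exactly the paper's argument. The paper bridges the middle step simply by asserting that $LD(G)/\tau\cong LD(G/\pi)$ gives $U_\sigma=\widetilde U_\tau$. You are right that this identification is the crux, but the verification you propose cannot succeed for the operators as defined. Once the shifts are matched, $\widetilde U_\tau=\Phi U_\sigma\Phi^\top$ is equivalent, since the shift is invertible, to $\widetilde R_\tau=\Phi R_\sigma\Phi^\top$. That is, the projectors $\tfrac12(\widetilde R_\tau+I)=I_2\otimes Q_\tau Q_\tau^\top$ and $\Phi P_2\widetilde Q\widetilde Q^\top P_2^\top\Phi^\top$ would have to be equal. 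The first has rank $2|\tau|=2nd$, because the isomorphism forces $|\tau|=|\mathcal A(G/\pi)|=nd$. The second has rank $|\sigma|=n$. No ordering reconciles these ranks, so your cell-by-cell check would fail, and with the shifts matched, $\widetilde U_\tau\neq U_\sigma$.

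The shift step is not automatic either. Shunt decompositions are not unique, and the single shift on the middle graph must both equal the compression $P_1^\top S_\tau P_1$ and satisfy $S_\sigma P_2=P_2\widetilde S$. Equal adjacency matrices alone do not give this.

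Nor is there a free choice to exploit. $\widetilde R_\tau=2(I_2\otimes Q_\tau Q_\tau^\top)-I$ is built from $Q_\tau$ alone and involves no partition of the vertices of $LD(G)/\tau$. So ``taking the induced partition to be the image of $\sigma$'' amounts to redefining $\widetilde R_\tau:=\Phi R_\sigma\Phi^\top$. Then Proposition~\ref{relation_operators}(a), $R_\tau P_1=P_1\widetilde R_\tau$, fails for the given $R_\tau=2P_1(I_2\otimes Q_\tau Q_\tau^\top)P_1^\top-I$, because $P_1$ is injective and the two projectors differ. Restoring it forces you to replace $R_\tau$ by $2P_1\Phi P_2\widetilde Q\widetilde Q^\top P_2^\top\Phi^\top P_1^\top-I$, which changes the operator $U_\tau$ the theorem is about.

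For that modified walk, your isometry argument goes through verbatim with lift $P_1\Phi P_2$. It also goes through if you impose $\widetilde U_\tau=U_\sigma$ as an explicit hypothesis, which is what the paper's proof tacitly does. For the operators as defined, the identification is a genuine gap, not a matter of choosing orderings.
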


\begin{proof}
Let $U_{\tau}$, $\widetilde{U}_{\tau}$, $U_{\sigma}$, and
$\widetilde{U}$ denote the unitary evolution operators on $LD(G)$,
$LD(G)/\tau$, $LD(G/\pi)$, and $LD(G/\pi)/\sigma$, respectively.
Since $\tau$ is an equitable arc partition of $LD(G)$ and $\sigma$ is
an equitable arc partition of $LD(G/\pi)$, Proposition~\ref{relation_operators}
gives, for every integer $k \geq 0$,
\begin{align}
  U_{\tau}^{k}\,(I_d \otimes Q_{\tau})
  &= (I_d \otimes Q_{\tau})\,\widetilde{U}_{\tau}^{\,k},
  \label{eq:tau-power}\\
  U_{\sigma}^{k}\,(I_d \otimes \widetilde{Q})
  &= (I_d \otimes \widetilde{Q})\,\widetilde{U}^{\,k}.
  \label{eq:sigma-power}
\end{align}

\noindent\textbf{($\Rightarrow$).}
Suppose $\widetilde{U}$ exhibits PST at time $k$,
so that
\[
  \widetilde{U}^{\,k}\,x = y
  \qquad
  \text{for some } x, y \in \operatorname{Im}(I_d \otimes \widetilde{Q}).
\]
Applying \eqref{eq:sigma-power} yields
\[
  U_{\sigma}^{k}\,(I_d \otimes \widetilde{Q})\,x
  = (I_d \otimes \widetilde{Q})\,y.
\]
Since $LD(G)/\tau \cong LD(G/\pi)$, we have $U_{\sigma} =
\widetilde{U}_{\tau}$, and \eqref{eq:tau-power} then gives
\begin{equation}
  U_{\tau}^{k}\,(I_d \otimes Q_{\tau})(I_d \otimes \widetilde{Q})\,x
  = (I_d \otimes Q_{\tau})(I_d \otimes \widetilde{Q})\,y.
\end{equation}
Hence $U_{\tau}$ exhibits PST between the
corresponding lifted states in $LD(G)$.

\noindent\textbf{($\Leftarrow$).}
Conversely, suppose $U_{\tau}$ exhibits PST at
time $k$, so that
\[
  U_{\tau}^{k}\,(I_d \otimes Q_{\tau})(I_d \otimes \widetilde{Q})\,x
  = (I_d \otimes Q_{\tau})(I_d \otimes \widetilde{Q})\,y.
\]
Multiplying on the left by $(I_d \otimes Q_{\tau})^{\top}$ and using
\eqref{eq:tau-power} together with
$(I_d \otimes Q_{\tau})^{\top}(I_d \otimes Q_{\tau}) = I$, we obtain
\[
  U_{\sigma}^{k}\,(I_d \otimes \widetilde{Q})\,x
  = (I_d \otimes \widetilde{Q})\,y.
\]
Multiplying on the left by $(I_d \otimes \widetilde{Q})^{\top}$ and
using \eqref{eq:sigma-power} together with
$(I_d \otimes \widetilde{Q})^{\top}(I_d \otimes \widetilde{Q}) = I$,
we obtain
\begin{equation}
  \widetilde{U}^{\,k}\,x = y.
\end{equation}
Hence $\widetilde{U}$ exhibits PST from $x$ to $y$. Therefore, PST occurs in $LD(G/\pi)/\sigma$
between states $x$ and $y$ if and only if it occurs in $LD(G)$
between the lifted states
$(I_d \otimes Q_{\tau})(I_d \otimes \widetilde{Q})\,x$ and
$(I_d \otimes Q_{\tau})(I_d \otimes \widetilde{Q})\,y$.
\end{proof}
\section{Chebyshev Representation of the Unitary Evolution on Quotient Graphs}\label{section 6}

The powers of the evolution operator $\widetilde{U}$ can be expressed 
in terms of Chebyshev polynomials of the first kind, yielding a 
representation analogous to that of the Grover walk~\cite{kubota2022perfect}. 
This formulation provides explicit criteria for PST
on quotient graphs, which can subsequently be lifted to the original 
graph $G$. We will apply this framework in Section~\ref{section 7} 
to establish PST in the cycle graph $C_{2n}$.

The reduced shift operator \(\widetilde{S}\) of quotient graph \(G/\pi\) is involutory, that is,
\begin{equation}\label{shift matrix_Involutary matrix}
    \widetilde{S}^{2}=I,
\end{equation}
whenever the induced permutation matrix satisfies
\(
\widetilde{P_j}^{2}=I_d
\) by Corollary~\ref{involutary}. We define the \emph{discriminant matrix} of the quotient graph by
\begin{equation}
    \widetilde{D}
    =
    \widetilde{D}(G/\pi)
    =
\widetilde{Q}^{\top}\widetilde{S}\widetilde{Q}
    \in \mathbb{C}^{d\times d}.
\end{equation}
 Taking transpose gives
\[
\widetilde{D}^{\top}
=
(\widetilde{Q}^{\top}\widetilde{S}\widetilde{Q})^{\top}
=
\widetilde{Q}^{\top}\widetilde{S}^{\top}\widetilde{Q}.
\]
Since \(\widetilde{S}\) is unitary and involutory, by
Equations~\eqref{shift matrix unitary opertaor} and
\eqref{shift matrix_Involutary matrix},
\[
\widetilde{S}\widetilde{S}^{\top}=I
\quad \Rightarrow \quad
\widetilde{S}^{\top}=\widetilde{S}^{-1}=\widetilde{S}.
\]
Hence,
\begin{equation}\label{symmetric}
\widetilde{D}^{\top}
=
\widetilde{Q}^{\top}\widetilde{S}\widetilde{Q}
=
\widetilde{D}.
\end{equation}
Therefore, \(\widetilde{D}\) is symmetric.
The matrix \(\widetilde  D\) encodes the effective transition structure of the quotient system. In particular, the dynamics of the walk can be reduced to the action of \(\widetilde  D\), and the \(n\)-th power of the evolution operator can be expressed through Chebyshev polynomials \(T_n(\widetilde  D)\). This establishes a direct connection between the adjacency structure \(\widetilde {A}\) and the unitary dynamics of the shunt decomposition walk. We first show how the discriminant $\widetilde{D}$ relates to the 
adjacency matrix $\widetilde{A}$ of the quotient graph $G/\pi$ under 
a specific characteristic matrix, as given in Lemma~\ref{lem:discriminant} below.
\begin{lemma}\label{lem:discriminant}
Let $G$ be a $d$-regular directed graph on $2n$ vertices with no
internal edges within any cell, and let $\pi=\{C_1,\dots,C_n\}$ be an
equitable partition of $V$.  Let $G/\pi$ be the quotient graph with
adjacency matrix $\widetilde{A}$, arc set
$\mathcal{A}(G/\pi)$, and permutation decomposition
$\widetilde{A}=\widetilde P_1+\cdots+\widetilde P_d$.  Let $\widetilde{S}=\mathrm{diag}(\widetilde P_1,\dots,\widetilde P_d)$
for the reduced shift operator on $\mathcal{A}(G/\pi)$. Suppose the normalized characteristic matrix
$\widetilde{Q}\in\mathbb{R}^{dn\times n}$ of the arc partition of
    $\mathrm{LD}(G/\pi)$ takes one of the following forms:
\begin{enumerate}
    \item
  \[
        \widetilde{Q}
        \;=\;
        \frac{1}{\sqrt{d}}
        \begin{pmatrix}I_n\\I_n\\\vdots\\I_n\end{pmatrix}
        \;\in\mathbb{R}^{dn\times n},
    \]
    corresponding to all $d$ arc-groups from each cell being
        ordered identically; or

    \item 
   \[
        \widetilde{Q}
        \;=\;
        \frac{1}{\sqrt{d}}
        \begin{pmatrix}\widetilde P_1\\\widetilde P_2\\\vdots\\\widetilde P_d\end{pmatrix}
        \;\in\mathbb{R}^{dn\times n},
    \]
   where $\widetilde P_1,\dots,\widetilde P_d$ are the same permutation matrices appearing
    in the decomposition $\widetilde{A}=\widetilde P_1+\cdots+\widetilde P_d$.
\end{enumerate}
Then the {discriminant matrix
\[
    \widetilde{D}
    \;=\;
    \widetilde{Q}^{\!\top}\,\widetilde{S}\,\widetilde{Q}.
\]
satisfies 
\[
    \widetilde{D}
    \;=\;
    \frac{1}{d}\,\widetilde{A},
\] in both cases,
where $\widetilde{A}$ has zero diagonal (since there are no internal
edges within cells).  Moreover, $\widetilde{D}$ is row-stochastic and
every eigenvalue satisfies $|\lambda(\widetilde{D})|\le 1$.}
\end{lemma}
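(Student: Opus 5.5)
The plan is a direct block computation that uses the block-diagonal form of $\widetilde{S}$. Write $\widetilde{S}=\mathrm{diag}(\widetilde P_1,\dots,\widetilde P_d)=\sum_{j=1}^d E_{jj}\otimes\widetilde P_j$. Write $\widetilde{Q}$ as a column of $d$ blocks of size $n\times n$, namely $\widetilde{Q}=\frac{1}{\sqrt d}\,(B_1^\top,\dots,B_d^\top)^\top$. Case~1 has $B_j=I_n$, and case~2 has $B_j=\widetilde P_j$. Block multiplication then gives
\[
\widetilde{D}=\widetilde{Q}^\top\widetilde{S}\widetilde{Q}=\frac{1}{d}\sum_{j=1}^d B_j^\top\,\widetilde P_j\,B_j .
\]
Here I must check that the row ordering of $\widetilde{Q}$, grouped by arc class, matches the block ordering of $\widetilde{S}$. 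This is exactly what ``ordered identically'' in the hypothesis provides. I also note that $\widetilde D$ is really $n\times n$ here, not $d\times d$.

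In case~1 each summand is $\widetilde P_j$, so $\widetilde{D}=\frac1d\sum_j\widetilde P_j=\frac1d\widetilde{A}$ by the shunt decomposition of $\widetilde A$. In case~2 each summand is $\widetilde P_j^\top\widetilde P_j\widetilde P_j$. Since $\widetilde P_j$ is a real permutation matrix, $\widetilde P_j^\top\widetilde P_j=I_n$, as already used in \eqref{shift matrix unitary opertaor}. The sum therefore collapses again to $\frac1d\sum_j\widetilde P_j=\frac1d\widetilde A$. For the zero diagonal, $\widetilde A_{ii}=b_{ii}$ counts the neighbours of a vertex of $C_i$ inside $C_i$. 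This count vanishes because no edge is internal to a cell. Hence no shunt $\widetilde P_j$ has a fixed point, and $\widetilde D$ has zero diagonal.

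For the last assertions, $\widetilde D=\frac1d\sum_j\widetilde P_j$ is entrywise nonnegative. Each $\widetilde P_j$ has all row sums equal to $1$, so every row of $\widetilde A$ sums to $d$; this agrees with Corollary~\ref{cor:3.1}. Hence $\widetilde D\mathbf 1=\mathbf 1$, and $\widetilde D$ is row-stochastic. The eigenvalue bound follows from $\rho(\widetilde D)\le\|\widetilde D\|_\infty=\max_i\sum_k|\widetilde D_{ik}|=1$. Alternatively, $\|\widetilde D\|_2\le\frac1d\sum_j\|\widetilde P_j\|_2=1$. Combined with the symmetry \eqref{symmetric}, either bound places the spectrum in $[-1,1]$.

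The main obstacle is not the algebra but bookkeeping. One must make sure that the quotient shunts $\widetilde P_j$ are genuine permutation matrices, as guaranteed by Corollary~\ref{cor:shunt-intertwine} and Remark~\ref{rem:shunt-intertwine-general}. One must also make sure that the arc ordering used for $\widetilde{S}$ is the same one used to build $\widetilde Q$. Otherwise the block formula for $\widetilde D$ picks up conjugating permutations.
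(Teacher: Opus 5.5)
Your proposal is correct and follows the paper's proof essentially step for step. It uses the same block multiplication $\widetilde D=\frac1d\sum_j B_j^\top\widetilde P_jB_j$ with $B_j=I_n$ or $B_j=\widetilde P_j$, collapses case (ii) via $\widetilde P_j^\top\widetilde P_j=I_n$, and applies the same row-sum argument for stochasticity. The only differences are minor: you bound the spectrum by $\rho(\widetilde D)\le\|\widetilde D\|_\infty=1$ where the paper cites Perron--Frobenius, and you correctly note that $\widetilde D$ is $n\times n$ and justify the zero diagonal, which the paper only asserts.
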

\begin{proof}

\small
\noindent\textit{Case (i): }
With $\widetilde{Q}=\frac{1}{\sqrt{d}}(I_n^{\top}\cdots I_n^{\top})^{\top}$
and $\widetilde{S}=\mathrm{diag}(\widetilde P_1,\dots,\widetilde P_d)$, block multiplication gives
\[
    \widetilde{S}\,\widetilde{Q}
    \;=\;
    \begin{pmatrix}\widetilde P_1&&\\&\ddots&\\&&\widetilde P_d\end{pmatrix}
    \frac{1}{\sqrt{d}}
    \begin{pmatrix}I_n\\\vdots\\I_n\end{pmatrix}
    \;=\;
    \frac{1}{\sqrt{d}}
    \begin{pmatrix}\widetilde P_1\\\vdots\\\widetilde P_d\end{pmatrix},
\]
and therefore
\begin{equation}
    \widetilde{D}
    \;=\;
    \widetilde{Q}^{\!\top}\widetilde{S}\,\widetilde{Q}
    \;=\;
    \frac{1}{d}
    \begin{pmatrix}I_n&\cdots&I_n\end{pmatrix}
    \begin{pmatrix}\widetilde P_1\\\vdots\\\widetilde P_d\end{pmatrix}
    \;=\;
    \frac{1}{d}(\widetilde P_1+\cdots+\widetilde P_d)
    \;=\;
    \frac{1}{d}\,\widetilde{A}.
\end{equation}

\small
\noindent\textit{Case (ii): }
With $\widetilde{Q}=\frac{1}{\sqrt{d}}(\widetilde P_1^{\top}\cdots \widetilde P_d^{\top})^{\top}$
and the same $\widetilde{S}$, block multiplication gives
\[
    \widetilde{S}\,\widetilde{Q}
    \;=\;
    \frac{1}{\sqrt{d}}
    \begin{pmatrix} \widetilde P_1^2\\\vdots\\\widetilde P_d^2\end{pmatrix},
\]therefore
\begin{align*}
    \widetilde{D}
    &\;=\;
    \widetilde{Q}^{\!\top}\widetilde{S}\,\widetilde{Q}
    \;=\;
    \frac{1}{d}
    \begin{pmatrix}\widetilde P_1^{\top}&\cdots&P_d^{\top}\end{pmatrix}
    \begin{pmatrix} \widetilde P_1^2\\\vdots\\\widetilde  P_d^2 \end{pmatrix}
    \;=\;
    \frac{1}{d}\sum_{k=1}^d\widetilde  P_k^{\top}\widetilde P_k^2.
\end{align*}
Since each $\widetilde P_k$ is a permutation matrix we have $\widetilde P_k^{\top}\widetilde P_k=I_n$,
hence $\widetilde P_k^{\top}\widetilde P_k^2=P_k$, and so
\begin{equation}
    \widetilde{D}
    \;=\;
    \frac{1}{d}\sum_{k=1}^d \widetilde P_k
    \;=\;
    \frac{1}{d}\,\widetilde{A}.
\end{equation}

Since each $\widetilde P $ is a permutation matrix, every row of $\widetilde P_k$ sums to
$1$.  Hence every row of $\widetilde{A}=\sum_{k=1}^d \widetilde P_k$ sums to $d$,
and therefore every row of $\widetilde{D}=\frac{1}{d}\widetilde{A}$
sums to $1$, confirming that $\widetilde{D}$ is row-stochastic with
non-negative entries. By the Perron--Frobenius theorem~\cite{horn2013matrix}, the spectral
radius of a row-stochastic matrix equals $1$, so every eigenvalue of
$\widetilde{D}$ satisfies $|\lambda(\widetilde{D})|\le 1$.

\end{proof}

\begin{defn}\label{def:quotient-vertex-type}
Let $ G/\pi$ be the quotient graph of a $d$-regular directed graph
$G$ under an equitable partition $\pi=\{C_1,\dots,C_r\}$. Let
$\widetilde{Q}\in\mathbb{R}^{|\mathcal A(G/\pi)|\times r}$ be the
normalized characteristic matrix of the arc partition of $LD(G/\pi)$,
whose $(a,i)$-entry is
\(
(d|C_i|)^{-1/2}
\)
if the arc $a\in\mathcal A(G/\pi)$ has terminal vertex in $C_i$, and
$0$ otherwise. For each cell $C_i\in\pi$, let
$e_{C_i}\in\mathbb C^r$
denote the standard basis vector with $1$ in the $i$-th position.
The \emph{quotient vertex-type state} associated with $C_i$ is
\begin{equation}
x_{C_i}
=
\widetilde{Q}e_{C_i}
=
\frac{1}{\sqrt{d|C_i|}}
\sum_{\substack{a\in\mathcal A(G/\pi)\\ t(a)\in C_i}} e_a,
\end{equation}
where $e_a$ is the standard basis vector indexed by arc $a$. Thus, $x_{C_i}$ is the normalized uniform superposition over all arcs
of $G/\pi$ whose terminal vertex lies in $C_i$. The set of quotient vertex-type states is
\begin{equation}
\widetilde{\chi}
=
\{\,\widetilde{Q}e_{C_i}: C_i\in\pi\,\}.
\end{equation}
\end{defn}

\begin{defn}\label{def:pst-Q}
Let $G$ be a $d$-regular directed graph with evolution operator $U$,
and let $ G/\pi$ be its quotient with reduced operator
$\widetilde{U}$. Given the lift operator $Q_\tau = I_d\otimes \widetilde{Q}
$, we say that PST occurs
from cell $C_i$ to cell $C_j$ at time $k$ if
\begin{equation}
    \bigl|\langle x_{C_j},\,\widetilde{U}^k x_{C_i}\rangle\bigr|
    \;=\; 1,
\end{equation}
where $x_{C_i} = \widetilde{Q}\,e_{C_i}$ is the quotient vertex-type
state of Definition~\ref{def:quotient-vertex-type}.
\end{defn}
Under the assumption that $\operatorname{Im}(Q_{\tau})$ is
$U$-invariant, Theorem~\ref{PST theorem 1} gives the equivalent
evolution in the parent graph:
\begin{equation}
  U^{k}(Q_{\tau}\,x_{C_i})
  \;=\; Q_{\tau}\,(\widetilde{U}^{k}\,x_{C_i})
  \;=\; Q_{\tau}\,x_{C_j}.
\end{equation}Thus PST between the quotient states $x_{C_i}$ and $x_{C_j}$ implies
PST between the lifted vertex-type states $Q_{\tau}\,x_{C_i}$ and
$Q_{\tau}\,x_{C_j}$ in $G$, and such transfer is fundamentally
governed by the spectral decomposition of the discriminant matrix
\(
  \widetilde{D} = \widetilde{Q}^{\top}\widetilde{S}\,\widetilde{Q}.
\)
As is standard in the analysis of Grover-type quantum walks, the
discrete-time evolution naturally gives rise to Chebyshev polynomials
of the first kind~\cite{kubota2022perfect}.
\begin{defn}\label{def:eigenvalue-support}
For a cell $C_i \in \pi$, the \emph{eigenvalue support} of $C_i$
with respect to the discriminant matrix $\widetilde{D}$ is
\begin{equation}
    \Sigma_{C_i}
    \;=\;
    \bigl\{\,\mu_r \in \mathrm{Spec}(\widetilde{D})
    : \widetilde{E}_r\,e_{C_i} \neq 0\,\bigr\},
\end{equation}
where $\widetilde{E}_r$ is the orthogonal projection onto the
eigenspace of $\widetilde{D}$ corresponding to eigenvalue $\mu_r$.
That is, $\Sigma_{C_i}$ consists of those eigenvalues of
$\widetilde{D}$ whose eigenspace has a non-trivial component in the
direction of $e_{C_i}$.
\end{defn}
A key property used in our analysis is that
\(
|T_n(p)| \leq 1
\quad \text{whenever } |p| \leq 1,
\)
where \(T_n(p)\) denotes the Chebyshev polynomial of the first kind, as discussed in Section~\ref{Chebyshev Polynomials}, this bound plays an important role in evaluating the powers of the discriminant matrix, since the eigenvalues of \(\widetilde{D}\) lie in the interval \([-1,1]\).
\begin{lemma}\label{quotient_Chebyshev}
Let $G/\pi$ be a quotient graph with time-evolution matrix $\widetilde{U}$ 
and discriminant 
\(
\widetilde{D} = \widetilde{Q}^{\top}\widetilde{S}\widetilde{Q}.
\)
Define the time-dependent discriminant for $k \in \mathbb{N} \cup \{0\}$ as
\(
\widetilde{D}_k = \widetilde{Q}^{\top}\widetilde{U}^{k}\widetilde{Q}.
\)
Then $\widetilde{D}_k = T_k(\widetilde{D})$, where $T_k$ is the Chebyshev 
polynomial of the first kind.
\end{lemma}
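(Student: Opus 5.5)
We will argue by strong induction on $k$, following the standard Grover-walk argument of Kubota and Segawa~\cite{kubota2022perfect}. The ingredients are the decomposition $\widetilde{U} = \widetilde{S}\widetilde{R}$ with $\widetilde{R} = 2\widetilde{Q}\widetilde{Q}^\top - I$, the isometry $\widetilde{Q}^\top\widetilde{Q} = I$, and the involution property $\widetilde{S}^2 = I$ from \eqref{shift matrix_Involutary matrix} (via Corollary~\ref{involutary}), which also gives $\widetilde{S}^\top = \widetilde{S}$. The aim is the three-term recurrence
\[
\widetilde{D}_{k+1} = 2\widetilde{D}\,\widetilde{D}_k - \widetilde{D}_{k-1}, \qquad k \ge 1,
\]
together with $\widetilde{D}_0 = I = T_0(\widetilde{D})$ and $\widetilde{D}_1 = T_1(\widetilde{D})$. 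For the latter we compute
\[
\widetilde{Q}^\top\widetilde{S}(2\widetilde{Q}\widetilde{Q}^\top - I)\widetilde{Q} = 2\widetilde{D} - \widetilde{D} = \widetilde{D}.
\]
The recurrence then matches the Chebyshev recursion $T_{k+1} = 2pT_k - T_{k-1}$.

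The recurrence is cleanest on the full arc space, so first set $\Pi = \widetilde{Q}\widetilde{Q}^\top$ and write $\widetilde{U} = 2\widetilde{S}\Pi - \widetilde{S}$. Using $\widetilde{S}^2 = I$, we get the key operator identity
\[
\widetilde{U}^2 = 2\widetilde{S}\Pi\widetilde{U} - \widetilde{S}\widetilde{U} = 2\widetilde{S}\Pi\widetilde{U} - \widetilde{R},
\]
or equivalently $\widetilde{U} + \widetilde{U}^{-1} = \widetilde{S}\widetilde{R} + \widetilde{R}\widetilde{S}$, since both factors are involutions. Multiplying by $\widetilde{U}^{k-1}$ gives
\[
\widetilde{U}^{k+1} = 2\widetilde{S}\Pi\widetilde{U}^{k} - \widetilde{R}\widetilde{U}^{k-1}.
\]
Sandwiching with $\widetilde{Q}^\top(\cdot)\widetilde{Q}$, the first term becomes $2\widetilde{Q}^\top\widetilde{S}\widetilde{Q}\,\widetilde{Q}^\top\widetilde{U}^k\widetilde{Q} = 2\widetilde{D}\widetilde{D}_k$. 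For the second term, note that $\widetilde{Q}^\top\widetilde{R} = \widetilde{Q}^\top$ because $\widetilde{R}\widetilde{Q} = \widetilde{Q}$ and $\widetilde{R}$ is symmetric, so it reduces to $\widetilde{D}_{k-1}$. This yields the recurrence, and induction finishes the proof.

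The main obstacle is justifying the reliance on $\widetilde{S}^2 = I$. The lemma as stated does not list it, but the surrounding text (the definition of $\widetilde{D}$ and its symmetry \eqref{symmetric}) already presupposes it, so we will state explicitly that we work under that standing assumption, i.e.\ $\widetilde{P}_j^2 = I$ for all $j$. Without it, $\widetilde{U}^{-1} = \widetilde{R}\widetilde{S}^\top$ and the clean symmetric recurrence can fail. A secondary check is the index bookkeeping, since the recurrence needs two base cases; we will verify $k = 0$ and $k = 1$ separately before invoking the inductive step for $k \ge 1$.
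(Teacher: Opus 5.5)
Your argument is correct and is essentially the paper's. Both are an induction on $k$ through the three-term recurrence coming from $\widetilde{U} = \widetilde{S}(2\widetilde{Q}\widetilde{Q}^\top - I)$, $\widetilde{Q}^\top\widetilde{Q} = I$, $\widetilde{R}\widetilde{Q} = \widetilde{Q}$ and $\widetilde{S}^2 = I$; the only difference is that you peel a factor of $\widetilde{U}$ off on the left (using $\widetilde{Q}^\top\widetilde{R} = \widetilde{Q}^\top$), whereas the paper first rewrites $\widetilde{D}_k = \widetilde{Q}^\top\widetilde{U}^{k-1}\widetilde{S}\widetilde{Q}$ and peels on the right. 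You are also right to state $\widetilde{S}^2 = I$ as a standing hypothesis, since the paper's proof uses it without comment when it replaces $\widetilde{Q}^\top\widetilde{U}^{k-2}\widetilde{S}\widetilde{S}\widetilde{Q}$ by $\widetilde{D}_{k-2}$.
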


\begin{proof}
We first observe that for every $k \geq 1$:
\begin{equation}\label{eq:discriminant}
\widetilde{D}_k 
= \widetilde{Q}^{\top} \widetilde{U}^k \widetilde{Q} 
= \widetilde{Q}^{\top} \widetilde{U}^{k-1} \widetilde{S} \widetilde{Q}.
\end{equation}
Indeed, using the shunt-decomposition 
$\widetilde{U} = \widetilde{S}(2\widetilde{Q}\widetilde{Q}^{\top} - I)$:
\begin{align*}
\widetilde{Q}^{\top} \widetilde{U}^k \widetilde{Q}
&= \widetilde{Q}^{\top} \widetilde{U}^{k-1} 
   \bigl[\widetilde{S}(2\widetilde{Q}\widetilde{Q}^{\top} - I)\bigr] 
   \widetilde{Q} \\
&= 2\widetilde{Q}^{\top} \widetilde{U}^{k-1} \widetilde{S} 
   (\widetilde{Q}\widetilde{Q}^{\top}\widetilde{Q}) 
   - \widetilde{Q}^{\top} \widetilde{U}^{k-1} \widetilde{S} \widetilde{Q} \\
&= 2\widetilde{Q}^{\top} \widetilde{U}^{k-1} \widetilde{S} \widetilde{Q} 
   - \widetilde{Q}^{\top} \widetilde{U}^{k-1} \widetilde{S} \widetilde{Q} 
   \qquad (\text{since } \widetilde{Q}^{\top}\widetilde{Q} = I) \\
&= \widetilde{Q}^{\top} \widetilde{U}^{k-1} \widetilde{S} \widetilde{Q}.
\end{align*}
We now proceed by induction on $k$. For the base cases:
\begin{itemize}
    \item $k = 0$: 
    $\widetilde{D}_0 = \widetilde{Q}^{\top} I \widetilde{Q} = I = T_0(\widetilde{D})$.
    \item $k = 1$: 
    $\widetilde{D}_1 = \widetilde{Q}^{\top} \widetilde{S} \widetilde{Q} 
    = \widetilde{D} = T_1(\widetilde{D})$.
\end{itemize}
Assume the identity holds for $k-1$ and $k-2$. 
Using Eq.~\eqref{eq:discriminant} and the shunt-decomposition, we have
\begin{align*}
\widetilde{D}_k 
&= \widetilde{Q}^{\top} \widetilde{U}^{k-1} \widetilde{S} \widetilde{Q} 
   \tag{by \eqref{eq:discriminant}}\\
&= \widetilde{Q}^{\top} 
   \bigl[\widetilde{U}^{k-2}\widetilde{S}
   (2\widetilde{Q}\widetilde{Q}^{\top} - I)\bigr] 
   \widetilde{S}\widetilde{Q} \\
&= 2(\widetilde{Q}^{\top}\widetilde{U}^{k-2}\widetilde{S}\widetilde{Q})
    (\widetilde{Q}^{\top}\widetilde{S}\widetilde{Q}) 
   - \widetilde{Q}^{\top}\widetilde{U}^{k-2}\widetilde{Q} \\
&= 2\widetilde{D}_{k-1}\,\widetilde{D} - \widetilde{D}_{k-2} 
   \tag{by \eqref{eq:discriminant} and induction hypothesis}\\
&= T_k(\widetilde{D}).
   \tag{Chebyshev recurrence}
\end{align*}
Since $T_k(p) = 2p\,T_{k-1}(p) - T_{k-2}(p)$ and the identity holds for 
all base cases, by induction we conclude 
$\widetilde{D}_k = T_k(\widetilde{D})$ for all $k \in \mathbb{N} \cup \{0\}$.
\end{proof}
Using the spectral decomposition of the discriminant $\widetilde{D}$, let
\begin{equation}
\widetilde{D} = \sum_{r=1}^{m} \mu_r \widetilde{E}_r,
\end{equation}
where $\mu_r$ are the eigenvalues of $\widetilde{D}$ and $\widetilde{E}_r$ are the corresponding eigenprojectors. For any polynomial $f$, we have
\begin{equation}
f(\widetilde{D}) = \sum_{r=1}^{m} f(\mu_r) \widetilde{E}_r. 
\end{equation}
\begin{lemma}\label{lem:norm-bound}
Let $G/\pi$ be a quotient graph with shift matrix $\widetilde{S}$ is 
symmetric, and let $\widetilde{D} = \widetilde{Q}^{\top}\widetilde{S}\widetilde{Q}$ 
be the corresponding discriminant. For a cell $C_i \in \pi$ and 
$k \in \mathbb{N} \cup \{0\}$, we have
\[
\|T_k(\widetilde{D})\,e_{C_i}\| \le 1.
\]
The equality holds if and only if $T_k(\mu) = \pm 1$ for every 
$\mu \in \Sigma_{C_i}$.
\end{lemma}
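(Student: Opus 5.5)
The plan is to reduce everything to the spectral decomposition of the real symmetric matrix $\widetilde{D}$. First, I establish that $\widetilde{D}$ is symmetric with spectrum in $[-1,1]$. Symmetry follows from \eqref{symmetric}, since $\widetilde{S}$ is assumed symmetric and unitary, hence involutory. For the spectral bound, note that $\widetilde{D} = \widetilde{Q}^{\top}\widetilde{S}\widetilde{Q}$ is a compression of a unitary matrix by an isometry. Therefore $\|\widetilde{D}\|\le \|\widetilde{Q}^\top\|\,\|\widetilde{S}\|\,\|\widetilde{Q}\| \le 1$. Being symmetric, its eigenvalues $\mu_r$ are real with $|\mu_r|\le 1$. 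This gives the spectral decomposition $\widetilde{D}=\sum_r \mu_r \widetilde{E}_r$, with $\widetilde{E}_r$ orthogonal projections satisfying $\sum_r \widetilde{E}_r = I$ and $\widetilde{E}_r\widetilde{E}_s=\delta_{rs}\widetilde{E}_r$.

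Next, I apply the functional calculus recorded just before the statement with $f=T_k$:
\[
T_k(\widetilde{D})\,e_{C_i} = \sum_r T_k(\mu_r)\,\widetilde{E}_r e_{C_i}.
\]
By orthogonality of the eigenprojections (Pythagoras) this yields
\[
\|T_k(\widetilde{D})e_{C_i}\|^2 = \sum_r |T_k(\mu_r)|^2\,\|\widetilde{E}_r e_{C_i}\|^2 = \sum_{\mu_r\in\Sigma_{C_i}} |T_k(\mu_r)|^2\,\|\widetilde{E}_r e_{C_i}\|^2 ,
\]
where only the terms in the eigenvalue support $\Sigma_{C_i}$ of Definition~\ref{def:eigenvalue-support} survive. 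Since $|\mu_r|\le 1$, the bound $|T_k(p)|\le 1$ on $[-1,1]$ from Section~\ref{Chebyshev Polynomials} shows that the last sum is at most $\sum_r \|\widetilde{E}_r e_{C_i}\|^2 = \|e_{C_i}\|^2 = 1$. This proves the inequality.

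For the equality case, I compare the sum with $1=\sum_{\mu_r\in\Sigma_{C_i}}\|\widetilde{E}_r e_{C_i}\|^2$. The difference is $\sum_{\mu_r\in\Sigma_{C_i}}(1-|T_k(\mu_r)|^2)\|\widetilde{E}_r e_{C_i}\|^2$, a sum of nonnegative terms whose weights are strictly positive by the definition of $\Sigma_{C_i}$. It therefore vanishes iff $|T_k(\mu_r)|=1$ for each $\mu_r\in\Sigma_{C_i}$. Since $T_k$ has real coefficients and $\mu_r$ is real, this is equivalent to $T_k(\mu_r)=\pm1$. The converse direction is immediate from the same formula.

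The main obstacle is the first step: guaranteeing that the eigenvalues of $\widetilde{D}$ are real and lie in $[-1,1]$. Without this, $|T_k(\mu)|$ could exceed $1$ and the bound would fail. The compression argument handles it using only the symmetry hypothesis on $\widetilde{S}$ and $\widetilde{Q}^\top\widetilde{Q}=I$, so I would not rely on the row-stochastic argument of Lemma~\ref{lem:discriminant}, which needs the special forms of $\widetilde{Q}$.
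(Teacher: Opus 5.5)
Your proposal is correct and follows essentially the same argument as the paper: the spectral decomposition of the symmetric matrix $\widetilde{D}$, Pythagoras over the orthogonal eigenprojections restricted to $\Sigma_{C_i}$, the bound $|T_k|\le 1$ on $[-1,1]$, and equality exactly when $|T_k(\mu)|=1$ on the support. You add two things the paper only asserts: the compression bound $\|\widetilde{D}\|\le\|\widetilde{Q}^{\top}\|\,\|\widetilde{S}\|\,\|\widetilde{Q}\|\le 1$, which justifies $\mathrm{Spec}(\widetilde{D})\subset[-1,1]$, and the strictly-positive-weight argument, which makes the equality case rigorous.
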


\begin{proof}
The proof is analogous to the vertex-state case in standard Grover 
walks (see~\cite[Lemma~3.2]{kubota2022perfect}).

Since $\widetilde{D} = \widetilde{Q}^{\top}\widetilde{S}\widetilde{Q}$ 
and $\widetilde{S}$ is a symmetric matrix, $\widetilde{D}$ is symmetric, 
and hence all its eigenvalues are real and satisfy 
$\mu \in \Sigma_{C_i} \subset [-1,1]$.
This guarantees that the spectral decomposition
\(
  \widetilde{D} = \sum_{\mu \in \Sigma_{C_i}.} \mu\, \widetilde{E}_\mu
\)
holds with orthogonal projectors $\widetilde{E}_\mu$, and that 
$|T_k(\mu)| \leq 1$ for all $\mu \in \Sigma_{C_i}$.
By expressing $\|T_k(\widetilde{D})\,e_{C_i}\|^2$ through this spectral 
decomposition, we obtain:
\[
\|T_k(\widetilde{D})\,e_{C_i}\|^2 
= \sum_{\mu \in \Sigma_{C_i}} 
  |T_k(\mu)|^2\,\langle \widetilde{E}_\mu\,e_{C_i},\, e_{C_i}\rangle.
\]
Since $|T_k(\mu)| \le 1$ for all $\mu \in  [-1,1]$ and 
$\sum_{\mu}\langle \widetilde{E}_\mu\,e_{C_i}, e_{C_i}\rangle 
= \|e_{C_i}\|^2 = 1$, 
the norm is bounded by $1$. Equality is achieved if and only if 
$|T_k(\mu)| = 1$, i.e.\ $T_k(\mu) = \pm 1$, for all $\mu$ in the 
support $\Sigma_{C_i}$.
\end{proof}
Lemma~\ref{quotient_Chebyshev} and Lemma~\ref{lem:norm-bound} together 
yield a necessary condition for PST to occur in 
the quotient graph $G/\pi$, as stated in Theorem~\ref{thm:pst-necessary} 
below.

\begin{Theorem}\label{thm:pst-necessary}
Let $G/\pi$ be a quotient graph with shift matrix $\widetilde{S}$ is 
symmetric, and let $\widetilde{D} = \widetilde{Q}^{\top}\widetilde{S}\widetilde{Q}$ 
be the corresponding discriminant. If PST occurs from the quotient vertex--type state $x_{C_i}$ to $x_{C_j}$ at time $k$, then 
\(
T_k (\mu) = \pm 1 \quad \text{for every } \mu \in \Sigma_{C_i}.
\)
\end{Theorem}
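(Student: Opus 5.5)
The plan is to convert the PST hypothesis into a statement about the norm of $T_k(\widetilde{D})\,e_{C_i}$, and then read off the conclusion from the equality case of Lemma~\ref{lem:norm-bound}.

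\emph{Step 1 (translate PST into a matrix entry).} By Definition~\ref{def:pst-Q}, PST from $x_{C_i}$ to $x_{C_j}$ at time $k$ means
\[
\bigl|\langle x_{C_j},\widetilde{U}^k x_{C_i}\rangle\bigr|=1.
\]
Substituting $x_{C_i}=\widetilde{Q}e_{C_i}$ and $x_{C_j}=\widetilde{Q}e_{C_j}$ turns this into $\bigl|e_{C_j}^{\top}\widetilde{Q}^{\top}\widetilde{U}^k\widetilde{Q}\,e_{C_i}\bigr|=1$. The middle factor is the time-dependent discriminant $\widetilde{D}_k$. By Lemma~\ref{quotient_Chebyshev} we have $\widetilde{D}_k=T_k(\widetilde{D})$, so the hypothesis reads
\[
\bigl|\langle e_{C_j},\,T_k(\widetilde{D})\,e_{C_i}\rangle\bigr|=1.
\]

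\emph{Step 2 (Cauchy--Schwarz).} Since $e_{C_j}$ is a unit vector,
\[
1=\bigl|\langle e_{C_j},T_k(\widetilde{D})e_{C_i}\rangle\bigr|\le \|T_k(\widetilde{D})e_{C_i}\|.
\]
Lemma~\ref{lem:norm-bound} gives the reverse inequality $\|T_k(\widetilde{D})e_{C_i}\|\le 1$. Hence $\|T_k(\widetilde{D})e_{C_i}\|=1$, and the equality clause of that lemma yields $T_k(\mu)=\pm1$ for every $\mu\in\Sigma_{C_i}$.

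\emph{Main obstacle.} The delicate point is justifying the hypotheses of Lemma~\ref{lem:norm-bound}. That lemma needs $\widetilde{D}$ to be symmetric with spectrum in $[-1,1]$.
\begin{itemize}
\item Symmetry of $\widetilde{D}$ follows from $\widetilde{S}$ being symmetric, as in \eqref{symmetric}.
\item The spectral bound follows because $\widetilde{D}$ is a compression of a unitary by an isometry, so $\|\widetilde{D}\|\le\|\widetilde{S}\|=1$.
\end{itemize}
Given these, $|T_k(\mu)|\le1$ holds on the spectrum, and the spectral expansion
\[
\|T_k(\widetilde{D})e_{C_i}\|^2=\sum_{\mu}|T_k(\mu)|^2\,\|\widetilde{E}_\mu e_{C_i}\|^2
\]
forces $|T_k(\mu)|=1$ on the support $\Sigma_{C_i}$. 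Since $\mu$ and $T_k$ are real, this means $T_k(\mu)=\pm1$. I would state these facts explicitly rather than only citing the lemma.
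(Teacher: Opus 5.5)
Your proposal is correct and follows the paper's proof essentially step for step: you rewrite the PST condition as $\bigl|\langle e_{C_j},T_k(\widetilde{D})e_{C_i}\rangle\bigr|=1$ via Lemma~\ref{quotient_Chebyshev}, use Cauchy--Schwarz together with Lemma~\ref{lem:norm-bound} to force $\|T_k(\widetilde{D})e_{C_i}\|=1$, and then read off the equality case. Your explicit compression bound $\|\widetilde{D}\|\le\|\widetilde{Q}^{\top}\|\,\|\widetilde{S}\|\,\|\widetilde{Q}\|=1$ justifies $\mathrm{Spec}(\widetilde{D})\subset[-1,1]$ more soundly than the paper does, since the paper attributes this containment to the symmetry of $\widetilde{D}$ alone.
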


\begin{proof}
Suppose PST occurs at time $k$. Following the characterization of PST in ~\cite{kubota2022perfect}, the condition $|\langle x_{C_j}, \widetilde{U}^\tau x_{C_i} \rangle| = 1$ is equivalent to $|\langle e_{C_j}, T_k(\widetilde{D}) e_{C_i} \rangle| = 1$ by Lemma~\ref{quotient_Chebyshev}. Applying the Cauchy--Schwarz inequality:
\[
1 = |\langle e_{C_j}, T_k(\widetilde{D}) e_{C_i} \rangle| \le \|e_{C_j}\| \cdot \|T_k(\widetilde{D}) e_{C_i}\| \le 1.
\]
This implies $\|T_k(\widetilde{D}) e_{C_i}\| = 1$, and the result follows directly from Lemma~\ref{lem:norm-bound}.
\end{proof}

\section{Perfect State Transfer on the Quotient Graph of an Even 
Cycle \texorpdfstring{$C_{2n}$}{C2n}, where 
\texorpdfstring{$n$}{n} is Even}
\label{section 7}

\begin{figure}[t]
\centering

\begin{minipage}[t]{0.44\textwidth}
\centering
\begin{tikzpicture}[scale=0.9, every node/.style={scale=0.8}]
  \node[circle,draw,fill=blue!10]   (v1) at (90:1.5)  {$1$};
  \node[circle,draw,fill=green!10]  (v2) at (30:1.5)  {$2$};
  \node[circle,draw,fill=orange!10] (v3) at (-30:1.5) {$3$};
  \node[circle,draw,fill=blue!10]   (v4) at (-90:1.5) {$4$};
  \node[circle,draw,fill=green!10]  (v5) at (-150:1.5){$5$};
  \node[circle,draw,fill=orange!10] (v6) at (150:1.5) {$6$};
  \draw (v1)--(v2)--(v3)--(v4)--(v5)--(v6)--(v1);
  \node[draw=none,font=\small] at (0,2.2) {$C_6$};
\end{tikzpicture}
\subcaption{Original cycle $C_6$.}
\label{fig:c6-original}
\end{minipage}
\hfill
\begin{minipage}[t]{0.44\textwidth}
\centering
\begin{tikzpicture}[>=stealth, scale=1.1,
    every node/.style={circle, draw, font=\small,
                       inner sep=4pt, minimum size=0.8cm}]

  \node[fill=blue!10,   draw=blue!70!black]   (C1) at (90:1.6)   {$C_1$};
  \node[fill=green!10,  draw=green!60!black]  (C2) at (210:1.6)  {$C_2$};
  \node[fill=orange!10, draw=orange!80!black] (C3) at (330:1.6)  {$C_3$};

  \draw[->,thick,blue,  bend left=15] (C1) to (C2);
  \draw[->,thick,green,  bend left=15] (C2) to (C1);
  \draw[->,thick,green,bend left=15] (C2) to (C3);
  \draw[->,thick,orange,bend left=15] (C3) to (C2);
  \draw[->,thick,orange, bend left=15] (C3) to (C1);
  \draw[->,thick,blue, bend left=15] (C1) to (C3);

  \node[draw=none,font=\small] at (0, 2.3) {$K_3 = C_6/\pi$};
\end{tikzpicture}
\subcaption{Quotient graph $C_6/\pi \cong K_3$ with directed edges.}
\label{fig:c6-quotient-k3}
\end{minipage}
\hfill

\begin{minipage}[t]{\textwidth}
\centering
\begin{tikzpicture}[>=stealth, scale=0.7,
    every node/.style={circle, draw, font=\scriptsize,
                       inner sep=2pt, minimum size=0.70cm}]

  \draw[blue!60, dashed, rounded corners=6pt, thick]
        (-1.1, 1.1) rectangle (1.1, 3.3);
  \node[draw=none,font=\scriptsize,text=blue!70!black]
        at (0, 3.65) {$\sigma_1$};
  \node[fill=blue!10, draw=blue!70!black] (b1) at (-0.5, 2.4) {$b_1$};
  \node[fill=blue!10, draw=blue!70!black] (b2) at ( 0.5, 2.4) {$b_2$};

  \draw[green!60!black, dashed, rounded corners=6pt, thick]
        (-4.2,-3.1) rectangle (-2.0,-0.9);
  \node[draw=none,font=\scriptsize,text=green!60!black]
        at (-3.1,-3.5) {$\sigma_2$};
  \node[fill=green!10, draw=green!60!black] (b3) at (-3.6,-2.3) {$b_3$};
  \node[fill=green!10, draw=green!60!black] (b4) at (-2.6,-1.6) {$b_4$};

  \draw[orange!80!black, dashed, rounded corners=6pt, thick]
        ( 2.0,-3.1) rectangle ( 4.2,-0.9);
  \node[draw=none,font=\scriptsize,text=orange!80!black]
        at ( 3.0,-3.4) {$\sigma_3$};
  \node[fill=orange!10, draw=orange!80!black] (b5) at ( 2.6,-1.6) {$b_5$};
  \node[fill=orange!10, draw=orange!80!black] (b6) at ( 3.6,-2.3) {$b_6$};


  \draw[->,thick,blue!70,  bend right=12]  (b1) to (b3);
  \draw[->,thick,blue!70,  bend left=12]  (b2) to (b3);
  \draw[->,thick,blue!70,  bend left=12]  (b1) to (b5);
  \draw[->,thick,blue!70,  bend left=12]  (b2) to (b5);

  \draw[->,thick,green!70, bend right=12] (b3) to (b1);
  \draw[->,thick,green!70, bend right=12] (b4) to (b1);
  \draw[->,thick,green!90, bend right=14] (b3) to (b6);
  \draw[->,thick,green!90, bend left=12] (b4) to (b6);

  \draw[->,thick,orange!70, bend left=12]  (b5) to (b2);
  \draw[->,thick,orange!70, bend right=12]  (b6) to (b2);
  \draw[->,thick,orange!70, bend right=9] (b5) to (b4);
  \draw[->,thick,orange!70, bend left=9] (b6) to (b4);

\end{tikzpicture}
\subcaption{Arc-partition line digraph $\mathrm{LD}(K_3)$:
nodes are arcs of $K_3$ grouped by cell $\sigma_j$
arranged in triangle form; directed edges follow
the rule $(i,j)\to(j,k)$.}
\label{fig:c6-arc-quotient}
\end{minipage}

\parbox{0.92\textwidth}{\small
\textit{Vertex partition:}\;
$\pi = \{C_1=\{1,4\},\; C_2=\{2,5\},\; C_3=\{3,6\}\}$.
\textit{Arc partition:}\;
$\sigma = \{\sigma_1,\sigma_2,\sigma_3\}$ where
$\sigma_1 = \{b_1=(C_2,C_1),\; b_2=(C_3,C_1)\}$,\quad
$\sigma_2 = \{b_3=(C_1,C_2),\; b_4=(C_3,C_2)\}$,\quad
$\sigma_3 = \{b_5=(C_1,C_3),\; b_6=(C_2,C_3)\}$.}

\caption{(a) Original cycle $C_6$. (b) Grouped layout
with cells $C_1$, $C_2$, $C_3$ in triangle form;
inter-cell edges give the quotient $K_3$.
(c) Line digraph $\mathrm{LD}(K_3)$ with arc-partition
cells $\sigma_1$, $\sigma_2$, $\sigma_3$; directed edges follow the adjacency in
$\mathrm{LD}(K_3)$.}
\label{fig:c6-full}
\end{figure}

We consider the cycle graph $C_{2n}$ and construct a quotient graph 
using an equitable partition $\pi$. We partition the vertex set of 
$C_{2n}$ into cells of size $2$, so that the number of cells is \(n\). Hence \(C_{2n}/\pi\) is 2-regular quotient graph by Corollary~\ref{cor:3.1}.
Under this partition, the quotient graph $C_{2n}/\pi$ is isomorphic 
to $C_n$ (see Figure~\ref{fig:c6-full}). Let $\sigma$ be the arc 
partition of the quotient graph $LD(C_{2n}/\pi)$,  obtained by 
grouping arcs according to their terminal vertices. Using this 
structure, we establish PST on $C_{2n}$ in the 
following theorem.
\begin{lemma}\label{cycle_Quotient}
Let $C_{2n}$ be the cycle graph on $2n$ vertices with $n \geq 3$.
Partition the vertex set into $n$ cells of size $2$ defined by
\[
  C_i = \{i,\; i+n\}, \qquad i = 1, 2, \ldots, n.
\]
Then $\pi = \{C_1, C_2, \ldots, C_n\}$ is an equitable partition of
$C_{2n}$, and the quotient graph $C_{2n}/\pi$ is isomorphic to the
cycle graph $C_n$.
\end{lemma}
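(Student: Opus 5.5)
The plan is to verify equitability directly from the neighbour counts, using the map $\phi:V(C_{2n})\to\{1,\dots,n\}$ that sends a vertex $v$ to its residue modulo $n$, with representatives chosen in $\{1,\dots,n\}$. By construction $C_i=\phi^{-1}(i)$. Label the vertices $1,\dots,2n$ so that $v\sim v\pm 1 \pmod{2n}$. Regard $C_{2n}$ as the symmetric $2$-regular digraph.

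First, fix a cell $C_i=\{i,i+n\}$. The vertex $i$ has out-neighbours $i-1$ and $i+1$ (mod $2n$), and $i+n$ has out-neighbours $i+n-1$ and $i+n+1$ (mod $2n$). Since $2n\equiv 0 \pmod n$, reduction mod $2n$ followed by reduction mod $n$ equals reduction mod $n$. Hence $\phi$ maps the neighbour pair of each vertex in $C_i$ to $\{i-1,i+1\}$ taken mod $n$. The wrap-around cases are explicit: for $i=1$ the neighbour $2n$ lies in $C_n$, and for $i=n$ the neighbour $n+1$ lies in $C_1$.

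Second, I use $n\ge 3$ to see that $i-1$, $i$ and $i+1$ are distinct mod $n$. So each vertex of $C_i$ has exactly one neighbour in $C_{i-1}$, exactly one in $C_{i+1}$, none in $C_i$, and none elsewhere. These counts $b_{ij}$ do not depend on the chosen vertex of $C_i$, so $\pi$ is equitable. As a consistency check in matrix form, set $Q=\tfrac{1}{\sqrt 2}\begin{pmatrix} I_n\\ I_n\end{pmatrix}$. One then verifies $AQ=Q\widetilde A$, consistent with \eqref{eq:equitable}, where $\widetilde A_{ij}=1$ iff $j\equiv i\pm1 \pmod n$. This check uses the block form of the circulant $A$: the sum of its two block columns is the $n$-cycle circulant.

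Third, the quotient matrix $\widetilde A=(b_{ij})$ is exactly the adjacency matrix of $C_n$ on vertices $C_1,\dots,C_n$, with $C_i$ adjacent to $C_{i\pm1}$. This gives $C_{2n}/\pi\cong C_n$, which is $2$-regular as predicted by Corollary~\ref{cor:3.1}.

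The only delicate point is the index bookkeeping at the wrap-around, together with the role of $n\ge3$. For $n=2$ the cells $C_{i-1}$ and $C_{i+1}$ coincide, and the quotient becomes a double edge rather than a simple cycle, as in Example~\ref{C_4 grah and its quotient}. I would state both points explicitly.
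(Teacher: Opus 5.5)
Your proof is correct and follows the same route as the paper: you list the neighbours of $i$ and $i+n$, observe that each vertex of $C_i$ has exactly one neighbour in $C_{i-1}$ and one in $C_{i+1}$ (indices mod $n$), and read off $\widetilde{A}=A(C_n)$. You are also more precise than the paper about where $n\ge 3$ is needed: it makes $C_{i-1}$ and $C_{i+1}$ distinct, whereas the paper invokes it only to rule out loops, and loops are already impossible when $n=2$.
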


\begin{proof}
Since the cells $C_i = \{i, i+n\}$ for $i = 1, \ldots, n$ are 
pairwise disjoint and cover $\{1, \ldots, 2n\}$, the collection $\pi$ 
is indeed a partition. For $n \geq 3$, no two vertices within the same 
cell are adjacent in $C_{2n}$, since their difference is $n \geq 3$, 
whereas adjacency requires a difference of $1$ or $2n-1$. Hence 
$G/\pi$ has no self-loops. Now fix $i \in \{1, \ldots, n\}$ and consider the 
two vertices $v = i$ and $w = i + n$ of $C_i$. Their neighbours in 
$C_{2n}$ are
\begin{equation}
  N(v) = \{i-1,\; i+1\}, \qquad N(w) = \{i+n-1,\; i+n+1\}.
\end{equation}
Since $i - 1, \, i+n-1 \in C_{i-1}$ and $i+1,\, i+n+1 \in C_{i+1}$ 
(indices modulo $n$), each vertex of $C_i$ has exactly one neighbour 
in $C_{i-1 \bmod n}$ and exactly one neighbour in $C_{i+1 \bmod n}$, 
and no neighbours in any other cell. As this count is independent of 
the choice of vertex in $C_i$, the partition $\pi$ is equitable. Consequently, in the quotient graph $C_{2n}/\pi$, each cell $C_i$ is 
adjacent only to $C_{i-1 \bmod n}$ and $C_{i+1 \bmod n}$, which is 
precisely the adjacency structure of $C_n$. Hence
\begin{equation}
  C_{2n}/\pi \;\cong\; C_n. 
\end{equation}
\end{proof}

\begin{cor}
\label{cor:cycle_tilde_Q}
Under the hypotheses of Lemma~\ref{cycle_Quotient}, let
$\sigma = \{\sigma_1, \ldots, \sigma_n\}$ be an equitable arc partition of
$\mathrm{LD}(G/\pi)$. By ordering the $2n$ arcs of $\mathrm{LD}(G/\pi)$
so that the first arc of each class $\sigma_1, \ldots, \sigma_n$ precedes
the second arc of each class, the normalized characteristic matrix
$\widetilde{Q}$ coincides with $Q$, and both take the simple form
\[
    Q \;=\; \widetilde{Q} \;=\; \frac{1}{\sqrt{2}}
    \begin{pmatrix} I_n \\ I_n \end{pmatrix}.
\]
\end{cor}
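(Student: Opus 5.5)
The plan is to compute both matrices explicitly in the chosen orderings and check that they coincide entry by entry.

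\emph{Step 1: the matrix $Q$.} The cells $C_i=\{i,i+n\}$ of Lemma~\ref{cycle_Quotient} all have size $2$. Order the vertices of $C_{2n}$ as $1,\dots,n,n+1,\dots,2n$ and the cells as $C_1,\dots,C_n$. Then the $(v,i)$-entry of $Q$ is $1/\sqrt2$ exactly when $v\equiv i \pmod n$. The first $n$ rows therefore form $\tfrac1{\sqrt2}I_n$, and rows $n+1,\dots,2n$ again form $\tfrac1{\sqrt2}I_n$. This gives $Q=\tfrac1{\sqrt2}\begin{pmatrix}I_n\\ I_n\end{pmatrix}$.

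\emph{Step 2: the matrix $\widetilde Q$.} By Lemma~\ref{cycle_Quotient}, $C_{2n}/\pi\cong C_n$ is $2$-regular with $2n$ arcs. Each cell $C_j$ is the terminal vertex of exactly two arcs, $(C_{j-1},C_j)$ and $(C_{j+1},C_j)$, with indices taken mod $n$. By Lemma~\ref{line digraph 2}, $\sigma_j$ is exactly this pair, so $|\sigma_j|=2$ and every nonzero entry of $\widetilde Q$ equals $1/\sqrt2$. Now order the arcs as
\[
(C_{0},C_1),(C_1,C_2),\dots,(C_{n-1},C_n),\;(C_2,C_1),(C_3,C_2),\dots,(C_{n+1},C_n),
\]
with indices mod $n$. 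In words, the first block lists the first arc of each $\sigma_j$ in order of $j$, and the second block lists the second arc of each $\sigma_j$ in the same order. The $j$-th row of each block then has its single nonzero entry in column $j$, so $\widetilde Q=\tfrac1{\sqrt2}\begin{pmatrix}I_n\\ I_n\end{pmatrix}=Q$.

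\emph{Consistency checks.} The first block corresponds to the shunt $\widetilde P_1$, $C_i\mapsto C_{i+1}$. The second block corresponds to the shunt $\widetilde P_2$, $C_i\mapsto C_{i-1}$. So this ordering also groups the arcs by arc class, as Section~\ref{subsection 4.1} requires, and it is compatible with $Q_\tau=I_2\otimes\widetilde Q$. As a final check, compute $\widetilde Q^\top A_{LD(C_n)}\widetilde Q$ and confirm it equals $\widetilde A$, the adjacency matrix of $C_n$. This agrees with Lemma~\ref{line digraph 2}.

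The main obstacle is the ordering bookkeeping: the two arcs in each $\sigma_j$ must be separated into the two blocks in a way that matches the shunt decomposition, so that the second block does not come out as a permutation of $I_n$ instead of $I_n$ itself. Choosing the blocks to be the two shunts resolves this, since each shunt is a bijection on terminal vertices. For $n\ge3$ there are no loops and no parallel arcs, so the two arcs into each $C_j$ are distinct and the argument holds.
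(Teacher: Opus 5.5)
Your proposal is correct and follows essentially the same route as the paper: compute $Q$ directly from the cells $C_i=\{i,i+n\}$, and compute $\widetilde{Q}$ from the terminal-vertex classes $\sigma_j$ of size $2$ by listing the first arc of each class in order of $j$, then the second arcs. Taking the two blocks to be the shunts $C_i\mapsto C_{i+1}$ and $C_i\mapsto C_{i-1}$ just makes explicit the labeling $(u_j,v_j)$, $(w_j,v_j)$ that the paper leaves unspecified, and your extra consistency checks are compatible with its setup.
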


\begin{proof}
By Lemma~\ref{cycle_Quotient}, $G/\pi \cong C_n$ has $n$ undirected
edges, so $\mathrm{LD}(G/\pi)$ has exactly $2n$ directed arcs. Hence
the number of arcs of $\mathrm{LD}(G/\pi)$ equals the number of
vertices of $G = C_{2n}$. The equitable vertex partition $\pi$ partitions the $2n$ vertices of
$G$ into $n$ cells each of size $2$, giving
\[
  Q \;=\; \frac{1}{\sqrt{2}}
  \begin{pmatrix} I_n \\ I_n \end{pmatrix}
  \;\in\; \mathbb{R}^{2n \times n},
\]
where the upper $I_n$ corresponds to the elements $\{1, 2, \ldots, n\}$
and the lower $I_n$ corresponds to the elements
$\{n+1, n+2, \ldots, 2n\}$ of each cell. The arc partition $\sigma$ partitions the $2n$ arcs of
$\mathrm{LD}(G/\pi)$ into $n$ classes each of size $2$, namely the
two arcs directed into each vertex of $C_n$. Ordering the arcs as
\[
  \underbrace{(u_1,v_1),\,(u_2,v_2),\,\ldots,\,(u_n,v_n)}_{
    \text{first arc of each class}},
  \qquad
  \underbrace{(w_1,v_1),\,(w_2,v_2),\,\ldots,\,(w_n,v_n)}_{
    \text{second arc of each class}},
\]
where $(u_i, v_i)$ and $(w_i, v_i)$ are the two arcs in class
$\sigma_i$, the normalized characteristic matrix takes the form
\[
  \widetilde{Q} \;=\; \frac{1}{\sqrt{2}}
  \begin{pmatrix} I_n \\ I_n \end{pmatrix}
  \;\in\; \mathbb{R}^{2n \times n}.
\]
Hence $Q = \widetilde{Q}$, and $\widetilde{Q}$ satisfies the
equitable partition condition
\[
  A_{\mathrm{LD}(G/\pi)}\,\widetilde{Q}
  \;=\; \widetilde{Q}\,\widetilde{A},
  \qquad
  \widetilde{A} = A(C_n),
\]
confirming that
\[
  \mathrm{LD}(G/\pi)/\sigma \;\cong\; C_n \;\cong\; G/\pi. 
\]
\end{proof}
\begin{ex}
\label{ex:C6_quotient}
Take $n = 3$, so $G = C_{2n} = C_6$ with vertices $1, 2, 3, 4, 5, 6$.
The equitable vertex partition $\pi$ is
\[
    C_1 = \{1,4\}, \quad
    C_2 = \{2,5\}, \quad
    C_3 = \{3,6\},
\]
ordering the vertices as $1,2,3,4,5,6$, the normalized characteristic
matrix is
\[
    Q \;=\; \frac{1}{\sqrt{2}}
    \begin{pmatrix} I_3 \\ I_3 \end{pmatrix}
    \;\in\; \mathbb{R}^{6 \times 3},
\]
where the upper $I_3$ corresponds to vertices $\{1,2,3\}$ and the lower
$I_3$ corresponds to vertices $\{4,5,6\}$, satisfying
$A(C_6)\,Q = Q\,A(C_3)$. The neighbour table confirms equitability:

\begin{center}
\begin{tabular}{cccc}
\hline
\rule{0pt}{10pt}%
Vertex & Cell & Neighbours in $C_6$ & Target cells \\[2pt]
\hline
\rule{0pt}{10pt}%
$1$ & $C_1$ & $2,\ 6$ & $C_2,\ C_3$ \\
$4$ & $C_1$ & $3,\ 5$ & $C_3,\ C_2$ \\[4pt]
$2$ & $C_2$ & $1,\ 3$ & $C_1,\ C_3$ \\
$5$ & $C_2$ & $4,\ 6$ & $C_1,\ C_3$ \\[4pt]
$3$ & $C_3$ & $2,\ 4$ & $C_2,\ C_1$ \\
$6$ & $C_3$ & $5,\ 1$ & $C_2,\ C_1$ \\[2pt]
\hline
\end{tabular}
\end{center}

\noindent
The quotient adjacency matrix is
\[
    \widetilde{A}(C_6/\pi)
    \;=\;
    \begin{pmatrix}
        0 & 1 & 1 \\
        1 & 0 & 1 \\
        1 & 1 & 0
    \end{pmatrix}
    \;=\; A(C_3).
\]
Now $C_6/\pi \cong C_3$ has $3$ undirected edges, so $\mathrm{LD}(C_3)$
has $6$ directed arcs. The equitable arc partition $\sigma$ groups the
two arcs directed into each vertex of $C_3$ into one class:
\[
    \sigma_1 = \{(2,1),(3,1)\}, \quad
    \sigma_2 = \{(1,2),(3,2)\}, \quad
    \sigma_3 = \{(1,3),(2,3)\}.
\]
By Corollary~\ref{cor:cycle_tilde_Q}, ordering the arcs as
\[
    \underbrace{(2,1),\,(1,2),\,(1,3)}_{\text{first arc of each class}
    \ \sigma_1,\sigma_2,\sigma_3},\;
    \underbrace{(3,1),\,(3,2),\,(2,3)}_{\text{second arc of each class}
    \ \sigma_1,\sigma_2,\sigma_3},
\]
the normalized characteristic matrix is
\[
    \widetilde{Q} \;=\; \frac{1}{\sqrt{2}}
    \begin{pmatrix} I_3 \\ I_3 \end{pmatrix}
    \;\in\; \mathbb{R}^{6 \times 3},
\]
where the upper $I_3$ corresponds to arcs $(2,1),\,(1,2),\,(1,3)$ and
the lower $I_3$ corresponds to arcs $(3,1),\,(3,2),\,(2,3)$. Hence
$\widetilde{Q} = Q$, satisfying
\[
    A_{\mathrm{LD}(C_3)}\,\widetilde{Q} \;=\; \widetilde{Q}\,A(C_3),
\]
and therefore $\mathrm{LD}(C_3)/\sigma \cong C_3 \cong C_6/\pi$. 
\end{ex}

\begin{cor}
The shift matrix $\widetilde{S} = \mathrm{diag}(\widetilde{P}_1, \widetilde{P}_2)$ 
of the cycle graph $C_n$ is always symmetric.
\end{cor}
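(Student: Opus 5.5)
The plan is to reduce symmetry of $\widetilde{S}$ to symmetry of each of its diagonal blocks, and then read those blocks off from the two rotations of $C_n$. Since $\widetilde{S} = \mathrm{diag}(\widetilde{P}_1,\widetilde{P}_2) = E_{11}\otimes\widetilde{P}_1 + E_{22}\otimes\widetilde{P}_2$, we have $\widetilde{S}^\top = \mathrm{diag}(\widetilde{P}_1^\top,\widetilde{P}_2^\top)$. Hence $\widetilde{S}$ is symmetric if and only if $\widetilde{P}_1^\top=\widetilde{P}_1$ and $\widetilde{P}_2^\top=\widetilde{P}_2$. Because each $\widetilde{P}_j$ is a permutation matrix, $\widetilde{P}_j^\top=\widetilde{P}_j^{-1}$, so this is the same as $\widetilde{P}_j^2=I$. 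By Corollary~\ref{involutary} this is in turn equivalent to $\widetilde{S}^2=I$.

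The main step is to identify the shunts of $C_n = C_{2n}/\pi$ from Lemma~\ref{cycle_Quotient}. The arcs of $C_n$ split into the two arc classes already used in the construction. These are determined by the shunts $P_1,P_2$ of $C_{2n}$, which by Corollary~\ref{cor:shunt-intertwine} descend to $\widetilde{P}_1,\widetilde{P}_2$ via $P_jQ=Q\widetilde{P}_j$. I would describe each class as one orientation. Then $\widetilde{P}_1$ is the cyclic shift $i\mapsto i+1 \pmod n$ and $\widetilde{P}_2$ is $i\mapsto i-1 \pmod n$, so $\widetilde{P}_2=\widetilde{P}_1^\top$. For such shunts, $\widetilde{S}$ is symmetric exactly when the off-diagonal structure pairs each arc $(C_i,C_{i+1})$ with its reverse $(C_{i+1},C_i)$. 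So one should order the arc space so that $\widetilde{S}$ acts as the arc-reversal on $\mathcal{A}(G/\pi)$. Concretely, I would use the ordering of Corollary~\ref{cor:cycle_tilde_Q}, with each shunt block indexed by terminal vertex. In that ordering the map sending an arc to the next arc in the same shunt class is the composition, which swaps the two arcs on each edge. I expect this to give a product of disjoint transpositions, hence an involution, hence symmetric.

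The obstacle is exactly this step. The claim that $\widetilde{P}_j^2=I$ is not automatic for rotations of $C_n$ with $n\ge3$. A rotation $i\mapsto i+1$ has order $n$, not $2$, so literally $\widetilde{P}_1$ is not symmetric. I would therefore have to argue that the correct shunt matrices in this model are the arc-reversal type matrices. These are the matrices that arise when $C_{2n}$, being $2$-regular and undirected, is decomposed via perfect matchings: for even cycle length the edges split into two perfect matchings $M_1,M_2$, each an involution. I would first try this route. Choose $P_1,P_2$ to be the two perfect matchings of $C_{2n}$; each satisfies $P_j^2=I$ and $P_j^\top=P_j$. Check that each matching maps each cell $C_i=\{i,i+n\}$ onto a single cell; this needs the parity of $n$, consistent with the section's assumption that $n$ is even. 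Then $\widetilde{P}_j = Q^\top P_j Q$ inherits $\widetilde{P}_j^\top = Q^\top P_j^\top Q=\widetilde{P}_j$ directly, and block-diagonality yields $\widetilde{S}^\top=\widetilde{S}$. The transpose argument through $Q^\top P_jQ$ is clean; the delicate part is verifying that the matchings respect $\pi$.
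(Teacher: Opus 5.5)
Your diagnosis is correct, and it exposes an error in the paper's own proof. The paper uses the same chain as your first paragraph, run in the other direction:
\begin{itemize}
\item it takes $\widetilde P_1,\widetilde P_2$ to be the permutation matrices of the two arc directions of $C_n$;
\item it asserts that each is an involution and deduces $\widetilde S^2=I$ from Corollary~\ref{involutary};
\item it combines this with $\widetilde S^{-1}=\widetilde S^\top$ to get symmetry.
\end{itemize}
For $n\ge 3$ the direction shunts are $\widetilde P$ and $\widetilde P^\top$, with $\widetilde P$ the $n$-cycle. So $\widetilde P_j^2\neq I$, and $\widetilde S=\mathrm{diag}(\widetilde P,\widetilde P^\top)$ is not symmetric. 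This already happens in Example~\ref{ex:C6_quotient}, where $C_6/\pi\cong C_3$.

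For odd $n\ge 3$ no other choice of shunts helps. If a shunt $\sigma$ of $C_n$ has $\sigma(v)=v+1$, injectivity forces $\sigma(v+2)=v+3$. Since $v\mapsto v+2$ is transitive on $\mathbb{Z}_n$ for odd $n$, the two rotations are the only shunts. So the statement as written is false, and ``always'' must be replaced by explicit hypotheses.

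Your perfect-matching repair proves the right corrected statement. The compatibility check you flag does go through for $n$ even. Then $i$ and $i+n$ have the same parity, so each matching involution of $C_{2n}$ moves both vertices of $C_i=\{i,i+n\}$ in the same direction and maps $C_i$ onto $C_{i+1}$ or $C_{i-1}$. The induced $\widetilde P_1,\widetilde P_2$ are the two perfect matchings of $C_n$; they sum to $A(C_n)$, and $\widetilde P_j^\top=Q^\top P_j^\top Q=\widetilde P_j$.

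For odd $n$ the same product gives $Q^\top P_jQ=\tfrac{1}{2}A(C_n)$. That matrix is symmetric but not a permutation matrix, hence not a shunt of the quotient. So the real content is the compatibility check, not the transpose computation.

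State the result as follows: for $n$ even, with the shunt decomposition of $C_{2n}$ into its two perfect matchings, $\widetilde S$ is symmetric (equivalently $\widetilde S^2=I$). This is also exactly what Section~\ref{section 7} needs. The inductive step of Lemma~\ref{quotient_Chebyshev} relies on $\widetilde S^2=I$, and Lemma~\ref{lem:pst-cycle-quotient} already assumes $n$ even.

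Discard the intermediate idea of reordering the arc space. A reordering acts by a simultaneous permutation similarity $\widetilde S\mapsto\Pi\widetilde S\Pi^\top$, which preserves symmetry and its failure alike. The arc-reversal behaviour you wanted is a property of the shunts themselves, not of the ordering. Indeed, $\widetilde S$ sends the arc $(v,\sigma_j(v))$ to $(\sigma_j(v),\sigma_j^2(v))$. That is the reversed arc exactly when $\sigma_j$ is an involution: for the matching decomposition, and never for the rotations when $n\ge 3$.
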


\begin{proof}
Since $C_n$ has degree $2$, its adjacency matrix satisfies 
$A(C_n) = \widetilde{P}_1 + \widetilde{P}_2$, where $\widetilde{P}_1$ 
and $\widetilde{P}_2$ are the two permutation matrices corresponding 
to the two arc directions. Since each $\widetilde{P}_j$ is an involution, 
i.e.\ $\widetilde{P}_j^2 = I$, it follows by 
Corollary~\ref{involutary} that $\widetilde{S}^2 = I$. 
Since $\widetilde{S}$ is unitary and satisfies $\widetilde{S}^2 = I$, 
we have $\widetilde{S}^{-1} = \widetilde{S}$. Combined with the 
unitarity condition $\widetilde{S}^{-1} = \widetilde{S}^{\top}$, 
this gives $\widetilde{S} = \widetilde{S}^{\top}$, and hence 
$\widetilde{S}$ is symmetric.
\end{proof}
Since $\widetilde{S}$ is symmetric by Equation~\eqref{symmetric}, 
the discriminant $\widetilde{D} = \widetilde{Q}^{\top}\widetilde{S}\widetilde{Q}$ 
is also symmetric. We now apply Theorem~\ref{thm:pst-necessary} to establish 
perfect state transfer in the quotient graph $C_{2n}$, as demonstrated in 
Lemma~\ref{lem:pst-cycle-quotient} below with an explicit example. 
Subsequently, we lift this result to the parent graph $C_{2n}$ in Theorem~\ref{thm:pst-cycle} via 
Theorem~\ref{PST theorem 1}.

\begin{lemma}\label{lem:pst-cycle-quotient} 
Let $C_{2n}$ be the cycle graph on $2n$ vertices, where $n \geq 2$
is \emph{even}, and let $\pi = \{C_i\}_{i=1}^n$ with
$C_i = \{i,\,i+n\}$ be the equitable vertex partition of $C_{2n}$.  Let
$\widetilde  D $ denotes the discriminant of the quotient graph $C_{2n}/\pi$.
Then PST occurs on the quotient graph from
$\widetilde{Q}e_{C_i}$ to $\widetilde{Q}e_{C_j}$ at time
\(
  k = \frac{n}{2}.
\)
\end{lemma}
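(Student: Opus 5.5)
The plan is to reduce everything to a spectral computation on the $n\times n$ discriminant $\widetilde D$, using Lemma~\ref{quotient_Chebyshev}. Since Definition~\ref{def:quotient-vertex-type} gives $x_{C_i}=\widetilde Q e_{C_i}$, that lemma yields
\[
\langle x_{C_j},\widetilde U^{k}x_{C_i}\rangle
= e_{C_j}^{\top}\widetilde Q^{\top}\widetilde U^{k}\widetilde Q\, e_{C_i}
= \bigl(T_k(\widetilde D)\bigr)_{ji}.
\]
So PST from $C_i$ to $C_j$ at time $k$ in the sense of Definition~\ref{def:pst-Q} is equivalent to $|(T_k(\widetilde D))_{ji}|=1$. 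It therefore suffices to show that $T_{n/2}(\widetilde D)$ is a signed permutation matrix, and to identify the target cell. I expect it to be $C_j$ with $j\equiv i+n/2 \pmod n$, which is the antipodal vertex of $C_n$.

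\emph{Step 1: identify $\widetilde D$.} By Lemma~\ref{cycle_Quotient}, for $n\ge 3$ we have $C_{2n}/\pi\cong C_n$. By Corollary~\ref{cor:cycle_tilde_Q}, $\widetilde Q=\frac1{\sqrt2}\binom{I_n}{I_n}$, which is form (i) of Lemma~\ref{lem:discriminant} with $d=2$. That lemma then gives $\widetilde D=\tfrac12 A(C_n)=\tfrac12(\widetilde P+\widetilde P^{\top})$, where $\widetilde P$ is the cyclic shift. The shift $\widetilde S$ is symmetric by the preceding corollary, so Lemma~\ref{lem:norm-bound} and Theorem~\ref{thm:pst-necessary} apply. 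The case $n=2$ I would treat separately by hand. There the quotient is the two-vertex multigraph of Example~\ref{C_4 grah and its quotient}, with $\widetilde D=\begin{pmatrix}0&1\\1&0\end{pmatrix}$. Then $T_1(\widetilde D)=\widetilde D$, which swaps $C_1$ and $C_2$ at $k=1=n/2$.

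\emph{Step 2: diagonalize.} $\widetilde D$ is circulant, so it is diagonalized by the Fourier vectors $f_\ell=(\omega^{\ell m})_{m}$ with $\omega=e^{2\pi i/n}$. The corresponding eigenvalues are $\mu_\ell=\cos(2\pi\ell/n)$ for $\ell=0,\dots,n-1$. Using $T_k(\cos\theta)=\cos(k\theta)$ with $k=n/2$, we get
\[
T_{n/2}(\mu_\ell)=\cos(\pi\ell)=(-1)^\ell .
\]
In particular $T_{n/2}(\mu)=\pm1$ on the whole spectrum, so the necessary condition of Theorem~\ref{thm:pst-necessary} holds for every cell.

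\emph{Step 3: recognize the operator.} This is the main step. A value $\pm1$ on each eigenvalue only gives $\|T_{n/2}(\widetilde D)e_{C_i}\|=1$ via Lemma~\ref{lem:norm-bound}. PST additionally needs the image to be a single basis vector. I would argue as follows. The matrix $\widetilde P^{\,n/2}$ is also circulant, and it has eigenvalue $\omega^{\ell n/2}=(-1)^\ell$ on $f_\ell$. Hence $T_{n/2}(\widetilde D)$ and $\widetilde P^{\,n/2}$ agree on a common eigenbasis, so $T_{n/2}(\widetilde D)=\widetilde P^{\,n/2}$. Since $n$ is even, $n/2$ is an integer and $\widetilde P^{\,n/2}$ is the permutation matrix sending $e_{C_i}$ to $e_{C_{i+n/2}}$. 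This gives $\bigl|\langle x_{C_{i+n/2}},\widetilde U^{n/2}x_{C_i}\rangle\bigr|=1$; in fact the transfer is exact with phase $+1$. One subtlety: the eigenvalues $\mu_\ell=\mu_{n-\ell}$ coincide, so the eigenspaces are $2$-dimensional. I will handle this by working with the full Fourier basis rather than with the projectors $\widetilde E_r$, and observe that $(-1)^\ell=(-1)^{n-\ell}$ since $n$ is even, so the two descriptions are consistent. Finally, I will note that the evenness of $n$ is exactly what makes $n/2$ an integer time, and what makes $(-1)^\ell$ a well-defined function of $\mu_\ell$.
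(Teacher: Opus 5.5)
Your argument follows the paper's route: reduction to $T_k(\widetilde D)$ via Lemma~\ref{quotient_Chebyshev}, $\widetilde D=\tfrac12A(C_n)$ with eigenvalues $\cos(2\pi\ell/n)$, and $T_{n/2}(\mu_\ell)=(-1)^\ell$. Your Step~3 improves on it. The paper checks only the necessary condition of Theorem~\ref{thm:pst-necessary} and then appeals to ``antipodal symmetry''. Your identity $T_{n/2}(\widetilde D)=\widetilde P^{\,n/2}$ on the Fourier basis actually shows that $e_{C_i}$ is sent to a single basis vector. Your separate treatment of $n=2$, where Lemma~\ref{cycle_Quotient} does not apply, is also needed.

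There is, however, a gap that you inherit from the paper, and it sits in your first display. The identity $\widetilde Q^{\top}\widetilde U^{k}\widetilde Q=T_k(\widetilde D)$ requires $\widetilde S^{2}=I$. In the inductive step of Lemma~\ref{quotient_Chebyshev}, the last term is really $\widetilde Q^{\top}\widetilde U^{k-2}\widetilde S^{2}\widetilde Q$, and it is silently replaced by $\widetilde D_{k-2}$. You obtain $\widetilde S^{2}=I$ from the corollary asserting that the shift of $C_n$ is symmetric. That corollary is false for the shunts it names: the two arc directions of $C_n$ are the rotations $\widetilde P$ and $\widetilde P^{\top}$, and $\widetilde P^{2}\neq I$ for $n\ge 3$. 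With these shunts and $\widetilde Q=\tfrac{1}{\sqrt2}\begin{pmatrix}I_n\\ I_n\end{pmatrix}$, one has $2\widetilde Q\widetilde Q^{\top}-I=\begin{pmatrix}0&I_n\\ I_n&0\end{pmatrix}$, hence
\[
\widetilde U=\begin{pmatrix}0&\widetilde P\\ \widetilde P^{\top}&0\end{pmatrix},\qquad \widetilde U^{2}=I_{2n}.
\]
So for every even $n\ge 4$, the state $\widetilde U^{n/2}x_{C_i}$ is either $x_{C_i}$ itself or $\tfrac{1}{\sqrt2}\begin{pmatrix}\widetilde P e_{C_i}\\ \widetilde P^{\top}e_{C_i}\end{pmatrix}$, which is not a vertex-type state. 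In neither case is there transfer to $x_{C_{i+n/2}}$. For example, with $n=4$ you get $\widetilde U^{2}x_{C_1}=x_{C_1}$, while $T_2(\widetilde D)=\widetilde P^{2}$ predicts $x_{C_3}$. The statement is therefore false for this shunt decomposition, and your proof cannot be completed as written.

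The fix uses the evenness of $n$ a second time. Decompose $C_n$ into its two perfect matchings and take $\widetilde P_1=M_1$, $\widetilde P_2=M_2$; these are shunts and involutions. They are also the images under $\pi$ of the two perfect matchings of $C_{2n}$, so the intertwining needed for the later lift still holds. With this choice:
\begin{itemize}
\item $\widetilde S$ really is symmetric and involutory;
\item $\widetilde Q$ keeps form (i);
\item $\widetilde D=\tfrac12(M_1+M_2)=\tfrac12A(C_n)$ is unchanged;
\item Lemma~\ref{quotient_Chebyshev} genuinely applies, and your Steps~2 and~3 go through verbatim.
\end{itemize}
As a check, for $n=4$ one verifies directly that $\widetilde U^{2}x_{C_1}=x_{C_3}$. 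State this choice of shunts explicitly as a hypothesis.
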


\begin{proof}
By Lemma~\ref{lem:discriminant} and
Corollary~\ref{cor:cycle_tilde_Q}, the quotient graph is
$C_{2n}/\pi \cong C_n$ and the discriminant satisfies
$\widetilde{D} = \tfrac{1}{2}A(C_n)$.
The eigenvalues of $A(C_n)$ are $2\cos\!\bigl(\tfrac{2\pi t}{n}\bigr)$
for $t = 0,1,\ldots,n-1$, so the eigenvalues of $\widetilde{D}$ are
\begin{equation}\label{eq:disc-evals}
  \mu_t = \cos\!\left(\frac{2\pi t}{n}\right),
  \qquad t = 0, 1, \ldots, n-1.
\end{equation}

By Theorem~\ref{thm:pst-necessary}, PST from
$\widetilde{Q}e_{C_i}$ to $\widetilde{Q}e_{C_j}$ at time $k$
requires
\[
  T_k(\mu_t) = \pm 1
  \qquad
  \text{for all } t = 0, 1, \ldots, n-1
  \text{ and }
  \mu_t \in \Sigma_{C_i}.
\]
where $T_k$ is the Chebyshev polynomial of degree $k$.
Using $T_k(\cos\theta) = \cos(k\theta)$, this condition becomes
\[
  \cos\!\left(k \cdot \frac{2\pi t}{n}\right) = \pm 1
  \quad\iff\quad
  k \cdot \frac{2t}{n} \in \mathbb{Z}
  \quad \text{for all } t = 0,1,\ldots,n-1.
\]
To satisfy this for all $t$, it suffices to consider $t=1$, which yields
\[
  \frac{2k}{n} \in \mathbb{Z}
  \quad\Longrightarrow\quad
  k = m\,\frac{n}{2}, \quad m \in \mathbb{Z}_{>0}.
\]
The smallest positive value is $k = \dfrac{n}{2}$, which is a
positive integer since $n$ is even.
Let $E_t$ denote the spectral projector of $\widetilde{D}$ onto the
eigenspace of $\mu_t$.  At $k = n/2$, the sign pattern is
\[
  T_{n/2}(\mu_t)
  = \cos\!\left(\frac{n}{2}\cdot\frac{2\pi t}{n}\right)
  = \cos(\pi t )
  = (-1)^t,
\]
so the transfer operator is
\begin{equation}
  T_{n/2}(\widetilde{D})
  = \sum_{t=0}^{n-1}(-1)^t E_t.
\end{equation}
Since $n$ is even and $C_n$ is vertex-transitive, the antipodal
symmetry of $C_n$ (sending cell $C_i$ to cell $C_{i+n/2 \bmod n}$)
implies
\[
  T_{n/2}(\widetilde{D})\,e_{C_i}
  = e_{C_{i+n/2 \bmod n}},
\]
that is, the transfer operator maps each basis vector to the
diametrically opposite basis vector of $C_n$.  Hence
\begin{equation}
  \bigl\langle
    \widetilde{U}^{n/2}\,\widetilde{Q}e_{C_i},\;
    \widetilde{Q}e_{C_j}
  \bigr\rangle = 1,
  \qquad
  j = i + \tfrac{n}{2} \bmod n,
\end{equation}
establishing PST at time $k = n/2$.
\end{proof}

\begin{ex}
\label{ex:pst-C8-C4}

Take $2n = 8$, so $n = 4$ (even, satisfying the lemma).
The vertex partition $C_i = \{i,\,i+4\}$ gives
\[
  C_1=\{1,5\},\quad C_2=\{2,6\},\quad C_3=\{3,7\},\quad C_4=\{4,8\},
\]
with quotient $C_8/\pi \cong C_4$.
The discriminant is $\widetilde{D} = \tfrac{1}{2}A(C_4)$, with
eigenvalues
\[
  \mu_t = \cos\!\left(\frac{2\pi t}{4}\right)
        = \cos\!\left(\frac{\pi t}{2}\right),
  \qquad t = 0,1,2,3,
\]
namely
\[
  \mu_0 = 1,\quad \mu_1 = 0,\quad \mu_2 = -1,\quad \mu_3 = 0.
\]
By the lemma, $k = n/2 = 2$.  The Chebyshev polynomial of
degree $2$ is
\[
  T_2(x) = 2x^2 - 1.
\]
Evaluating on the spectrum:
\(
  T_2(1) = 1,\quad
  T_2(0) = -1,\quad
  T_2(-1) = 1,\quad
  T_2(0) = -1.
\)
Thus $T_2(\mu_t) = (-1)^t \in \{+1,-1\}$ for all $t$, confirming the
PST condition.
The distinct eigenvalues of $\widetilde{D}$ are
$\mu = +1, 0, -1$.  Their spectral projectors are:
\[
  E_0 = \frac{1}{4}
  \begin{pmatrix}1&1&1&1\\1&1&1&1\\1&1&1&1\\1&1&1&1\end{pmatrix},
  \qquad
  E_2 = \frac{1}{4}
  \begin{pmatrix}1&{-1}&1&{-1}\\{-1}&1&{-1}&1\\
                 1&{-1}&1&{-1}\\{-1}&1&{-1}&1\end{pmatrix},
\]
\[
  E_{1,3} = \frac{1}{2}
  \begin{pmatrix}1&0&{-1}&0\\0&1&0&{-1}\\
                 {-1}&0&1&0\\0&{-1}&0&1\end{pmatrix}.
\]
These satisfy $E_0 + E_{1,3} + E_2 = I_4$.

\[
  T_2(\widetilde{D})
  = (+1)E_0 + (-1)E_{1,3} + (+1)E_2
  = 2\widetilde{D}^2 - I_4
  =
  \begin{pmatrix}
    0 & 0 & 1 & 0 \\
    0 & 0 & 0 & 1 \\
    1 & 0 & 0 & 0 \\
    0 & 1 & 0 & 0
  \end{pmatrix}.
\]
This is the antipodal permutation matrix of $C_4$, and its action is
\[
  T_2(\widetilde{D})\,e_{C_1} = e_{C_3},\quad
  T_2(\widetilde{D})\,e_{C_2} = e_{C_4},\quad
  T_2(\widetilde{D})\,e_{C_3} = e_{C_1},\quad
  T_2(\widetilde{D})\,e_{C_4} = e_{C_2}.
\]
Hence PST occurs at $k = 2$ between the
antipodal pairs:
\[
  \widetilde{Q}e_{C_1} \;\longleftrightarrow\; \widetilde{Q}e_{C_3},
  \qquad
  \widetilde{Q}e_{C_2} \;\longleftrightarrow\; \widetilde{Q}e_{C_4}.
\]
\end{ex}

\begin{Theorem}\label{thm:pst-cycle}
Let $G = C_{2n}$ be the even cycle on $2n$ vertices, where
$n \geq 2$ is even, and let $\pi = \{C_i\}_{i=1}^{n}$ with
$C_i = \{i,\, i+n\}$ be the equitable vertex partition of $G$. Then $G$ exhibits PST at time
\(
  k = \frac{n}{2}
\)
from the state $Q_\tau\, x_{C_i}$ to the state
$Q_\tau\, x_{C_j}$, where
$j \equiv i + \tfrac{n}{2} \pmod{n}$.
\end{Theorem}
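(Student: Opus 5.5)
The plan is to lift the quotient result, Lemma~\ref{lem:pst-cycle-quotient}, to $C_{2n}$ via Theorem~\ref{PST theorem 1}. The work is in checking that theorem's hypotheses for $G = C_{2n}$ and $\pi = \{C_i\}$ with $C_i=\{i,i+n\}$, namely:
\begin{itemize}
\item $\pi$ is equitable with cells of equal size;
\item $\widetilde Q = Q$;
\item $Q_\tau = I_2\otimes\widetilde Q$.
\end{itemize}
Once these hold, the quotient transfer $\widetilde U^{n/2}x_{C_i}=\gamma\, x_{C_j}$ with $j\equiv i+n/2 \pmod n$ lifts directly to $U^{n/2}Q_\tau x_{C_i}=\gamma\, Q_\tau x_{C_j}$.

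First, Lemma~\ref{cycle_Quotient} gives equitability, equal cell size $2$, and $C_{2n}/\pi\cong C_n$. Corollary~\ref{cor:cycle_tilde_Q} gives $Q=\widetilde Q=\tfrac{1}{\sqrt2}\binom{I_n}{I_n}$ under the stated arc ordering.

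Next, I would fix the shunt decomposition of $C_{2n}$ into the two rotations $P_1$ ($v\mapsto v+1$) and $P_2$ ($v\mapsto v-1$). Since each $P_\ell$ maps $C_k$ onto $C_{k\pm1}$, Corollary~\ref{cor:shunt-intertwine} gives $P_\ell Q = Q\widetilde P_\ell$, where $\widetilde P_\ell$ are the rotations of $C_n$. Lemma~\ref{lemma 3.1} then gives $\widetilde S=(I_2\otimes Q^\top)S(I_2\otimes Q)$.

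For $Q_\tau$, I would order the arcs of $C_{2n}$ class by class, $\mathcal A^{(1)}$ then $\mathcal A^{(2)}$, as in Example~\ref{ex 1}, aligning the $k$-th arcs of each class with $\widetilde Q$'s row ordering. The terminal-vertex coarsening $\tau$ of Lemma~\ref{line digraph 1} then has characteristic matrix $Q_\tau = I_2\otimes \widetilde Q$. This matching of orderings is the step I expect to be the main obstacle. The paper asserts such factorizations by choosing orderings, so I would make the ordering explicit:
\begin{itemize}
\item index the arcs in $\mathcal A^{(\ell)}$ by their heads $v=1,\dots,2n$;
\item group $\tau$-cells by head-cell modulo $n$.
\end{itemize}
I would then verify the Kronecker form blockwise.

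With these in hand, Proposition~\ref{prop} gives $U^kQ_\tau = Q_\tau\widetilde U^k$. Lemma~\ref{lem:pst-cycle-quotient} supplies PST $x_{C_i}\to x_{C_j}$ at $k=n/2$. To make this step solid, I would upgrade $|\langle e_{C_j},T_{n/2}(\widetilde D)e_{C_i}\rangle|=1$ to the vector statement $\widetilde U^{n/2}x_{C_i}=\gamma x_{C_j}$ as follows.
\begin{itemize}
\item Since $\widetilde U$ is unitary and $x_{C_i}$ is a unit vector, $\widetilde U^{n/2}x_{C_i}$ is a unit vector.
\item By Lemma~\ref{quotient_Chebyshev}, its inner product with the unit vector $x_{C_j}$ equals $\langle e_{C_j},T_{n/2}(\widetilde D)e_{C_i}\rangle$, which has modulus $1$.
\item Equality in Cauchy--Schwarz then forces $\widetilde U^{n/2}x_{C_i}=\gamma x_{C_j}$ with $|\gamma|=1$.
\end{itemize}
Finally, the direction $(\Rightarrow)$ of Theorem~\ref{PST theorem 1} yields $U^{n/2}Q_\tau x_{C_i}=\gamma\, Q_\tau x_{C_j}$. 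Since $Q_\tau$ is an isometry, $Q_\tau x_{C_i}$ and $Q_\tau x_{C_j}$ are unit states, so this is PST in $G=C_{2n}$ at time $n/2$.
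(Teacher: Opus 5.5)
Your strategy is the paper's: lift Lemma~\ref{lem:pst-cycle-quotient} through Proposition~\ref{prop} and Theorem~\ref{PST theorem 1}. Two of the concrete choices you make break it, and the first is decisive.

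\textbf{The shunt decomposition.} With the rotations $v\mapsto v\pm1$, the induced quotient shunts are the rotations $\rho^{\pm1}$ of $C_n$. So for $n\ge3$, $\widetilde S^2\neq I$ and $\widetilde S$ is not symmetric. The recursion in Lemma~\ref{quotient_Chebyshev} silently replaces $\widetilde Q^\top\widetilde U^{k-2}\widetilde S^2\widetilde Q$ by $\widetilde D_{k-2}$, which needs $\widetilde S^2=I$, and Lemma~\ref{lem:pst-cycle-quotient} rests on that recursion. For rotations the conclusion is in fact false. Since $2\widetilde Q\widetilde Q^\top-I=\bigl(\begin{smallmatrix}0&I\\ I&0\end{smallmatrix}\bigr)$, you get $\widetilde U=\bigl(\begin{smallmatrix}0&\rho\\ \rho^{-1}&0\end{smallmatrix}\bigr)$ and $\widetilde U^2=I$. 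Hence $\widetilde U^{n/2}x_{C_i}$ is either $x_{C_i}$ or $\tfrac{1}{\sqrt2}\binom{e_{i+1}}{e_{i-1}}$, and both are orthogonal to $x_{C_{i+n/2}}$ for every even $n\ge4$. Already for $n=4$, $\widetilde Q^\top\widetilde U^2\widetilde Q=I\neq T_2(\widetilde D)$. The same failure occurs in the parent walk; for $C_8$ one checks directly that the lifted state returns to itself at $k=2$.

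The fix is to use the two perfect matchings $N_1=(1\,2)(3\,4)\cdots$ and $N_2=(2\,3)(4\,5)\cdots(2n\,1)$ of $C_{2n}$. Because $n$ is even, $c$ and $c+n$ have the same parity. So each $N_\ell$ maps cells onto cells and induces a perfect matching of $C_n$, which is an involution. Then $\widetilde S^2=I$, $\widetilde D=\tfrac12A(C_n)$, and $T_{n/2}(\widetilde D)=\sum_t(-1)^tE_t$ is the antipodal permutation. This, not merely integrality of $n/2$, is where evenness of $n$ is used.

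\textbf{The factorization $Q_\tau=I_2\otimes\widetilde Q$.} This cannot be verified for the terminal-vertex $\tau$ in any arc ordering for which $S=\sum_jE_{jj}\otimes P_j$. Every shunt is a permutation, so each $T_p$ contains exactly one arc of each class. With your head-indexing, $Q_\tau=\tfrac1{\sqrt2}\binom{I_{2n}}{I_{2n}}$, and each of its columns meets both class blocks. Each column of $I_2\otimes\widetilde Q$, by contrast, lies in a single block. Moreover $SQ_\tau w=\tfrac1{\sqrt2}\binom{P_1w}{P_2w}$, so $\operatorname{Im}(Q_\tau)$ is not $U$-invariant and $UQ_\tau=Q_\tau\widetilde U$ is false for this $Q_\tau$. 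The paper's proof simply asserts the same identity.

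There are two repairs, and both need the involutive shunts:
\begin{itemize}
\item Set $Q_\tau:=I_2\otimes Q$, the characteristic matrix of the partition of arcs by shunt class and $\pi$-cell of the head. Then $SQ_\tau=Q_\tau\widetilde S$ follows from $P_\ell Q=Q\widetilde P_\ell$, and $RQ_\tau=Q_\tau\widetilde R$ follows from $Q_\tau^\top Q_\tau=I$. Your Cauchy--Schwarz lift then goes through unchanged.
\item Keep the terminal-vertex $Q_\tau$ and bypass the intertwining. Put $\Phi=Q_\tau\widetilde Q$. Then $\Phi^\top\Phi=I$, $S^2=I$, and $\Phi^\top S\Phi=\widetilde Q^\top\tfrac12A(C_{2n})\widetilde Q=\tfrac12A(C_n)$. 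Running the Chebyshev recursion directly on $U=S(2\Phi\Phi^\top-I)$ gives $\Phi^\top U^{n/2}\Phi=T_{n/2}(\tfrac12A(C_n))$, again the antipodal permutation.
\end{itemize}
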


\begin{proof}

By Lemma~\ref{cycle_Quotient}, the partition $\pi$ is
equitable with $n$ cells each of size  $2$, and the quotient
satisfies $C_{2n}/\pi \cong C_n$.
By Corollary~\ref{cor:cycle_tilde_Q}, the vertex and arc
characteristic matrices coincide:
\[
  Q
  \;=\;
  \widetilde{Q}
  \;=\;
  \frac{1}{\sqrt{2}}
  \begin{pmatrix} I_n \\ I_n \end{pmatrix}
  \;\in\; M_{2n \times n}(\mathbb{R}),
  \qquad
  \widetilde{Q}^\top \widetilde{Q} = I_n.
\]
The arc characteristic matrix of $\mathrm{LD}(C_{2n})$ takes the
form
\[
  Q_\tau
  \;=\;
   I_2\otimes \widetilde{Q} 
  \;=\;
  \frac{1}{\sqrt{2}}
  \begin{pmatrix} I_{2n} \\ I_{2n} \end{pmatrix}
  \;\in\; M_{4n \times 2n}(\mathbb{R}),
  \qquad
  Q_\tau^\top Q_\tau = I_{2n}.
\]
By construction, $Q_\tau$ satisfies the intertwining identity
\[
  A_{\mathrm{LD}(C_{2n})}\, Q_\tau
  \;=\;
  Q_\tau\, A_{C_{2n}},
\]
where $A_{\mathrm{LD}(C_{2n})} \in M_{4n \times 4n}(\mathbb{R})$
and $A_{C_{2n}} \in M_{2n \times 2n}(\mathbb{R})$. By Lemma~\ref{lem:pst-cycle-quotient}, the quotient $C_n$
exhibits PST at time $k = \tfrac{n}{2}$
(a positive integer since $n$ is even) between the antipodal
states $\widetilde{Q}\,e_{C_i}$ and $\widetilde{Q}\,e_{C_j}$,
that is,
\[
  \left|
    e_{C_j}^\top\,
    \widetilde{Q}^\top\,
    \widetilde{U}^{\,k}\,
    \widetilde{Q}\,
    e_{C_i}
  \right| = 1.
\]
Applying Theorem~\ref{PST theorem 1} with
$Q_\tau = I_2 \otimes \widetilde{Q}$, PST on the quotient lifts
to the original graph, giving
\[
  \left|
    x_{C_j}^\top\, Q_\tau^\top\, U^{k}\, Q_\tau\, x_{C_i}
  \right|
  \;=\;
  \left|
    e_{C_j}^\top\,
    \widetilde{Q}^\top\,
    \widetilde{U}^{\,k}\,
    \widetilde{Q}\,
    e_{C_i}
  \right|
  \;=\; 1,
\]
where $U$ and $\widetilde{U}$ are the transition operators on
$C_{2n}$ and $C_n$, respectively. Hence $C_{2n}$ exhibits PST at time
$k = \tfrac{n}{2}$ from the state $Q_\tau\, x_{C_i}$ to the
state $Q_\tau\, x_{C_j}$, with
$j \equiv i + \tfrac{n}{2} \pmod{n}$.
\end{proof}

\section{Perfect State Transfer on the Quotient Graph \texorpdfstring{$K_{n}^{\circlearrowleft}$}{Kn} for \texorpdfstring{$n=2^s$}{n=2s} for some integer \texorpdfstring{$s\ge 1$}{s>=1}}
\label{section 8}
Complete graphs with self-loops are among the most symmetric graph
structures and have received considerable attention in the study of
quantum walks. Szegedy quantum walks on complete graphs with
self-loops have been investigated, where the success probability
approaches unity as the number of vertices $N$ becomes large~\cite{PhysRevA.94.022301}.
Moreover, for a complete graph with self-loops on $N = 30$ vertices,
the first maximum of the fidelity is attained after $12$ steps under
the Szegedy quantum walk model.

Motivated by these results, we investigate perfect state transfer
(PST) on the complete graph with self-loops $K_n^{\circlearrowleft}$
on $n$ vertices, where $n$ is a power of $2$. Specifically, we
consider a $d$-regular directed graph $G$ on $2n$ vertices whose
quotient graph is $K_n^{\circlearrowleft}$ of degree $d$, and we
establish conditions under which PST occurs in the corresponding
quotient quantum walk. The results of this section serve as a
tool for proving PST on broader families of quotient graphs,
as developed in Sections~\ref{section 9} and~\ref{section 10}.
Let the shift matrix of $K_n^{\circlearrowleft}$ be 
\[
\widetilde{S}
=
\sum_{j=1}^{d} E_{jj}\otimes \widetilde   P_j,
\]
where \(\widetilde  P_1,\widetilde  P_2,\ldots,\widetilde  P_d\) are the shunts of the adjacency matrix
\(A(K_{n}^{\circlearrowleft})\) satisfying
\[
A(K_{n}^{\circlearrowleft})
=
\widetilde  P_1+ \widetilde  P_2+\cdots+ \widetilde  P_d=I_n+\widetilde  P+\widetilde   P^2 \cdots \widetilde  P^{d-1}.
\]

For the specific choice
\[
\widetilde{Q}
=
\frac{1}{\sqrt d}
\begin{bmatrix}
I_n\\
\widetilde  P\\
\widetilde  P^2\\
\vdots\\
\widetilde  P^{d-1}
\end{bmatrix}
\]
where \(\widetilde  P\) is the cyclic permutation matrix, and with reflction operator \(\widetilde  R\) as defined in
Equation~\ref{quotient reflection}, we obtain
\[
\widetilde{S}
=
\bigoplus_{k=0}^{n-1} \widetilde  P^k,
\qquad
\widetilde{U}
=
\widetilde{S}\widetilde{R}.
\]

The following lemma determines the eigenvalues of
\(\widetilde{U}\), and the subsequent theorem establishes PST on the quotient graph \(K_{n}^{\circlearrowleft}\).

\begin{lemma} \label{eigen_values_}
Let $n \geq 1$ and set $d = n$. Given
\[
  \widetilde{Q} = \frac{1}{\sqrt{n}}
  \begin{bmatrix} I_n \\ \widetilde{P} \\ \widetilde{P}^2 \\ \vdots \\ 
  \widetilde{P}^{n-1} \end{bmatrix},
  \qquad
  R = 2\widetilde{Q}\widetilde{Q}^\top - I_{n^2},
  \qquad
  S = \bigoplus_{k=0}^{n-1} \widetilde{P}^k,
  \qquad
  \widetilde{U} = \widetilde S \widetilde R,
\]
where $\widetilde{P}$ is the cyclic shift on $\mathbb{C}^n$,
the eigenvalues of $\widetilde{U} \in \mathbb{C}^{n^2 \times n^2}$
are the roots of the $n$th-degree characteristic polynomial
\[
  p_l(\lambda)
  = \prod_{m=0}^{n-1}(\lambda + \omega^{lm})
  - \frac{2}{n}\sum_{m=0}^{n-1} \omega^{-ml}
    \prod_{\substack{j=0\\j\neq m}}^{n-1}(\lambda + \omega^{lj}),
  \qquad l \in \mathbb{Z}_n,
\]
where $\omega = e^{2\pi i/n}$.
\end{lemma}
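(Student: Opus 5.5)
Since $\widetilde S=\bigoplus_{k=0}^{n-1}\widetilde P^k$ and $\widetilde Q$ are built from powers of the single cyclic shift $\widetilde P$, every block of $\widetilde S$ and of $\widetilde Q\widetilde Q^\top$ is a circulant. The plan is therefore to diagonalize all blocks simultaneously with the Fourier matrix. This reduces $\widetilde U$ to a direct sum of $n$ matrices of size $n\times n$, one for each frequency $l\in\mathbb Z_n$. Each of these is a diagonal matrix times a rank-one modification of $-I$, and its characteristic polynomial can be read off from the matrix determinant lemma.

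\emph{Step 1 (Fourier block reduction).} Let $F$ be the unitary DFT matrix on $\mathbb C^n$, so $F^*\widetilde P F=\Omega:=\mathrm{diag}(\omega^0,\dots,\omega^{n-1})$. Conjugating by $W=I_n\otimes F$ (coin index $k$ first, as in $\widetilde S$) gives
\[
W^*\widetilde S W=\bigoplus_{k}\Omega^k, \qquad W^*\widetilde Q=\tfrac{1}{\sqrt n}\,\bigl[\Omega^0;\Omega^1;\dots;\Omega^{n-1}\bigr]F^*.
\]
Hence
\[
W^*\widetilde Q\widetilde Q^\top W=\tfrac1n\bigl(\Omega^{j}\Omega^{-k}\bigr)_{j,k},
\]
where I use $\widetilde P^{\top}=\widetilde P^{-1}$. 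Every block is diagonal, so the permutation regrouping coordinates by Fourier index $l$ turns $W^*\widetilde U W$ into $\bigoplus_{l}U_l$ with
\[
U_l=D_l\,(2v_lv_l^*-I_n), \qquad D_l=\mathrm{diag}(\omega^{lm})_{m=0}^{n-1}, \qquad v_l=\tfrac{1}{\sqrt n}(\omega^{lm})_{m}.
\]
One can check that $v_l$ is a unit vector. The bookkeeping here is routine but must be done carefully. In particular, the sign of the exponent in $v_l$ depends on whether $\widetilde Q^\top$ is computed with $\widetilde P^{\top}$ or $\widetilde P^{*}$; they coincide because $\widetilde P$ is real.

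\emph{Step 2 (determinant lemma).} For each $l$,
\[
\det(\lambda I-U_l)=\det\bigl(\lambda I+D_l-2D_lv_lv_l^*\bigr).
\]
Set $M=\lambda I+D_l$, which is diagonal with entries $\lambda+\omega^{lm}$. The matrix determinant lemma gives
\[
\det(M)\bigl(1-2v_l^*M^{-1}D_lv_l\bigr)=\prod_m(\lambda+\omega^{lm})-\tfrac{2}{n}\sum_m \overline{\omega^{lm}}\,\omega^{lm}\,\omega^{lm}\prod_{j\ne m}(\lambda+\omega^{lj}).
\]
Depending on how the conjugations in $v_lv_l^*$ pair up, the weight in the sum becomes $\omega^{lm}$ or $\omega^{-lm}$. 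I will track this through Step 1 to match the stated weight $\omega^{-ml}$, possibly by relabeling $l\mapsto -l$, which permutes $\mathbb Z_n$ and does not change the set of polynomials. The identity extends from $\lambda\ne-\omega^{lm}$ to all $\lambda$ by polynomial continuity.

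\emph{Step 3 (conclusion).} Since $W$ and the regrouping permutation are unitary, $\mathrm{Spec}(\widetilde U)=\bigcup_l \mathrm{Spec}(U_l)$. Each $U_l$ contributes exactly the $n$ roots of $p_l$, giving $n^2$ eigenvalues in total, as required.

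The main obstacle is Step 1: keeping the index order of the Kronecker structure consistent with the block form of $\widetilde S$, and getting the conjugation and exponent signs right, so that the weight in the determinant-lemma sum matches $\omega^{-ml}$ exactly as stated. I would first verify the reduction on $n=2$ by hand to fix these conventions.
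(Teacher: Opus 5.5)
Your Steps 1--3 are correct. They are essentially the paper's argument written in a Fourier-conjugate basis. The paper restricts $\widetilde U$ to $V_l=\operatorname{span}\{f_m\otimes f_l: m\in\mathbb{Z}_n\}$ and gets $\Sigma_l(2e_le_l^\top-I_n)$, a cyclic shift times a diagonal reflection. On the same subspace you get a diagonal matrix times a Householder reflection. Your version has the advantage that $M=\lambda I+D_l$ is diagonal, so no circulant resolvent is needed.

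The gap is the last move, where you plan to adjust conjugation conventions or relabel $l\mapsto -l$ until the weight becomes $\omega^{-ml}$. That cannot be done. Since $M$ and $D_l$ are diagonal,
\[
v_l^*M^{-1}D_lv_l=\sum_m \bigl|(v_l)_m\bigr|^2\,\frac{\omega^{lm}}{\lambda+\omega^{lm}}=\frac1n\sum_m\frac{\omega^{lm}}{\lambda+\omega^{lm}},
\]
and $|(v_l)_m|^2=1/n$ whichever sign you put in the exponent of $v_l$. So the term that omits the factor $(\lambda+\omega^{lm})$ is forced to carry the weight $\omega^{+lm}$. Relabeling does not help either. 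Substituting $m\mapsto -m$ shows that your polynomial for $-l$ equals your polynomial for $l$, so $l\mapsto -l$ maps your family to itself. A hand check at $n=2$ cannot reveal any of this, since there $\omega^{-1}=\omega$.

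In fact the statement as printed is false for $n\ge 3$, so no proof of it can close. Take $n=3$, $l=1$. The stated polynomial is $p_1(\lambda)=\lambda^3+2\lambda+1$. It has a real root in $(-1,0)$, which is impossible for the orthogonal matrix $\widetilde U$. Its constant term would also force $\det U_1=-1$, whereas $\det U_1=\det D_1\cdot\det(2v_1v_1^*-I)=\omega^3\cdot 1=1$. Your formula gives $\lambda^3-1$.

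Take $n=4$, $l=1$. The stated formula gives $\lambda^4+2\lambda^2-1$, while yours gives $\lambda^4+1$. The latter is exactly what the paper's own $K_4^{\circlearrowleft}$ example obtains for $l=1,3$ by a different route. The two formulas coincide when $\omega^{2l}=1$, as for the blocks $l=0,2$ computed there from the formula.

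The paper's proof reaches $\omega^{-ml}$ through a sign slip. It uses $\Sigma_le_l=e_{2l}$, i.e.\ the shift by $+l$, together with the entry formula $\frac1n\sum_m\omega^{m(j-k)}/(\lambda+\omega^{lm})$, which is the resolvent of the shift by $-l$. The correct way to finish is to conclude that the lemma holds with $\omega^{+ml}$ in place of $\omega^{-ml}$, which is precisely what your Step 2 establishes, rather than to force agreement with the printed sign.
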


\begin{proof}
Since $d = n$, the same root of unity $\omega = e^{2\pi i/n}$ governs
both the cyclic shift $\widetilde{P}$ on $\mathbb{C}^n$ and the
DFT structure on $\mathbb{C}^n$.
The DFT basis vectors
\[
  f_l = \frac{1}{\sqrt{n}}\bigl[1,\,\omega^{l},\,\omega^{2l},\,\ldots,\,
  \omega^{(n-1)l}\bigr]^\top, \qquad l \in \mathbb{Z}_n,
\]
satisfy $\widetilde{P} f_l = \omega^l f_l$.
The subspaces
\(
  V_l = \operatorname{span}\{f_m \otimes f_l : m \in \mathbb{Z}_n\},
  \qquad l \in \mathbb{Z}_n,
\)
are $\widetilde{U}$-invariant and
$\mathbb{C}^{n^2} = \bigoplus_{l=0}^{n-1} V_l$.
Indeed, since $d = n$, the action of $S$ gives
$S(f_m \otimes f_l) = f_{m+l \bmod n} \otimes f_l \in V_l$,
and the action of $R$ gives
$R(f_m \otimes f_l) = d_m^{(l)}(f_m \otimes f_l) \in V_l$,
both of which keep $V_l$ invariant.
It therefore suffices to find the eigenvalues of
$\widetilde{U}_l = \widetilde{U}|_{V_l}$ for each $l \in \mathbb{Z}_n$. In the basis $\{f_m \otimes f_l\}_{m \in \mathbb{Z}_n}$ one has
\begin{equation}
  \widetilde{U}_l = \Sigma_l D_l,
\end{equation}
where $\Sigma_l$ is the cyclic shift by $l$ on $\mathbb{C}^n$
(that is\ $(\Sigma_l)_{m,m'} = \delta_{m,\,m'+l \bmod n}$),
arising from $S|_{V_l}$, and
$D_l = \operatorname{diag}(d_0^{(l)},\ldots,d_{n-1}^{(l)})$
with
\[
  d_m^{(l)} =
  \begin{cases} +1 & m = l,\\ -1 & m \neq l,\end{cases}
\]
arising from $R|_{V_l}$. Write $D_l$ as a rank-one update of $-I_n$,
\(
  D_l = -I_n + 2\,e_l e_l^\top,
\)
since this matrix has $+1$ in position $(l,l)$ and $-1$ elsewhere on
the diagonal. Hence
\[
  \widetilde{U}_l = \Sigma_l D_l
  = \Sigma_l(-I_n + 2\,e_l e_l^\top)
  = -\Sigma_l + 2\,(\Sigma_l e_l)\,e_l^\top.
\]
This expresses $\lambda I - \widetilde{U}_l$ as a rank-one perturbation
of $\lambda I + \Sigma_l$
\begin{equation}
  \lambda I - \widetilde{U}_l
  = \underbrace{(\lambda I + \Sigma_l)}_{=\,A}
    + \underbrace{(-2\,\Sigma_l e_l)}_{=\,u}
      \,\underbrace{e_l^\top}_{=\,v^\top}.
  \label{eq:rankone}
\end{equation}
For an invertible matrix $A$ and vectors $u,v$,
\[
  \det(A + uv^\top) = \det(A)\,(1 + v^\top A^{-1} u).
\]
Applying this to \eqref{eq:rankone} with
$A = \lambda I + \Sigma_l$, $u = -2\Sigma_l e_l$, $v = e_l$
\begin{align}
  \det(\lambda I - \widetilde{U}_l)
  &= \det(\lambda I + \Sigma_l)
     \Bigl(1 - 2\,e_l^\top(\lambda I + \Sigma_l)^{-1}\Sigma_l e_l\Bigr).
  \label{eq:MDL}
\end{align}

The matrix $\Sigma_l$ is the cyclic permutation
$m \mapsto m + l \bmod n$ on $\mathbb{C}^n$,
whose eigenvalues are $\{\omega^{lm} : m \in \mathbb{Z}_n\}$
with $\omega = e^{2\pi i/n}$. Therefore
\(
  \det(\lambda I + \Sigma_l) = \prod_{m=0}^{n-1}(\lambda + \omega^{lm}).
\)
Now we compute $e_l^\top(\lambda I + \Sigma_l)^{-1}\Sigma_l e_l$.
The shift $\Sigma_l$ maps the $k$-th standard basis vector to
$e_{k+l \bmod n}$, so in particular
\(
  \Sigma_l e_l = e_{2l \bmod n}.
\)
Hence the resolvent entry becomes
\begin{equation}\label{eq:res}
  e_l^\top(\lambda I + \Sigma_l)^{-1}\Sigma_l e_l
  = e_l^\top(\lambda I + \Sigma_l)^{-1} e_{2l \bmod n}
  = \bigl[(\lambda I + \Sigma_l)^{-1}\bigr]_{l,\,2l \bmod n}.
\end{equation}
Since $\Sigma_l$ is a circulant matrix on $\mathbb{C}^n$,
so is $(\lambda I + \Sigma_l)^{-1}$. Its $(j,k)$ entry is
\[
  \bigl[(\lambda I + \Sigma_l)^{-1}\bigr]_{j,k}
  = \frac{1}{n}\sum_{m=0}^{n-1}\frac{\omega^{m(j-k)}}{\lambda + \omega^{lm}}.
\]
Setting $j = l$ and $k = 2l \bmod n$ gives $j - k \equiv -l \pmod{n}$,
so
\begin{equation}\label{eq:entry}
  \bigl[(\lambda I + \Sigma_l)^{-1}\bigr]_{l,\,2l \bmod n}
  = \frac{1}{n}\sum_{m=0}^{n-1}\frac{\omega^{-ml}}{\lambda + \omega^{lm}}.
\end{equation}
Substituting \eqref{eq:res} and \eqref{eq:entry} into \eqref{eq:MDL}:
\[
  \det(\lambda I - \widetilde{U}_l)
  = \prod_{m=0}^{n-1}(\lambda + \omega^{lm})
    \left(1 - \frac{2}{n}\sum_{m=0}^{n-1}
    \frac{\omega^{-ml}}{\lambda + \omega^{lm}}\right).
\]
Clearing denominators by multiplying through gives
\begin{equation}\label{det}
  \det(\lambda I - \widetilde{U}_l)
  = \prod_{m=0}^{n-1}(\lambda + \omega^{lm})
    - \frac{2}{n}\sum_{m=0}^{n-1}
      \omega^{-ml}\prod_{\substack{j=0\\j\neq m}}^{n-1}(\lambda + \omega^{lj})
  = p_l(\lambda).
\end{equation}
This is a degree-$n$ polynomial in $\lambda$, valid for each
$l \in \mathbb{Z}_n$. Thus the eigenvalues of $\widetilde{U}$ are
$\bigcup_{l=0}^{n-1}\{\lambda : p_l(\lambda) = 0\}$.
\end{proof}
\begin{lemma}\label{lem:Uln_general}
Let $n = 2^s$ for some integer $s \geq 1$, and let
$\widetilde U_l = \Sigma_l D_l$ be the restriction of
$\widetilde{U} = \widetilde S\widetilde R$ to the invariant subspace
$V_l$, for $l \in \mathbb{Z}_n$. Then the following hold.

\noindent\textup{(i)}
For every $l \in \mathbb{Z}_n$,
\[
  \widetilde U_l^n = (-1)^{l \bmod n}\,I_n, \qquad \widetilde U_l^{2n} = I_n.
\]
That is, the order of $\widetilde U_l$ is exactly $2n$ for every
$l \in \mathbb{Z}_n$ with $l \bmod n$ odd, and divides $2n$ for
every $l \in \mathbb{Z}_n$ with $l \bmod n$ even.

\noindent\textup{(ii)}
Since $d = n$, the identity
\(
  \widetilde{U}^{\,n} = I_n \otimes \widetilde{P}^{n/2}
\)
holds.
\end{lemma}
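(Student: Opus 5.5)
The plan is to avoid the characteristic polynomial $p_l$ of Lemma~\ref{eigen_values_} altogether. Instead I will use the explicit matrix form $\widetilde U_l=\Sigma_l D_l$ obtained in its proof, together with the decomposition $\mathbb{C}^{n^2}=\bigoplus_l V_l$ into $\widetilde U$-invariant subspaces. The key observation is that $\widetilde U_l$ is a \emph{signed permutation} (monomial) matrix. Since $D_l=-I_n+2e_le_l^\top$ is diagonal with entries $d^{(l)}_m=\pm1$, and $\Sigma_l e_m=e_{m+l \bmod n}$, we have
\[
\widetilde U_l\,e_m \;=\; d^{(l)}_m\, e_{m+l \bmod n},\qquad m\in\mathbb{Z}_n .
\]
Iterating this,
\[
\widetilde U_l^{\,k} e_m=\Bigl(\prod_{j=0}^{k-1} d^{(l)}_{m+jl}\Bigr)e_{m+kl}.
\]
For $k=n$ we get $m+nl\equiv m$, so $\widetilde U_l^n$ is diagonal. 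Its $m$-th diagonal entry is the product of the signs collected along the walk $m,m+l,\dots,m+(n-1)l$.

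\textbf{Sign count.} Let $g=\gcd(l,n)$, with the convention $g=n$ when $l=0$. The walk traverses the coset $m+g\mathbb{Z}_n$, which has $n/g$ elements, visiting each element exactly $g$ times. Every visited element contributes $-1$ except the element $l$, which contributes $+1$. Hence the diagonal entry is $(-1)^{\,n-g\cdot[\,l\in m+g\mathbb{Z}_n\,]}$. Because $n=2^s$ is even, this reduces to $(-1)^{g\cdot[\,l\in m+g\mathbb{Z}_n\,]}$. I split into cases.
\begin{itemize}
\item If $l$ is odd, then $g=1$ and the single coset is all of $\mathbb{Z}_n$, which contains $l$; the sign is $-1$ for every $m$.
\item If $l\neq0$ is even, then $g$ is even, so the sign is $+1$ for every $m$, whether or not $l$ lies in the coset.
\item If $l=0$, then $g=n$ is even, and again the sign is $+1$.
\end{itemize}
This gives $\widetilde U_l^n=(-1)^{l}I_n$, and squaring gives $\widetilde U_l^{2n}=I_n$.

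\textbf{Order claim.} For odd $l$, the formula above shows that $\widetilde U_l^k e_m$ is a multiple of $e_{m+kl}$. So $\widetilde U_l^k=I_n$ forces $kl\equiv 0 \pmod n$, which for odd $l$ means $n\mid k$. Since $\widetilde U_l^n=-I_n\neq I_n$, the order is exactly $2n$. For even $l$ the order divides $n$, and hence divides $2n$.

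\textbf{Part (ii).} I assemble the blocks. On $V_l=\operatorname{span}\{f_m\otimes f_l\}$, part (i) says $\widetilde U^{\,n}$ acts as the scalar $(-1)^l$. On the same vector,
\[
(I_n\otimes\widetilde P^{n/2})(f_m\otimes f_l)=\omega^{ln/2}\,f_m\otimes f_l=(-1)^l f_m\otimes f_l,
\]
using $\widetilde Pf_l=\omega^lf_l$ and $\omega^{n/2}=-1$. Both operators therefore act as the same scalar on each summand of $\mathbb{C}^{n^2}=\bigoplus_l V_l$. Since the $f_m\otimes f_l$ form a basis, they coincide.

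\textbf{Main obstacle.} I expect the delicate point to be the bookkeeping that justifies $\widetilde U|_{V_l}\cong\Sigma_lD_l$ in the basis $\{f_m\otimes f_l\}$. One must check that $\Sigma_l$ and $D_l$ are written in the same ordered basis as in Lemma~\ref{eigen_values_}, and that the tensor-factor convention makes $\widetilde P$ act on the $f_l$ factor. I will take this from Lemma~\ref{eigen_values_} as given and re-verify it only for the convention used in (ii). The sign count itself is routine once the monomial structure is noticed.
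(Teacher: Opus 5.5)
Your proposal is correct and follows essentially the same route as the paper: your monomial formula $\widetilde U_l^{\,k}e_m=\bigl(\prod_{j=0}^{k-1}d^{(l)}_{m+jl}\bigr)e_{m+kl}$ is the column-wise form of the paper's inductive identity $\widetilde U_l^{\,k}=\Sigma_{kl}E_k$. Your gcd/coset sign count, the order argument for odd $l$, and the comparison of $\widetilde U^{\,n}$ with $I_n\otimes\widetilde P^{n/2}$ on the basis $f_m\otimes f_l$ all match the paper's proof; reading the action directly off basis vectors just spares you the paper's step of commuting the diagonal factor past $\Sigma_l$.
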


\begin{proof}

\noindent\textit{Proof of part~(i).} Recall from Lemma~\ref{eigen_values_} that $\widetilde{U}_l = \Sigma_l D_l$,
where $\Sigma_l$ is the cyclic shift by $l$ on $\mathbb{C}^n$ and
$D_l = \operatorname{diag}(d_0^{(l)}, \ldots, d_{n-1}^{(l)})$ with
$d_m^{(l)} = +1$ if $m = l \bmod n$ and $d_m^{(l)} = -1$ otherwise.
We first establish that for all $k \geq 1$,
\begin{equation}\label{eq:ind}
  \widetilde U_l^k = \Sigma_{kl \bmod n}\,E_k,
\end{equation}
where $\Sigma_{kl}$ denotes the cyclic shift by $kl \bmod n$ on
$\mathbb{C}^n$ and $E_k$ is the $n \times n$ diagonal matrix with
entries
\begin{equation}\label{eq:Ek}
  (E_k)_{mm} = \prod_{r=0}^{k-1} d_{m-rl \bmod n}^{(l)},
  \qquad m \in \mathbb{Z}_n,
\end{equation}

\noindent\textit{Base case $k = 1$.}
We have $\widetilde U_l^1 = \Sigma_l D_l = \Sigma_l E_1$ since
$(E_1)_{mm} = d_m^{(l)}$.

\noindent\textit{Inductive step.}
Assume $\widetilde U_l^k = \Sigma_{kl} E_k$. Then
\(
  \widetilde U_l^{k+1} = \Sigma_{kl} E_k \cdot \Sigma_l D_l.
\)
Since $E_k$ is diagonal and $\Sigma_l$ is the shift by $l$ on
$\mathbb{C}^n$, commuting $E_k$ past $\Sigma_l$ relabels the
diagonal indices:
\[
  E_k \Sigma_l = \Sigma_l \widehat{E}_k,
  \qquad
  (\widehat{E}_k)_{mm} = (E_k)_{m-l \bmod n,\; m-l \bmod n}.
\]
Hence
\[
  \widetilde U_l^{k+1} = \Sigma_{(k+1)l}\,\widehat{E}_k D_l
  = \Sigma_{(k+1)l}\,E_{k+1},
\]
where
\[
  (E_{k+1})_{mm}
  = (\widehat{E}_k)_{mm}\cdot d_m^{(l)}
  = \prod_{r=0}^{k-1} d_{m-(r+1)l}^{(l)} \cdot d_m^{(l)}
  = \prod_{r=0}^{k} d_{m-rl}^{(l)}.
\]
This completes the induction. At $k = n$ the shift satisfies
$\Sigma_{nl} = \Sigma_0 = I_n$ since $nl \equiv 0 \pmod{n}$ for
every $l$, so
\begin{equation}
  \widetilde U_l^n = E_n,
\end{equation}
and it remains to determine the diagonal entries of $E_n$.
Fix $m \in \mathbb{Z}_n$ and consider the sequence of indices
\[
  m,\; m-l,\; m-2l,\; \ldots,\; m-(n-1)l \pmod{n}.
\]
Let $g = \gcd(l \bmod n,\,n)$. The map $r \mapsto m - rl \bmod n$
has period $n/g$, so as $r$ runs over $\{0,1,\ldots,n-1\}$ each
element of the orbit
\[
  \mathcal{O}(m,l) = \{m - rl \bmod n : r \in \mathbb{Z}_n\}
\]
is visited exactly $g$ times. The factor $d_{m-rl}^{(l)}$ equals
$+1$ only when $m - rl \equiv l \pmod{n}$. The number of such
$r \in \{0,\ldots,n-1\}$ is
\[
  N_+(m,l)
  = \#\{r \in \mathbb{Z}_n : rl \equiv m-l \pmod{n}\}
  = \begin{cases}
      g & \text{if } g \mid (m-l),\\
      0 & \text{if } g \nmid (m-l).
    \end{cases}
\]
The remaining $n - N_+(m,l)$ factors each equal $-1$, so
\begin{equation}\label{eq:Enentry}
  (E_n)_{mm} = (+1)^{N_+}\cdot(-1)^{n-N_+} = (-1)^{n-N_+}.
\end{equation}

\paragraph{Case 1: $g \nmid (m-l)$.}
Then $N_+ = 0$, so $(E_n)_{mm} = (-1)^n = +1$ since $n = 2^s$ is
even.

\paragraph{Case 2: $g \mid (m-l)$.}
Then $N_+ = g$, so $(E_n)_{mm} = (-1)^{n-g}$. Write
$g = \gcd(l \bmod n,\,n) = 2^a$ with $0 \leq a \leq s$, so that
$n - g = 2^s - 2^a = 2^a(2^{s-a}-1)$.
\begin{itemize}
  \item If $a \geq 1$ (that is\ $l \bmod n$ is \emph{even}):
    $n - g$ is even, so $(-1)^{n-g} = +1$.
  \item If $a = 0$ (that is\ $l \bmod n$ is \emph{odd}, $g = 1$):
    $n - g = 2^s - 1$ is odd for $s \geq 1$, so
    $(-1)^{n-g} = -1$.
\end{itemize}

\noindent Combining both cases, for every $m \in \mathbb{Z}_n$,
\[
  (E_n)_{mm} =
  \begin{cases}
    +1 = (-1)^{l \bmod n} & \text{if } l \bmod n \text{ is even},\\
    -1 = (-1)^{l \bmod n} & \text{if } l \bmod n \text{ is odd}.
  \end{cases}
\]
Hence $E_n = (-1)^{l \bmod n}\,I_n$ independently of $m$, and
therefore
\begin{equation}
  \widetilde U_l^n = (-1)^{l \bmod n}\,I_n.
\end{equation}
Squaring gives $\widetilde U_l^{2n} = I_n$.
To see that the order is exactly $2n$ when $l \bmod n$ is odd,
note that $\widetilde U_l^n = -I_n \neq I_n$, so no divisor of $n$
is the order. For $n < T < 2n$, write $T = n + q$ with
$1 \leq q < n$; then
\begin{equation}
  \widetilde U_l^T = \widetilde U_l^n \cdot \widetilde U_l^q
  = -\widetilde U_l^q.
\end{equation}
For $\widetilde U_l^T = I_n$ we would need $\widetilde U_l^q = -I_n$,
hence $\Sigma_{ql \bmod n} = I_n$, that is\ $ql \equiv 0 \pmod{n}$.
Since $\gcd(l \bmod n,\,n) = 1$ this forces $n \mid q$,
contradicting $1 \leq q < n$. Hence the order of $\widetilde U_l$
is exactly $2n$ for every $l$ with $l \bmod n$ odd.
This proves part~(i).

\noindent\textit{Proof of part~(ii).}
Since $d = n$, we verify directly that $\widetilde{U}^{\,n}$ and
$I_n \otimes \widetilde{P}^{n/2}$ agree on every basis vector
$f_m \otimes f_l$ of $\mathbb{C}^{n^2}$.
From part~(i),
\begin{equation}
  \widetilde{U}^{\,n}(f_m \otimes f_l)
  = \widetilde U_l^n\,(f_m \otimes f_l)
  = (-1)^{l \bmod n}\,(f_m \otimes f_l)
  = (-1)^l\,(f_m \otimes f_l),
\end{equation}
where the last equality uses $l \in \mathbb{Z}_n$ so that
$l \bmod n = l$.
Using the mixed-product property and the eigenrelation
$\widetilde{P}\,f_l = \omega^l f_l$ with $\omega = e^{2\pi i/n}$, we have
\[
  (I_n \otimes \widetilde{P}^{n/2})(f_m \otimes f_l)
  = f_m \otimes \widetilde{P}^{n/2} f_l
  = f_m \otimes \omega^{ln/2}\,f_l
  = e^{i\pi l}\,(f_m \otimes f_l)
  = (-1)^l\,(f_m \otimes f_l).
\]
Both sides produce the same scalar $(-1)^l$ on every basis vector
$f_m \otimes f_l$ of $\mathbb{C}^{n^2}$, so
\begin{equation}
  \widetilde{U}^{\,n} = I_n \otimes \widetilde{P}^{n/2},
\end{equation}
which is part~(ii).
\end{proof}
These two Lemmas together yield the PST and periodicity values of $K_n^{\circlearrowleft}$, 
as established below with example.
\begin{Theorem}
\label{thm:PST}
Let \(\pi\) be an equitable partition of a \(d\)-regular directed graph \(G\) on \(2n\) vertices, and suppose that the corresponding quotient graph is \(K_n^{\circlearrowleft}\), where \(n=2^s\) for some integer \(s\ge 1\). Then the following hold.
\noindent\textup{(i)}
At time \(k=n\), PST occurs between each basis state
\[
e_{v_i}\otimes e_{v_j}
\in
\operatorname{Im}( I_d \otimes \widetilde Q),
\qquad (v_i,v_j)\in\mathcal A(K_n^{\circlearrowleft}),
\]
representing the walker on the arc \((v_i,v_j)\), and the basis state corresponding to its antipodal arc
\(
\bigl(v_i,\;v_{j+n/2 \bmod n}\bigr).
\)
That is, for every \((v_i,v_j)\in\mathcal A(K_n^{\circlearrowleft})\),
\[
\Bigl|
\bigl(\widetilde U^{\,n}\bigr)_{
(v_i,\;v_{j+n/2 \bmod n}),
(v_i,\;v_j)}
\Bigr|^2
=1,
\]
and hence the transfer fidelity is \(\mathcal F=1\).

\noindent\textup{(ii)} 
\(
  \widetilde{U}^{\,2n} = I_{n^2},
\)
and the period of $\widetilde{U}$ is exactly $k = 2n$.
\end{Theorem}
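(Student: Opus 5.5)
The plan is to reduce both parts to Lemma~\ref{lem:Uln_general}, which already computes $\widetilde U^{\,n}$ in closed form. Throughout, take the setting fixed before Lemma~\ref{eigen_values_}: $d=n$, $\widetilde Q=\frac{1}{\sqrt n}[I_n;\widetilde P;\dots;\widetilde P^{n-1}]$ and $\widetilde S=\bigoplus_k\widetilde P^k$. By Lemma~\ref{eigen_values_}, the arc space $\mathbb C^{n^2}\cong\mathbb C^{d}\otimes\mathbb C^n$ splits as $\bigoplus_l V_l$ with $V_l=\operatorname{span}\{f_m\otimes f_l\}$. On each block, Lemma~\ref{lem:Uln_general}(i) gives $\widetilde U_l^{\,n}=(-1)^l I_n$. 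Part~(ii) of that lemma assembles these blocks into the identity $\widetilde U^{\,n}=I_n\otimes\widetilde P^{n/2}$.

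For part~(i), I would read off matrix entries of $I_n\otimes\widetilde P^{n/2}$ in the standard arc basis. An arc $(v_i,v_j)$ of $K_n^{\circlearrowleft}$ corresponds to the basis vector $e_{v_i}\otimes e_{v_j}$: the first factor is the coin or shunt-class index, which for the shunts $\widetilde P^{k}$ is tied to the tail, and the second factor is the position. Since $\widetilde P$ is the cyclic shift, $\widetilde P^{n/2}e_{v_j}=e_{v_{j+n/2 \bmod n}}$. Hence
\[
\widetilde U^{\,n}(e_{v_i}\otimes e_{v_j})=e_{v_i}\otimes e_{v_{j+n/2\bmod n}}.
\]
So the $\bigl((v_i,v_{j+n/2}),(v_i,v_j)\bigr)$ entry equals $1$, with the phase exactly $1$, and the fidelity is $\mathcal F=1$. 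To place the statement in the lifted setting, I would invoke Theorem~\ref{lemma_auxiliary_pst} (equivalently Proposition~\ref{relation_operators}). Via the intertwining $U_\sigma^k(I_d\otimes\widetilde Q)=(I_d\otimes\widetilde Q)\widetilde U^{\,k}$, this shows that the transfer of the states in $\operatorname{Im}(I_d\otimes\widetilde Q)$ is governed by $\widetilde U^{\,n}$, so the conclusion holds for the quotient of any $G$ with $G/\pi\cong K_n^{\circlearrowleft}$.

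For part~(ii), squaring gives $\widetilde U^{\,2n}=I_n\otimes\widetilde P^{n}=I_{n^2}$, since $\widetilde P^n=I$. For the exact period, $\widetilde U$ is the direct sum of the $\widetilde U_l$, so its order is the lcm of their orders. Lemma~\ref{lem:Uln_general}(i) states that $\widetilde U_l$ has order exactly $2n$ whenever $l$ is odd, and $l=1$ exists because $n\ge2$. Therefore the period is exactly $2n$.

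The main obstacle is the bookkeeping of identifications. One must confirm that the tensor ordering $e_{v_i}\otimes e_{v_j}$ used in the theorem matches the ordering implicit in $\widetilde S=\bigoplus_k\widetilde P^k$ and in the Fourier basis $f_m\otimes f_l$ of Lemma~\ref{eigen_values_}. Specifically, the shift $\widetilde P^{n/2}$ must act on the position factor, not the coin factor, so that the antipodal arc keeps the tail $v_i$. I would first check this on $n=2$, computing $\widetilde U^{\,2}$ explicitly as a $4\times4$ matrix. Then I would argue in general that $\widetilde U^{\,n}$ commutes with $\widetilde U$, so the identification is consistent with the block structure.
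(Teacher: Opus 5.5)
Your argument is correct, and for part~(i) and for $\widetilde U^{\,2n}=I_{n^2}$ it is the paper's own: quote $\widetilde U^{\,n}=I_n\otimes\widetilde P^{n/2}$ from Lemma~\ref{lem:Uln_general}(ii), apply it to $e_{v_i}\otimes e_{v_j}$ via the mixed-product property, and square.

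The one genuine difference is how you show the period is exactly $2n$. You take the lcm of the orders of the blocks $\widetilde U_l$ and use the exact-order clause of Lemma~\ref{lem:Uln_general}(i) on $V_1$. The paper instead uses only $\widetilde U^{\,n}=I_n\otimes\widetilde P^{n/2}\neq I_{n^2}$ (because $\widetilde P^{n/2}\neq I_n$), together with the fact that the only divisor of $2n=2^{s+1}$ not dividing $n$ is $2n$. Your version locates the obstruction spectrally, since the odd-$l$ blocks carry primitive $2n$-th roots of unity, but it leans on the lemma's exact-order claim. The paper's version needs nothing beyond the closed form and the power-of-two hypothesis.

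There are three small corrections.
\begin{enumerate}
\item Your ``bookkeeping obstacle'' is already settled. Lemma~\ref{lem:Uln_general}(ii) is stated in the same Kronecker convention as $\widetilde S=\sum_k E_{kk}\otimes\widetilde P^k$: the decomposition used there has $\widetilde S(f_m\otimes f_l)=f_{m+l}\otimes f_l$, with the first factor the shunt index. The claim in~(i) is then literally an entry of $I_n\otimes\widetilde P^{n/2}$. Your proposed check, that $\widetilde U^{\,n}$ commutes with $\widetilde U$, holds for every power and says nothing about which factor carries $\widetilde P^{n/2}$, so drop it.
\item The first tensor factor is not tied to the tail: each shunt class contains one arc at every vertex. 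Naming that index $v_i$ is just the statement's labelling convention, and that convention is all the matrix-entry claim uses.
\item The appeal to Theorem~\ref{lemma_auxiliary_pst} is unnecessary, because the statement concerns $\widetilde U$ alone.
\end{enumerate}
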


\begin{proof}
Let the vertices of \(K_n^{\circlearrowleft}\) be labeled by
\(
V(K_n^{\circlearrowleft})
=
\{v_0,v_1,\ldots,v_{n-1}\}.
\)
Let $\widetilde{U}$ be transition matrix of
$K_n^{\circlearrowleft}$ defined by $\widetilde{U} = \widetilde  S \widetilde  R$, where
$S = \bigoplus_{k=0}^{n-1} \widetilde  P^k$ and
$R = 2\widetilde{Q}\widetilde{Q}^\top - I_{n^2}$, \(
  \widetilde{Q} = \frac{1}{\sqrt{d}}
  \begin{bmatrix} I_n \\ \widetilde{P} \\ \widetilde{P}^2 \\ \vdots \\ 
  \widetilde{P}^{d-1} \end{bmatrix}.\)
Thus the arc set of $K_n^{\circlearrowleft}$ is
\(
  \mathcal A(K_n^{\circlearrowleft})
  = \bigl\{(v_i, v_j) : i, j \in \mathbb{Z}_n\bigr\},
\)
so the state space of the walk is $\mathbb{C}^{n^2}$ with
orthonormal basis $\{e_{v_i} \otimes e_{v_j}\}_{(v_i,v_j) \in \mathcal A(K_n^{\circlearrowleft})}\in \operatorname{Im}(I_d\otimes \widetilde{Q})$. Since \(d=n\) for $K_n^{\circlearrowleft}$, we have by Lemma~\ref{lem:Uln_general},
\begin{equation}
\label{eq:Un-thm}
\widetilde U^{\,n}
=
I_n\otimes \widetilde{P}^{n/2},
\end{equation}
where \(\widetilde{P}\) is the cyclic shift matrix satisfying
\(\widetilde{P} e_{v_j}=e_{v_{j+1\bmod n}}\).
The basis vector \(e_{v_i}\otimes e_{v_j}\) represents the walker
on the arc \((v_i,v_j)\). Using the mixed-product property,
\begin{equation}
(I_n\otimes \widetilde{P}^{n/2})(e_{v_i}\otimes e_{v_j})
=
(I_n e_{v_i})\otimes(\widetilde{P}^{n/2}e_{v_j})
=
e_{v_i}\otimes e_{v_{j+n/2\bmod n}}.
\end{equation}
Hence, by \eqref{eq:Un-thm},
\begin{equation}
\widetilde U^{\,n}(e_{v_i}\otimes e_{v_j})
=
e_{v_i}\otimes e_{v_{j+n/2\bmod n}}.
\end{equation}
Therefore every arc \((v_i,v_j)\) is mapped to its antipodal arc
\(
(v_i,v_j)
\longmapsto
(v_i,v_{j+n/2\bmod n})
\)
with amplitude
\(
\bigl(\widetilde U^{\,n}\bigr)_{
(v_i,v_{j+n/2\bmod n}),
(v_i,v_j)}
=
1.
\)
Since \(\widetilde U^{\,n}\) is unitary, all other entries in the same
column are zero, and thus the transfer fidelity is
\begin{equation}
\mathcal F
=
\Bigl|
\bigl(\widetilde U^{\,n}\bigr)_{
(v_i,v_{j+n/2\bmod n}),
(v_i,v_j)}
\Bigr|^2
=
1.
\end{equation}
This proves (i).

Applying \eqref{eq:Un-thm} twice gives
\begin{equation}
\widetilde U^{\,2n}
=
(\widetilde U^{\,n})^2
=
(I_n\otimes C^{n/2})^2
=
I_n\otimes C^n
=
I_{n^2}.
\end{equation}
Thus the period divides \(2n\). Since \(n\ge2\),
\(
\widetilde{P}^{n/2}e_{v_j}
=
e_{v_{j+n/2}}
\neq
e_{v_j},
\)
so \(\widetilde{P}^{n/2}\neq I_n\), and hence
\begin{equation}
\widetilde U^{\,n}
=
I_n\otimes \widetilde{P}^{n/2}
\neq
I_{n^2}.
\end{equation}
Therefore the period does not divide \(n\). Since it divides \(2n\) but
not \(n\), the period is exactly \(2n\). This proves (ii).
\end{proof}

\begin{ex}{ $K_4^{\circlearrowleft}$ with Loops, $n = 4 = 2^2=d$}. The cyclic shift matrix $\widetilde{P} \in M_{4\times4}(\mathbb{R})$ is
defined by $(\widetilde{P})_{ij} = \delta_{i,\,j+1 \bmod 4}$
\[
  \widetilde{P}=
  \begin{bmatrix}
    0 & 1 & 0 & 0 \\
    0 & 0 & 1 & 0 \\
    0 & 0 & 0 & 1 \\
    1 & 0 & 0 & 0
  \end{bmatrix}.
\]
Its powers are
\[
  \widetilde{P}^0 = I_4, \qquad
  \widetilde{P}^1 = \widetilde P, \qquad
  \widetilde{P}^2 =
  \begin{bmatrix}
    0&0&1&0\\ 0&0&0&1\\ 1&0&0&0\\ 0&1&0&0
  \end{bmatrix}, \qquad
  \widetilde{P}^3 =
  \begin{bmatrix}
    0&0&0&1\\ 1&0&0&0\\ 0&1&0&0\\ 0&0&1&0
  \end{bmatrix}.
\]
\[
  \widetilde{Q}
  = \frac{1}{\sqrt{4}}
    \begin{bmatrix} I_4 \\ \widetilde{P} \\ \widetilde{P}^2 \\ \widetilde{P}^3 \end{bmatrix}
  = \frac{1}{2}
  \scalebox{0.5}{$\begin{bmatrix}
      1&0&0&0\\ 0&1&0&0\\ 0&0&1&0\\ 0&0&0&1\\
      0&1&0&0\\ 0&0&1&0\\ 0&0&0&1\\ 1&0&0&0\\
      0&0&1&0\\ 0&0&0&1\\ 1&0&0&0\\ 0&1&0&0\\
      0&0&0&1\\ 1&0&0&0\\ 0&1&0&0\\ 0&0&1&0
    \end{bmatrix}$}
  \in M_{16\times4}(\mathbb{R}).
\]

Each column of $\widetilde{Q}$ contains exactly four entries equal to
$\tfrac{1}{2}$ and is otherwise zero, so every column has unit norm.
Distinct columns have disjoint support, hence are orthogonal. Therefore
$\widetilde{Q}^{\top}\widetilde{Q} = I_4$.
Set $P = \widetilde{Q}\widetilde{Q}^{\top} \in M_{16\times16}(\mathbb{R})$,
the orthogonal projection onto the column space of $\widetilde{Q}$, and
\[
 \widetilde R = 2P - I_{16}.
\]
Hence $\widetilde R$ is a symmetric orthogonal matrix (a Householder-type reflection).

\[
  \widetilde S = \bigoplus_{k=0}^{3} \widetilde{P}^k
  =
  \begin{bmatrix}
    I_4 & 0   & 0   & 0   \\
    0   & \widetilde{P}   & 0   & 0   \\
    0   & 0   & \widetilde{P}^2 & 0   \\
    0   & 0   & 0   & \widetilde{P}^3
  \end{bmatrix}
  \in M_{16\times16}(\mathbb{R}).
\]
Since each $\widetilde P^k$ is a permutation matrix, $\widetilde S$ is also a permutation matrix and
hence orthogonal $ \widetilde S^{\top} \widetilde S = I_{16}$.
We have 
\(
  \widetilde{U} = \widetilde S \widetilde R \in M_{16\times16}(\mathbb{R}).
\)
Since $\widetilde S$ and $\widetilde R$ are both orthogonal,
$\widetilde{U}^{\top}\widetilde{U} = \widetilde R^{\top}\widetilde    S^{\top} \widetilde S \widetilde R =  \widetilde R^{\top} \widetilde R = I_{16}$,
so $\widetilde{U}$ is orthogonal and all its eigenvalues lie on the unit circle. Let $\omega = e^{2\pi i/4} = i$ and
$f_l = \tfrac{1}{2}[1, \omega^l, \omega^{2l}, \omega^{3l}]^{\top}$ for
$l \in \mathbb{Z}_4$.  By Lemma~\ref{eigen_values_}, $\widetilde{U}$ preserves each subspace
\(
  V_l = \operatorname{span}\{f_m \otimes f_l : m \in \mathbb{Z}_4\},
  \qquad l \in \mathbb{Z}_4,
\)
and its restriction to $V_l$ (in the basis $\{f_m \otimes f_l\}$) is
\[
  \widetilde U_l = \Sigma_l D_l,
\]
where $\Sigma_l$ is the cyclic shift by $l$ and
$D_l = \operatorname{diag}(d_0^{(l)}, \ldots, d_3^{(l)})$ with
\[
  d_m^{(l)} =
  \begin{cases}
    +1 & m = l, \\
    -1 & m \neq l.
  \end{cases}
\]

Explicitly
\[
  \Sigma_0 = I_4, \qquad
  \Sigma_1 =
  \begin{bmatrix}0&0&0&1\\1&0&0&0\\0&1&0&0\\0&0&1&0\end{bmatrix},
  \qquad
  \Sigma_2 =
  \begin{bmatrix}0&0&1&0\\0&0&0&1\\1&0&0&0\\0&1&0&0\end{bmatrix},
  \qquad
  \Sigma_3 =
  \begin{bmatrix}0&1&0&0\\0&0&1&0\\0&0&0&1\\1&0&0&0\end{bmatrix},
\]
\[
  D_0 = \operatorname{diag}(+1,-1,-1,-1), \quad
  D_1 = \operatorname{diag}(-1,+1,-1,-1), \quad
  D_2 = \operatorname{diag}(-1,-1,+1,-1),\]
  \[
  D_3 = \operatorname{diag}(-1,-1,-1,+1).
\]

From Lemma~\ref{eigen_values_}, Equation~\ref{det}, we have for each $l \in \mathbb{Z}_4$
\begin{equation}\label{eq:charpoly}
  \det(\lambda I - \widetilde U_l)
  = \prod_{m=0}^{3}(\lambda + \omega^{lm})
    - \frac{1}{2}\sum_{m=0}^{3}
      \omega^{-ml}
      \prod_{\substack{j=0\\j\neq m}}^{3}(\lambda + \omega^{lj}).
\end{equation}

\textit{Case $l = 0$.}
All $\omega^{0\cdot m} = 1$, so $\omega^{-m\cdot 0} = 1$ and
\[
  \prod_{m=0}^{3}(\lambda+1) = (\lambda+1)^4,
  \qquad
  \frac{1}{2}\sum_{m=0}^{3}(\lambda+1)^3
  = \frac{1}{2}\cdot 4(\lambda+1)^3 = 2(\lambda+1)^3.
\]
Hence
\[
  \det(\lambda I - \widetilde U_0) = (\lambda+1)^4 - 2(\lambda+1)^3
  = (\lambda+1)^3\bigl[(\lambda+1)-2\bigr]
  = (\lambda+1)^3(\lambda-1).
\]
\textit{Eigenvalues of $ \widetilde U_0$:} $\{-1,-1,-1,+1\}$.
\textit{Cases $l = 1$ and $l = 3$ (odd $l$).}
Since $l$ is odd and $n = 2^2$, Lemma~\ref{lem:Uln_general} gives $\widetilde U_l^4 = -I_4$.  If $\lambda$
is an eigenvalue with eigenvector $v$, then
\(
  -v = \widetilde U_l^4 v = \lambda^4 v,
\)
so $\lambda^4 = -1$.  The four solutions are the primitive $8$th roots of
unity, all distinct.  The $4\times 4$ matrix $\widetilde U_l$ therefore has characteristic
polynomial
\[
  \det(\lambda I - \widetilde U_l) = \lambda^4 + 1.
\]
\textit{Eigenvalues of $\widetilde U_1$ and $ \widetilde U_3$:}
$e^{\pm i\pi/4},\, e^{\pm 3i\pi/4}$,
that is\ $\dfrac{\pm 1 \pm i}{\sqrt{2}}$.

\textit{Case $l = 2$.}
Here $\omega^2 = -1$, so $\omega^{2m}$ cycles as $\{1,-1,1,-1\}$.  We apply
\eqref{eq:charpoly} directly.

\noindent\textit{First term.}
\[
  \prod_{m=0}^{3}(\lambda+\omega^{2m})
  = (\lambda+1)^2(\lambda-1)^2
  = (\lambda^2-1)^2.
\]

\noindent\textit{Second term.}
The weights $\omega^{-2m} = (-1)^{-m}$ cycle as $\{1,-1,1,-1\}$.
The four omitted products are:
\[
\begin{array}{rcl}
  m=0: & \omega^{0} = 1, &
    \displaystyle\prod_{j\neq0}(\lambda+\omega^{2j})
    = (\lambda-1)(\lambda+1)(\lambda-1) = (\lambda^2-1)(\lambda-1),\\
  m=1: & \omega^{-2} = -1, &
    \displaystyle\prod_{j\neq1}(\lambda+\omega^{2j})
    = (\lambda+1)(\lambda+1)(\lambda-1) = (\lambda+1)^2(\lambda-1),\\[6pt]
  m=2: & \omega^{-4} = 1, &
    \displaystyle\prod_{j\neq2}(\lambda+\omega^{2j})
    = (\lambda+1)(\lambda-1)(\lambda-1) = (\lambda^2-1)(\lambda-1),\\[6pt]
  m=3: & \omega^{-6} = -1, &
    \displaystyle\prod_{j\neq3}(\lambda+\omega^{2j})
    = (\lambda+1)(\lambda+1)(\lambda-1) = (\lambda+1)^2(\lambda-1).
\end{array}
\]
Collecting with their weights and the factor $\tfrac{1}{2}$
\begin{align*}
  &\frac{1}{2}\Bigl[
      1\cdot(\lambda^2-1)(\lambda-1)
    + (-1)\cdot(\lambda+1)^2(\lambda-1)
    + 1\cdot(\lambda^2-1)(\lambda-1)
    + (-1)\cdot(\lambda+1)^2(\lambda-1)
    \Bigr]\\[4pt]
  &= (\lambda-1)\Bigl[(\lambda^2-1) - (\lambda+1)^2\Bigr]
   = -2(\lambda^2-1).
\end{align*}
\begin{align*}
  \det(\lambda I - \widetilde U_2)
  &= (\lambda^2-1)^2 + 2(\lambda^2-1)
   = (\lambda^2-1)(\lambda^2+1)
   = \lambda^4 - 1.
\end{align*}
\textit{Eigenvalues of $\widetilde U_2$:} $\{+1,-1,+i,-i\}$.

Collecting eigenvalues from all four blocks:

\begin{center}
\begin{tabular}{ccl}
\hline
$l$ & Characteristic polynomial & Eigenvalues (with multiplicity)\\
\hline
$0$ & $(\lambda+1)^3(\lambda-1)$
    & $-1$ (mult.\ 3),\; $+1$ (mult.\ 1)\\
$1$ & $\lambda^4+1$
    & $e^{\pm i\pi/4},\; e^{\pm 3i\pi/4}$ (each mult.\ 1)\\
$2$ & $\lambda^4-1 = (\lambda^2-1)(\lambda^2+1)$
    & $+1,\;-1,\;+i,\;-i$ (each mult.\ 1)\\
$3$ & $\lambda^4+1$
    & $e^{\pm i\pi/4},\; e^{\pm 3i\pi/4}$ (each mult.\ 1)\\
\hline
\end{tabular}
\end{center}

Summing multiplicities across all blocks gives
\[
  \operatorname{spec}(\widetilde{U})
  =
  \left\{
    +1^{(2)},\;
    -1^{(4)},\;
    (+i)^{(1)},\;
    (-i)^{(1)},\;
    \Bigl(\tfrac{1+i}{\sqrt{2}}\Bigr)^{(2)},\;
    \Bigl(\tfrac{1-i}{\sqrt{2}}\Bigr)^{(2)},\;
    \Bigl(\tfrac{-1+i}{\sqrt{2}}\Bigr)^{(2)},\;
    \Bigl(\tfrac{-1-i}{\sqrt{2}}\Bigr)^{(2)}
  \right\},
\]
accounting for all $16$ eigenvalues, all on the unit circle.

\begin{itemize}
  \item $l = 1, 3$ (odd): $\widetilde U_l^4 = -I_4$, so $\widetilde U_l^8 = I_4$.
  \item $l = 0$: eigenvalues $\pm 1$ give $\widetilde U_0^2 = I_4$, so $\widetilde U_0^8 = I_4$.
  \item $l = 2$: eigenvalues $+1,-1,+i,-i$ give $\widetilde U_2^4 = I_4$, so $\widetilde U_2^8 = I_4$.
\end{itemize}
Hence $\widetilde{U}^8 = \bigoplus_{l=0}^{3} \widetilde U_l^8 = I_{16}$. The blocks $l = 1,3$ have primitive $8$th roots of unity as eigenvalues,
forcing $8 \mid T$. Therefore,
\(
  {T = 2n = 8.}
\)
By Theorem~\ref{thm:PST}(i), PST occurs at time $t = 4$
via the operator identity
\[
  \widetilde{U}^{\,4} = I_4 \otimes  \widetilde P^2.
\]
We verify this directly from the $16\times16$ matrix $\widetilde{U}^4$,
whose rows and columns are indexed by arcs $i\to j$ with
$i,j\in\mathbb{Z}_4$. The matrix has a single $+1$ in each row and
column and is zero elsewhere:

\begin{center}
\renewcommand{\arraystretch}{0.5}
\resizebox{\textwidth}{!}{%
\begin{tabular}{c|cccccccccccccccc}
 & \scriptsize$0{\to}0$ & \scriptsize$0{\to}1$ & \scriptsize$0{\to}2$
 & \scriptsize$0{\to}3$ & \scriptsize$1{\to}0$ & \scriptsize$1{\to}1$
 & \scriptsize$1{\to}2$ & \scriptsize$1{\to}3$ & \scriptsize$2{\to}0$
 & \scriptsize$2{\to}1$ & \scriptsize$2{\to}2$ & \scriptsize$2{\to}3$
 & \scriptsize$3{\to}0$ & \scriptsize$3{\to}1$ & \scriptsize$3{\to}2$
 & \scriptsize$3{\to}3$ \\
\hline
\scriptsize$0{\to}0$ & $\cdot$ & $\cdot$ & $+1$ & $\cdot$ & $\cdot$ & $\cdot$ & $\cdot$ & $\cdot$ & $\cdot$ & $\cdot$ & $\cdot$ & $\cdot$ & $\cdot$ & $\cdot$ & $\cdot$ & $\cdot$ \\
\scriptsize$0{\to}1$ & $\cdot$ & $\cdot$ & $\cdot$ & $+1$ & $\cdot$ & $\cdot$ & $\cdot$ & $\cdot$ & $\cdot$ & $\cdot$ & $\cdot$ & $\cdot$ & $\cdot$ & $\cdot$ & $\cdot$ & $\cdot$ \\
\scriptsize$0{\to}2$ & $+1$ & $\cdot$ & $\cdot$ & $\cdot$ & $\cdot$ & $\cdot$ & $\cdot$ & $\cdot$ & $\cdot$ & $\cdot$ & $\cdot$ & $\cdot$ & $\cdot$ & $\cdot$ & $\cdot$ & $\cdot$ \\
\scriptsize$0{\to}3$ & $\cdot$ & $+1$ & $\cdot$ & $\cdot$ & $\cdot$ & $\cdot$ & $\cdot$ & $\cdot$ & $\cdot$ & $\cdot$ & $\cdot$ & $\cdot$ & $\cdot$ & $\cdot$ & $\cdot$ & $\cdot$ \\
\scriptsize$1{\to}0$ & $\cdot$ & $\cdot$ & $\cdot$ & $\cdot$ & $\cdot$ & $\cdot$ & $+1$ & $\cdot$ & $\cdot$ & $\cdot$ & $\cdot$ & $\cdot$ & $\cdot$ & $\cdot$ & $\cdot$ & $\cdot$ \\
\scriptsize$1{\to}1$ & $\cdot$ & $\cdot$ & $\cdot$ & $\cdot$ & $\cdot$ & $\cdot$ & $\cdot$ & $+1$ & $\cdot$ & $\cdot$ & $\cdot$ & $\cdot$ & $\cdot$ & $\cdot$ & $\cdot$ & $\cdot$ \\
\scriptsize$1{\to}2$ & $\cdot$ & $\cdot$ & $\cdot$ & $\cdot$ & $+1$ & $\cdot$ & $\cdot$ & $\cdot$ & $\cdot$ & $\cdot$ & $\cdot$ & $\cdot$ & $\cdot$ & $\cdot$ & $\cdot$ & $\cdot$ \\
\scriptsize$1{\to}3$ & $\cdot$ & $\cdot$ & $\cdot$ & $\cdot$ & $\cdot$ & $+1$ & $\cdot$ & $\cdot$ & $\cdot$ & $\cdot$ & $\cdot$ & $\cdot$ & $\cdot$ & $\cdot$ & $\cdot$ & $\cdot$ \\
\scriptsize$2{\to}0$ & $\cdot$ & $\cdot$ & $\cdot$ & $\cdot$ & $\cdot$ & $\cdot$ & $\cdot$ & $\cdot$ & $\cdot$ & $\cdot$ & $+1$ & $\cdot$ & $\cdot$ & $\cdot$ & $\cdot$ & $\cdot$ \\
\scriptsize$2{\to}1$ & $\cdot$ & $\cdot$ & $\cdot$ & $\cdot$ & $\cdot$ & $\cdot$ & $\cdot$ & $\cdot$ & $\cdot$ & $\cdot$ & $\cdot$ & $+1$ & $\cdot$ & $\cdot$ & $\cdot$ & $\cdot$ \\
\scriptsize$2{\to}2$ & $\cdot$ & $\cdot$ & $\cdot$ & $\cdot$ & $\cdot$ & $\cdot$ & $\cdot$ & $\cdot$ & $+1$ & $\cdot$ & $\cdot$ & $\cdot$ & $\cdot$ & $\cdot$ & $\cdot$ & $\cdot$ \\
\scriptsize$2{\to}3$ & $\cdot$ & $\cdot$ & $\cdot$ & $\cdot$ & $\cdot$ & $\cdot$ & $\cdot$ & $\cdot$ & $\cdot$ & $+1$ & $\cdot$ & $\cdot$ & $\cdot$ & $\cdot$ & $\cdot$ & $\cdot$ \\
\scriptsize$3{\to}0$ & $\cdot$ & $\cdot$ & $\cdot$ & $\cdot$ & $\cdot$ & $\cdot$ & $\cdot$ & $\cdot$ & $\cdot$ & $\cdot$ & $\cdot$ & $\cdot$ & $\cdot$ & $\cdot$ & $+1$ & $\cdot$ \\
\scriptsize$3{\to}1$ & $\cdot$ & $\cdot$ & $\cdot$ & $\cdot$ & $\cdot$ & $\cdot$ & $\cdot$ & $\cdot$ & $\cdot$ & $\cdot$ & $\cdot$ & $\cdot$ & $\cdot$ & $\cdot$ & $\cdot$ & $+1$ \\
\scriptsize$3{\to}2$ & $\cdot$ & $\cdot$ & $\cdot$ & $\cdot$ & $\cdot$ & $\cdot$ & $\cdot$ & $\cdot$ & $\cdot$ & $\cdot$ & $\cdot$ & $\cdot$ & $+1$ & $\cdot$ & $\cdot$ & $\cdot$ \\
\scriptsize$3{\to}3$ & $\cdot$ & $\cdot$ & $\cdot$ & $\cdot$ & $\cdot$ & $\cdot$ & $\cdot$ & $\cdot$ & $\cdot$ & $\cdot$ & $\cdot$ & $\cdot$ & $\cdot$ & $+1$ & $\cdot$ & $\cdot$ \\
\end{tabular}%
}
\end{center}
Each row $i\to j$ has its unique $+1$ in column $i\to j+2\bmod 4$,
confirming $\widetilde{U}^4 = I_4\otimes\widetilde  P^2$ entry by entry. The image of $I_4\otimes\widetilde{Q}$ consists of all vectors of the form
\[
\operatorname{Im}(I_4\otimes\widetilde{Q})
= \left\{
\frac{1}{2}
\begin{bmatrix} \widetilde{Q}z_0 \\ \widetilde{Q}z_1 \\ \widetilde{Q}z_2 \\ \widetilde{Q}z_3 \end{bmatrix}
: z_0,z_1,z_2,z_3\in\mathbb{C}^4
\right\}.
\]
Taking $z_i = e_{v_j}$ for each pair $(i,j)\in\mathbb{Z}_4\times\mathbb{Z}_4$
and $z_k=0$ for $k\neq i$, the 16 distinguished basis states in
$\operatorname{Im}(I_4\otimes\widetilde{Q})$ are
\[
x_{ij}
= (I_4\otimes\widetilde{Q})(e_{v_i}\otimes e_{v_j})
= e_{v_i}\otimes\widetilde{Q}e_{v_j}
= \frac{1}{2}\,e_{v_i}\otimes
\begin{bmatrix}
e_{v_j}\\e_{v_{j+1\bmod4}}\\e_{v_{j+2\bmod4}}\\e_{v_{j+3\bmod4}}
\end{bmatrix},
\qquad i,j\in\mathbb{Z}_4.
\]
Explicitly, for tail block $i=0$:
\[
x_{0,0} = \frac{1}{2}
\begin{bmatrix}e_{v_0}\\e_{v_1}\\e_{v_2}\\e_{v_3}\\0\\\vdots\\0\end{bmatrix},\quad
x_{0,1} = \frac{1}{2}
\begin{bmatrix}e_{v_1}\\e_{v_2}\\e_{v_3}\\e_{v_0}\\0\\\vdots\\0\end{bmatrix},\quad
x_{0,2} = \frac{1}{2}
\begin{bmatrix}e_{v_2}\\e_{v_3}\\e_{v_0}\\e_{v_1}\\0\\\vdots\\0\end{bmatrix},\quad
x_{0,3} = \frac{1}{2}
\begin{bmatrix}e_{v_3}\\e_{v_0}\\e_{v_1}\\e_{v_2}\\0\\\vdots\\0\end{bmatrix},
\]
and analogously for tail blocks $i=1,2,3$, with the nonzero entries
shifted to the corresponding block. Applying $\widetilde{U}^{4} = I_4\otimes\widetilde{P}^{2}$.
Since $I_4\otimes\widetilde{P}^2$ acts on each tail-block $i$ independently
by $\widetilde{P}^2$, and $x_{ij}$ is supported only in tail-block $i$:
\[
\widetilde{U}^{\,4}\,x_{ij}
= (I_4\otimes\widetilde{P}^2)(e_{v_i}\otimes\widetilde{Q}e_{v_j})
= e_{v_i}\otimes\widetilde{P}^2\widetilde{Q}e_{v_j}
= \frac{1}{2}\,e_{v_i}\otimes
\begin{bmatrix}
e_{v_{j+2\bmod4}}\\e_{v_{j+3\bmod4}}\\e_{v_{j+0\bmod4}}\\e_{v_{j+1\bmod4}}
\end{bmatrix}
= e_{v_i}\otimes\widetilde{Q}e_{v_{j+2\bmod4}}
= x_{i,\,j+2\bmod4}.
\]
Hence $\widetilde{U}^{\,4}\,x_{ij} = x_{i,\,j+2\bmod 4}$, with fidelity
\[
\mathcal{F}
= \bigl|\langle x_{i,\,j+2\bmod4},\,\widetilde{U}^{\,4}\,x_{ij}\rangle\bigr|^2
= 1.
\]
The 16 PST pairs, grouped by tail block, are:

\begin{center}
\begin{tabular}{ccc}
\hline
Tail block & Initial state $x$ & Target state $y$ \\
\hline
$i=0$ & $x_{0,0}$ & $x_{0,2}$ \\
$i=0$ & $x_{0,1}$ & $x_{0,3}$ \\
$i=0$ & $x_{0,2}$ & $x_{0,0}$ \\
$i=0$ & $x_{0,3}$ & $x_{0,1}$ \\
$i=1$ & $x_{1,0}$ & $x_{1,2}$ \\
$i=1$ & $x_{1,1}$ & $x_{1,3}$ \\
$i=1$ & $x_{1,2}$ & $x_{1,0}$ \\
$i=1$ & $x_{1,3}$ & $x_{1,1}$ \\
$i=2$ & $x_{2,0}$ & $x_{2,2}$ \\
$i=2$ & $x_{2,1}$ & $x_{2,3}$ \\
$i=2$ & $x_{2,2}$ & $x_{2,0}$ \\
$i=2$ & $x_{2,3}$ & $x_{2,1}$ \\
$i=3$ & $x_{3,0}$ & $x_{3,2}$ \\
$i=3$ & $x_{3,1}$ & $x_{3,3}$ \\
$i=3$ & $x_{3,2}$ & $x_{3,0}$ \\
$i=3$ & $x_{3,3}$ & $x_{3,1}$ \\
\hline
\end{tabular}
\end{center}
Applying the transfer twice confirms the period
\[
\widetilde{U}^{\,8}\,x_{ij}
= \widetilde{U}^{\,4}\,x_{i,\,j+2\bmod4}
= x_{i,\,j+4\bmod4}
= x_{ij},
\qquad
\widetilde{U}^{\,8} = I_{16}.
\]
Since $\widetilde{P}^2\neq I_4$ (as $e_{v_0}\mapsto e_{v_2}\neq e_{v_0}$),
we have $\widetilde{U}^{\,4} = I_4\otimes\widetilde{P}^2\neq I_{16}$,
so the period does not divide~$4$. Therefore the period of $\widetilde{U}$
is exactly $2n = 8$.

\end{ex}
Thus the derived model exhibits exact PST on
\(K_{n}^{\circlearrowleft}\) whenever \(n=d\) is a power of \(2\).
Specifically, PST occurs after \(n\) steps and the evolution is periodic
with period \(2n\). Thus, for every complete graph with loops on
\(n=2^s\) vertices, the corresponding quotient quantum walk exhibits
PST together with a well-defined periodic structure.

\section{Construction of 
\texorpdfstring{$LD(K_{n,n}^{\rightleftharpoons})$}{LD(Knn)} 
with PST for 
\texorpdfstring{$n = 2^s$}{n = 2\^{}s}, 
\texorpdfstring{$s \geq 0$}{s >= 0}}
\label{section 9}
\begin{figure}[h]
\centering

\begin{subfigure}[b]{0.62\textwidth}
\centering
\begin{tikzpicture}[
    scale=0.8,
    transform shape,
    every node/.style={
        circle, draw, fill=white,
        inner sep=1.5pt,
        minimum size=18pt,
        font=\small
    },
    Gedge/.style={thick, gray, <->, >=stealth},
    col1/.style={green!70!black, line width=1.6pt, ->, >=stealth},
    col2/.style={orange,         line width=1.6pt, ->, >=stealth},
    col3/.style={blue,           line width=1.6pt, ->, >=stealth},
    col4/.style={violet,         line width=1.6pt, ->, >=stealth},
]

\node[
    draw=green!60!black,
    dashed,
    rectangle,
    rounded corners=8pt,
    minimum width=2.2cm,
    minimum height=5.0cm,
    fill=green!8,
    label=above:{\small $C_1=\{a_1,b_1\}$}
] at (0,1.8) {};

\node[
    draw=blue!60!black,
    dashed,
    rectangle,
    rounded corners=8pt,
    minimum width=2.2cm,
    minimum height=5.0cm,
    fill=blue!8,
    label=below:{\small $C_3=\{a_2,b_2\}$}
] at (0,-3.0) {};

\node[
    draw=orange!80!black,
    dashed,
    rectangle,
    rounded corners=8pt,
    minimum width=2.2cm,
    minimum height=5.0cm,
    fill=orange!8,
    label=above:{\small $C_2=\{a_3,b_3\}$}
] at (6.5,1.8) {};

\node[
    draw=violet!80!black,
    dashed,
    rectangle,
    rounded corners=8pt,
    minimum width=2.2cm,
    minimum height=5.0cm,
    fill=violet!5,
    label=below:{\small $C_4=\{a_4,b_4\}$}
] at (6.5,-3.0) {};

\node (a1) at (0,3.2) {$a_1$};
\node (b1) at (0,0.4) {$b_1$};
\node (a2) at (0,-1.8) {$a_2$};
\node (b2) at (0,-4.2) {$b_2$};

\node (a3) at (6.5,3.2) {$a_3$};
\node (b3) at (6.5,0) {$b_3$};
\node (a4) at (6.5,-1.8) {$a_4$};
\node (b4) at (6.5,-4.2) {$b_4$};

\foreach \j in {1,2,3,4}{
    \draw[col1,bend left=8] (a1) to (b\j);
    \draw[col1,bend left=8] (b1) to (a\j);
}

\foreach \j in {1,2,3,4}{
    \draw[col3,bend left=10] (a2) to (b\j);
    \draw[col3,bend left=10] (b2) to (a\j);
}

\foreach \j in {1,2,3,4}{
    \draw[col2,bend left=12] (a3) to (b\j);
    \draw[col2,bend left=12] (b3) to (a\j);
}

\foreach \j in {1,2,3,4}{
    \draw[col4,bend left=14] (a4) to (b\j);
    \draw[col4,bend left=14] (b4) to (a\j);
}

\end{tikzpicture}
\caption{Vertex partition of \(K^{\rightleftharpoons}_{4,4}\)}
\label{fig:partition-colored}
\end{subfigure}
\hfill
\begin{subfigure}[b]{0.34\textwidth}
\centering

\begin{tikzpicture}[
    scale=1.0,
    transform shape,
    cnode/.style={
        circle,
        draw,
        fill=white,
        inner sep=2pt,
        minimum size=22pt,
        font=\small\bfseries
    },
    Kloop/.style={thick, gray, ->, >=stealth},
    Kedge/.style={thick, gray, <->, >=stealth},
    col1/.style={green!70!black, line width=1.6pt, ->, >=stealth},
    col2/.style={orange,         line width=1.6pt, ->, >=stealth},
    col3/.style={blue,           line width=1.6pt, ->, >=stealth},
    col4/.style={violet,         line width=1.6pt, ->, >=stealth},
]

\node[cnode, fill=green!8,  draw=green!60!black]  (C1) at (0,2.8) {$C_1$};
\node[cnode, fill=orange!8, draw=orange!80!black] (C2) at (2.8,2.8) {$C_2$};
\node[cnode, fill=blue!8,   draw=blue!60!black]   (C3) at (0,0) {$C_3$};
\node[cnode, fill=violet!5, draw=violet!80!black] (C4) at (2.8,0) {$C_4$};

\draw[Kloop] (C1) edge[loop left,looseness=6] (C1);
\draw[Kloop] (C2) edge[loop right,looseness=6] (C2);
\draw[Kloop] (C3) edge[loop left,looseness=6] (C3);
\draw[Kloop] (C4) edge[loop right,looseness=6] (C4);

\draw[col1,bend left=18]  (C1) to (C2);
\draw[col2,bend left=18]  (C2) to (C1);

\draw[col1,bend right=18] (C1) to (C3);
\draw[col3,bend right=18] (C3) to (C1);

\draw[col1,bend left=12]  (C1) to (C4);
\draw[col4,bend left=12]  (C4) to (C1);

\draw[col2,bend right=12] (C2) to (C3);
\draw[col3,bend right=12] (C3) to (C2);

\draw[col2,bend left=18]  (C2) to (C4);
\draw[col4,bend left=18]  (C4) to (C2);

\draw[col3,bend left=18]  (C3) to (C4);
\draw[col4,bend left=18]  (C4) to (C3);

\end{tikzpicture}

\caption{Quotient graph  $K^{\rightleftharpoons}_{4,4}/\pi\cong K_4^\circlearrowleft$ .}
\label{fig:K4-quotient}
\end{subfigure}

\caption{
(a) The vertex-equitable partition $\pi = \{C_1, C_2, C_3, C_4\}$ 
of $K^{\rightleftharpoons}_{4,4}$ with colored inter-cell arcs. 
(b) The corresponding quotient digraph 
$K^{\rightleftharpoons}_{4,4}/\pi \cong K_4^{\circlearrowleft}$, 
the complete digraph on four vertices with a loop at each cell.
}
\label{fig:partition-full}
\end{figure}

In this section, we consider the line digraph of the complete bipartite
digraph $LD(K^{\rightleftharpoons}_{n,n})$, where $n = d$ holds for
this graph. We perform a vertex-equitable partition $\pi$ of
$K^{\rightleftharpoons}_{n,n}$, and show that
\begin{equation}
    K^{\rightleftharpoons}_{n,n}/\pi \cong K^{\circlearrowleft}_{n},
\end{equation}
that is, the complete bipartite digraph with loops at all vertices. The
relation between $LD(K^{\rightleftharpoons}_{2n,2n})/\tau$ and
$LD(K^{\rightleftharpoons}_{n,n}/\pi)$ will be further established in
subsequent theorems with illustrative examples, where $\tau$ is an
equitable arc partition of $K^{\rightleftharpoons}_{n,n}$. Moreover,
the PST in $LD(K^{\rightleftharpoons}_{n,n})$
 also showed in the subsequent results.

\begin{lemma}\label{Bipartite Graph}
Let $K^{\rightleftharpoons}_{n,n}$ be the complete bipartite 
digraph with bipartition $A=\{a_1,\dots,a_n\}$ and $B=\{b_1,\dots,b_n\}$ and 
arc set
\[
E=\{(a_i,b_j)\}\cup\{(b_i,a_j)\},\quad 1\le i,j\le n,
\]
so that $|E|=2n^2$. For each pair $(i,j)\in[n]^2$ define the cell
\[
C_{ij}=\{(a_i,b_j),(b_i,a_j)\}\subset E.
\]
Then $\tau=\{C_{ij}:1\le i,j\le n\}$ is an equitable partition of the arc set 
of $LD(K^{\rightleftharpoons}_{n,n})$ into $n^2$ cells of size $2$, with quotient 
matrix $B\in\{0,1\}^{n^2\times n^2}$ whose $\big((i,j),(k,l)\big)$-entry is
\[
B_{(i,j),(k,l)}=\begin{cases}1 & \text{if }j=k,\\0 & \text{if }j\neq k.\end{cases}
\] 
Furthermore,
\[
LD(K^{\rightleftharpoons}_{n,n})/\tau\;\cong\; LD(K^{\circlearrowleft}_n).
\]
\end{lemma}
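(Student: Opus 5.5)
Our plan is a direct count, in the style of the proof of Lemma~\ref{line digraph 1}. We fix an arc of $K^{\rightleftharpoons}_{n,n}$ in a cell $C_{ij}$ and a target cell $C_{kl}$, and count how many out-neighbours the arc has in $C_{kl}$ inside $LD(K^{\rightleftharpoons}_{n,n})$. If this count depends only on $(i,j)$ and $(k,l)$, and not on which of the two arcs of $C_{ij}$ we started from, then $\tau$ is equitable. The count itself is then the quotient matrix.

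First we check that $\tau$ is a partition of $E$ into $n^2$ cells of size $2$. Each arc is either $(a_i,b_j)$ or $(b_i,a_j)$, and this determines a unique pair $(i,j)$. Moreover $(a_i,b_j)\ne(b_i,a_j)$ because $A\cap B=\emptyset$.

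Next comes the equitability count. The arc $(a_i,b_j)$ is adjacent in $LD$ to exactly the arcs with tail $b_j$, namely $(b_j,a_l)$ for $l\in[n]$. Among these, $(b_j,a_l)$ lies in $C_{jl}$, so the arc meets $C_{kl}$ in exactly one arc when $k=j$ and in none when $k\ne j$. Symmetrically, $(b_i,a_j)$ has out-neighbours $(a_j,b_l)\in C_{jl}$, which again gives one neighbour in $C_{kl}$ iff $k=j$. Both arcs of $C_{ij}$ therefore produce the same count $B_{(i,j),(k,l)}=\delta_{jk}$. This establishes equitability, and by the standard identity (as in \eqref{eq_21}) it gives $A_{LD(K^{\rightleftharpoons}_{n,n})}Q_\tau=Q_\tau B$ with $Q_\tau$ the normalized characteristic matrix.

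For the isomorphism, we identify the cell $C_{ij}$ with the arc $(v_i,v_j)$ of $K^{\circlearrowleft}_n$. The arc set of $K^{\circlearrowleft}_n$ is all of $[n]^2$, including the loops $(v_i,v_i)$, and this map is a bijection on vertex sets. In $LD(K^{\circlearrowleft}_n)$, the arc $(v_i,v_j)$ points to $(v_k,v_l)$ iff $j=k$; this adjacency matrix coincides with $B$ entrywise. Hence the map $C_{ij}\mapsto(v_i,v_j)$ is a digraph isomorphism $LD(K^{\rightleftharpoons}_{n,n})/\tau\cong LD(K^{\circlearrowleft}_n)$.

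The only delicate point is the loops. We must confirm that the diagonal cells $C_{ii}=\{(a_i,b_i),(b_i,a_i)\}$ correspond to the loops of $K^{\circlearrowleft}_n$, and that the quotient multiplicities are exactly $0/1$ rather than $2$. The count above settles this: each arc has exactly one out-neighbour per cell $C_{jl}$. The possible confusion is the opposite count, since each cell $C_{jl}$ receives two arcs from the cell $C_{ij}$ in total. To avoid it, we will state explicitly that quotient entries count the neighbours of a single arc.
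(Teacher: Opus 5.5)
Your proposal is correct and follows essentially the same route as the paper. You check that $\tau$ partitions $E$, count that each arc of $C_{ij}$ has exactly $\delta_{jk}$ out-neighbours in $C_{kl}$ regardless of which arc is chosen, and identify $C_{ij}$ with the arc $(v_i,v_j)$ of $K^{\circlearrowleft}_n$; the only difference is cosmetic, since the paper phrases the count via the cells' tail and head sets $\{a_i,b_i\}$ and $\{a_j,b_j\}$, whereas you list the out-neighbours $(b_j,a_l)$ and $(a_j,b_l)$ directly.
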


\begin{proof}

Every arc of $E$ is uniquely of the form $(a_i,b_j)$ or $(b_i,a_j)$ for some 
$(i,j)\in[n]^2$, so each arc belongs to exactly one cell $C_{ij}$. The cells 
are pairwise disjoint and cover $E$, so $\tau$ partitions $E$ into $n^2$ cells 
each of size $2$. For each cell $C_{ij}$
\begin{equation}\label{tail head}
\mathrm{tail}(C_{ij})=\{a_i,b_i\},\quad \mathrm{head}(C_{ij})=\{a_j,b_j\}.
\end{equation}

By definition of the line digraph, $C_{ij}\to C_{kl}$ in 
$LD(K^{\rightleftharpoons}_{n,n})$ if and only if
\[
\mathrm{head}(C_{ij})\cap\mathrm{tail}(C_{kl})\neq\emptyset,
\]
that is, $\{a_j,b_j\}\cap\{a_k,b_k\}\neq\emptyset$, which holds if and only 
if $j=k$. Hence out neighbours of 
\begin{equation}\label{head tail}
(C_{ij})=\{C_{jl}:1\le l\le n\},
\end{equation}
and every cell has out-degree exactly $n$. We must show that for every pair of cells $C_{ij}$ and $C_{kl}$, every arc 
$e\in C_{ij}$ points to the same number of arcs in $C_{kl}$, and that this 
number equals $B_{(i,j),(k,l)}$.

\textit{Case 1: $j\neq k$.} 
By Equation \ref{head tail}, no arc of $C_{ij}$ points to any arc of $C_{kl}$, so the count 
is $0$.

\textit{Case 2: $j=k$.} 
Take any arc $e\in C_{ij}$. From Equation ~\ref{tail head}, $e$ has a unique head vertex 
$v\in\mathrm{head}(C_{ij})=\{a_j,b_j\}$, namely:
\[
v=\begin{cases}b_j & \text{if }e=(a_i,b_j),\\ 
               a_j & \text{if }e=(b_i,a_j).\end{cases}
\]
Now consider cell $C_{jl}$ (since $k=j$). Its two arcs are $(a_j,b_l)$ and 
$(b_j,a_l)$, with tails $a_j$ and $b_j$ respectively. Exactly one of these 
tails equals $v$:
\[
\begin{cases}
(a_j,b_l)\text{ has tail }a_j=v & \text{if }e=(b_i,a_j),\\ 
(b_j,a_l)\text{ has tail }b_j=v & \text{if }e=(a_i,b_j).
\end{cases}
\]
Hence exactly $1$ arc in $C_{jl}=C_{kl}$ follows $e$, and this count is 
independent of which arc $e\in C_{ij}$ we chose and independent of $i$ 
and $l$. Combining both cases, the number of arcs in $C_{kl}$ that any arc 
$e\in C_{ij}$ points to is
\[
B_{(i,j),(k,l)}=\begin{cases}1 & \text{if }j=k,\\0 & \text{if }j\neq k,\end{cases}
\]
which depends only on $j$ and $k$, not on $i$, $l$, or the choice of 
$e\in C_{ij}$. Therefore $\tau$ is an equitable partition with quotient 
matrix $B$ as stated.
If $Q_\tau$ is the normalized characteristic matrix of $\tau$, then
\begin{equation}
A_{LD(K^{\rightleftharpoons}_{n,n})}\,Q_\tau=Q_\tau\,B,
\end{equation}
where $A_{LD(K^{\rightleftharpoons}_{n,n})}$ is the adjacency matrix of $LD(K^{\rightleftharpoons}_{n,n})$.
Under the identification $C_{ij}\mapsto(i\to j)$, the adjacency rule
\[
C_{ij}\to C_{kl}\iff j=k
\]
coincides exactly with the arc-composition rule $(i\to j)\to(j\to l)$ 
in $LD(K^{\circlearrowleft}_n)$. Hence
\begin{equation}
LD(K^{\rightleftharpoons}_{n,n})/\tau\cong LD(K^{\circlearrowleft}_n).
\end{equation}
\end{proof}
Next, we consider the vertex equitable partition of 
$K_{n,n}^{\rightleftharpoons}$ and show that its quotient graph 
is isomorphic to the complete digraph with loops $K_n^{\circlearrowleft}$.
\begin{lemma}\label{thm:Knn_quotient}
Let $K_{n,n}^{\rightleftharpoons}$ be the complete bipartite digraph with
bipartition $A = \{a_1, \dots, a_n\}$ and $B = \{b_1, \dots, b_n\}$.
Define the partition
\(
\pi \;=\; \{C_1, \dots, C_n\}, \qquad C_i \;=\; \{a_i,\, b_i\}, 
\quad i = 1, \dots, n.
\)
Then $\pi$ is an equitable partition of $K_{n,n}^{\rightleftharpoons}$,
and the corresponding quotient digraph satisfies
\[
K_{n,n}^{\rightleftharpoons}/\pi \;\cong\; K_n^{\circlearrowleft},
\]
where $K_n^{\circlearrowleft}$ denotes the complete digraph on $n$
vertices with a loop at every vertex. Moreover, the normalized
characteristic matrix of $\pi$ is given by
\[
Q \;=\; \frac{1}{\sqrt{2}}
\begin{pmatrix} I_n \\ I_n \end{pmatrix},
\]
where $I_n$ is the identity matrix of order $n$.
\end{lemma}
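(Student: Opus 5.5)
The proof is a direct neighbour count followed by a matrix identity. Fix a cell $C_i=\{a_i,b_i\}$ and a target cell $C_j=\{a_j,b_j\}$. In $K_{n,n}^{\rightleftharpoons}$ the out-neighbours of $a_i$ are exactly $B=\{b_1,\dots,b_n\}$, and those of $b_i$ are exactly $A=\{a_1,\dots,a_n\}$.

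The number of out-neighbours of $a_i$ lying in $C_j$ is therefore $|B\cap C_j|=|\{b_j\}|=1$. Likewise the number for $b_i$ is $|A\cap C_j|=|\{a_j\}|=1$. This holds for every pair $(i,j)$, including $i=j$, where the arcs $a_i\to b_i$ and $b_i\to a_i$ stay inside $C_i$.

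Since the count does not depend on the chosen vertex of $C_i$, $\pi$ is equitable with $b_{ij}=1$ for all $i,j$. Hence $\widetilde A=J_n=(J_n-I_n)+I_n$, which is exactly the adjacency matrix of $K_n^{\circlearrowleft}$: every ordered pair of distinct cells is joined by one arc, and each cell has one loop.

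For the characteristic matrix, order the vertices as $a_1,\dots,a_n,b_1,\dots,b_n$. Column $i$ of the unnormalized characteristic matrix is then $e_{a_i}+e_{b_i}$. Each cell has size $2$, so normalizing gives $Q=\frac{1}{\sqrt2}\begin{pmatrix}I_n\\ I_n\end{pmatrix}$, and $Q^\top Q=I_n$ holds.

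As a consistency check I would verify $AQ=Q\widetilde A$ directly. In this ordering $A=\begin{pmatrix}0&J_n\\ J_n&0\end{pmatrix}$, so
\[
AQ=\frac{1}{\sqrt2}\begin{pmatrix}J_n\\ J_n\end{pmatrix}=Q J_n .
\]
Also, $G$ is $n$-regular with equal cell sizes, so Corollary~\ref{cor:3.1} confirms that the quotient is $n$-regular, as $K_n^{\circlearrowleft}$ should be.

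No step is a real obstacle. The only point needing care is the diagonal entries: the arcs $a_i\leftrightarrow b_i$ lie inside a cell and produce the loops. Consequently the hypothesis of Lemma~\ref{lem:discriminant} excluding internal edges does not apply here, and the quotient must be read as $J_n$ rather than $J_n-I_n$.
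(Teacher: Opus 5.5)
Your proof is correct and follows essentially the same route as the paper: you count exactly one out-neighbour per cell for both $a_i$ and $b_i$ (including $i=j$), which gives quotient matrix $J_n$, you normalize $Q$, and you check $AQ=QJ_n$ directly. The only cosmetic difference is in that last check: the paper verifies it column by column via $A\mathbf{1}_{C_i}=\mathbf{1}_{V}$, while you use the block form $A=\begin{pmatrix}0&J_n\\ J_n&0\end{pmatrix}$.
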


\begin{proof}
  
Fix any two cells $C_i, C_j \in \pi$. Since $a_i$ has out-neighbours
$\{b_1,\dots,b_n\}$ and $b_j$ is the unique member of $B\cap C_j$, vertex
$a_i$ has exactly one out-neighbour in $C_j$. Likewise $b_i$ has
out-neighbours $\{a_1,\dots,a_n\}$ and $a_j$ is the unique member of
$A\cap C_j$, so $b_i$ also has exactly one out-neighbour in $C_j$. This
count is $1$ for every pair $(i,j)$, including $i=j$, so $\pi$ is equitable
with quotient matrix
\[
B_{ij} \;=\; 1 \quad\text{for all } i,j, \qquad\text{that is, } B = J_n.
\]
Each cell $C_i = \{a_i, b_i\}$ has size $d = 2$, so the normalized
characteristic matrix is
\[
Q \;=\; \frac{1}{\sqrt{2}}
\begin{pmatrix} I_n \\ I_n \end{pmatrix} \;\in\; \mathbb{R}^{2n\times n},
\]
where the $i$-th column has entry $\tfrac{1}{\sqrt{2}}$ at rows $a_i$
and $b_i$, and zero elsewhere. Fix any $i \in [n]$. The $i$-th column of $A_G Q$ is
\[
A_{K_{n,n}^{\rightleftharpoons}} Q\, e_i
\;=\; \frac{1}{\sqrt{2}}\,A_{K_{n,n}^{\rightleftharpoons}}\,\mathbf{1}_{C_i},
\]
where $\mathbf{1}_{C_i} \in \mathbb{R}^{2n}$ denotes the indicator vector
of the cell $C_i$, with entry $1$ at vertices $a_i, b_i$ and $0$ elsewhere.
Since $a_i$ sends an arc to every $b_j$ and $b_i$ sends an arc to every
$a_j$, every vertex of $K_{n,n}^{\rightleftharpoons}$ receives exactly one arc from $C_i$, so
\(
A_{K_{n,n}^{\rightleftharpoons}}\,\mathbf{1}_{C_i} \;=\; \mathbf{1}_{V(K_{n,n}^{\rightleftharpoons})}.
\)
The $i$-th column of $Q J_n$ is
\[
Q J_n\, e_i
\;=\; \frac{1}{\sqrt{2}}
\begin{pmatrix} I_n \\ I_n \end{pmatrix}
\mathbf{1}_n
\;=\; \frac{1}{\sqrt{2}}\,\mathbf{1}_{V(K_{n,n}^{\rightleftharpoons})},
\]
since $J_n e_i = \mathbf{1}_n$ and each vertex of $G$ belongs to exactly
one cell, so $\sum_{j=1}^n \mathbf{1}_{C_j} = \mathbf{1}_{V(K_{n,n}^{\rightleftharpoons})}$.
Both columns equal $\tfrac{1}{\sqrt{2}}\,\mathbf{1}_{V(K_{n,n}^{\rightleftharpoons})}$ for every
$i \in [n]$, hence $A_{K_{n,n}^{\rightleftharpoons}}\,Q = Q\,J_n$. Thus, the quotient graph $K_{n,n}^{\rightleftharpoons}/\pi$ has adjacency matrix $B = J_n$, which has a $1$
in every position including the diagonal. Hence $K_{n,n}^{\rightleftharpoons}/\pi$ has an arc from
$C_i$ to $C_j$ for every $(i,j)\in[n]^2$, including $i=j$, giving
\begin{equation}
K_{n,n}^{\rightleftharpoons}/\pi \cong K_n^{\circlearrowleft}.
\end{equation}
\end{proof}

\begin{ex}\label{ex:K44-full}
Consider the complete bipartite digraph $G = {K}^\rightleftharpoons_{4,4}$
with vertex set $A\cup B$, where $A=\{a_1,a_2,a_3,a_4\}$ and
$B=\{b_1,b_2,b_3,b_4\}$, and arc set
\[
\mathcal{A}(G)
= \{(a_i,b_j)\mid 1\le i,j\le 4\}
\cup \{(b_i,a_j)\mid 1\le i,j\le 4\},
\qquad |\mathcal{A}(G)|=32.
\]

Define the arc partition
\[
\tau=\{C_{ij} : 1\le i,j\le 4\},\quad
C_{ij}=\{(a_i,b_j),(b_i,a_j)\},
\]
so $|\tau|=16$ cells each of size $2$.
The cells are displayed in the following table.

\[
\renewcommand{\arraystretch}{1.3}
\begin{array}{c|c|c|c|c}
 & j=1 & j=2 & j=3 & j=4 \\
\hline
i=1 &
T_1:\,(a_1,b_1),(b_1,a_1) &
T_2:\,(a_1,b_2),(b_1,a_2) &
T_3:\,(a_1,b_3),(b_1,a_3) &
T_4:\,(a_1,b_4),(b_1,a_4)\\
\hline
i=2 &
T_5:\,(a_2,b_2),(b_2,a_2) &
T_6:\,(a_2,b_3),(b_2,a_3) &
T_7:\,(a_2,b_4),(b_2,a_4)&
T_8:\,(a_2,b_1),(b_2,a_1) \\

\hline
i=3 &
T_{9}:\,(a_3,b_3),(b_3,a_3) &
T_{10}:\,(a_3,b_4),(b_3,a_4)&
T_{11}:\,(a_3,b_1),(b_3,a_1) &
T_{12}:\,(a_3,b_2),(b_3,a_2) \\

\hline
i=4 &
T_{13}:\,(a_4,b_4),(b_4,a_4)&
T_{14}:\,(a_4,b_1),(b_4,a_1) &
T_{15}:\,(a_4,b_2),(b_4,a_2) &
T_{16}:\,(a_4,b_3),(b_4,a_3) 
\end{array}
\]

Since $|\mathcal{A}(G)|=32$ and each cell has size $2$, the normalized
characteristic matrix is
\[
Q_\tau\in\; \mathbb{R}^{32\times 16},
\]
One verifies $Q_\tau^\top Q_\tau = I_{16}$. By Lemma~\ref{line digraph 1}, $\tau$ is an equitable partition of $LD(G)$
and the quotient adjacency matrix
\[
A' = Q_\tau^\top\, A_{LD(G)}\, Q_\tau \;\in\;\mathbb{R}^{16\times 16}
\]
satisfies $A_{LD(G)}\,Q_\tau = Q_\tau\,A'$.
Ordering the cells as
$T_1,\dots,T_{16}$ (row-major in $i$, then $j$), a direct computation gives
\[
A'=
\renewcommand{\arraystretch}{0.8}
\left[
\begin{array}{@{}cccc|cccc|cccc|cccc@{}}
1&1&1&1&0&0&0&0&0&0&0&0&0&0&0&0\\
0&0&0&0&1&1&1&1&0&0&0&0&0&0&0&0\\
0&0&0&0&0&0&0&0&1&1&1&1&0&0&0&0\\
0&0&0&0&0&0&0&0&0&0&0&0&1&1&1&1\\
\hline
0&0&0&0&1&1&1&1&0&0&0&0&0&0&0&0\\
0&0&0&0&0&0&0&0&1&1&1&1&0&0&0&0\\
0&0&0&0&0&0&0&0&0&0&0&0&1&1&1&1\\
1&1&1&1&0&0&0&0&0&0&0&0&0&0&0&0\\
\hline
0&0&0&0&0&0&0&0&1&1&1&1&0&0&0&0\\
0&0&0&0&0&0&0&0&0&0&0&0&1&1&1&1\\
1&1&1&1&0&0&0&0&0&0&0&0&0&0&0&0\\
0&0&0&0&1&1&1&1&0&0&0&0&0&0&0&0\\
\hline
0&0&0&0&0&0&0&0&0&0&0&0&1&1&1&1\\
1&1&1&1&0&0&0&0&0&0&0&0&0&0&0&0\\
0&0&0&0&1&1&1&1&0&0&0&0&0&0&0&0\\
0&0&0&0&0&0&0&0&1&1&1&1&0&0&0&0
\end{array}
\right].
\]

Let $G'$ denote the quotient graph with adjacency matrix $A'$.
Note that $LD(G)/\tau\ncong G$ in general; indeed
$G'=LD(G)/\tau\cong LD({K}_4^\circlearrowleft)$.
Define the equitable vertex partition
\[
\pi=\{C_1,C_2,C_3,C_4\},\qquad C_k=\{a_k,b_k\},
\]
with normalized characteristic matrix $Q\in\mathbb{R}^{8\times 4}$.
Because every vertex in $C_k$ has out-arcs to all vertices in every
$C_l$ (including $l=k$), the quotient is
\[
G/\pi \;\cong\; {K}_4^\circlearrowleft
\]
with adjacency matrix $A(G/\pi)=J_4$ (the $4\times 4$ all-ones matrix)(see Figure~\ref{fig:K4-quotient}). The complete digraph ${K}_4^\circlearrowleft$ has $4\times 4=16$ arcs
(including self-loops), with arc set
$\mathcal{A}(G/\pi)=\{(i,j):1\le i,j\le 4\}$. The line digraph $LD(G/\pi)$ has these 16 arcs as vertices, with adjacency
matrix (arcs ordered $(1,1),(1,2),(1,3),(1,4),(2,2),(2,3),(2,4),(2,1),(3,3),(3,4),(3,1),(3,2),(4,4),(4,1),(4,2),(4,3)$), shown in
Figure~\ref{fig:partition-full}, we have 
\[
A_{LD(G/\pi)}
= A'\]
 
Now define the  arc partition $\sigma=\{\sigma_1,\sigma_2,\sigma_3,\sigma_4\}$
of $LD(G/\pi)$ by grouping arcs by their terminal vertex
\[
\sigma_j = \{(i,j) : 1\le i\le 4\},\quad j=1,2,3,4.
\]
Each $\sigma_j$ has size $4$, so the normalized characteristic matrix is
\[
 \widetilde{Q}
\;\in\;\mathbb{R}^{16\times 4},
\]
Explicitly,
\[
\widetilde{Q}=
\begin{bmatrix}
\tfrac{1}{2}&0&0&0\\
0&\tfrac{1}{2}&0&0\\
0&0&\tfrac{1}{2}&0\\
0&0&0&\tfrac{1}{2}\\[2pt]
0&\tfrac{1}{2}&0&0\\
0&0&\tfrac{1}{2}&0\\
0&0&0&\tfrac{1}{2}\\
\tfrac{1}{2}&0&0&0\\[2pt]
0&0&\tfrac{1}{2}&0\\
0&0&0&\tfrac{1}{2}\\
\tfrac{1}{2}&0&0&0\\
0&\tfrac{1}{2}&0&0\\[2pt]
0&0&0&\tfrac{1}{2}\\
\tfrac{1}{2}&0&0&0\\
0&\tfrac{1}{2}&0&0\\
0&0&\tfrac{1}{2}&0
\end{bmatrix}.
\]
A direct computation confirms:
\[
\widetilde{A}
\;=\;
\widetilde{Q}^\top\,A_{LD(G/\pi)}\,\widetilde{Q}
\;=\; J_4,
\]
and $A_{LD(G/\pi)}\,\widetilde{Q} = \widetilde{Q}\,\widetilde{A}$,
so $\sigma$ is equitable for $LD(G/\pi)$ with quotient
\[
LD(G/\pi)/\sigma \;\cong\; {K}_4 ^\circlearrowleft\;\cong\; G/\pi.
\]
This verifies $LD(G/\pi)/\sigma\cong G/\pi$. Comparing the matrices above, we see
\[
{A_{LD(G/\pi)} \;=\; A'},
\]
that is,\ the line-digraph adjacency matrix of the quotient graph $G/\pi$
equals the quotient adjacency matrix of $LD(G)$ under $\tau$.
Consequently, by Theorem~\ref{lemma_auxiliary_pst}, PST on  $LD(G/\pi)$ is
equivalent to PST on $LD(G)$.

\noindent\textbf{Remark.}
Note that $\widetilde{Q}\neq Q_\pi$: the former lives in
$\mathbb{R}^{16\times 4}$ (arcs of $G/\pi$ vs.\ cells of $\sigma$),
while the latter lives in $\mathbb{R}^{8\times 4}$ (vertices of $G$
vs.\ cells of $\pi$). Hence one cannot directly transfer PST results from $G/\pi$ to $G$ or vice versa via $\widetilde{Q}$
alone; the equivalence is instead mediated by the identity
$A_{LD(G/\pi)}=A'$ and Theorem~\ref{lemma_auxiliary_pst}.

\end{ex}
Following Lemma~\ref{thm:Knn_quotient} and Lemma~\ref{Bipartite Graph}, 
we establish PST in the lifted line digraph 
$LD(K_{n,n}^{\rightleftharpoons})$ by applying 
Theorem~\ref{thm:PST} and Theorem~\ref{lemma_auxiliary_pst}, 
as shown in Theorem~\ref{thm:main_PST} below.
\begin{Theorem}
\label{thm:main_PST}
Let $K^{\rightleftharpoons}_{n,n}$ be the complete bipartite digraph, where \(n=2^s\) for some integer \(s\geq 1\). Let
$\tau = \{T_1, \ldots, T_{2m}\}$, $m = 2n$, be an equitable arc partition of
$\mathrm{LD}(K^{\rightleftharpoons}_{n,n})$, with normalized characteristic
matrix $Q_\tau$. Let $\pi = \{C_1, \ldots, C_n\}$ be an equitable vertex
partition of $K^{\rightleftharpoons}_{n,n}$ with $C_i = \{a_i, b_i\}$, and let
$\sigma = \{\sigma_1, \ldots, \sigma_n\}$ be an equitable arc partition of
$\mathrm{LD}(K^{\rightleftharpoons}_{n,n}/\pi)$ with normalized characteristic
matrix $\widetilde{Q}$. Let $U_\tau$ and $U_\sigma$ be the transition matrices
of $\mathrm{LD}(K^{\rightleftharpoons}_{n,n})$ and $\mathrm{LD}(K^{\rightleftharpoons}_{n,n}/\pi)$, respectively. Then:
\begin{enumerate}
    \item $U_\sigma$ exhibits PST in $\mathrm{LD}(K^{\rightleftharpoons}_{n,n}/\pi)$
    from $(I_d \otimes \widetilde{Q} )\,x$ to $(I_d \otimes \widetilde{Q} )\,y$,
    for all $x, y \in \mathrm{Im}(I_d \otimes \widetilde{Q})$, at step $k = n$
    and is periodic at step $2n$.

    \item $U_\tau$ exhibits PST in $\mathrm{LD}(K^{\rightleftharpoons}_{n,n})$ from
    $(I_d \otimes Q_\tau )( I_d \otimes\widetilde{Q} )\,x$ to
    $(I_d \otimes Q_\tau )( I_d \otimes \widetilde{Q})\,y$,
    for all $x, y \in \mathrm{Im}(I_d\otimes \widetilde{Q} )$,
    at step $k = n$ and is periodic at step $k=2n$.
\end{enumerate}
\end{Theorem}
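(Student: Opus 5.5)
The plan is to assemble the statement from the structural lemmas of this section together with Theorem~\ref{thm:PST} and Theorem~\ref{lemma_auxiliary_pst}. No new spectral computation should be needed.

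First, I would fix the two quotient structures. By Lemma~\ref{thm:Knn_quotient}, the partition $\pi=\{C_i\}$ with $C_i=\{a_i,b_i\}$ is equitable, and $K^{\rightleftharpoons}_{n,n}/\pi\cong K_n^{\circlearrowleft}$ with quotient matrix $J_n$. Since $K^{\rightleftharpoons}_{n,n}$ is $n$-regular, we have $d=n$, which matches the hypothesis $d=n$ used in Lemma~\ref{eigen_values_} and Lemma~\ref{lem:Uln_general}. By Lemma~\ref{Bipartite Graph}, the arc partition $\tau=\{C_{ij}\}$ is equitable on $LD(K^{\rightleftharpoons}_{n,n})$, and $LD(K^{\rightleftharpoons}_{n,n})/\tau\cong LD(K_n^{\circlearrowleft})$. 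This is exactly the hypothesis $LD(G)/\tau\cong LD(G/\pi)$ of Theorem~\ref{lemma_auxiliary_pst}.

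Next, I would order the arcs of $K_n^{\circlearrowleft}$ as in Example~\ref{ex:K44-full}, so that the $i$-th tail block lists $(i,i),(i,i+1),\dots,(i,i+n-1)$. In this ordering the shunts are $\widetilde P_j=\widetilde P^{\,j-1}$, and the terminal-vertex partition $\sigma$ has characteristic matrix $\widetilde Q=\frac1{\sqrt n}[I_n;\widetilde P;\dots;\widetilde P^{n-1}]$. This is precisely the setup of Section~\ref{section 8}. I must also check that Lemma~\ref{line digraph 2} applies, so that $LD(K_n^{\circlearrowleft})/\sigma\cong K_n^{\circlearrowleft}$.

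With this setup, Theorem~\ref{thm:PST} gives $\widetilde U^{\,n}=I_n\otimes\widetilde P^{n/2}$ and $\widetilde U^{\,2n}=I$. Here $\widetilde U$ is the walk on $LD(K_n^{\circlearrowleft})/\sigma\cong K_n^{\circlearrowleft}$. The walk maps each basis state to its antipodal-arc state, so it has PST at $k=n$ and period exactly $2n$.

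For part (1), I would transfer this through the intertwining of Proposition~\ref{relation_operators}(b), namely $U_\sigma^k(I_d\otimes\widetilde Q)=(I_d\otimes\widetilde Q)\widetilde U^k$. Applied with $k=n$ to $x$, it gives $U_\sigma^n(I_d\otimes\widetilde Q)x=(I_d\otimes\widetilde Q)y$ with $y=\widetilde U^n x$. Applied with $k=2n$, it gives $U_\sigma^{2n}$ equal to the identity on $\operatorname{Im}(I_d\otimes\widetilde Q)$.

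For part (2), I would invoke Theorem~\ref{lemma_auxiliary_pst} directly. Because $U_\sigma=\widetilde U_\tau$ under the isomorphism of Lemma~\ref{Bipartite Graph}, the same identities lift through $I_d\otimes Q_\tau$. This yields PST at step $n$ between the lifted states, and periodicity at step $2n$ on the lifted subspace.

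The main obstacle is justifying that the isomorphism $LD(K^{\rightleftharpoons}_{n,n})/\tau\cong LD(K_n^{\circlearrowleft})$ identifies the transition operators, not just the adjacency matrices. It must carry the shunt decomposition $S_\tau$ to $S_\sigma$ and $\widetilde R_\tau$ to $R_\sigma$. I would handle this first by choosing the shunts of $K^{\rightleftharpoons}_{n,n}$ compatibly with $\pi$: shunt $j$ sends $a_i\mapsto b_{i+j-1}$ and $b_i\mapsto a_{i+j-1}$. By Corollary~\ref{cor:shunt-intertwine} and Remark~\ref{rem:shunt-intertwine-general}, each such shunt induces $\widetilde P^{\,j-1}$ on the quotient. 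I would then verify that the cell labelling $C_{ij}\mapsto(i\to j)$ maps the induced line-digraph shunts onto those of $LD(K_n^{\circlearrowleft})$, as the explicit equality $A_{LD(G/\pi)}=A'$ in Example~\ref{ex:K44-full} suggests.

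A secondary care point is that the statement quantifies over all $x,y\in\operatorname{Im}(I_d\otimes\widetilde Q)$. I would read this as: for every $x$, PST holds to $y=\widetilde U^n x$, which for basis states is the antipodal arc.
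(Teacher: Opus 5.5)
Your outline is the paper's own argument: Lemma~\ref{thm:Knn_quotient} and Lemma~\ref{Bipartite Graph}, the block form of $\widetilde Q$, Theorem~\ref{thm:PST}, then Proposition~\ref{relation_operators} and Theorem~\ref{lemma_auxiliary_pst}. You are right that an isomorphism of quotient adjacency matrices is not enough. But the gap is on the side you treat as automatic, namely the $\sigma$-intertwining $U_\sigma(I_d\otimes\widetilde Q)=(I_d\otimes\widetilde Q)\widetilde U$ that you use for part~(1).

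Part~(a) of Proposition~\ref{relation_operators} holds and $\widetilde R$ is invertible. So this identity is equivalent to $S_\sigma(I_d\otimes\widetilde Q)=(I_d\otimes\widetilde Q)\widetilde S$, i.e.\ to $P_j\widetilde Q=\widetilde Q\widetilde P_j$ for every shunt $P_j$ of $LD(K_n^{\circlearrowleft})$. With $\sigma$ the terminal-vertex partition, this fails for every shunt decomposition. A shunt sends each arc $(i,c)$ to some arc $(c,l)$. Being a bijection, it maps $\sigma_c=\{(i,c)\}_i$ onto the out-arcs $\{(c,l)\}_l$ of $c$, which meet every cell $\sigma_l$ exactly once. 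Hence $P_j\widetilde Q e_c=n^{-1/2}\mathbf{1}_{\{(c,l)\}_l}\notin\operatorname{Im}\widetilde Q$ and $\widetilde Q^{\top}P_j\widetilde Q=\tfrac1n J_n$. It follows that $(I_d\otimes\widetilde Q)^{\top}S_\sigma(I_d\otimes\widetilde Q)=I_n\otimes\tfrac1n J_n$, a rank-$n$ projection, not the unitary shift $\sum_j E_{jj}\otimes\widetilde P^{\,j-1}$ of Section~\ref{section 8}. Even the compressed identity $\widetilde U=(I_d\otimes\widetilde Q)^{\top}U_\sigma(I_d\otimes\widetilde Q)$ fails, since its right side is $(I_n\otimes\tfrac1nJ_n)\widetilde R$, of rank $n<n^2$. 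Remark~\ref{rem:shunt-intertwine-general}, which this step rests on, does not hold here: equitability of $\sigma$ (quotient $J_n$) does not force any shunt to map cells into cells.

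Consequently the identity $\widetilde U^{\,n}=I_n\otimes\widetilde P^{\,n/2}$ of Theorem~\ref{thm:PST}, which is correct for the operator of Section~\ref{section 8}, tells you nothing about $U_\sigma$. Part~(1) is not established, and part~(2), which you derive from it via $U_\sigma=\widetilde U_\tau$, falls with it. The paper's proof passes through exactly this step via Theorem~\ref{lemma_auxiliary_pst}, so following it does not close the gap.

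The $\tau$-side obstacle you flagged is real but repairable. Take shunts of $LD(K^{\rightleftharpoons}_{n,n})$ sending $(a_i,b_j)\mapsto(b_j,a_{\phi_k(i,j)})$ and $(b_i,a_j)\mapsto(a_j,b_{\phi_k(i,j)})$, where for each $j$ the array $(i,k)\mapsto\phi_k(i,j)$ is a Latin square. These map $C_{ij}$ onto $C_{j,\phi_k(i,j)}$. They must be permutations of the $2n^2$ arcs; the vertex shunts $a_i\mapsto b_{i+j-1}$ you propose only control the level $G\to G/\pi$, which is not the level Theorem~\ref{lemma_auxiliary_pst} works at.

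There is no such repair for the terminal-vertex $\sigma$. You would have to replace it by a shunt-compatible equitable partition of $\mathcal{A}(K_n^{\circlearrowleft})$, e.g.\ grouping $(i,i+t)$ by $t$ with shunts $(i,i+t)\mapsto(i+t,i+2t+j)$. You would then need to check that the resulting quotient walk, including its reflection, is the one analysed in Section~\ref{section 8}.
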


\begin{proof}
Let $\pi = \{C_1, C_2, \ldots, C_n\}$ be an equitable vertex partition where
$C_i = \{a_i, b_i\}$. By Lemma~\ref{thm:Knn_quotient}, we have
\[
    K^{\rightleftharpoons}_{n,n}/\pi \;\cong\; K_n^{\circlearrowleft},
    \label{eq:quotient_iso}
\]
the complete graph with a loop at every vertex. Let
$\sigma = \{\sigma_1, \ldots, \sigma_n\}$ be an equitable arc partition of
$\mathrm{LD}(K^{\rightleftharpoons}_{n,n}/\pi)$. Then by Theorem~\ref{Isomorphism},
\[
    \mathrm{LD}(K^{\rightleftharpoons}_{n,n}/\pi)/\sigma \;\cong\; K^{\rightleftharpoons}_{n,n}/\pi \;\cong\; K_n^{\circlearrowleft}.
\]
Since $\sigma$ is an equitable arc partition, by Lemma~\ref{line digraph 2} we have
\[
    A_{\mathrm{LD}(K^{\rightleftharpoons}_{n,n}/\pi)}\,\widetilde{Q} \;=\; \widetilde{Q}\,\widetilde{A},
\]
where $\widetilde{A}$ is the adjacency matrix of $K_n^{\circlearrowleft}$.
By relabeling the arcs of $K^{\rightleftharpoons}_{n,n}/\pi$, the normalized characteristic matrix takes
the form
\[
    \widetilde{Q} \;=\; \frac{1}{\sqrt{d}}
    \begin{bmatrix}
        I_n \\
        \widetilde{P} \\
        \widetilde{P}^2 \\
        \vdots \\
        \widetilde{P}^{d-1}
    \end{bmatrix},
\]
where
\[
    I_n + \widetilde{P} + \widetilde{P}^2 + \cdots + \widetilde{P}^{d-1}
    \;=\; A(K_n^{\circlearrowleft}),
\]
that is, $\widetilde{P}, \ldots, \widetilde{P}^{d-1}$ are the cyclic shifts
(permutation matrices) of the adjacency matrix of $K_n^{\circlearrowleft}$.

By Theorem~\ref{thm:PST}, $K_n^{\circlearrowleft}$ exhibits PST at step $k = n$ and is periodic at step $2n$. Moreover, by
Lemma~\ref{Bipartite Graph},
\[
    \mathrm{LD}(K^{\rightleftharpoons}_{n,n})/\tau
    \;\cong\;
    \mathrm{LD}(K^{\circlearrowleft}_n).
\]
Therefore, by Theorem~\ref{lemma_auxiliary_pst}, the transition matrix
$U_\sigma$ of $\mathrm{LD}(K^{\rightleftharpoons}_{n,n}/\pi)$ exhibits PST from
$(I_d\otimes \widetilde{Q} )\,x$ to $(I_d \otimes \widetilde{Q})\,y$, for all
$x, y \in \mathrm{Im}(I_d\otimes \widetilde{Q} )$, at step $k = n$ and is
periodic at step $2n$. Furthermore, the transition matrix $U_\tau$ of
$\mathrm{LD}(K^{\rightleftharpoons}_{n,n})$ exhibits PST from
\[
    (I_d\otimes Q_\tau )(I_d \otimes \widetilde{Q})\,x
    \quad \text{to} \quad
    (I_d \otimes Q_\tau )(I_d \otimes \widetilde{Q} )\,y,
\]
for all $x, y \in \mathrm{Im}(I_d \otimes \widetilde{Q} )$, at step $k = n$
and is periodic at step $2n$.
\end{proof}
\section{Construction of \texorpdfstring{$LD(\operatorname{Circ}(2n, S))$}{LD(Circ(2n,S))}, 
where \texorpdfstring{$S$}{S} consists of all odd residues modulo 
\texorpdfstring{$2n$}{2n}, with PST for 
\texorpdfstring{$n = 2^s,\, s \geq 0$}{n = 2\^{}s, s >= 0}}
\label{section 10}
\begin{figure}[t]
\centering

\begin{minipage}[ht]{0.45\textwidth}
\centering
\begin{tikzpicture}[
  scale=0.6,
  >=stealth,
  every node/.style={circle, draw, font=\scriptsize, inner sep=2pt,
                     minimum size=0.55cm}]

\node[fill=blue!20,   draw=blue!70!black]   (v0) at ( 90:2.6) {$0$};
\node[fill=red!20,    draw=red!70!black]    (v1) at ( 45:2.6) {$1$};
\node[fill=violet!20, draw=violet!80!black] (v2) at (  0:2.6) {$2$};
\node[fill=blue!20,   draw=blue!70!black]   (v3) at (-45:2.6) {$3$};
\node[fill=green!20,  draw=green!60!black]  (v4) at (-90:2.6) {$4$};
\node[fill=violet!20, draw=violet!80!black] (v5) at (-135:2.6){$5$};
\node[fill=red!20,    draw=red!70!black]    (v6) at (180:2.6) {$6$};
\node[fill=green!20,  draw=green!60!black]  (v7) at (135:2.6) {$7$};

\foreach \u/\v in {0/1,0/3,0/5,0/7,
                   1/2,1/4,1/6,
                   2/3,2/5,2/7,
                   3/4,3/6,
                   4/5,4/7,
                   5/6,6/7}
  \draw[black, thin] (v\u) -- (v\v);

\end{tikzpicture}
\subcaption{$\mathrm{Circ}(8,\{1,3,5,7\})$  
with vertices coloured by partition class.}
\label{fig:circ8-circular}
\end{minipage}
\hfill
\begin{minipage}[t]{0.45\textwidth}
\centering
\begin{tikzpicture}[
  scale=0.5,
  > = {Stealth[length=3pt,bend]},
  every node/.style = {circle, draw, font=\scriptsize,
                       inner sep=1.5pt, minimum size=0.55cm}]

\draw[blue!60,        dashed, rounded corners=8pt, line width=0.8pt]
      (-5.4, 0.4) rectangle (-2.4, 4.0);
\draw[red!60,         dashed, rounded corners=8pt, line width=0.8pt]
      ( 2.4, 0.4) rectangle ( 5.4, 4.0);
\draw[green!60!black, dashed, rounded corners=8pt, line width=0.8pt]
      (-5.4,-4.0) rectangle (-2.4,-0.4);
\draw[violet!80!black,dashed, rounded corners=8pt, line width=0.8pt]
      ( 2.4,-4.0) rectangle ( 5.4,-0.4);

\node[draw=none,font=\scriptsize\bfseries,text=blue!70!black]
      at (-3.9, 3.72) {$C_1$};
\node[draw=none,font=\scriptsize\bfseries,text=red!70!black]
      at ( 3.9, 3.72) {$C_2$};
\node[draw=none,font=\scriptsize\bfseries,text=green!60!black]
      at (-3.9,-3.72) {$C_3$};
\node[draw=none,font=\scriptsize\bfseries,text=violet!80!black]
      at ( 3.9,-3.72) {$C_4$};

\node[fill=blue!20,   draw=blue!70!black]   (v0) at (-4.6, 3.0) {$0$};
\node[fill=blue!20,   draw=blue!70!black]   (v3) at (-3.2, 1.2) {$3$};
\node[fill=red!20,    draw=red!70!black]    (v1) at ( 4.6, 3.0) {$1$};
\node[fill=red!20,    draw=red!70!black]    (v6) at ( 3.2, 1.2) {$6$};
\node[fill=green!20,  draw=green!60!black]  (v4) at (-3.2,-1.2) {$4$};
\node[fill=green!20,  draw=green!60!black]  (v7) at (-4.6,-3.0) {$7$};
\node[fill=violet!20, draw=violet!80!black] (v2) at ( 4.6,-3.0) {$2$};
\node[fill=violet!20, draw=violet!80!black] (v5) at ( 3.2,-1.2) {$5$};

\draw[->,blue!70,thin] (v0) to[bend left=10]  (v1);
\draw[->,blue!70,thin] (v0) to[bend left=10]  (v3);
\draw[->,blue!70,thin] (v0) to[bend left=10]  (v5);
\draw[->,blue!70,thin] (v0) to[bend left=10]  (v7);
\draw[->,blue!70,thin] (v3) to[bend right=10] (v4);
\draw[->,blue!70,thin] (v3) to[bend left=10]  (v6);
\draw[->,blue!70,thin] (v3) to[bend left=10]  (v0);
\draw[->,blue!70,thin] (v3) to[bend right=10] (v2);

\draw[->,red!60,thin]  (v1) to[bend left=10]  (v2);
\draw[->,red!60,thin]  (v1) to[bend left=5]   (v4);
\draw[->,red!60,thin]  (v1) to[bend left=10]  (v6);
\draw[->,red!60,thin]  (v1) to[bend left=10]  (v0);
\draw[->,red!60,thin]  (v6) to[bend left=5]   (v7);
\draw[->,red!60,thin]  (v6) to[bend left=10]  (v1);
\draw[->,red!60,thin]  (v6) to[bend left=10]  (v3);
\draw[->,red!60,thin]  (v6) to[bend left=10]  (v5);

\draw[->,green!60!black,thin] (v4) to[bend right=10] (v5);
\draw[->,green!60!black,thin] (v4) to[bend right=10] (v7);
\draw[->,green!60!black,thin] (v4) to[bend left=5]   (v1);
\draw[->,green!60!black,thin] (v4) to[bend right=10] (v3);
\draw[->,green!60!black,thin] (v7) to[bend left=10]  (v0);
\draw[->,green!60!black,thin] (v7) to[bend left=10]  (v2);
\draw[->,green!60!black,thin] (v7) to[bend right=10] (v4);
\draw[->,green!60!black,thin] (v7) to[bend left=5]   (v6);

\draw[->,violet!70,thin] (v2) to[bend right=10] (v3);
\draw[->,violet!70,thin] (v2) to[bend right=10] (v5);
\draw[->,violet!70,thin] (v2) to[bend left=10]  (v7);
\draw[->,violet!70,thin] (v2) to[bend left=10]  (v1);
\draw[->,violet!70,thin] (v5) to[bend left=10]  (v6);
\draw[->,violet!70,thin] (v5) to[bend left=5]   (v0);
\draw[->,violet!70,thin] (v5) to[bend right=10] (v2);
\draw[->,violet!70,thin] (v5) to[bend right=10] (v4);

\end{tikzpicture}
\subcaption{Partition layout of $\mathrm{Circ}(8,\{1,3,5,7\})$: 
$C_1 = \{0,3\}$ (blue), $C_2 = \{1,6\}$ (red), 
$C_3 = \{4,7\}$ (green), $C_4 = \{2,5\}$ (violet).}
\label{fig:circ8-partition}
\end{minipage}

\vspace{4pt}
\begin{minipage}[t]{\textwidth}
\centering
\begin{tikzpicture}[
  scale=0.55,
  >=stealth,
  every node/.style={circle, draw, font=\scriptsize, inner sep=3pt,
                     minimum size=0.85cm}]

\node[fill=blue!20,   draw=blue!70!black]   (C1) at (-2.2,  2.2) {$C_1$};
\node[fill=red!20,    draw=red!70!black]    (C2) at ( 2.2,  2.2) {$C_2$};
\node[fill=green!20,  draw=green!60!black]  (C3) at (-2.2, -2.2) {$C_3$};
\node[fill=violet!20, draw=violet!80!black] (C4) at ( 2.2, -2.2) {$C_4$};

\draw[->, blue!70!black,  thick] (C1) to[out=120, in=60,  looseness=8] (C1);
\draw[->, red!70!black,   thick] (C2) to[out=60,  in=120, looseness=8] (C2);
\draw[->, green!60!black, thick] (C3) to[out=240, in=180, looseness=8] (C3);
\draw[->, violet!80!black,thick] (C4) to[out=300, in=0,   looseness=8] (C4);

\draw[->, blue!70, thick] (C1) to[bend left=18]  (C2);
\draw[->, blue!70, thick] (C1) to[bend right=18] (C4);
\draw[->, blue!70, thick] (C1) to[bend right=18] (C3);
\draw[->, red!70,  thick] (C2) to[bend left=18]  (C3);
\draw[->, red!70,  thick] (C2) to[bend left=18]  (C1);
\draw[->,red!70,  thick] (C2) to[bend right=18] (C4);
\draw[->,green!60!black, thick] (C3) to[bend left=18]  (C4);
\draw[->,green!60!black, thick] (C3) to[bend right=18] (C1);
\draw[->,green!60!black, thick] (C3) to[bend left=18]  (C2);
\draw[->,violet!70, thick] (C4) to[bend right=18] (C1);
\draw[->,violet!70, thick] (C4) to[bend right=18] (C2);
\draw[->,violet!70, thick] (C4) to[bend left=18]  (C3);

\node[draw=none, font=\scriptsize, text=gray!80] at ( 0.0,  2.9) {$$};
\node[draw=none, font=\scriptsize, text=gray!80] at ( 2.9,  0.0) {$$};
\node[draw=none, font=\scriptsize, text=gray!80] at ( 0.0, -2.9) {$$};
\node[draw=none, font=\scriptsize, text=gray!80] at (-2.9,  0.0) {$$};
\node[draw=none, font=\scriptsize, text=gray!80] at ( 0.0,  0.5) {$$};
\node[draw=none, font=\scriptsize, text=gray!80] at ( 0.0, -0.5) {$$};

\end{tikzpicture}
\subcaption{Quotient digraph $\mathrm{Circ}(8,\{1,3,5,7\})/\pi 
\cong K_4^{\circlearrowleft}$, obtained by collapsing each cell to a 
single vertex. Every cell carries a loop and all four cells are 
mutually adjacent with multiplicity~$2$.}
\label{fig:circ8-quotient}
\end{minipage}

\caption{Three representations of $\mathrm{Circ}(8,\{1,3,5,7\})$ 
under the equitable partition $\pi = \{C_1, C_2, C_3, C_4\}$, where 
$C_1 = \{0,3\}$, $C_2 = \{1,6\}$, $C_3 = \{4,7\}$, and 
$C_4 = \{2,5\}$. The corresponding quotient digraph is 
$K_4^{\circlearrowleft}$.}
\label{fig:circ8-three-layouts}
\end{figure}

A \emph{circulant graph} $\mathrm{Circ}(n, S)$ is an undirected graph
on vertex set $\mathbb{Z}_n = \{0, 1, \dots, n-1\}$ in which each
vertex $v$ is adjacent to $v + s \pmod{n}$ for every $s \in S$, where
$S \subseteq \mathbb{Z}_n \setminus \{0\}$ is symmetric, that is,
$s \in S \Rightarrow -s \in S$. The graph is $|S|$-regular. When $n$
is even and $S$ consists of all odd residues modulo $n$, we write
$\mathrm{Circ}(2n, n)$; this yields an $n$-regular graph in which
every vertex is adjacent to all vertices at odd distance modulo $n$.

In this section we partition the vertices of $\mathrm{Circ}(2n, S)$ into $ n$
cells $\pi = \{C_1, \dots, C_n\}$, each containing exactly one even
and one odd element of $\mathbb{Z}_{2n}$. Hence \(C_{2n}/\pi\) is 2-regular quotient graph by Corollary~\ref{cor:3.1}. Since every $s \in S$ is odd,
adding $s$ to any vertex flips its parity, so each vertex has exactly
one neighbour in every cell. This uniform one-per-cell property makes
$\pi$ an equitable partition, whose quotient adjacency matrix is
determined in the lemma below.

\begin{lemma}\label{lem:circulant-complete-quotient}
Let $G = \mathrm{Circ}(2n, S)$ where
$S = \{1, 3, \dots, 2n-1\}$ is the set of all odd residues
modulo $2n$. Partition $\mathbb{Z}_{2n}$ into
$n$ cells $\pi = \{C_1, \dots, C_n\}$ where each cell contains
exactly one even and one odd element of $\mathbb{Z}_{2n}$. Then
\[
A(G/\pi) = J_{n \times n},
\]
so the quotient multigraph $G/\pi$ is the complete graph $K_n^\circlearrowleft$
augmented with a loop at every vertex.
\end{lemma}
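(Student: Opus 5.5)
The plan is a direct counting argument, following the same template as the proof of Lemma~\ref{thm:Knn_quotient}. The key observation is parity: every element of $S$ is odd, so for any vertex $v\in\mathbb{Z}_{2n}$ the neighbourhood $N(v)=\{v+s: s\in S\}$ consists of vertices of parity opposite to $v$. Conversely, since $S$ contains \emph{all} $n$ odd residues, the map $s\mapsto v+s$ is a bijection from $S$ onto the set of residues of parity opposite to $v$. Hence $N(v)$ is exactly the set of all $n$ vertices of the other parity. In particular $G$ is $n$-regular, and since $S=-S$ the graph is undirected, so it may be viewed as a symmetric digraph.

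Next I would fix cells $C_i=\{e_i,o_i\}$ and $C_j=\{e_j,o_j\}$, with $e$ even and $o$ odd. The even vertex $e_i$ is adjacent to every odd vertex, so its neighbours in $C_j$ are exactly $\{o_j\}$. Similarly, the odd vertex $o_i$ has exactly the neighbour $e_j$ in $C_j$. Thus every vertex of $C_i$ has exactly one neighbour in $C_j$, and this holds for all $i,j$, including $i=j$, since $e_i\sim o_i$ because $o_i-e_i$ is odd. This shows that $\pi$ is equitable with $b_{ij}=1$ for all $i,j$, i.e.\ $A(G/\pi)=J_{n\times n}$.

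To match the paper's matrix formulation, I would also verify $AQ=QJ_n$ column by column, as in Lemma~\ref{thm:Knn_quotient}. Every vertex receives exactly one arc from $\{e_i,o_i\}$, so $A\mathbf{1}_{C_i}=\mathbf{1}_{V}$. On the other side, $QJ_ne_i=\tfrac1{\sqrt2}\sum_j\mathbf{1}_{C_j}=\tfrac1{\sqrt2}\mathbf{1}_V$, and the two columns agree.

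Finally, $J_n$ has a $1$ in every entry, including the diagonal. So $G/\pi$ has an arc $C_i\to C_j$ for every ordered pair and a loop at each cell, giving $G/\pi\cong K_n^{\circlearrowleft}$. Consistency with Corollary~\ref{cor:3.1} is automatic, since the row sums of $J_n$ equal $n=d$. There is no real obstacle here. The only point needing care is the bijection claim that $N(v)$ is exactly the opposite parity class, which is what uses that $S$ contains all odd residues rather than merely odd ones. I would also note that the lemma holds for any pairing of evens with odds, so the specific choice of $\pi$ shown in Figure~\ref{fig:circ8-partition} is immaterial.
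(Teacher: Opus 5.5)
Your proposal is correct and takes essentially the same approach as the paper. Both rest on the parity argument: $N(v)$ is exactly the opposite parity class, so each vertex of $C_i$ has exactly one neighbour ($o_j$ or $e_j$) in every $C_j$, including $j=i$, which gives $A(G/\pi)=J_{n\times n}$ with loops. Your bijection phrasing is slightly cleaner than the paper's case-by-case membership check, and your added verification of $AQ=QJ_n$ (mirroring Lemma~\ref{thm:Knn_quotient}) is a harmless extra.
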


\begin{proof}
Write $\mathbb{Z}_{2n} = \mathcal{E} \sqcup \mathcal{O}$ where
$\mathcal{E} = \{0,2,\dots,2n-2\}$ and $\mathcal{O} = \{1,3,\dots,2n-1\}$
are the even and odd elements respectively, each of cardinality $n$.
By hypothesis, each cell $C_{r'}$ satisfies
$|C_{r'} \cap \mathcal{E}| = |C_{r'} \cap \mathcal{O}| = 1$,
so we may write $C_{r'} = \{e_{r'}, o_{r'}\}$ with
$e_{r'} \in \mathcal{E}$ and $o_{r'} \in \mathcal{O}$.
Fix $r \in \{1,\dots,n\}$ and let $v \in C_r$ be arbitrary.
Since every $s \in S = \mathcal{O}$ is odd,
\[
v + s \equiv v + 1 \pmod{2},
\]
so all $n$ neighbours $N(v) = \{v + s \pmod{2n} : s \in S\}$
lie entirely in $\mathcal{O}$ if $v \in \mathcal{E}$, and
entirely in $\mathcal{E}$ if $v \in \mathcal{O}$. Moreover,
since $0 \notin S$, the map $s \mapsto v + s \pmod{2n}$ is
injective on $S$, so $|N(v)| = |S| = n$. Now fix $r' \in \{1,\dots,n\}$. We count $|N(v) \cap C_{r'}|$
by considering two cases.

\textit{Case 1:} $v \in \mathcal{E}$. Then $N(v) \subseteq
\mathcal{O}$, so $N(v) \cap C_{r'} = N(v) \cap \{o_{r'}\}$.
The element $o_{r'} \in N(v)$ if and only if $o_{r'} - v
\pmod{2n} \in S$. Since $v \in \mathcal{E}$ and $o_{r'} \in
\mathcal{O}$, we have $v \neq o_{r'}$, so $o_{r'} - v
\not\equiv 0 \pmod{2n}$. Furthermore $o_{r'} - v$ is odd
(odd minus even), hence $o_{r'} - v \pmod{2n} \in \mathcal{O}
= S$. Thus $|N(v) \cap C_{r'}| = 1$.

\textit{Case 2:} $v \in \mathcal{O}$. Then $N(v) \subseteq
\mathcal{E}$, so $N(v) \cap C_{r'} = N(v) \cap \{e_{r'}\}$.
Since $v \in \mathcal{O}$ and $e_{r'} \in \mathcal{E}$,
we have $v \neq e_{r'}$, so $e_{r'} - v \not\equiv 0
\pmod{2n}$. Furthermore $e_{r'} - v$ is odd (even minus odd),
hence $e_{r'} - v \pmod{2n} \in \mathcal{O} = S$. Thus
$|N(v) \cap C_{r'}| = 1$.

In both cases $|N(v) \cap C_{r'}| = 1$, and this count
depends only on $r$ and $r'$, not on the particular choice
of $v \in C_r$. Hence by definition $\pi$ is an equitable
partition of $G$ with
\[
A(G/\pi)_{r,r'} = 1 \qquad \text{for all } r,r'
\in \{1,\dots,n\},
\]
giving \begin{equation} A(G/\pi) = J_{n\times n}\end{equation}. The diagonal entries
$A(G/\pi)_{r,r} = 1$ correspond to loops at each vertex
of $G/\pi$, so the quotient multigraph is $K_n^{\circlearrowleft}$.
\end{proof}
\begin{ex}
For $G=\mathrm{Circ}(8,\{1,3,5,7\})$ with $n=4$, the partition
$C_1=\{0,3\}$, $C_2=\{1,6\}$, $C_3=\{4,7\}$, $C_4=\{2,5\}$
satisfies the hypothesis with $e_1=0,o_1=3$; $e_2=6,o_2=1$;
$e_3=4,o_3=7$; $e_4=2,o_4=5$. Vertex $0\in\mathcal{E}$
has neighbours $\{1,3,5,7\}=\mathcal{O}$, hitting $o_2=1$,
$o_1=3$, $o_4=5$, $o_3=7$: exactly one per cell. Vertex
$3\in\mathcal{O}$ has neighbours $\{0,2,4,6\}=\mathcal{E}$,
hitting $e_1=0$, $e_4=2$, $e_3=4$, $e_2=6$: exactly one
per cell. Hence $A(G/\pi)=J_{4\times 4}$ ( see Figure~\ref{fig:circ8-three-layouts}). 
\end{ex}
We take the arc partition $\tau$ of $LD(\mathrm{Circ}(2n,S))$ and 
show that the quotient $LD(\mathrm{Circ}(2n,S))/\tau$ is isomorphic 
to the line digraph $LD(K_n^{\circlearrowleft})$ of the complete 
digraph with loops $K_n^{\circlearrowleft}$, as established in 
Lemma~\ref{ciculant:PST} below.
\begin{lemma}\label{ciculant:PST}
Let $G=\mathrm{Circ}(2n,S)$ with $S=\{1,3,5,\dots,2n-1\}$ all odd residues 
modulo $2n$, so that $|\mathcal{A}(G)|=2n^2$. Partition $\mathbb{Z}_{2n}$ 
into $n$ consecutive even--odd pairs
\[
m_i=\{e_i,o_i\}=\{2i-2,\;2i-1\},\quad 1\le i\le n.
\]
For each pair $(i,j)\in[n]^2$ define the cell
\[
C_{ij}=\{(e_i,\;o_j),\;(o_i,\;e_j)\}\subset\mathcal{A}(G).
\]
Then $\tau=\{C_{ij}:1\le i,j\le n\}$ is an equitable partition of 
$\mathcal{A}(G)$ into $n^2$ cells of size $2$, with quotient matrix 
$B\in\{0,1\}^{n^2\times n^2}$ whose $\big((i,j),(k,l)\big)$-entry is
\[
B_{(i,j),(k,l)}=\begin{cases}1 & \text{if }j=k,\\0 & \text{if }j\neq k.\end{cases}
\]
Furthermore,
\[
LD(\mathrm{Circ}(2n,S))/\tau\;\cong\;LD(K^{\circlearrowleft}_n).
\]
\end{lemma}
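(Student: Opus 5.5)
The plan is to mirror the proof of Lemma~\ref{Bipartite Graph} almost verbatim, since $\mathrm{Circ}(2n,S)$ with $S$ the odd residues is, as a digraph, exactly the complete bipartite digraph between $\mathcal{E}=\{e_1,\dots,e_n\}$ and $\mathcal{O}=\{o_1,\dots,o_n\}$. First I will record this identification. Every $s\in S$ is odd, so the arcs of $G$ are exactly the pairs $(u,v)$ with $u,v$ of opposite parity. Moreover, any even–odd difference is an odd nonzero residue, hence lies in $S$ (as in the proof of Lemma~\ref{lem:circulant-complete-quotient}). Therefore
\[
\mathcal{A}(G)=\{(e_i,o_j)\}\cup\{(o_i,e_j)\},\qquad 1\le i,j\le n,
\]
which gives $|\mathcal{A}(G)|=2n^2$. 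Under the bijection $e_i\mapsto a_i$, $o_i\mapsto b_i$, the graph $G$ becomes $K^{\rightleftharpoons}_{n,n}$, and the cells $C_{ij}$ correspond to the cells of Lemma~\ref{Bipartite Graph}. One could simply invoke that lemma through this isomorphism; I will nonetheless sketch the direct argument so the statement is self-contained.

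The direct argument begins with the partition property. Each arc is uniquely of the form $(e_i,o_j)$ or $(o_i,e_j)$, so the $C_{ij}$ are disjoint, have size $2$, and cover $\mathcal{A}(G)$. Next I note that $\mathrm{tail}(C_{ij})=\{e_i,o_i\}=m_i$ and $\mathrm{head}(C_{ij})=m_j$.

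The equitability count splits into two cases. If $j\ne k$, then $m_j\cap m_k=\emptyset$, so no arc of $C_{ij}$ precedes any arc of $C_{kl}$ in $LD(G)$, and the count is $0$. If $j=k$, take $e\in C_{ij}$. Its head is either $o_j$ (when $e=(e_i,o_j)$) or $e_j$ (when $e=(o_i,e_j)$). The cell $C_{jl}$ contains exactly one arc with tail $o_j$, namely $(o_j,e_l)$, and exactly one with tail $e_j$, namely $(e_j,o_l)$. So the count is exactly $1$, independent of $i$, $l$ and the choice of $e$. This yields the quotient matrix $B$ and the intertwining relation $A_{LD(G)}Q_\tau=Q_\tau B$.

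Finally, I identify $C_{ij}\leftrightarrow(i\to j)\in\mathcal{A}(K_n^{\circlearrowleft})$, loops included when $i=j$. The rule $C_{ij}\to C_{kl}\iff j=k$ is exactly the adjacency rule of $LD(K_n^{\circlearrowleft})$, which gives the claimed isomorphism.

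The only step needing real care is the first one: checking that every even–odd pair is actually an arc. This uses that $S$ contains \emph{all} odd residues, and it is what makes each $C_{ij}$ consist of genuine arcs and makes the count equal to $1$ rather than $0$ in the case $j=k$. The parity argument from Lemma~\ref{lem:circulant-complete-quotient} settles it.
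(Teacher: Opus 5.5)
Your proof is correct and takes essentially the same route as the paper. The paper checks by parity that $(e_i,o_j)$ and $(o_i,e_j)$ are arcs, so the $C_{ij}$ partition $\mathcal{A}(G)$; it reads off $\mathrm{tail}(C_{ij})=m_i$ and $\mathrm{head}(C_{ij})=m_j$; it does the $j\neq k$ versus $j=k$ count; and it matches $B$ with the adjacency rule of $LD(K_n^{\circlearrowleft})$. You also state explicitly that $e_i\mapsto a_i$, $o_i\mapsto b_i$ is an isomorphism $G\cong K^{\rightleftharpoons}_{n,n}$ carrying these cells onto those of Lemma~\ref{Bipartite Graph}; the paper leaves this implicit and simply repeats that lemma's computation, though the isomorphism alone would suffice.
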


\begin{proof}

Observe that
\(
o_j - e_i = (2j-1)-(2i-2) = 2(j-i)+1 \in S,
\)
so $(e_i,o_j)\in\mathcal{A}(G)$, and
\(
e_j - o_i = (2j-2)-(2i-1) = 2(j-i)-1 \equiv 2(j-i)-1 \pmod{2n},
\)
which is odd and hence lies in $S$, so $(o_i,e_j)\in\mathcal{A}(G)$. 
Thus $|C_{ij}|=2$ for all $(i,j)\in[n]^2$. Since
\[
\mathcal{A}(G) 
= \{(v,w)\in\mathbb{Z}_{2n}^2 : w-v\in S\}
= \{(e_i,o_j):1\le i,j\le n\}
  \cup\{(o_i,e_j):1\le i,j\le n\},
\]
and the maps
\(
(e_i,o_j)\mapsto (i,j),\quad (o_i,e_j)\mapsto (i,j)
\)
are both bijections from their respective domains to $[n]^2$, each arc 
of $G$ belongs to exactly one cell $C_{ij}$. Hence
\(
\bigsqcup_{(i,j)\in[n]^2} C_{ij} = \mathcal{A}(G),
\)
so $\tau$ partitions $\mathcal{A}(G)$ into $n^2$ cells each of size $2$. For each cell $C_{ij}$,
\begin{equation}\label{circ:tailhead}
\mathrm{tail}(C_{ij})=\{e_i,o_i\}=m_i,\qquad
\mathrm{head}(C_{ij})=\{o_j,e_j\}=m_j.
\end{equation}
By definition of the line digraph, $C_{ij}\to C_{kl}$ in $LD(G)$ if and 
only if
\[
\mathrm{head}(C_{ij})\cap\mathrm{tail}(C_{kl})\neq\emptyset,
\]
that is, $m_j\cap m_k\neq\emptyset$, which holds if and only if $j=k$. 
Hence the out-neighbours of $C_{ij}$ are
\begin{equation}\label{circ:outnbr}
\{C_{jl}:1\le l\le n\},
\end{equation}
and every cell has out-degree exactly $n$. We must show that for every 
pair of cells $C_{ij}$ and $C_{kl}$, every arc $e\in C_{ij}$ points to 
the same number of arcs in $C_{kl}$, and that this number equals 
$B_{(i,j),(k,l)}$.

\textit{Case 1: $j\neq k$.}
By Equation ~\eqref{circ:outnbr}, no arc of $C_{ij}$ points to any arc of 
$C_{kl}$, so the count is $0$.

\textit{Case 2: $j=k$.}
Take any arc $e\in C_{ij}$. From Equation~\eqref{circ:tailhead}, $e$ has a 
unique head vertex $v\in\mathrm{head}(C_{ij})=m_j$, namely
\[
v=\begin{cases}o_j & \text{if }e=(e_i,o_j),\\ 
               e_j & \text{if }e=(o_i,e_j).\end{cases}
\]
Now consider cell $C_{jl}$ (since $k=j$). Its two arcs are $(e_j,o_l)$ 
and $(o_j,e_l)$, with tails $e_j$ and $o_j$ respectively. Exactly one 
of these tails equals $v$
\[
\begin{cases}
(e_j,o_l)\text{ has tail }e_j=v & \text{if }e=(o_i,e_j),\\
(o_j,e_l)\text{ has tail }o_j=v & \text{if }e=(e_i,o_j).
\end{cases}
\]
Hence exactly $1$ arc in $C_{jl}=C_{kl}$ follows $e$, and this count 
is independent of which arc $e\in C_{ij}$ we chose and independent of 
$i$ and $l$. Combining both cases, the number of arcs in $C_{kl}$ that any arc 
$e\in C_{ij}$ points to is
\[
B_{(i,j),(k,l)}=\begin{cases}1 & \text{if }j=k,\\0 & \text{if }j\neq k,\end{cases}
\]
which depends only on $j$ and $k$, not on $i$, $l$, or the choice of 
$e\in C_{ij}$. Therefore $\tau$ is an equitable partition with quotient 
matrix $B$ as stated. If $Q_\tau$ is the normalised characteristic matrix of $\tau$, then
\[
A_{LD(G)}\,Q_\tau=Q_\tau\,B,
\]
where $A_{LD(G)}$ is the adjacency matrix of $L(\mathrm{Circ}(2n,S))$.
Under the identification $C_{ij}\mapsto(i\to j)$, the adjacency rule
\[
C_{ij}\to C_{kl}\iff j=k
\]
coincides exactly with the arc-composition rule $(i\to j)\to(j\to l)$ 
in $L(K^{\circlearrowleft}_n)$. Hence
\begin{equation}
LD(\mathrm{Circ}(2n,S))/\tau\cong LD(K^{\circlearrowleft}_n).\qquad\square
\end{equation}
\end{proof}

\begin{ex}
Consider $G=\mathrm{Circ}(8,\{1,3,5,7\})$ on $\mathbb{Z}_8=\{0,1,\dots,7\}$,
so $n=4$ and $|\mathcal{A}(G)|=32$. The four even--odd pairs are
\[
m_1=\{0,1\},\quad m_2=\{2,3\},\quad m_3=\{4,5\},\quad m_4=\{6,7\}.
\]
The $16$ cells $C_{ij}=\{(e_i,o_j),(o_i,e_j)\}$ partitioning all $32$ 
arcs are displayed in the following table.
\[
\renewcommand{\arraystretch}{1.3}
\begin{array}{c|c|c|c|c}
 & j=1 & j=2 & j=3 & j=4 \\
\hline
i=1 &
T_1:\,(0,1),(1,0) &
T_2:\,(0,3),(1,2) &
T_3:\,(0,5),(1,4) &
T_4:\,(0,7),(1,6)\\
\hline
i=2 &
T_5:\,(2,3),(3,2) &
T_6:\,(2,5),(3,4) &
T_7:\,(2,7),(3,6)&
T_8:\,(2,1),(3,0) \\
\hline
i=3 &
T_9:\,(4,5),(5,4) &
T_{10}:\,(4,7),(5,6)&
T_{11}:\,(4,1),(5,0) &
T_{12}:\,(4,3),(5,2) \\

\hline
i=4 &
T_{13}:\,(6,7),(7,6)&
T_{14}:\,(6,1),(7,0) &
T_{15}:\,(6,3),(7,2) &
T_{16}:\,(6,5),(7,4) 

\end{array}
\]
Each cell $C_{ij}$ has $\mathrm{tail}(C_{ij})=m_i$ and 
$\mathrm{head}(C_{ij})=m_j$. For example,
\[
\mathrm{tail}(C_{23})=m_2=\{2,3\},\quad\mathrm{head}(C_{23})=m_3=\{4,5\},
\]
and the out-neighbours of $C_{23}$ in $LD(G)$ are $\{C_{31},C_{32},
C_{33},C_{34}\}=\{T_{11},T_{12},T_9,T_{10}\}$, exactly the four cells 
in row $i=3$. Every arc of $C_{23}$ points to exactly one arc of each 
of these cells, confirming $B_{(2,3),(3,l)}=1$ for $l=1,2,3,4$ and 
$B_{(2,3),(k,l)}=0$ for $k\neq 3$.
\end{ex}
Following Lemma~\ref{lem:circulant-complete-quotient} and Lemma~\ref{ciculant:PST}, 
we establish PST in the lifted line digraph 
$LD(\mathrm{Circ}(2n,\, S))$ by applying 
Theorem~\ref{thm:PST} and Theorem~\ref{lemma_auxiliary_pst}, 
as shown in Theorem~\ref{thm:main_PST_2} below.
\begin{Theorem}
\label{thm:main_PST_2}
Let $G = \mathrm{Circ}(2n,\, S)$ where $S = \{1, 3, \ldots, 2n-1\}$ is the
set of all odd residues modulo $2n$, be the circulant graph of degree $d$,
where $n = 2^s$ for some integer $s \geq 1$. Let
$\tau = \{T_1, \ldots, T_{2m}\}$, $m = 2n$, be an equitable arc partition of
$\mathrm{LD}(G)$, with normalized characteristic matrix $Q_\tau$. Let
$\pi = \{C_1, \ldots, C_n\}$ be an equitable vertex partition of $G$, where
each cell $C_i$ contains exactly one even and one odd element of
$\mathbb{Z}_{2n}$, and let $\sigma = \{\sigma_1, \ldots, \sigma_n\}$ be a  
equitable arc partition of $\mathrm{LD}(G/\pi)$ with normalized characteristic
matrix $\widetilde{Q}$. Let $U_\tau$ and $U_\sigma$ be the transition matrices
of $\mathrm{LD}(G)$ and $\mathrm{LD}(G/\pi)$, respectively. Then:
\begin{enumerate}
    \item $U_\sigma$ exhibits PST  in $\mathrm{LD}(G/\pi)$
    from $(I_d\otimes \widetilde{Q} )\,x$ to $(I_d \otimes \widetilde{Q} )\,y$,
    for all $x,\, y \in \mathrm{Im}(I_d \otimes \widetilde{Q} )$,
    at step $k = n$ and is periodic at step $2n$.

    \item $U_\tau$ exhibits PST in $\mathrm{LD}(G)$ from
    $(I_d \otimes Q_\tau )(I_d \otimes \widetilde{Q} )\,x$ to
    $(I_d \otimes Q_\tau )(I_d \otimes \widetilde{Q})\,y$,
    for all $x,\, y \in \mathrm{Im}(I_d \otimes \widetilde{Q} )$,
    at step $k = n$ and is periodic at step $2n$.
\end{enumerate}
\end{Theorem}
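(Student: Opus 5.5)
This is the circulant analogue of Theorem~\ref{thm:main_PST}, and I would run the same argument with the bipartite lemmas replaced by their circulant counterparts. The plan has three steps, carried out in order.

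\textbf{Step 1: identify the quotient.} By Lemma~\ref{lem:circulant-complete-quotient}, any partition $\pi$ with each cell containing one even and one odd residue is equitable, and $A(G/\pi)=J_n$. Hence $G/\pi\cong K_n^{\circlearrowleft}$, which is $n$-regular, so $d=n=|S|$ is consistent with the degree of $G$.

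\textbf{Step 2: pass to the line digraph of the quotient.} Let $\sigma$ group the arcs of $K_n^{\circlearrowleft}$ by terminal vertex. Lemma~\ref{line digraph 2} gives
\[
A_{LD(G/\pi)}\widetilde{Q}=\widetilde{Q}\widetilde{A},
\]
and Theorem~\ref{Isomorphism} gives $LD(G/\pi)/\sigma\cong K_n^{\circlearrowleft}$. Next I relabel the $n^2$ arcs $(i,j)$ of $K_n^{\circlearrowleft}$ according to the shunts $\widetilde P_k=\widetilde P^{k}$ of the decomposition $J_n=I_n+\widetilde P+\cdots+\widetilde P^{n-1}$. Shunt $k$ sends $i\mapsto i+k$, so its arcs have terminal vertex $i+k$. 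Ordering the arcs by shunt and then by tail therefore puts $\widetilde Q$ in the form $\frac{1}{\sqrt n}[I_n;\widetilde P;\dots;\widetilde P^{n-1}]$, exactly as in the proof of Theorem~\ref{thm:main_PST}. With this form, Lemma~\ref{eigen_values_}, Lemma~\ref{lem:Uln_general} and Theorem~\ref{thm:PST} apply for $n=2^s$. They give $\widetilde U^{\,n}=I_n\otimes\widetilde P^{n/2}$, which yields PST at step $n$, and $\widetilde U^{\,2n}=I$ with period exactly $2n$.

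\textbf{Step 3: lift to $LD(G)$.} I choose the consecutive even--odd pairing $m_i=\{2i-2,2i-1\}$ of Lemma~\ref{ciculant:PST}, which is a valid choice of $\pi$ for Step~1. That lemma gives
\[
LD(G)/\tau\cong LD(K_n^{\circlearrowleft})=LD(G/\pi),
\]
which is exactly the hypothesis of Theorem~\ref{lemma_auxiliary_pst}. Proposition~\ref{relation_operators} supplies the intertwinings $U_\sigma^k(I_d\otimes\widetilde Q)=(I_d\otimes\widetilde Q)\widetilde U^{\,k}$ and $U_\tau^k(I_d\otimes Q_\tau)=(I_d\otimes Q_\tau)\widetilde U_\tau^{\,k}$, with $\widetilde U_\tau=U_\sigma$ under the isomorphism.
\begin{itemize}
\item Part~1 follows by applying the first intertwining at $k=n$ and $k=2n$ to the PST and periodicity of $\widetilde U$ from Step~2.
\item Part~2 follows by composing both intertwinings, i.e.\ by invoking Theorem~\ref{lemma_auxiliary_pst} directly.
\end{itemize}
For periodicity, $\widetilde U^{\,2n}=I$ lifts to $U_\tau^{2n}$ acting as the identity on the image of $(I_d\otimes Q_\tau)(I_d\otimes\widetilde Q)$, and likewise for $U_\sigma$.

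\textbf{Main obstacle.} The delicate point is making sure the shunt decompositions and arc orderings are compatible with one another. Three things must hold simultaneously:
\begin{itemize}
\item the relabeled $\widetilde Q$ has the cyclic block form required by Theorem~\ref{thm:PST};
\item the shift $S_\tau$ restricts correctly under $I_d\otimes Q_\tau$, in the sense of Lemma~\ref{lemma 3.1} and Remark~\ref{rem:shunt-intertwine-general};
\item the isomorphism $LD(G)/\tau\cong LD(G/\pi)$ carries the shunt structure of $\widetilde S_\tau$ onto that of $S_\sigma$, so that $\widetilde U_\tau=U_\sigma$ holds as operators and not merely at the level of adjacency matrices.
\end{itemize}
I would check the third point first by using shunts of $G$ of the form $v\mapsto v+s$ for $s\in S$, paired so that they map the pairs $m_i$ onto pairs $m_{i+k}$. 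Each such shunt then projects onto $\widetilde P^{k}$ on the cells, and the cell map $C_{ij}\mapsto(i\to j)$ respects the shunt labels.
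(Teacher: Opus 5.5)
Your outline is the paper's own proof: Lemma~\ref{lem:circulant-complete-quotient}, then the cyclic relabelling of $\widetilde Q$ with Theorem~\ref{thm:PST}, then Lemma~\ref{ciculant:PST} and Theorem~\ref{lemma_auxiliary_pst}. The quotient-level part is sound, since for $n=2^s$ one does get $\widetilde U^{\,n}=I_n\otimes\widetilde P^{\,n/2}$. The gap is the lifting, exactly at the point you call the main obstacle, and the paper's argument has the same gap.

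The intertwining $U_\sigma^k(I_d\otimes\widetilde Q)=(I_d\otimes\widetilde Q)\widetilde U^{\,k}$ of Proposition~\ref{relation_operators} is what you use for Part~1, and you compose it again for Part~2. It needs every shunt $P_j$ of $LD(G/\pi)$ to map $\operatorname{Im}\widetilde Q$ into itself. When $\sigma_j$ is the set of arcs of $K_n^{\circlearrowleft}$ with head $j$, this is impossible for $n\ge 2$:
the $n$ arcs $(i,j)\in\sigma_j$ all have the same out-neighbourhood $\{(j,l):l\in\mathbb{Z}_n\}$ in $LD(K_n^{\circlearrowleft})$. A shunt is a permutation, so it maps $\sigma_j$ onto that set, which meets each cell $\sigma_l$ in exactly one arc. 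Consequences:
\begin{enumerate}
\item $\widetilde Q^{\top}P_j\widetilde Q=\frac{1}{n}J_n$ for every possible shunt.
\item Hence $(I_d\otimes\widetilde Q)^{\top}S_\sigma(I_d\otimes\widetilde Q)=I_d\otimes\frac{1}{n}J_n\neq\widetilde S$.
\item $R_\sigma$ maps $\operatorname{Im}(I_d\otimes\widetilde Q)$ onto itself but $S_\sigma$ does not, so this subspace is not $U_\sigma$-invariant and the intertwining already fails at $k=1$.
\end{enumerate}
Equitability of $\sigma$ (Lemma~\ref{line digraph 2}) does not give shunt compatibility, contrary to Remark~\ref{rem:shunt-intertwine-general}. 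Your proposed check cannot detect this. Translations $v\mapsto v+s$ of $G$, paired to send $m_i$ to $m_{i+k}$, are shunts of $G$ relative to $\pi$. But $S_\tau$ and $S_\sigma$ are built from shunts of $LD(G)$ and $LD(G/\pi)$. On the $\tau$ side the shift can in fact be made compatible: $(e_i,o_j)\mapsto(o_j,e_{i+k})$, $(o_i,e_j)\mapsto(e_j,o_{i+k})$ maps $C_{ij}$ onto $C_{j,i+k}$. The obstruction is on the $\sigma$ side.

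Separately, $\widetilde U_\tau=U_\sigma$ does not follow from $LD(G)/\tau\cong LD(G/\pi)$, because a digraph isomorphism says nothing about the coins. $\widetilde R_\tau=2(I_2\otimes Q_\tau Q_\tau^{\top})-I$ has a $(+1)$-eigenspace of dimension $2\operatorname{rank}Q_\tau$. $R_\sigma$ reflects about the $n$-dimensional subspace $(I_d\otimes\widetilde Q)\operatorname{Im}\widetilde Q$. No relabelling identifies them.

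To close the argument you would need two things:
\begin{enumerate}
\item an arc partition of $G/\pi$ that is respected by some shunt decomposition of $LD(G/\pi)$ and whose reflection restricts to the coin analysed in Theorem~\ref{thm:PST};
\item a coin on $LD(G)$ matched to it through $\tau$.
\end{enumerate}
The head-vertex partition $\sigma$ cannot serve, so as written neither Part~1 nor Part~2 is established.
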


\begin{proof}
Let $\pi = \{C_1, C_2, \ldots, C_n\}$ be an equitable vertex partition of $G$
where each cell $C_i$ contains exactly one even and one odd element of
$\mathbb{Z}_{2n}$. By Lemma~\ref{lem:circulant-complete-quotient}, we have
\begin{equation}
    G/\pi \;\cong\; K_n^{\circlearrowleft},
    \label{eq:circ_quotient_iso}
\end{equation}
the complete graph with a loop at every vertex. Let
$\sigma = \{\sigma_1, \ldots, \sigma_n\}$ be an equitable arc partition of
$\mathrm{LD}(G/\pi)$. Then by Theorem~\ref{Isomorphism},
\[
    \mathrm{LD}(G/\pi)/\sigma \;\cong\; G/\pi \;\cong\; K_n^{\circlearrowleft}.
\]
Since $\sigma$ is an equitable arc partition, by Lemma~\ref{line digraph 2} we have
\[
    A_{\mathrm{LD}(G/\pi)}\,\widetilde{Q} \;=\; \widetilde{Q}\,\widetilde{A},
\]
where $\widetilde{A}$ is the adjacency matrix of $K_n^{\circlearrowleft}$.
By relabeling the arcs of $G/\pi$, the normalized characteristic matrix takes
the form
\[
    \widetilde{Q} \;=\; \frac{1}{\sqrt{d}}
    \begin{bmatrix}
        I_n \\[4pt]
        \widetilde{P} \\[4pt]
        \widetilde{P}^2 \\[4pt]
        \vdots \\[4pt]
        \widetilde{P}^{d-1}
    \end{bmatrix},
\]
where
\[
    I_n + \widetilde{P} + \widetilde{P}^2 + \cdots + \widetilde{P}^{d-1}
    \;=\; A\!\left(K_n^{\circlearrowleft}\right),
\]
that is, $\widetilde{P}, \widetilde{P}^2, \ldots, \widetilde{P}^{d-1}$ are the
cyclic shifts (permutation matrices) of the adjacency matrix of
$K_n^{\circlearrowleft}$. By Theorem~\ref{thm:PST}, $K_n^{\circlearrowleft}$ exhibits PST  at step $k = n$ and is periodic at step $2n$. Moreover, by
Lemma~\ref{ciculant:PST},
\[
    \mathrm{LD}(G)/\tau \;\cong\; \mathrm{LD}\!\left(K_n^{\circlearrowleft}\right).
\]
Therefore, by Theorem~\ref{lemma_auxiliary_pst}, the transition matrix
$U_\sigma$ of $\mathrm{LD}(G/\pi)$ exhibits PST from
$(I_d \otimes \widetilde{Q})\,x$ to $(I_d \otimes \widetilde{Q} )\,y$, for all
$x,\, y \in \mathrm{Im}(I_d \otimes \widetilde{Q})$, at step $k = n$ and is
periodic at step $2n$. Furthermore, the transition matrix $U_\tau$ of
$\mathrm{LD}(G)$ exhibits PST from
\[
    (I_d \otimes Q_\tau)(I_d \otimes \widetilde{Q} )\,x
    \quad\text{to}\quad
    (I_d \otimes Q_\tau )(I_d \otimes \widetilde{Q} )\,y,
\]
for all $x,\, y \in \mathrm{Im}(I_d \otimes \widetilde{Q} )$, at step $k = n$
and is periodic at step $2n$.
\end{proof}

\section{Conclusion}\label{section 11}
In this work, we developed a framework for studying perfect state 
transfer (PST) in quotient graphs based on the equitable partition 
and the shunt decomposition of a graph $G$. The main results are 
summarized in two cases.

In the first case, we defined the unitary evolution and established 
conditions under which PST on the quotient graph $G/\pi$ is equivalent 
to PST on $G$, using the Chebyshev representation of the unitary 
evolution for quotient graphs. We verified that the cycle graph 
$C_{2n}$ with $n = 2^s$, $s \geq 1$, satisfies this condition, and 
provided illustrative examples. In the second case, we addressed the unitary 
evolution in settings where PST in the quotient graph $G/\pi$ is not 
necessarily equivalent to PST in $G$. In this setting, we established 
conditions under which, graph \(G\) can be reduced to a quotient graph 
isomorphic to $K_n^{\circlearrowleft}$, then PST can be found in the 
line digraph $LD(G)$. To this end, we constructed two families of line 
digraphs with PST by ordering the arcs of $K_{n,n}^{\rightleftharpoons}$ 
and $\operatorname{Circ}(2n, S)$, where $S$ consists of all odd residues 
modulo $2n$ and $n = 2^s$ for some $s \geq 1$. 

Moreover, several promising directions remain for future research, 
including the exploration of PST in broader families of Cayley graphs, and to develop systematic methods for constructing larger graphs 
admitting PST from smaller ones for which PST is already established. It would also be of interest to 
investigate how the interplay between the shunt-decomposition 
configurations $S$ and $\widetilde{S}$ affects the entanglement 
entropy and average mixing matrices of the quantum walk. Finally, 
analysing the spectral gap of the reduced operators, which  may provide 
deeper insights into hitting times and the algorithmic speedups 
achievable in quantum walks on symmetric network topologies.
\printbibliography
\end{document}